\newcommand{\mtrx}{\boldsymbol}
\newcommand{\ZZ}{\mathbb{Z}}
\DeclareMathOperator{\Char}{char}
\DeclareMathOperator{\Pf}{Pf}
\newcommand{\maxI}{\mathfrak{m}}
\DeclareMathOperator{\soc}{soc}
\DeclareMathOperator{\reg}{reg}
\DeclareMathOperator{\sgn}{sgn}
\DeclareMathOperator{\projdim}{pd}
\newtheorem{proposition}{Proposition}[section]
\newtheorem*{proposition*}{Proposition}
\newtheorem{theorem}[proposition]{Theorem}
\newtheorem*{theorem*}{Theorem}
\newtheorem{corollary}[proposition]{Corollary}
\newtheorem{lemma}[proposition]{Lemma}
\newtheorem*{lemma*}{Lemma}
\newtheorem{theoremAlpha}{Theorem}
\theoremstyle{definition}
\newtheorem{definition}[proposition]{Definition}
\newtheorem*{definition*}{Definition}
\newtheorem{notation}[proposition]{Notation}
\theoremstyle{remark}
\newtheorem*{exercise*}{Exercise}
\newtheorem{example}[proposition]{Example}
\newtheorem{remark}[proposition]{Remark}
\newtheorem*{remark*}{Remark}
\title{Determining the Betti numbers of $R/(x^{p^e},y^{p^e},z^{p^e})$ for most even degree hypersurfaces in odd characteristic}
\author{Heath Camphire}
\date{October 13, 2023}
\begin{document}
\maketitle

\begin{abstract}
Let $k$ be a field of odd characteristic $p$. Fix an even number $d<p+1$ and a power $q\geq d+3$ of $p$. For most choices of degree $d$ standard graded hypersurfaces $R=k[x,y,z]/(f)$ with homogeneous maximal ideal $\maxI$, we can determine the graded Betti numbers of $R/\maxI^{[q]}$. In fact, given two fixed powers $q_0,q_1\geq d+3$, for most choices of $R$ the graded Betti numbers in high homological degree of $R/\maxI^{[q_0]}$ and $R/\maxI^{[q_1]}$ are the same up to a constant shift. This thesis shows this fact by combining our results with the work of Miller, Rahmati, and R.G on link-$q$-compressed polynomials in \cite{RGpaper}. We show that link-$q$-compressed polynomials are indeed fairly common in many polynomial rings.
\end{abstract}

\section{Introduction}

Kustin and Ulrich wrote in \cite{KU09} about a peculiar phenomenon they observed: given a Noetherian graded algebra $(R,\maxI)$ over a field $k$ of characteristic $p>0$, for certain choices of $\maxI$-primary ideal $J$ of $R$, the tail of the resolution of $R/J^{[p^e]}$ appeared to be constant as a function of $e$ (up to a graded shift). This is unexpected as the first several syzygy modules often have generators of vastly different degrees for different $e$, however it is possible to find examples of $J$ such that the $\dim R$-th syzygy module of $R/J^{[p^e]}$ is independent of $e$.

There have been a few results so far related to this phenomenon.
Kustin and Vraciu in \cite{KV07} found conditions that would show that $R/J$ and $R/J^{[p^e]}$ both have finite projective dimension, and so the tails of their resolutions are the same because they're both zero.
Also in \cite{KU09}, Kustin and Ulrich found conditions that the tails of the resolutions of $R/J$ and $R/J^{[p^e]}$ are isomorphic (up to a graded shift) as graded modules, though not necessarily as complexes with differential.
Kustin, Rahmati, and Vraciu explicitly determined the resolutions of $R/J^{[p^e]}$ in \cite{KRV12}, where $J=\maxI^{[N]}$ for some $N>0$, 
$\maxI$ is the homogeneous maximal ideal, 
and $R$ is a diagonal hypersurface in two or three variables. From this they determined that the tail of the resolution of $R/J^{[p^e]}$ is a periodic (not necessarily constant) function of $e$ up to a graded shift. This is true even if the tail of the resolution is treated as a complex with differential, not just as a graded module.

Kustin, R.G., and Vraciu studied diagonal hypersurfaces $R$ in four indeterminates. In \cite{KRGV22} they discovered that the Betti numbers in fixed homological degree of $R/\maxI^{[N]}$ (where $\maxI$ is the homogeneous maximal ideal) increase rather than stay constant as $N$ increases. In fact, they determined in \cite{KRGV23} that the tail of the resolution of $R/\maxI^{[N]}$ is the direct sum of resolutions of $R/\maxI^{[N']}$ for certain $N'\leq N$. Compared to three indeterminates, the four indeterminate case has much more complicated resolutions and Betti numbers.

Our results show that for $R=k[x,y,z]/\left(\left(xy-z^2\right)^D\right)$, the $R$-free resolution of $R/\maxI^{[p^e]}$ has its tail independent of $e$ if $p>2D-1$ and $p$ is odd; see Corollary \ref{cor:FDResRailIndep}. We prove this with the method used in \cite{KRV12} while proving our main results, which we then combine with the work by Miller, Rahmati, and R.G. in \cite{RGpaper}.

Miller, Rahmati, and R.G. developed a property of algebras called \textit{$\mathfrak{c}$-compressed} (see Definition \ref{def:compressed}) in \cite{RGpaper} as a stronger version of the relatively compressed property, which originally appeared in \cite{IK99,MMRN05} and only required the Hilbert function to reach a maximal value among graded Artinian algebras, but not necessarily their theoretical maximum.
A homogeneous polynomial $f \in k[x,y,z]$ is \textit{link-$q$-compressed} if $\maxI^{[q]}:f$ is $\maxI^{[q]}$-compressed where $\maxI=(x,y,z)$ is the homogeneous maximal ideal (see Definition \ref{def:lqc}), and  Miller, Rahmati, and R.G. showed that if $f \in k[x,y,z]$ is link-$q$-compressed for two powers $q=q_0,q_1$ of $p$, then the graded Betti numbers for the resolutions of $R/\maxI^{[q_0]}$ and $R/\maxI^{[q_1]}$ eventually agree up to a constant shift (see the exact results in Theorem \ref{thm:RGlqcBetti}). Their work also gives formulas for algebraic invariants like the Hilbert-Kunz function of $R$ and the Castelnuovo-Mumford regularity of $R/\maxI^{[q]}$ when $f$ is link-$q$-compressed. In \cite{RGpaper} it is shown that link-$p^e$-compressed is a \textit{Zariski-open} condition on the coefficients of $f$ for any given values of $e$ and $d$. This means that the condition holds for general choices of $f$ only if it holds for at least one choice of $f$. We discuss this further in Remark \ref{rmk:lqcInGeneral}.

The main result of this thesis is Theorem \ref{thm:FDlqc}, which is the following:
\begin{theoremAlpha}
\label{thmA:FDlqc}
Let $k$ be a field of odd characteristic $p$. Fix a number $D<\frac{p+1}{2}$ and a power $q>1$ of $p$. The polynomial $\left(xy-z^2\right)^D \in k[x,y,z]$ is link-$q$-compressed.
\end{theoremAlpha}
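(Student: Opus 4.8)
The plan is to reduce the theorem to a maximal-rank property of multiplication by $(xy-z^2)^D$ on the monomial complete intersection $k[x,y,z]/\maxI^{[q]}$, and then to establish that property by computing the Hilbert function of $R/\maxI^{[q]}$ in the spirit of \cite{KRV12}.

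\textbf{Step 1: reduction to a maximal-rank statement.} Set $f:=(xy-z^2)^D$, let $A:=k[x,y,z]/\maxI^{[q]}$, write $\rho:=3(q-1)$ for the socle degree of the monomial complete intersection $A$, and put $h_q(i):=\dim_k A_i=[t^i]\,(1+t+\cdots+t^{q-1})^3$, which is symmetric and unimodal about $\rho/2$. Via Macaulay inverse systems, with dual socle generator $F=x^{[q-1]}y^{[q-1]}z^{[q-1]}$ of $A$ one has $g\in\maxI^{[q]}:f$ if and only if $g$ annihilates $f\circ F$, so $\maxI^{[q]}:f=\Ann(f\circ F)$ is a codimension-$3$ Gorenstein ideal. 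Expanding $f=\sum_{j=0}^{D}(-1)^j\binom{D}{j}x^{D-j}y^{D-j}z^{2j}$ and using that $D<\tfrac{p+1}{2}$ forces $2D\le p-1<q$ (as $p$ is odd), one checks that $f\circ F$ is a nonzero form of degree exactly $\rho-2D$: the monomials of $f$ are pairwise distinct, every exponent appearing stays in $[0,q-1]$, and each $\binom{D}{j}$ is nonzero modulo $p$ since $D<p$. Hence $B:=k[x,y,z]/(\maxI^{[q]}:f)$ is Artinian Gorenstein with socle degree $\sigma:=\rho-2D$, and by Definition~\ref{def:compressed} the ideal $\maxI^{[q]}:f$ is $\maxI^{[q]}$-compressed exactly when $\dim_k B_i=\min\{h_q(i),h_q(\sigma-i)\}$ for all $i$. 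Now multiplication by $f$ on $A$ has kernel $(\maxI^{[q]}:f)/\maxI^{[q]}$ and image isomorphic to $B(-2D)$ as graded $A$-modules, so
\[
\dim_k B_i=\rank\bigl(\,\cdot f\colon A_i\to A_{i+2D}\,\bigr)\le\min\{h_q(i),h_q(i+2D)\},
\]
and since $h_q(\sigma-i)=h_q(i+2D)$ by symmetry of $h_q$, Theorem~\ref{thmA:FDlqc} is equivalent to the assertion that multiplication by $(xy-z^2)^D$ induces a map of maximal rank $\bigl(k[x,y,z]/\maxI^{[q]}\bigr)_i\to\bigl(k[x,y,z]/\maxI^{[q]}\bigr)_{i+2D}$ for every $i$; equivalently, $\dim_k\bigl(R/\maxI^{[q]}\bigr)_j=\max\{0,\,h_q(j)-h_q(j-2D)\}$ for all $j$, where $R=k[x,y,z]/(f)$.

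\textbf{Step 2: the Hilbert function of $R/\maxI^{[q]}$.} I would obtain this Hilbert function by constructing an explicit free resolution of $R/\maxI^{[q]}$ over the hypersurface ring $R$, adapting the method of \cite{KRV12}. The key ingredient is the matrix factorization of $f$ built from $\Psi:=\left(\begin{smallmatrix}x&z\\ z&y\end{smallmatrix}\right)$: since $\Psi$ commutes with its adjugate $\Psi^{\mathrm{adj}}$ and $\Psi\,\Psi^{\mathrm{adj}}=(xy-z^2)I_2$, the pair $(\Psi^{D},(\Psi^{\mathrm{adj}})^{D})$ is a matrix factorization of $f=(xy-z^2)^D$, governing the eventually $2$-periodic tail of any $R$-free resolution. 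Splicing this tail together with the Koszul complex on the regular sequence $x^q,y^q,z^q$ over $k[x,y,z]$ — equivalently, forming the mapping cone dictated by the linkage $\maxI^{[q]}:\bigl((f)+\maxI^{[q]}\bigr)=\maxI^{[q]}:f$ and the Buchsbaum--Eisenbud Pfaffian structure of the Gorenstein ideal $\maxI^{[q]}:f$ found in Step~1 — should yield an explicit, and (one must check) minimal, $R$-free resolution of $R/\maxI^{[q]}$, from whose graded Betti numbers the Hilbert series of $R/\maxI^{[q]}$ is read off directly. This same resolution is what is needed for Corollary~\ref{cor:FDResRailIndep}; for Theorem~\ref{thmA:FDlqc} alone only its ranks and shifts, i.e.\ the Hilbert series, are required.

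\textbf{Step 3: the combinatorial identity and the main obstacle.} Matching the Hilbert series produced in Step~2 against $\sum_j\max\{0,h_q(j)-h_q(j-2D)\}\,t^j$ reduces to an identity among the coefficients of $(1+t+\cdots+t^{q-1})^3$ and $(1+t+\cdots+t^{q-1})^3(1-t^{2D})$; this is exactly where the hypothesis $D<\tfrac{p+1}{2}$ enters, since $2D\le p-1$ guarantees — via Lucas's theorem and the fact that $q$ is a power of $p$ — that the coefficients that must vanish or cancel behave just as in characteristic zero, with no accidental collapse modulo $p$. The main obstacle, and the bulk of the work, is Step~2: controlling the differentials through the region where the $2$-periodic matrix-factorization tail coming from $(xy-z^2)^D$ overlaps the three top relations $x^q,y^q,z^q$ (which do not form a regular sequence modulo $f$), and confirming minimality so that the Betti numbers, hence the Hilbert function, come out precisely as predicted. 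Apart from the single inequality $2D\le p$, every step is characteristic-free.
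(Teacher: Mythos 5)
Your Step~1 reduction is correct: by symmetry of the Hilbert function $h_q$ of the monomial complete intersection, the $\mathfrak{c}$-compressed condition is indeed equivalent to multiplication by $f$ on $k[x,y,z]/\maxI^{[q]}$ having maximal rank in every degree, which in turn is equivalent to the degree criterion of Lemma~\ref{lem:lqcDef} used in the paper. But Step~2 has a concrete error, and Step~2 is where all the content lies.

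The pair $\bigl(\Psi^{D},(\Psi^{\mathrm{adj}})^{D}\bigr)$ with $\Psi=\left(\begin{smallmatrix}x&z\\ z&y\end{smallmatrix}\right)$ is \emph{a} matrix factorization of $f=(xy-z^2)^D$, but it is a $2\times2$ one, and it does \emph{not} govern the tail of the $R$-free resolution of $R/\maxI^{[q]}$. A hypersurface has many inequivalent matrix factorizations, one for each stable MCM module, and the one appearing in the tail of a given module's resolution is the one attached to its high syzygy. For $R/\maxI^{[q]}$ the relevant high syzygy has Betti number $2d=4D$, not $2$ (see Theorem~\ref{thm:RGlqcBetti} and Theorem~\ref{thm:GensAndRes}), so the tail is built from a $2d\times 2d$ matrix factorization — in the paper, the explicit skew-symmetric block matrix $\mtrx{\varphi}=\left[\begin{smallmatrix}\mtrx{0}&\mtrx{M}\\-\mtrx{M}^\top&\mtrx{0}\end{smallmatrix}\right]$ built from the tridiagonal matrix $\mtrx{M}$ of Notation~\ref{not:MandLn}. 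Splicing the $2\times2$ factorization onto the Koszul complex as you describe would produce a complex with the wrong ranks and could not be exact, let alone minimal.

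Even granting the right factorization, your plan defers the essential step. To assemble and verify the resolution one must produce the $2d\times 3$ connecting matrix $\mtrx{\psi}$ and check that $u\mtrx{X}-\mtrx{\psi}^\top\mtrx{\varphi}^\vee\mtrx{\psi}$ lies in $(f)$ so that Lemma~\ref{lem:phipsi} applies; this is where the paper's number theory (the $\lambda_t$ identities of Lemma~\ref{lem:lambdaBinom}, the $z^q$ expansion of Lemma~\ref{lem:zq}) and determinant computations (Propositions~\ref{prop:detMd1and1d}, \ref{prop:detM11}, \ref{prop:detM}) are consumed, and it is exactly what your Step~2 labels ``the main obstacle'' without carrying out. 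There is also a circularity risk in your framing: computing the Hilbert function of $R/\maxI^{[q]}$ from a minimal resolution presupposes knowing the generator degrees of $\maxI^{[q]}:f$, which is precisely the datum being sought. The paper sidesteps this by constructing the (not-yet-known-to-be-minimal) complex directly from Lemma~\ref{lem:phipsi} — exactness follows from the Buchsbaum--Eisenbud-type structure, not from Hilbert function bookkeeping — and then reads the generator degrees off the maximal-order Pfaffians of $\mtrx{\partial}_2$ (Propositions~\ref{prop:Partial2PfaffsExceptLast3} and~\ref{prop:Partial2PfaffsLast3}), after which Lemma~\ref{lem:lqcDef} closes the argument. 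So while your reformulation of the goal is sound, the mechanism you propose to achieve it is both mis-specified (wrong size of matrix factorization) and missing the construction of $\mtrx{\psi}$ and the verification of Lemma~\ref{lem:phipsi}, which is the actual content of the theorem.
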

The set of link-$q$-compressed polynomials is Zariski open as shown in \cite{RGpaper}, and our results show that it is also nonempty. This means that a general choice of polynomial $f$ is link-$q$-compressed, and thus the conclusions in theorems about link-$q$-compressed polynomials in \cite{RGpaper} apply to $f$. We summarize these results as follows, which is Theorem \ref{thm:generalBettiAndMore}:
\begin{theoremAlpha}
\label{thmA:generalBettiAndMore}
Let $k$ be a field of odd characteristic $p$. Fix an even number $d<p+1$ and a power $q\geq d+3$ of $p$. For a general choice of degree $d$ standard graded hypersurface $k$-algebra $R$ over three indeterminates with homogeneous maximal ideal $\maxI$, the following hold:
\begin{itemize}
    \item The minimal graded $R$-free resolution of $R/\maxI^{[q]}$ has the following eventually 2-periodic form \[\cdots \xrightarrow{\mtrx{\partial}_4} R^{2d} \xrightarrow{\mtrx{\partial}_3} R^{2d} \xrightarrow{\mtrx{\partial}_4} R^{2d} \xrightarrow{\mtrx{\partial}_3} R^{2d} \xrightarrow{\mtrx{\partial}_2} R^{3} \xrightarrow{\mtrx{\partial}_1} R \to 0\] whose differentials are maps of pure graded degrees \[\deg(\mtrx{\partial}_1) = q, \; \deg(\mtrx{\partial}_2) = \frac{1}{2}(q+d-1), \; \deg(\mtrx{\partial}_3) = 1, \; \deg(\mtrx{\partial}_4) = d-1\/.\]
    \item The Castelnuovo-Mumford regularity is given by $\reg(R/\maxI^{[q]}) = \frac{3}{2} q + \frac{1}{2} d - \frac{5}{2}$.
    \item The Hilbert-Kunz function of $R$ at $q$ is $HK_R(q) = \frac{3}{4} dq^2 - \frac{1}{12} (d^3 - d)$.
\end{itemize}
\end{theoremAlpha}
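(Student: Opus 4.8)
The plan is to reduce the ``general $R$'' statement to the single explicit hypersurface furnished by Theorem~\ref{thmA:FDlqc} and then invoke the machinery of \cite{RGpaper}. Write $d = 2D$; the hypothesis $d < p+1$ is exactly $D < \tfrac{p+1}{2}$, and since $q \geq d+3 > 1$, Theorem~\ref{thmA:FDlqc} gives that $(xy-z^2)^D \in k[x,y,z]$ is link-$q$-compressed. By \cite{RGpaper} the set of degree-$d$ forms $f \in k[x,y,z]$ for which $\maxI^{[q]}:f$ is $\maxI^{[q]}$-compressed is Zariski open in the affine space of coefficients; it contains $(xy-z^2)^D$, hence is nonempty and therefore dense, so a general degree-$d$ hypersurface $R = k[x,y,z]/(f)$ has $f$ link-$q$-compressed (cf.\ Remark~\ref{rmk:lqcInGeneral}). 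It thus suffices to prove the three assertions whenever $f$ is link-$q$-compressed and $q \geq d+3$.

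Now fix such an $f$. The graded Betti numbers of $R/\maxI^{[q]}$, its Castelnuovo--Mumford regularity, and the length $HK_R(q) = \dim_k R/\maxI^{[q]}$ are all functions of the Hilbert function of the linked algebra $k[x,y,z]/(\maxI^{[q]}:f)$, which by the link-$q$-compressed hypothesis is the explicit ``compressed'' Hilbert function; in particular they are the same for every link-$q$-compressed form of degree $d$. This, together with the explicit values in three variables, is the content of Theorem~\ref{thm:RGlqcBetti} and the accompanying invariant formulas of \cite{RGpaper}. Specializing those results to three variables and degree $d$ gives the graded Betti table of $R/\maxI^{[q]}$: a single free summand in each homological position, of ranks $1, 3, 2d, 2d, 2d, \dots$. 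Since $R$ is a hypersurface ring, Eisenbud's theorem on codimension-one complete intersections forces the minimal $R$-free resolution to be eventually $2$-periodic; combined with the ranks just recorded (which are eventually constant equal to $2d > 0$), this yields the displayed eventually $2$-periodic complex, and because each free module is generated in a single degree the differentials are homogeneous of pure degree equal to the difference of the adjacent generator degrees, i.e.\ $\deg\mtrx{\partial}_1 = q$, $\deg\mtrx{\partial}_2 = \tfrac12(q+d-1)$, $\deg\mtrx{\partial}_3 = 1$, $\deg\mtrx{\partial}_4 = d-1$. The last two bullets are then evaluations of the now explicit compressed Hilbert function: the regularity is the top nonzero degree of $R/\maxI^{[q]}$ and $HK_R(q)$ its total length, and substituting $n = 3$ variables and $\deg f = d$ into the formulas of \cite{RGpaper} returns $\reg(R/\maxI^{[q]}) = \tfrac32 q + \tfrac12 d - \tfrac52$ and $HK_R(q) = \tfrac34 dq^2 - \tfrac1{12}(d^3-d)$. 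Equivalently, since the graded Betti numbers and $HK_R(q)$ are semicontinuous and attain their generic value on the link-$q$-compressed locus, all three invariants may instead be read off from the resolution of $R/\maxI^{[q]}$ computed for $R = k[x,y,z]/((xy-z^2)^D)$ elsewhere in this thesis (cf.\ Corollary~\ref{cor:FDResRailIndep}).

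Most of the argument is assembly, so the principal obstacle is careful bookkeeping: one must run the Hilbert-function computation for the $\maxI^{[q]}$-compressed algebra in three variables precisely enough to see that the numbers collapse to the clean forms above, and in particular confirm that $q \geq d+3$ is exactly the hypothesis under which the periodic tail already has ranks $2d$ with no smaller stray summands and under which the regularity and length take the stated polynomial shapes. The one genuinely structural point --- checking that the $2$-periodic pattern already sets in by homological degree $3$ rather than only asymptotically --- is where I expect to rely on the finer description of the resolution produced by the method of \cite{KRV12} rather than on Betti-number counts alone; reconciling the ``eventual agreement up to a constant shift'' of Theorem~\ref{thm:RGlqcBetti} with the explicit degree formulas (the shift between powers $q_0$ and $q_1$ being $\tfrac32(q_1-q_0)$ along the tail) serves as a useful consistency check.
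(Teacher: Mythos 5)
Your proposal is correct and follows essentially the same route as the paper: establish that the link-$q$-compressed locus is Zariski open and nonempty (witnessed by $(xy-z^2)^{d/2}$, which the paper packages as Theorem~\ref{thm:generallqc}), then read off the resolution, regularity, and Hilbert--Kunz formula from the \cite{RGpaper} results (Theorem~\ref{thm:RGlqcBetti} and Theorem~\ref{thm:lqcHilbKunzFunc}), checking the parity hypothesis $p$ odd, $d$ even. The invocation of Eisenbud's periodicity theorem and the semicontinuity remark in your second paragraph are harmless elaborations, but are not needed: Theorem~\ref{thm:RGlqcBetti} already delivers the explicit two-periodic resolution and its degree shifts directly.
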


This thesis has the following structure:
Section \ref{sec:Background} starts with Subsection \ref{subsec:Pfafs}, which covers matrix notation and results on Pfaffians, an invariant of skew-symmetric matrices analogous to determinants. The rest of Section \ref{sec:Background} gives prior results related to the link-$q$-compressed condition, beginning with its original definition in Definitions \ref{def:compressed} and \ref{def:lqc}.

Sections \ref{sec:NumThry} and \ref{sec:DetCalcs} detail intermediate calculations used in our main results. Specifically, Section \ref{sec:NumThry} covers relevant number theory results, while Section \ref{sec:DetCalcs} lists determinant calculations related to the matrix $\mtrx{M}$ in Notation \ref{not:MandLn}.

Section \ref{sec:MainResults} includes our main results. We prove the first major result Theorem \ref{thm:FDlqc}, which we described above as Theorem \ref{thmA:FDlqc}. We combine this result with the results in \cite{RGpaper} using their theory of link-$q$-compressed polynomials to prove Theorem \ref{thm:generalBettiAndMore}. At the end of the section we list other examples and theorems related to the link-$q$-compressed condition.

\section{Background}
\label{sec:Background}

\subsection{Pfaffians}
\label{subsec:Pfafs}

Here we give the definition of a Pfaffian, as well as list several properties, formulas, and notations that we use throughout this thesis.
More information on Pfaffians, including proofs for the properties to come, can be found in sources such as \cite{God93} and \cite{Ham01}.

Much of our work involves calculating determinants and Pfaffians of matrices. We introduce notation used throughout our paper when performing those calculations:

\begin{notation}
\begin{itemize}
    \item 
For a matrix $\mtrx{T}$ of size $M \times N$, the entry of $\mtrx{T}$ in the $i$th row and $j$th column is denoted $\mtrx{T}_{i,j}$.
    \item 
We also let $\mtrx{T}_{(\hat{\imath}_1\cdots\hat{\imath}_m),(\hat{\jmath}_1\cdots\hat{\jmath}_n)}$ denote the matrix obtained by removing rows $1 \leq i_1,\ldots,i_m \leq M$ and columns $1 \leq j_1,\ldots,j_n \leq N$ from $\mtrx{T}$.
    \item 
If we don't remove any rows (resp. columns) we use $(-)$ in place of $(\hat{\imath}_1\cdots\hat{\imath}_m)$ (resp. $(\hat{\jmath}_1\cdots\hat{\jmath}_n)$), which looks like $\mtrx{T}_{(-),(\hat{\jmath}_1\cdots\hat{\jmath}_n)}$ (resp. $\mtrx{T}_{(\hat{\imath}_1\cdots\hat{\imath}_m),(-)}$).
    \item 
To make notation more compact, we use $\mtrx{T}_{\hat{\hat{\imath}}_1\cdots\hat{\hat{\imath}}_m}$ as a shorthand for $\mtrx{T}_{(\hat{\imath}_1\cdots\hat{\imath}_m),(\hat{\imath}_1\cdots\hat{\imath}_m)}$.
    \item 
Here we also note that all matrices in this thesis are denoted in boldface (which looks like $\mtrx{T}$), and column vectors have a vector arrow above them (which looks like $\vec{T}$), with row vectors written a vector arrow and transpose symbol (which looks like $\vec{T}^\top$).
    \item 
We also use $\vec{T}_{\hat{\imath}_1\cdots\hat{\imath}_m}$ to denote $\vec{T}_{(\hat{\imath}_1\cdots\hat{\imath}_m),(-)}$ if $\vec{T}$ is a single column, and $\vec{T}_{\hat{\jmath}_1\cdots\hat{\jmath}_n}^\top$ to denote $(\vec{T}^\top)_{(-),(\hat{\jmath}_1\cdots\hat{\jmath}_n)}$ if $\vec{T}^\top$ is a single row.
\end{itemize}
\end{notation}

\begin{definition}[{\cite[Corollary 2.2]{Ham01}}]
\label{def:Pfaffs}
If a skew-symmetric matrix $\mtrx{A}$ is size $s \times s$ where $s>0$ is even, then its \textit{Pfaffian} can be calculated by using an analog of the cofactor expansion formula: \[
\Pf\mtrx{A}
 = 
\sum_{i=1}^s (-1)^{i+j+H(j-i)} \mtrx{A}_{i,j} \Pf\left( \mtrx{A}_{\hat{\hat{\imath}}\hat{\hat{\jmath}}} \right)
\/,\] where $j$ is any fixed index from $1$ to $s$ and $H(x) = \begin{cases}1,&x\geq0\\0,&x<0\end{cases}$ is the Heaviside step function.
If $s=0$, we define \[
\Pf\mtrx{A} = 1
\/.\] If $s$ is odd, then we set \[
\Pf\mtrx{A} = 0
\/.\]
\end{definition}
\begin{remark}
The summation formula doesn't change if we fix $i$ and sum over $j$ instead (see \cite[Corollary 4.3]{Ham01}). This formula is especially useful when when most entries in the $i$-th row or $j$-th column of $\mtrx{A}$ are zero.
\end{remark}
\begin{remark}
\label{rmk:PfaffSqDet}
Pfaffians have a close relation to determinants: if $\mtrx{A}$ is skew-symmetric, then $\det\mtrx{A} = (\Pf\mtrx{A})^2$ (see {\cite[Theorem 2.3]{God93}}).
It should be no surprise, given our definition is an analog of the cofactor expansion formula for determinants.
\end{remark}

\begin{remark}
\label{rmk:PfaffBlockSquare}
If $\mtrx{M}$ is a $n \times n$ matrix, then \[
\Pf \begin{bmatrix} \mtrx{0} & \mtrx{M} \\ -\mtrx{M}^\top & \mtrx{0} \end{bmatrix} = (-1)^{n(n-1)/2} \det\mtrx{M}
\/,
\] where $\mtrx{M}^\top$ is the transpose of $\mtrx{M}$ (see \cite[Lemma 3.1]{Ham01}).
\end{remark}
\begin{lemma}
\label{lem:PfaffBlockNonSquare}
If $\mtrx{M}$ is a non-square matrix, then $\Pf\begin{bmatrix}\mtrx{0}&\mtrx{M}\\-\mtrx{M}^\top&\mtrx{0}\end{bmatrix}=0$.
\end{lemma}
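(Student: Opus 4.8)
The plan is to reduce the non-square case to the square case already recorded in Remark~\ref{rmk:PfaffBlockSquare}, using the parity property of Pfaffians: $\Pf\mtrx{A}=0$ whenever $\mtrx{A}$ is an odd-sized skew-symmetric matrix (Definition~\ref{def:Pfaffs}). Write $\mtrx{M}$ for an $m\times n$ matrix with $m\neq n$, say $m<n$ (the case $m>n$ being symmetric, or obtained by transposing and using that $\mtrx{M}\mapsto-\mtrx{M}^\top$ changes the big block matrix by a permutation-conjugation that either preserves or negates the Pfaffian). The block matrix $\begin{bmatrix}\mtrx{0}&\mtrx{M}\\-\mtrx{M}^\top&\mtrx{0}\end{bmatrix}$ has size $(m+n)\times(m+n)$, which is skew-symmetric by construction.

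First I would dispose of the case where $m+n$ is odd: then the block matrix is an odd-sized skew-symmetric matrix, so its Pfaffian is $0$ by definition, and there is nothing more to prove. So assume $m+n$ is even, which together with $m\neq n$ forces $n\geq m+2$. Now I would expand the Pfaffian along one of the last $n$ columns, say column $m+n$ (which lies in the right-hand block of columns). By the cofactor-type expansion in Definition~\ref{def:Pfaffs}, $\Pf$ of the block matrix is a signed sum of entries $\mtrx{A}_{i,m+n}$ times $\Pf$ of the matrix with rows and columns $i$ and $m+n$ deleted. The nonzero entries in column $m+n$ all sit in the first $m$ rows (they come from the block $\mtrx{M}$), so only $i\in\{1,\dots,m\}$ contribute. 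For each such $i$, deleting row/column $i$ removes one of the "$\mtrx{0}$-block" rows and deleting row/column $m+n$ removes one of the "$-\mtrx{M}^\top$-block" columns; the resulting $(m+n-2)\times(m+n-2)$ skew-symmetric matrix is again of the same block shape, with the upper-right block now an $(m-1)\times(n-1)$ submatrix of $\mtrx{M}$ (still non-square since $(m-1)\neq(n-1)$).

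This sets up an induction on $m+n$. The base case is $m=0$: then $\begin{bmatrix}\mtrx{0}&\mtrx{M}\\-\mtrx{M}^\top&\mtrx{0}\end{bmatrix}$ is just the $n\times n$ zero matrix with $n>0$, whose Pfaffian is $0$ (e.g. by Remark~\ref{rmk:PfaffBlockSquare} with $\mtrx{M}=\mtrx{0}$, or directly from the definition since every term in the expansion contains a zero entry). The inductive step: by the expansion above, $\Pf$ of the block matrix is a sum of terms, each of which is $\pm\,\mtrx{M}_{i,n}$ times the Pfaffian of a block matrix of the same form built from an $(m-1)\times(n-1)$ non-square matrix, hence $0$ by the inductive hypothesis. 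Therefore the whole sum vanishes. The only mildly delicate point — the main thing to get right rather than a genuine obstacle — is bookkeeping the sign factors $(-1)^{i+j+H(j-i)}$ and confirming that the deleted submatrix genuinely retains the block-antidiagonal form; once that is checked, the induction closes immediately. Alternatively, one could avoid induction entirely by invoking $\det\mtrx{A}=(\Pf\mtrx{A})^2$ from Remark~\ref{rmk:PfaffSqDet}: the block matrix has a nontrivial kernel whenever $\mtrx{M}$ is non-square (a vector supported on the columns corresponding to $\ker\mtrx{M}^\top$ or $\ker\mtrx{M}$, whichever is nonzero, lies in its kernel), so its determinant is $0$, hence $\Pf=0$; I would likely present this shorter argument, with the inductive one as a remark if the paper wants a characteristic-free statement.
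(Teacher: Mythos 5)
Your main inductive argument matches the paper's proof in essence: both expand the Pfaffian along a column lying in the right-hand block, observe that the nonzero entries in that column sit in the rows of $\mtrx{M}$, and reduce to the same block shape built from a $(m-1)\times(n-1)$ submatrix of $\mtrx{M}$, closing an induction at a zero base case. The paper fixes $\ell=|m-n|$ and inducts on the smaller dimension (handling the two orientations of $\mtrx{M}$ separately), whereas you induct on $m+n$ with a WLOG; you also dispatch the odd $m+n$ case immediately from the definition, which is a small clean-up the paper does not make explicit (in the paper's setup it is subsumed in the induction, since for odd $\ell$ every matrix encountered has odd size).

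Your alternative argument via $\det\mtrx{A}=(\Pf\mtrx{A})^2$ is genuinely different from anything in the paper and is worth flagging. The kernel/rank reasoning shows $\det=0$ over a field, hence $(\Pf)^2=0$ and so $\Pf=0$ there; to obtain the conclusion in the generality the paper actually uses (entries in $\ZZ[x,y,z]$ and in $k[x,y,z]$, i.e.\ over general commutative rings) one should add the standard polynomial-identity bridge: the Pfaffian of the block is a polynomial in the entries of $\mtrx{M}$ over $\ZZ$, it vanishes under every complex specialization, hence it is the zero polynomial, hence it vanishes under every specialization. That is a harmless extra sentence, but without it the shorter argument as written is slightly weaker than what the paper needs. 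The purely combinatorial expansion you gave (and the paper gives) sidesteps this issue entirely and works over any ring with no appeal to fields.
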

\begin{proof}
Fix $\ell>0$. To prove that $\Pf\begin{bmatrix}\mtrx{0}&\mtrx{M}\\-\mtrx{M}^\top&\mtrx{0}\end{bmatrix}=0$, we assume first that $\mtrx{M}$ has size $(n+\ell)\times n$ and then that it has size $n\times(n+\ell)$, and use induction on $n$ both times. In either of these cases, $\Pf\begin{bmatrix}\mtrx{0}&\mtrx{M}\\-\mtrx{M}^\top&\mtrx{0}\end{bmatrix}$ has size $s\times s$ where $s = (n+\ell)+n = 2n+\ell$.

First we assume that $\mtrx{M}$ has $\ell$ more rows than columns (so it has size $(n+\ell)\times n$).

If $\mtrx{M}$ is a $\ell\times0$ matrix, then $\Pf\begin{bmatrix}\mtrx{0}&\mtrx{M}\\-\mtrx{M}^\top&\mtrx{0}\end{bmatrix} = \Pf\mtrx{0} = 0$.

Let $n\geq0$ and assume that $\Pf\begin{bmatrix}\mtrx{0}&\mtrx{M}\\-\mtrx{M}^\top&\mtrx{0}\end{bmatrix} = 0$ for any matrix $\mtrx{M}$ of size $(n+\ell)\times n$. Let $\mtrx{M}$ be a $((n+1)+\ell)\times(n+1)$ matrix.
Fix $\beta=((n+1)+\ell)+(n+1)=2(n+1)+\ell$ and $j=n+1=\beta-((n+1)+\ell)$.
If $1\leq\alpha\leq (n+1)+\ell$, let $i=\alpha$, then we have  \[
\Pf\left( \begin{bmatrix}\mtrx{0}&\mtrx{M}\\-\mtrx{M}^\top&\mtrx{0}\end{bmatrix}_{\hat{\hat{\alpha}}\hat{\hat{\beta}}} \right)
 = 
\Pf\begin{bmatrix}\mtrx{0}_{(\hat{\imath}),(\hat{\imath})}&\mtrx{M}_{(\hat{\imath}),(\hat{\jmath})}\\\left(-\mtrx{M}^\top\right)_{(\hat{\jmath}),(\hat{\imath})}&\mtrx{0}_{(\hat{\jmath}),(\hat{\jmath})}\end{bmatrix}
 = 
\Pf\begin{bmatrix}\mtrx{0}&\mtrx{M}_{(\hat{\imath}),(\hat{\jmath})}\\-\left(\mtrx{M}_{(\hat{\imath}),(\hat{\jmath})}\right)^\top&\mtrx{0}\end{bmatrix}
 = 
0
\] because $\mtrx{M}_{(\hat{\imath}),(\hat{\jmath})}$ has size $(n+\ell)\times n$.
If instead $(n+1)+\ell+1\leq\alpha\leq 2(n+1)+\ell$, let $i=\alpha-((n+1)+\ell)$, then $(\alpha,\beta)$ are coordinates for the bottom right $\mtrx{0}$ block of $\begin{bmatrix}\mtrx{0}&\mtrx{M}\\-\mtrx{M}^\top&\mtrx{0}\end{bmatrix}$, and so \[
\begin{bmatrix}\mtrx{0}&\mtrx{M}\\-\mtrx{M}^\top&\mtrx{0}\end{bmatrix}_{\alpha,\beta} = \mtrx{0}_{i,j} = 0
\/.\]
In either case, $\begin{bmatrix}\mtrx{0}&\mtrx{M}\\-\mtrx{M}^\top&\mtrx{0}\end{bmatrix}_{\alpha,\beta} \Pf\left( \begin{bmatrix}\mtrx{0}&\mtrx{M}\\-\mtrx{M}^\top&\mtrx{0}\end{bmatrix}_{\hat{\hat{\alpha}}\hat{\hat{\beta}}} \right) = 0$, and thus by Definition \ref{def:Pfaffs},
\begin{align*}
\Pf\begin{bmatrix}\mtrx{0}&\mtrx{M}\\-\mtrx{M}^\top&\mtrx{0}\end{bmatrix}
=&
\sum_{\alpha=1}^{2(n+1)+\ell} (-1)^{\alpha+\beta+H(\beta-\alpha)} \begin{bmatrix}\mtrx{0}&\mtrx{M}\\-\mtrx{M}^\top&\mtrx{0}\end{bmatrix}_{\alpha,\beta} \Pf\left( \begin{bmatrix}\mtrx{0}&\mtrx{M}\\-\mtrx{M}^\top&\mtrx{0}\end{bmatrix}_{\hat{\hat{\alpha}}\hat{\hat{\beta}}} \right)
\\=&
\sum_{\alpha=1}^{2(n+1)+\ell} (-1)^{\alpha+\beta+H(\beta-\alpha)} (0)
\\=&
0
\/.\end{align*}
Therefore, by induction we have that $\Pf\begin{bmatrix}\mtrx{0}&\mtrx{M}\\-\mtrx{M}^\top&\mtrx{0}\end{bmatrix} = 0$ for any $\mtrx{M}$ of size $(n+\ell) \times n$ with $\ell>0$.

The argument for $n \times(n+\ell)$ matrices is similar.
\end{proof}
\begin{definition}[{\cite[Equation 1.16]{KRV12}}]
\label{def:PfaffEll}
If a skew-symmetric matrix $\mtrx{A}$ is size $s \times s$ and $s$ is odd, we take Pfaffians of submatrices of $\mtrx{A}$ in order to still gain some information about $\mtrx{A}$. For any index $\ell$ from $1$ to $s$, we define \[
\Pf_\ell\mtrx{A} = (-1)^{\ell+1} \Pf\left(\mtrx{A}_{\hat{\hat{\ell}}}\right)
\/.\]
\end{definition}
\begin{definition}
\label{def:detAdj}
We denote the \textit{classical adjoint} of a matrix $\mtrx{M}$ as $\overline{\mtrx{M}}$, meaning $\mtrx{M} \overline{\mtrx{M}} = \overline{\mtrx{M}} \mtrx{M} = (\det \mtrx{M}) \mtrx{I}$. This matrix is given by the formula \[
\overline{\mtrx{M}}_{i,j}
 = 
(-1)^{j+i} \det\left( \mtrx{M}_{(\hat{\jmath}),(\hat{\imath})} \right)
\/.
\]
\end{definition}
\begin{definition}[{\cite[Definition 1.17 and Observation 1.18]{KRV12}}]
\label{def:PfaffAdj}
The Pfaffian version of the classical adjoint is denoted $\mtrx{A}^\vee$ for a matrix $\mtrx{A}$, where $\mtrx{A} \mtrx{A}^\vee = \mtrx{A}^\vee \mtrx{A} = (\Pf\mtrx{A}) \mtrx{I}$. This matrix is given by the formula \[
\left(\mtrx{A}^\vee\right)_{i,j}
 = 
(-1)^{j+i+H(i-j)} \Pf\left( \mtrx{A}_{\hat{\hat{\jmath}}\hat{\hat{\imath}}} \right)
\/.\]
\end{definition}
\begin{lemma}
\label{lem:PfaffBlockAdj}
If $\mtrx{M}$ is a $n \times n$ matrix, then \[
\begin{bmatrix} \mtrx{0} & \mtrx{M} \\ -\mtrx{M}^\top & \mtrx{0} \end{bmatrix}^\vee
 = 
(-1)^{n(n-1)/2} \begin{bmatrix} \mtrx{0} & -\overline{\mtrx{M}}^\top \\ \overline{\mtrx{M}} & \mtrx{0} \end{bmatrix}
\/.\]
\end{lemma}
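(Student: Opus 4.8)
The plan is to compute the Pfaffian adjoint entry by entry straight from Definition~\ref{def:PfaffAdj}. Write $\mtrx{A} = \begin{bmatrix}\mtrx{0}&\mtrx{M}\\-\mtrx{M}^\top&\mtrx{0}\end{bmatrix}$, a $2n\times 2n$ skew-symmetric matrix, and partition its index set $\{1,\dots,2n\}$ into the ``top'' block $T=\{1,\dots,n\}$ and the ``bottom'' block $B=\{n+1,\dots,2n\}$. The key structural observation is that for any $i,j$ the matrix $\mtrx{A}_{\hat{\hat{\jmath}}\hat{\hat{\imath}}}$ obtained by deleting rows and columns $i,j$ is again of the same block shape $\begin{bmatrix}\mtrx{0}&\mtrx{M}'\\-(\mtrx{M}')^\top&\mtrx{0}\end{bmatrix}$, where $\mtrx{M}'$ is an appropriate submatrix of $\mtrx{M}$, because deleting a top index only touches the rows of $\mtrx{M}$ (and columns of $-\mtrx{M}^\top$) while deleting a bottom index only touches the columns of $\mtrx{M}$ (and rows of $-\mtrx{M}^\top$).

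First I would dispatch the diagonal and the ``same block'' entries. If $i=j$ then $\mtrx{A}_{\hat{\hat{\imath}}}$ has odd order $2n-1$, so $\Pf(\mtrx{A}_{\hat{\hat{\imath}}})=0$ by Definition~\ref{def:Pfaffs}. If $i\neq j$ with both in $T$ (resp. both in $B$), then $\mtrx{M}' = \mtrx{M}_{(\hat{\imath}\hat{\jmath}),(-)}$ has size $(n-2)\times n$ (resp. $\mtrx{M}' = \mtrx{M}_{(-),(\widehat{i-n}\,\widehat{j-n})}$ has size $n\times(n-2)$), which is non-square, so $\Pf(\mtrx{A}_{\hat{\hat{\jmath}}\hat{\hat{\imath}}})=0$ by Lemma~\ref{lem:PfaffBlockNonSquare}. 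Hence $(\mtrx{A}^\vee)_{i,j}=0$ whenever $i,j$ lie in the same block, which matches the two diagonal zero blocks of the right-hand side.

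Next, the ``opposite block'' entries. If $i\le n<j$, then exactly one row and one column is removed from each of the four blocks of $\mtrx{A}$, so $\mtrx{M}' = \mtrx{M}_{(\hat{\imath}),(\widehat{j-n})}$ is an $(n-1)\times(n-1)$ minor of $\mtrx{M}$, and Remark~\ref{rmk:PfaffBlockSquare} gives $\Pf(\mtrx{A}_{\hat{\hat{\jmath}}\hat{\hat{\imath}}}) = (-1)^{(n-1)(n-2)/2}\det\mtrx{M}_{(\hat{\imath}),(\widehat{j-n})}$; combining with the sign $(-1)^{i+j+H(i-j)}$ from Definition~\ref{def:PfaffAdj} (here $H(i-j)=0$) and with the formula for $\overline{\mtrx{M}}$ in Definition~\ref{def:detAdj}, everything reduces after cancelling the common determinant to the scalar sign identity $(-1)^{(n-1)(n-2)/2} = (-1)^{n(n-1)/2-n+1}$. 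The case $j\le n<i$ is symmetric, with $H(i-j)=1$, and reduces to $(-1)^{(n-1)(n-2)/2} = (-1)^{n(n-1)/2-n}$ against the bottom-left block $\overline{\mtrx{M}}$. Both sign identities follow at once from the congruence
\[
\tfrac{1}{2}n(n-1) \equiv \tfrac{1}{2}(n-1)(n-2) + (n-1) \pmod 2,
\]
which holds because $\tfrac12 n(n-1) - \tfrac12(n-1)(n-2) = n-1$.

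The one place demanding care — the ``hard part'' in the sense of being error-prone rather than deep — is the bookkeeping of the four competing sign contributions (the $(-1)^{i+j+H(i-j)}$ of the Pfaffian adjoint, the $(-1)^{(n-1)(n-2)/2}$ of Remark~\ref{rmk:PfaffBlockSquare} applied in size $n-1$, the internal $(-1)^{j+i}$ of $\overline{\mtrx{M}}$, and the global $(-1)^{n(n-1)/2}$), together with correctly tracking the index shift by $n$ when translating between block coordinates and $\mtrx{M}$-coordinates; once the displayed congruence is invoked this is purely mechanical. As a sanity check (or an alternative route when $\det\mtrx{M}\neq 0$), one can instead verify directly that the claimed matrix $\mtrx{B} = (-1)^{n(n-1)/2}\begin{bmatrix}\mtrx{0}&-\overline{\mtrx{M}}^\top\\\overline{\mtrx{M}}&\mtrx{0}\end{bmatrix}$ satisfies $\mtrx{A}\mtrx{B} = \mtrx{B}\mtrx{A} = (\Pf\mtrx{A})\mtrx{I}$ using $\mtrx{M}\overline{\mtrx{M}} = \overline{\mtrx{M}}\mtrx{M} = (\det\mtrx{M})\mtrx{I}$ and Remark~\ref{rmk:PfaffBlockSquare}, and then extend to all $\mtrx{M}$ by treating the entries of $\mtrx{M}$ as indeterminates, since the identity of polynomial matrices is forced on the dense locus $\det\mtrx{M}\neq 0$ where $\mtrx{A}$ is invertible.
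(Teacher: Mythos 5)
Your proof follows exactly the same route as the paper's: compute $(\mtrx{A}^\vee)_{i,j}$ entry by entry from Definition~\ref{def:PfaffAdj}, dispatch same-block entries with Lemma~\ref{lem:PfaffBlockNonSquare}, and handle opposite-block entries with Remark~\ref{rmk:PfaffBlockSquare} plus Definition~\ref{def:detAdj}.

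There is one arithmetic slip in the sign bookkeeping. You state that the bottom-left case $j\le n<i$ reduces to $(-1)^{(n-1)(n-2)/2}=(-1)^{n(n-1)/2-n}$, but that exponent is off by one and the displayed identity is in fact \emph{false}. Carrying out the reduction: the left side picks up $(-1)^{j+i+1}$ (since $H(i-j)=1$) times $(-1)^{(n-1)(n-2)/2}$, while the right side contributes $(-1)^{n(n-1)/2}(-1)^{j+i'}$ from $\overline{\mtrx{M}}_{i',j}$ with $i'=i-n$; after cancelling $(-1)^{i'+j}$ and using $i=i'+n$, you are left with $(-1)^{(n-1)(n-2)/2}=(-1)^{n(n-1)/2-n-1}$, which has the same parity as $(-1)^{n(n-1)/2-n+1}$ — i.e.\ exactly the same identity as the top-right case. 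This is no accident: the extra $-1$ from $H(i-j)=1$ in the bottom-left case is precisely compensated by the fact that the target block there is $+\overline{\mtrx{M}}$ rather than $-\overline{\mtrx{M}}^\top$, so both opposite-block cases collapse to a single sign identity. The final congruence you display, $\tfrac12 n(n-1)\equiv\tfrac12(n-1)(n-2)+(n-1)\pmod 2$, is correct and does prove that single identity; the fix is only to redo the bottom-left reduction so it lands on the identity your congruence actually establishes. Your alternative suggestion of verifying $\mtrx{A}\mtrx{B}=(\Pf\mtrx{A})\mtrx{I}$ on the locus $\det\mtrx{M}\neq 0$ and extending by Zariski density is a legitimate shortcut, but it is not the route the paper takes.
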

\begin{proof}
The matrix $\begin{bmatrix} \mtrx{0} & \mtrx{M} \\ -\mtrx{M}^\top & \mtrx{0} \end{bmatrix}^\vee$ is a $2\times2$ block matrix consisting of $n\times n$ blocks.
Let $1\leq\alpha,\beta\leq2n$.

Assume $1\leq\alpha,\beta\leq n$. Let $i=\alpha$ and $j=\beta$.
The $(\alpha,\beta)$ coordinate of $\begin{bmatrix} \mtrx{0} & \mtrx{M} \\ -\mtrx{M}^\top & \mtrx{0} \end{bmatrix}^\vee$ is the $(i,j)$ coordinate of the top left block of $\begin{bmatrix} \mtrx{0} & \mtrx{M} \\ -\mtrx{M}^\top & \mtrx{0} \end{bmatrix}^\vee$.
By Definition \ref{def:PfaffAdj}, we have
\begin{align*}
\left(\begin{bmatrix} \mtrx{0} & \mtrx{M} \\ -\mtrx{M}^\top & \mtrx{0} \end{bmatrix}^\vee\right)_{\alpha,\beta}
=&
(-1)^{\beta+\alpha+H(\alpha-\beta)}
\Pf\left( \begin{bmatrix} \mtrx{0} & \mtrx{M} \\ -\mtrx{M}^\top & \mtrx{0} \end{bmatrix}_{\hat{\hat{\beta}}\hat{\hat{\alpha}}} \right)
\\=&
(-1)^{j+i+H(i-j)}
\Pf \begin{bmatrix} \mtrx{0}_{(\hat{\jmath}\hat{\imath}),(\hat{\jmath}\hat{\imath})} & \mtrx{M}_{(\hat{\jmath}\hat{\imath}),(-)} \\ \left(-\mtrx{M}^\top\right)_{(-),(\hat{\jmath}\hat{\imath})} & \mtrx{0} \end{bmatrix}
\\=&
(-1)^{j+i+H(i-j)}
\Pf \begin{bmatrix} \mtrx{0} & \mtrx{M}_{(\hat{\jmath}\hat{\imath}),(-)} \\ -\left(\mtrx{M}_{(\hat{\jmath}\hat{\imath}),(-)}\right)^\top & \mtrx{0} \end{bmatrix}
\\=&\tag{1}
(-1)^{j+i+H(i-j)}
0
 = 
0
\/,\end{align*}
where (1) comes from Lemma \ref{lem:PfaffBlockNonSquare} because $\mtrx{M}_{(\hat{\jmath}\hat{\imath}),(-)}$ is size $(n-2)\times n$.
This means that the top left block of $\begin{bmatrix} \mtrx{0} & \mtrx{M} \\ -\mtrx{M}^\top & \mtrx{0} \end{bmatrix}^\vee$ is $\mtrx{0}$.

Assume $1\leq\alpha\leq n<\beta\leq2n$. Let $i=\alpha$ and $j=\beta-n$.
The $(\alpha,\beta)$ coordinate of $\begin{bmatrix} \mtrx{0} & \mtrx{M} \\ -\mtrx{M}^\top & \mtrx{0} \end{bmatrix}^\vee$ is the $(i,j)$ coordinate of the top right block of $\begin{bmatrix} \mtrx{0} & \mtrx{M} \\ -\mtrx{M}^\top & \mtrx{0} \end{bmatrix}^\vee$.
By Definition \ref{def:PfaffAdj}, we have
\begin{align*}
\left(\begin{bmatrix} \mtrx{0} & \mtrx{M} \\ -\mtrx{M}^\top & \mtrx{0} \end{bmatrix}^\vee\right)_{\alpha,\beta}
=&
(-1)^{\beta+\alpha+H(\alpha-\beta)}
\Pf\left( \begin{bmatrix} \mtrx{0} & \mtrx{M} \\ -\mtrx{M}^\top & \mtrx{0} \end{bmatrix}_{\hat{\hat{\beta}}\hat{\hat{\alpha}}} \right)
\\=&
(-1)^{(n+j)+i}
\Pf\left( \begin{bmatrix} \mtrx{0}_{(\hat{\imath}),(\hat{\imath})} & \mtrx{M}_{(\hat{\imath}),(\hat{\jmath})} \\ -\left(\mtrx{M}_{(\hat{\imath}),(\hat{\jmath})}\right)^\top & \mtrx{0}_{(\hat{\jmath}),(\hat{\jmath})} \end{bmatrix} \right)
\\=&\tag{2}
(-1)^n
(-1)^{i+j}
\left(
(-1)^{(n-1)((n-1)-1)/2}
\det\left( \mtrx{M}_{(\hat{\imath}),(\hat{\jmath})} \right)
\right)
\\=&\tag{3}
(-1)^n
(-1)^{(n-1)(n-2)/2}
\overline{\mtrx{M}}_{j,i}
\\=&
-(-1)^{n-1}
(-1)^{(n-1)n/2-(n-1)}
\left(\overline{\mtrx{M}}^\top\right)_{i,j}
\\=&
-
(-1)^{n(n-1)/2}
\left(\overline{\mtrx{M}}^\top\right)_{i,j}
\/,\end{align*}
where (2) comes from Remark \ref{rmk:PfaffBlockSquare} because $\mtrx{M}_{(\hat{\imath}),(\hat{\jmath})}$ is size $(n-1)\times(n-1)$ and (3) comes from Definition \ref{def:PfaffAdj}.
This means that the top right block of $\begin{bmatrix} \mtrx{0} & \mtrx{M} \\ -\mtrx{M}^\top & \mtrx{0} \end{bmatrix}^\vee$ is $-
(-1)^{n(n-1)/2}
\overline{\mtrx{M}}^\top$.

Assume $1\leq\beta\leq n<\alpha\leq2n$. Let $i=\alpha-n$ and $j=\beta$.
The $(\alpha,\beta)$ coordinate of $\begin{bmatrix} \mtrx{0} & \mtrx{M} \\ -\mtrx{M}^\top & \mtrx{0} \end{bmatrix}^\vee$ is the $(i,j)$ coordinate of the bottom left block of $\begin{bmatrix} \mtrx{0} & \mtrx{M} \\ -\mtrx{M}^\top & \mtrx{0} \end{bmatrix}^\vee$.
By Definition \ref{def:PfaffAdj}, we have
\begin{align*}
\left(\begin{bmatrix} \mtrx{0} & \mtrx{M} \\ -\mtrx{M}^\top & \mtrx{0} \end{bmatrix}^\vee\right)_{\alpha,\beta}
=&
(-1)^{\beta+\alpha+H(\alpha-\beta)}
\Pf\left( \begin{bmatrix} \mtrx{0} & \mtrx{M} \\ -\mtrx{M}^\top & \mtrx{0} \end{bmatrix}_{\hat{\hat{\beta}}\hat{\hat{\alpha}}} \right)
\\=&
(-1)^{j+(n+i)+1}
\Pf\left( \begin{bmatrix} \mtrx{0}_{(\hat{\jmath}),(\hat{\jmath})} & \mtrx{M}_{(\hat{\jmath}),(\hat{\imath})} \\ -\left(\mtrx{M}_{(\hat{\jmath}),(\hat{\imath})}\right)^\top & \mtrx{0}_{(\hat{\imath}),(\hat{\imath})} \end{bmatrix} \right)
\\=&\tag{4}
(-1)^{n+1}
(-1)^{j+i}
\left(
(-1)^{(n-1)((n-1)-1)/2}
\det\left(\mtrx{M}_{(\hat{\jmath}),(\hat{\imath})}\right)
\right)
\\=&\tag{5}
(-1)^{n-1}
(-1)^{(n-1)(n-2)/2}
\overline{\mtrx{M}}_{i,j}
\\=&
(-1)^{n-1}
(-1)^{(n-1)n/2-(n-1)}
\overline{\mtrx{M}}_{i,j}
\\=&
(-1)^{n(n-1)/2}
\overline{\mtrx{M}}_{i,j}
\/,\end{align*}
where (4) comes from Remark \ref{rmk:PfaffBlockSquare} because $\mtrx{M}_{(\hat{\imath}),(\hat{\jmath})}$ is size $(n-1)\times(n-1)$ and (5) comes from Definition \ref{def:PfaffAdj}.
This means that the bottom left block of $\begin{bmatrix} \mtrx{0} & \mtrx{M} \\ -\mtrx{M}^\top & \mtrx{0} \end{bmatrix}^\vee$ is $
(-1)^{n(n-1)/2}
\overline{\mtrx{M}}$.

Assume $n+1\leq\alpha,\beta\leq2n$. Let $i=\alpha-n$ and $j=\beta-n$.
The $(\alpha,\beta)$ coordinate of $\begin{bmatrix} \mtrx{0} & \mtrx{M} \\ -\mtrx{M}^\top & \mtrx{0} \end{bmatrix}^\vee$ is the $(i,j)$ coordinate of the bottom right block of $\begin{bmatrix} \mtrx{0} & \mtrx{M} \\ -\mtrx{M}^\top & \mtrx{0} \end{bmatrix}^\vee$.
By Definition \ref{def:PfaffAdj}, we have
\begin{align*}
\left(\begin{bmatrix} \mtrx{0} & \mtrx{M} \\ -\mtrx{M}^\top & \mtrx{0} \end{bmatrix}^\vee\right)_{\alpha,\beta}
=&
(-1)^{\beta+\alpha+H(\alpha-\beta)}
\Pf\left( \begin{bmatrix} \mtrx{0} & \mtrx{M} \\ -\mtrx{M}^\top & \mtrx{0} \end{bmatrix}_{\hat{\hat{\beta}}\hat{\hat{\alpha}}} \right)
\\=&
(-1)^{j+i+H(i-j)}
\Pf \begin{bmatrix} \mtrx{0} & \mtrx{M}_{(-),(\hat{I}\hat{J})} \\ -\mtrx{M}_{(-),(\hat{I}\hat{J})}^\top & \mtrx{0} \end{bmatrix}
\\=&\tag{6}
(-1)^{j+i+H(i-j)} 0
 = 
0
\/,\end{align*}
where (6) also comes from Lemma \ref{lem:PfaffBlockNonSquare} because $\mtrx{M}_{(-),(\hat{\imath}\hat{\jmath})}$ is size $n\times(n-2)$.
This means that the bottom right block of $\begin{bmatrix} \mtrx{0} & \mtrx{M} \\ -\mtrx{M}^\top & \mtrx{0} \end{bmatrix}^\vee$ is $\mtrx{0}$.
\end{proof}

The following is used in our final results.
\begin{lemma}[{\cite[Observation 1.22]{KRV12}}]
\label{lem:partial2lastPfs}
Let $\mtrx{\partial}_2$ be an $(m+3)\times(m+3)$ skew-symmetric matrix with $m$ even. If $\mtrx{\partial}_2$ is partitioned into submatrices \[
\mtrx{\partial}_2
 = 
\begin{bmatrix} \mtrx{\varphi} & \mtrx{\psi} \\ -\mtrx{\psi}^\top & \mtrx{\Phi} \end{bmatrix}
\/,\] where $\mtrx{\varphi}$ is an $m \times m$ skew-symmetric matrix, $\mtrx{\Phi}$ is a $3 \times 3$ skew-symmetric matrix, and $\mtrx{\psi}$ is a $m \times 3$ matrix, then for each index $\ell$, with $1 \leq \ell \leq 3$, \[
\Pf_{m+\ell}(\mtrx{\partial}_2)
 = 
\Pf_\ell\left(
\mtrx{\psi}^\top \mtrx{\varphi}^\vee \mtrx{\psi}
+
\Pf(\mtrx{\varphi})
\mtrx{\Phi}
\right)
\/.\]
\end{lemma}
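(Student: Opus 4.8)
\emph{Approach.} The plan is to strip off the definition of $\Pf_{m+\ell}$, recognize the matrix that remains as a block skew-symmetric matrix whose upper-left block $\mtrx{\varphi}$ has \emph{even} size $m$, and then apply the Pfaffian analogue of the Schur-complement formula for determinants. Since $m$ is even, $m+3$ is odd, so Definition \ref{def:PfaffEll} gives
\[
\Pf_{m+\ell}(\mtrx{\partial}_2)
 = (-1)^{m+\ell+1}\Pf\!\left((\mtrx{\partial}_2)_{\hat{\hat{m+\ell}}}\right)
 = (-1)^{\ell+1}\Pf\!\left((\mtrx{\partial}_2)_{\hat{\hat{m+\ell}}}\right).
\]
Deleting row and column $m+\ell$ from the partitioned form of $\mtrx{\partial}_2$ touches only the $\mtrx{\psi}$ and $\mtrx{\Phi}$ blocks, removing the $\ell$th column of $\mtrx{\psi}$ and the $\ell$th row and column of $\mtrx{\Phi}$, so
\[
(\mtrx{\partial}_2)_{\hat{\hat{m+\ell}}}
 = \begin{bmatrix}\mtrx{\varphi} & \mtrx{\psi}_{(-),(\hat{\ell})}\\ -\big(\mtrx{\psi}_{(-),(\hat{\ell})}\big)^\top & \mtrx{\Phi}_{\hat{\hat{\ell}}}\end{bmatrix}
\]
is a skew-symmetric matrix of \emph{even} size $(m+2)\times(m+2)$, with $\mtrx{\psi}_{(-),(\hat{\ell})}$ of size $m\times 2$ and $\mtrx{\Phi}_{\hat{\hat{\ell}}}$ of size $2\times 2$.

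The core step is the block identity, valid whenever $\mtrx{\varphi}$ is invertible,
\[
\Pf\begin{bmatrix}\mtrx{\varphi} & \mtrx{B}\\ -\mtrx{B}^\top & \mtrx{D}\end{bmatrix}
 = \Pf(\mtrx{\varphi})\,\Pf\!\left(\mtrx{D} + \mtrx{B}^\top\mtrx{\varphi}^{-1}\mtrx{B}\right),
\]
which I would obtain from the unipotent congruence $\mtrx{U} = \left[\begin{smallmatrix}\mtrx{I} & -\mtrx{\varphi}^{-1}\mtrx{B}\\ \mtrx{0} & \mtrx{I}\end{smallmatrix}\right]$: a direct multiplication gives $\mtrx{U}^\top\left[\begin{smallmatrix}\mtrx{\varphi}&\mtrx{B}\\-\mtrx{B}^\top&\mtrx{D}\end{smallmatrix}\right]\mtrx{U} = \left[\begin{smallmatrix}\mtrx{\varphi}&\mtrx{0}\\\mtrx{0}&\mtrx{D}+\mtrx{B}^\top\mtrx{\varphi}^{-1}\mtrx{B}\end{smallmatrix}\right]$, and then one invokes the transformation law $\Pf(\mtrx{U}^\top\mtrx{N}\mtrx{U}) = \det(\mtrx{U})\,\Pf(\mtrx{N})$ together with the multiplicativity of the Pfaffian on block-diagonal skew-symmetric matrices (both standard; see \cite{God93, Ham01}). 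Applying this identity with $\mtrx{B} = \mtrx{\psi}_{(-),(\hat{\ell})}$ and $\mtrx{D} = \mtrx{\Phi}_{\hat{\hat{\ell}}}$, substituting $\mtrx{\varphi}^{-1} = \Pf(\mtrx{\varphi})^{-1}\mtrx{\varphi}^\vee$ from Definition \ref{def:PfaffAdj}, and using that the Pfaffian of a $2\times 2$ skew-symmetric matrix is linear in its single free entry (so the scalar $\Pf(\mtrx{\varphi})^{-1}$ pulls out of the inner Pfaffian and cancels the leading factor $\Pf(\mtrx{\varphi})$), I arrive at
\[
\Pf\!\left((\mtrx{\partial}_2)_{\hat{\hat{m+\ell}}}\right)
 = \Pf\!\left(\big(\mtrx{\psi}_{(-),(\hat{\ell})}\big)^\top\mtrx{\varphi}^\vee\,\mtrx{\psi}_{(-),(\hat{\ell})} + \Pf(\mtrx{\varphi})\,\mtrx{\Phi}_{\hat{\hat{\ell}}}\right).
\]

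To finish, set $\mtrx{K} = \mtrx{\psi}^\top\mtrx{\varphi}^\vee\mtrx{\psi} + \Pf(\mtrx{\varphi})\mtrx{\Phi}$, a $3\times 3$ skew-symmetric matrix. The $(i,j)$ entry of $\mtrx{\psi}^\top\mtrx{\varphi}^\vee\mtrx{\psi}$ depends only on the $i$th and $j$th columns of $\mtrx{\psi}$, so deleting the $\ell$th column of $\mtrx{\psi}$ deletes precisely the $\ell$th row and $\ell$th column of $\mtrx{\psi}^\top\mtrx{\varphi}^\vee\mtrx{\psi}$; since also $\Pf(\mtrx{\varphi})\,\mtrx{\Phi}_{\hat{\hat{\ell}}} = (\Pf(\mtrx{\varphi})\mtrx{\Phi})_{\hat{\hat{\ell}}}$, the argument of the last Pfaffian is exactly $\mtrx{K}_{\hat{\hat{\ell}}}$. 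Combining this with the first display and Definition \ref{def:PfaffEll} applied to the $3\times 3$ matrix $\mtrx{K}$ gives
\[
\Pf_{m+\ell}(\mtrx{\partial}_2)
 = (-1)^{\ell+1}\Pf\!\left(\mtrx{K}_{\hat{\hat{\ell}}}\right)
 = \Pf_\ell(\mtrx{K})
 = \Pf_\ell\!\left(\mtrx{\psi}^\top\mtrx{\varphi}^\vee\mtrx{\psi} + \Pf(\mtrx{\varphi})\mtrx{\Phi}\right),
\]
as claimed.

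The point that really needs care — and the main obstacle — is that the lemma imposes no invertibility hypothesis on $\mtrx{\varphi}$, whereas the Schur-complement step above does. I would resolve this by a specialization argument: regard the entries of $\mtrx{\varphi}$, $\mtrx{\psi}$, $\mtrx{\Phi}$ as indeterminates over $\ZZ$ (subject only to the skew-symmetry relations), so that $\Pf(\mtrx{\varphi})$ is a nonzero polynomial and hence a unit in the fraction field of the resulting polynomial ring, over which the derivation above is valid verbatim. Both sides of the desired identity are polynomials in those entries with integer coefficients, so an equality in the fraction field is already an equality of polynomials; applying the ring homomorphism that sends the indeterminates to the actual entries of $\mtrx{\partial}_2$ over $k$ then yields the lemma in full generality. (Alternatively, one can avoid invertibility entirely by proving the block identity by induction on $m$ directly from the cofactor expansion of Definition \ref{def:Pfaffs} along one of the last two rows — a longer but fully self-contained route, and the one I would fall back on if the specialization bookkeeping turns out to be awkward.)
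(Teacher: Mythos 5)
Your proof is correct, but there is nothing in the paper to compare it against: the paper states this as Lemma~\ref{lem:partial2lastPfs} by citing \cite[Observation 1.22]{KRV12} directly, with no proof of its own. Taken on its own terms, your argument is sound: the Schur-complement identity $\Pf\begin{bmatrix}\mtrx{\varphi}&\mtrx{B}\\-\mtrx{B}^\top&\mtrx{D}\end{bmatrix}=\Pf(\mtrx{\varphi})\Pf(\mtrx{D}+\mtrx{B}^\top\mtrx{\varphi}^{-1}\mtrx{B})$ via unipotent congruence is the standard route, the observation that the inner Pfaffian is of a $2\times 2$ matrix (so that it is additive and homogeneous, letting $\Pf(\mtrx{\varphi})^{-1}$ cancel the prefactor) is exactly the right trick, and the identification of the resulting submatrix with $(\mtrx{\psi}^\top\mtrx{\varphi}^\vee\mtrx{\psi}+\Pf(\mtrx{\varphi})\mtrx{\Phi})_{\hat{\hat{\ell}}}$ is correct because the $(i,j)$ entry of $\mtrx{\psi}^\top\mtrx{\varphi}^\vee\mtrx{\psi}$ depends only on columns $i,j$ of $\mtrx{\psi}$. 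You also correctly flag that invertibility of $\mtrx{\varphi}$ is not part of the hypothesis, and the generic-entries specialization argument you give (work over $\ZZ[\text{entries}]$, where $\Pf(\mtrx{\varphi})$ is a nonzero polynomial, pass to the fraction field, then specialize) is exactly how one repairs that, since both sides are manifestly polynomial in the matrix entries with integer coefficients. So the proposal stands as a complete and self-contained proof of a statement the paper takes for granted.
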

\begin{remark}
This version of \cite[Observation 1.22]{KRV12} replaces $\mtrx{\psi}$ in the original version with $\mtrx{\psi}^\top$ so that it matches later notation in this thesis.
\end{remark}
The last note we make here regards the degree of determinants.
\begin{remark}
\label{rmk:DetDeg}
If a $n \times n$ matrix $\mtrx{M}$ has entries in a graded ring $R$ that are all in the same graded component $R_d$ (meaning $\mtrx{M}$ is a map of pure graded degree $d$), then $\det\mtrx{M} \in R_{nd}$.

The Leibniz formula for the determinant of $\mtrx{M}$ is \[
\det\mtrx{M} = \sum_{\sigma\in S_n} \left(\sgn\sigma \prod_{h=1}^n \mtrx{M}_{h,\sigma(h)}\right)
\/,\] where $S_n$ is the $n$th symmetric group and $\sgn$ is the signature function. Since $\mtrx{M}_{h,\sigma(h)} \in R_d$ for each $h$ and $\sigma$, each term $\sgn\sigma \prod_{h=1}^n \mtrx{M}_{h,\sigma(h)}$ in this sum belongs to $R_{\sum_{h=1}^n d} = R_{nd}$, and so $\det\mtrx{M} \in R_{nd}$.
\end{remark}

\subsection{The link-$q$-compressed condition}
\label{subsec:LinkQComp}

This thesis revolves around homogeneous polynomials that are link-$q$-compressed. Here we give the technical definition of link-$q$-compressed. After this, we introduce an equivalent condition to use as an alternate definition.

\begin{notation}
\label{not:HilbertFunc}
Let $H_i$ denote the Hilbert function, which is defined by $H_i(B) = \dim_k B_i$ of a graded algebra $B$.
\end{notation}
\begin{definition}[{\cite[Definition 2.3]{RGpaper}}]
\label{def:compressed}
Let $\mathfrak{c} \subseteq P$ be a homogeneous complete intersection ideal such that $P/\mathfrak{c}$ is Artinian. Let $J \subseteq P$ be a homogeneous Gorenstein ideal containing $\mathfrak{c}$. We say that the algebra $A = P/J$, or equivalently the ideal $J$, is \textit{$\mathfrak{c}$-compressed} if, for every $i$, $H_i(P/J)$ takes on the maximum possible value, i.e., \[
H_i(P/J) = \min\{H_i(P/\mathfrak{c}),H_{s-i}(P/\mathfrak{c})\}
\] where $s$ is the degree of the socle of $A$.
\end{definition}
\begin{notation}
Let $P=k[x_1,\ldots,x_n]$ be a standard graded polynomial ring over a characteristic $p>0$ field $k$. Define the homogeneous maximal ideal $\maxI = (x_1,\ldots,x_n)$. If $q = p^e$ is a power of $p$, define the $q$th Frobenius power of the maximal ideal as $\maxI^{[q]} = (x_1^q,\ldots,x_n^q)$
\end{notation}
\begin{definition}[{\cite[Definition 2.9]{RGpaper}}]
\label{def:lqc}
Let $f \in P$ be homogeneous, and $q$ be a power of $p$. We say that $f$ is \textit{link-$q$-compressed} if the ideal $\maxI^{[q]}:f$ is $\maxI^{[q]}$-compressed.
\end{definition}

In order to connect this definition to the alternate condition in Lemma \ref{lem:lqcDef}, we first discuss Macaulay's inverse systems.

Macaulay discovered in 1918 a one-to-one correspondence between Artinian Gorenstein algebras $P/I$ (where $P$ is the polynomial ring we've been using) and cyclic $P$-submodules of the inverse power algebra $D = k[x_1^{-1},\ldots,x_n^{-1}]$, see \cite[Section IV.]{Mac94}.
Here $x_i^{-1}$ has degree $1$ in $D$, and the $P$-module action on $D$ is defined by the $k$-linear action where \[
(x_1^{a_1} \cdots x_n^{a_n}) (x_1^{-b_1} \cdots x_n^{-b_n}) = \begin{cases} x_1^{-(b_1-a_1)} \cdots x_n^{-(b_n-a_n)} & \forall i \ b_i \geq a_i \\ 0 & \text{else} \end{cases}
\] for all $i$ and $a_1,b_1,\ldots,a_n,b_n \geq 0$. This algebra $D$ is also isomorphic to the injective hull of $k$ (as proved by Northcott in \cite{MR360555}).

We call the ring of inverse polynomials $D$ and set $S = P$ because they are respectively isomorphic to the divided power $k$-algebra and the symmetric $k$-algebra of a $k$-vector space with basis $x_1,\ldots,x_n$ (see \cite[Appendix A]{IK99}).

The correspondence between homogeneous Artinian Gorenstein ideals $I$ of $P$ and cyclic graded submodules $(\varphi)$ of $D$ is shown in \cite[Lemma 2.12]{IK99}, and is as follows:

For any homogeneous element $\varphi$ of $D$, let $I(\varphi) = (0 :_S \varphi) = \{r \in S \mid r \varphi = 0\}$ be an ideal of $S = P$, which is a graded Artinian ideal. Also, set $s = \deg\varphi$, where $s$ is the socle degree of $P/I(\varphi)$.

In the other direction: for any homogeneous Artinian Gorenstein ideal $I$ of $P$, $I^\bot = (0 :_D I) = \{\varphi \in D \mid I \varphi = 0\}$ is a cyclic $S$-submodule $(\varphi)$ of $D$ because $I$ is Artinian Gorenstein. This element $\varphi$ is called an inverse system for $I$.

Given a homogeneous inverse polynomial $\varphi$ in $D$, we have a corresponding map $\Phi: S \to D$ given by $\Phi(g) = g \varphi$ for any $g \in S$. This map then restricts by degree to a family of maps $\Phi_i := \Phi|_{S_i}: S_i \to D_{s-i}$, where $s = \deg \varphi$. Note that $I(\varphi) = \ker \Phi$ by definition.

The following gives us a way to calculate $\varphi$ when $J$ contains a Frobenius power of the maximal ideal of $P$:
\begin{lemma}[{\cite[Lemma 2.7]{RGpaper}}]
\label{lem:invSystFrob}
For any Gorenstein ideal $J \subseteq P$ that contains $\mathfrak{c} = \maxI^{[q]}$, there
exists a homogeneous $f \in P$ of degree $d$ with $1 < d < q$ such that $J$ is of the form \[
J = \maxI^{[q]} : f
\/.\]
The inverse polynomial of $J$ is \[
\varphi = f [x_1^{(q-1)} \cdots x_n^{(q-1)}]
\] where multiplication on the left by $f$ is via the $S$-module structure on $D$. Note
that $\varphi$ can also be written in inverse variables as \[
\varphi = \frac{f}{(x_1^{q-1} \cdots x_n^{q-1})}
\/.\]
In particular, its degree is \[
\deg \varphi = s = n(q-1)-d
 \quad 
\text{where}
 \quad 
d = \deg f
\/.\]

Conversely, any $\varphi$ that is a linear combination of inverse power monomials of degree $s < n(q-1)$ with each variable having power strictly less than $q$ can be written in the form above and so provides an inverse system whose associated ideal $J$ is as above.
\end{lemma}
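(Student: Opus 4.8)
The plan is to deduce the lemma from Macaulay's correspondence recalled above (see \cite[Lemma 2.12]{IK99}) between homogeneous Artinian Gorenstein ideals of $S=P$ and cyclic graded $S$-submodules of $D$, together with one explicit computation: the inverse system of the complete intersection $\mathfrak{c}=\maxI^{[q]}$ itself. Note that $\maxI^{[q]}\subseteq J$ already forces $P/J$ to be Artinian, so the correspondence applies to $J$.

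First I would show that $(0:_D\maxI^{[q]})$ equals the cyclic $S$-submodule generated by $\varphi_0:=[x_1^{(q-1)}\cdots x_n^{(q-1)}]$, equivalently $\varphi_0 = 1/(x_1^{q-1}\cdots x_n^{q-1})$ in inverse variables, and more precisely that this submodule is exactly the $k$-span of all inverse monomials in which every exponent is at most $q-1$. This is a direct check from the definition of the $S$-action on $D$: for $0\le\alpha\le(q-1,\dots,q-1)$ componentwise one has $x^{\alpha}\cdot\varphi_0 = [x_1^{(q-1-\alpha_1)}\cdots x_n^{(q-1-\alpha_n)}]$, that is, $x^{\alpha}/(x_1^{q-1}\cdots x_n^{q-1})$ in inverse variables, whereas $x^{\alpha}\cdot\varphi_0 = 0$ as soon as some $\alpha_i\ge q$. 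Hence these monomials span $(\varphi_0)$, a monomial annihilates $\varphi_0$ exactly when it lies in $\maxI^{[q]}$, and so $(0:_S\varphi_0)=\maxI^{[q]}$ while $(0:_D\maxI^{[q]})=(\varphi_0)$.

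For the forward direction, let $J$ be a Gorenstein ideal with $\maxI^{[q]}\subseteq J$. Macaulay's correspondence gives $J^{\bot}=(\varphi)$ for a homogeneous $\varphi\in D$ with $J=(0:_S\varphi)$ and $\deg\varphi = s$ the socle degree of $P/J$; from $\maxI^{[q]}\subseteq J$ we get $\varphi\in(0:_D\maxI^{[q]})=(\varphi_0)$, so $\varphi = f\varphi_0$ with $f$ homogeneous (using homogeneity of $\varphi$), and after deleting any monomials of $f$ that lie in $\maxI^{[q]}$ (they kill $\varphi_0$) we may take every exponent appearing in $f$ to be $<q$, which makes $\varphi = f/(x_1^{q-1}\cdots x_n^{q-1})$. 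Comparing degrees in $\varphi=f\varphi_0$ yields $s = n(q-1)-d$ with $d=\deg f$, and the range $1<d<q$ is what the standing hypotheses on $J$ translate into (in particular $\maxI^{[q]}\subsetneq J$ already forces $d\ge1$). Finally, for any $g\in S$ the identity $g\varphi = (gf)\varphi_0$ gives $g\in J \iff gf\in(0:_S\varphi_0)=\maxI^{[q]} \iff g\in\maxI^{[q]}:f$, i.e. $J=\maxI^{[q]}:f$.

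For the converse I would run this in reverse: a nonzero $k$-linear combination $\varphi$ of inverse monomials of degree $s<n(q-1)$ with all exponents $<q$ lies in $(\varphi_0)$ by the span description above, so $\varphi = f\varphi_0$ with $f$ the homogeneous polynomial of degree $d=n(q-1)-s\ge1$ obtained by reversing exponents; then $f$ is a nonzero polynomial all of whose exponents are $<q$, hence $f\notin\maxI^{[q]}$ and $\varphi\ne0$, so Macaulay's correspondence makes $J:=(0:_S\varphi)$ Artinian Gorenstein of socle degree $s$, and $g\varphi=(gf)\varphi_0$ again identifies $J$ with $\maxI^{[q]}:f$. I expect the only real friction to be bookkeeping --- keeping the divided-power and inverse-variable forms of the $S$-action aligned, and reading the precise bounds on $d$ off the hypotheses rather than attempting a bound valid for every Gorenstein $J\supseteq\maxI^{[q]}$ (which would be false, e.g. for $J=\maxI$). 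The conceptual core, that colon-by-$f$ on ideals corresponds to multiplication-by-$f$ on inverse systems via $g\cdot(f\varphi)=(gf)\cdot\varphi$, is immediate once the inverse system of $\maxI^{[q]}$ has been pinned down.
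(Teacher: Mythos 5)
The paper does not reprove this lemma; it cites it verbatim from \cite{RGpaper}. Your Macaulay-duality argument is correct and is the expected line of reasoning: computing $(0:_D \maxI^{[q]})$ as the cyclic module $(\varphi_0)$ generated by $\varphi_0 = [x_1^{(q-1)}\cdots x_n^{(q-1)}]$, using the explicit span description (inverse monomials with all exponents at most $q-1$) to write $\varphi = f\varphi_0$ with every exponent of $f$ below $q$, and then observing that $g\in J$ iff $g\varphi=0$ iff $(gf)\varphi_0=0$ iff $gf\in(0:_S\varphi_0)=\maxI^{[q]}$ iff $g\in\maxI^{[q]}:f$ is precisely the mechanism underlying the statement, including the degree identity $s = n(q-1)-d$. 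The converse direction is handled by the same span description run in reverse, which is also right.

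The one place where the statement itself is looser than your argument can deliver, and which you correctly flag, is the claim $1<d<q$: this does not follow merely from $J$ being Gorenstein with $\maxI^{[q]}\subseteq J$. For $J=\maxI^{[q]}$ one gets $f=1$ and $d=0$, and for $J=\maxI$ one gets $f=x_1^{q-1}\cdots x_n^{q-1}$ and $d=n(q-1)$, which exceeds $q$ once $n\ge2$. That range should be read as a standing hypothesis on $f$ inherited from the cited source (which in context is working with $f$ of small degree and strictly Gorenstein-linked $J$), not as something derived from $J\supseteq\maxI^{[q]}$. Your decision to treat the bound that way rather than try to prove it is the correct call, and everything else in the proposal is sound.
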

Combining this lemma with our understanding of the $\Phi$ map from earlier, we have $\ker \Phi = I(\varphi) = J = \maxI^{[q]} : f$.

\begin{lemma}[cf. {\cite[Lemma 2.5]{RGpaper}}]
\label{lem:lqcEqvs}
Let $\mathfrak{c} = \maxI^{[q]} = (x_1^q,\ldots,x_n^q) \subseteq P$ and let $C = (X_1^q,\ldots,X_n^q)$ be the lift of $\mathfrak{c}$ to the symmetric algebra $S$ obtained by replacing each $x_i$ by $X_i$. Let $J \subseteq P$ be a homogeneous Gorenstein ideal containing $\mathfrak{c}$ with inverse polynomial $\varphi$ and let $s$ be the degree of the socle of $P/J$.
The following conditions are equivalent.
\begin{enumerate}
\item[(1)]
$J$ is $\mathfrak{c}$-compressed.
\item[(2)]
For $i \leq s/2$ the kernel of the map $\Phi_i: S_i \to D_{s-i}$ is generated by the monomials $X_1^{a_1} \cdots X_n^{a_n}$ in $S_i$ with $a_j \geq q$ for some $1 \leq j \leq n$.
\end{enumerate}
\end{lemma}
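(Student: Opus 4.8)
The plan is to prove the equivalence of (1) and (2) by unwinding both conditions into statements about the Hilbert function of $P/J$ and comparing term by term. First I would set up the bookkeeping: since $J = I(\varphi) = \ker\Phi$ and $\Phi_i\colon S_i \to D_{s-i}$ is the degree-$i$ restriction, we have $H_i(P/J) = \dim_k(S_i/\ker\Phi_i) = \dim_k \img\Phi_i$. On the other side, $P/\mathfrak{c}$ is a complete intersection of $n$ forms of degree $q$, so its Hilbert function is symmetric about $s_0/2$ where $s_0 = n(q-1)$, and $H_i(P/\mathfrak{c}) = \#\{(a_1,\dots,a_n) : \sum a_j = i,\ 0 \le a_j \le q-1\}$; in particular the monomials $X_1^{a_1}\cdots X_n^{a_n}$ of degree $i$ with \emph{some} $a_j \ge q$ span exactly the degree-$i$ part of the lift $C$ of $\mathfrak{c}$, and modulo those the space $S_i$ has dimension $H_i(P/\mathfrak{c})$.

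Next I would observe that $C \subseteq J$ (equivalently $\mathfrak{c} \subseteq J$), so $\ker\Phi_i$ always contains $(C)_i = \LinSpan\{X_1^{a_1}\cdots X_n^{a_n}\in S_i : a_j\ge q \text{ for some }j\}$. Condition (2) says this containment is an equality for all $i \le s/2$. So (2) is equivalent to: for $i\le s/2$, $\dim_k\ker\Phi_i = \dim_k (C)_i = H_i(S) - H_i(P/\mathfrak{c})$, i.e. $H_i(P/J) = \dim\img\Phi_i = H_i(S) - \dim\ker\Phi_i = H_i(P/\mathfrak{c})$. Since for $i \le s/2 \le s_0/2$ we have $H_i(P/\mathfrak{c}) = \min\{H_i(P/\mathfrak{c}), H_{s-i}(P/\mathfrak{c})\}$ — here one needs that $H_i(P/\mathfrak{c})$ is nondecreasing up to $s_0/2$ and that $s \le s_0$, which follows from Lemma \ref{lem:invSystFrob} since $d = \deg f \ge 1$ — this matches exactly the compressed inequality of Definition \ref{def:compressed} in degrees $i \le s/2$.

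Then I would handle the degrees $i > s/2$ by duality. The pairing $S_i \times D_{s-i} \to k$ induced by $\Phi$ (i.e. $(g,\psi)\mapsto$ coefficient extraction, or more precisely the perfect pairing coming from Macaulay duality applied to $\varphi$) identifies $\img\Phi_i$ with $(\img\Phi_{s-i})^\vee$ as $k$-vector spaces, so $H_i(P/J) = H_{s-i}(P/J)$: the Hilbert function of the Gorenstein algebra $P/J$ is symmetric about $s/2$. Hence the compressed condition in degrees $i > s/2$ is automatic once it holds in degrees $i\le s/2$ (using symmetry of both $H(P/J)$ and $H(P/\mathfrak{c})$, the latter about $s_0/2$, together with $s \le s_0$). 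Combining the two ranges gives (1) $\Leftrightarrow$ (2).

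The main obstacle I anticipate is the inequality direction showing that $H_i(P/J) \le H_i(P/\mathfrak{c})$ \emph{always} holds (this is needed to know "compressed'' means "maximal,'' i.e. that equality in (2) really is the top value), and relatedly pinning down precisely why $s \le s_0 = n(q-1)$ so that the two symmetry axes $s/2$ and $s_0/2$ are correctly ordered and the $\min$ in Definition \ref{def:compressed} simplifies to $H_i(P/\mathfrak{c})$ on the lower half. The first point is exactly the standard fact that $(C)_i \subseteq \ker\Phi_i$ forces $\dim\img\Phi_i \le \dim(S_i/(C)_i) = H_i(P/\mathfrak{c})$, so it is really just the inclusion $\mathfrak c\subseteq J$ again; the second follows from Lemma \ref{lem:invSystFrob}. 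I would also need to be careful that the monomials with some $a_j \ge q$ are not merely \emph{contained} in $\ker\Phi_i$ but that condition (2) asserts they \emph{generate} it as a subspace, which is the content being matched — so the write-up should phrase everything as equality of subspaces of $S_i$, not just equality of dimensions, though over a field the two are equivalent once one inclusion is known.
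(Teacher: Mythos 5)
The paper states this lemma as a citation to \cite[Lemma 2.5]{RGpaper} and does not include a proof of its own, so there is no in-paper argument to compare against; I will assess your proof directly.

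Your argument is essentially correct. The reduction $H_i(P/J) = \dim_k\img\Phi_i$, the identification of $(C)_i$ with the span of the degree-$i$ monomials having some exponent $\geq q$, the observation that $\mathfrak{c}\subseteq J$ already forces $(C)_i\subseteq\ker\Phi_i$ in every degree (so that condition (2) is an \emph{equality} assertion, equivalent to $H_i(P/J)=H_i(P/\mathfrak{c})$ for $i\leq s/2$), the use of Gorenstein symmetry $H_i(P/J)=H_{s-i}(P/J)$ to handle $i>s/2$, and the final matching against Definition \ref{def:compressed} all fit together correctly. Two points should be made fully explicit in a finished write-up. First, your step that $H_i(P/\mathfrak{c})=\min\{H_i(P/\mathfrak{c}),H_{s-i}(P/\mathfrak{c})\}$ for $i\leq s/2$ relies on symmetry (about $s_0/2$ with $s_0=n(q-1)$) together with \emph{unimodality} of $H_\bullet(P/\mathfrak{c})$; you flag this but do not justify it. Since $H_\bullet(P/\mathfrak{c})$ is the coefficient sequence of $(1+t+\cdots+t^{q-1})^n$, this is the classical fact that a product of symmetric unimodal polynomials with nonnegative coefficients is again symmetric unimodal, and it should be cited or sketched. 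The needed inequality then follows: for $i\leq s/2$ set $j=s-i\geq s/2$; if $j\leq s_0/2$ both indices lie in the nondecreasing range and $i\leq j$ gives $H_i\leq H_j$, while if $j>s_0/2$ one uses symmetry $H_j=H_{s_0-j}$ and notes $i\leq s_0-j$ because $i+j=s\leq s_0$ (the inequality $s\leq s_0$ coming from Lemma \ref{lem:invSystFrob} as you say). Second, the phrase ``automatic for $i>s/2$'' slightly undersells the work: after invoking $H_i(P/J)=H_{s-i}(P/J)=H_{s-i}(P/\mathfrak{c})$ you still must check $H_{s-i}(P/\mathfrak{c})=\min\{H_i(P/\mathfrak{c}),H_{s-i}(P/\mathfrak{c})\}$, which is exactly the same unimodality computation applied to the index $s-i<s/2$. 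With those two points spelled out, the proof is complete and sound.
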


For the purpose of making our later arguments more transparent, we expand on this as follows:
\begin{lemma}
\label{lem:lqcDef}
Let $f \in P$ be a homogeneous polynomial with degree $d = \deg f$.
Then $f$ is link-$q$-compressed if and only if the nonzero elements of $(\maxI^{[q]}:f)/\maxI^{[q]}$ all have degree $>s/2$, where the socle degree $s$ of $P/(\maxI^{[q]}:f)$ is $n(q-1)-d$.
\end{lemma}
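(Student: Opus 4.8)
The plan is to deduce Lemma \ref{lem:lqcDef} directly from the equivalence in Lemma \ref{lem:lqcEqvs} by translating condition (2) into a statement purely about the graded pieces of the ideal $\maxI^{[q]} : f$. First I would set up notation: write $J = \maxI^{[q]} : f$, let $\varphi$ be the inverse polynomial attached to $J$ as in Lemma \ref{lem:invSystFrob}, so $\deg \varphi = s = n(q-1) - d$, and recall $J = I(\varphi) = \ker \Phi$ where $\Phi_i : S_i \to D_{s-i}$. The key observation is that $J \supseteq \maxI^{[q]}$ already, and $\maxI^{[q]}$ in the symmetric algebra $S = P$ is precisely the ideal $C$ generated by the $X_j^q$; in degree $i$, the $k$-span of the monomials $X_1^{a_1} \cdots X_n^{a_n} \in S_i$ with some $a_j \geq q$ is exactly $(\maxI^{[q]})_i = C_i$. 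So condition (2) of Lemma \ref{lem:lqcEqvs}, namely that $\ker \Phi_i$ is generated by those monomials for all $i \leq s/2$, says exactly that $J_i = (\maxI^{[q]})_i$ for all $i \leq s/2$ — i.e., that $(J/\maxI^{[q]})_i = 0$ for $i \leq s/2$.

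The next step is to massage this into the phrasing in the statement, which talks about \emph{nonzero} elements of $(\maxI^{[q]}:f)/\maxI^{[q]}$ having degree $> s/2$. The quotient $J/\maxI^{[q]}$ is a finite-dimensional graded $k$-vector space (since $P/\maxI^{[q]}$ is Artinian), so $(J/\maxI^{[q]})_i = 0$ for all $i \leq s/2$ is literally the same as saying every nonzero homogeneous element of $J/\maxI^{[q]}$ sits in degree $> s/2$, and since an arbitrary nonzero element decomposes into homogeneous components the claim for homogeneous elements upgrades to the claim for all nonzero elements. One subtlety to handle carefully: $s/2$ need not be an integer, so I should be explicit that "$i \leq s/2$" over integer degrees $i$ means $i \leq \lfloor s/2 \rfloor$, and check this matches the strict inequality "$> s/2$" in the conclusion; since $s = n(q-1) - d$ and the hypotheses of Theorem \ref{thmA:FDlqc} force parity constraints, but for this lemma I only need the elementary fact that for an integer $i$, $i \leq s/2 \iff i \not> s/2$, which is immediate. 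I would also note $1 < d < q$ so that Lemma \ref{lem:invSystFrob} genuinely applies and $s$ has the claimed value.

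Finally I would assemble the two directions. For ($\Rightarrow$): if $f$ is link-$q$-compressed then $J$ is $\mathfrak{c}$-compressed, so Lemma \ref{lem:lqcEqvs}(2) holds, so $J_i = (\maxI^{[q]})_i$ for $i \leq s/2$, hence $(J/\maxI^{[q]})_i = 0$ there, hence all nonzero elements of $J/\maxI^{[q]}$ have degree $> s/2$. For ($\Leftarrow$): run the same chain backwards — the degree condition gives $(J/\maxI^{[q]})_i = 0$ for $i \leq s/2$, i.e. $\ker \Phi_i = J_i = (\maxI^{[q]})_i$ for those $i$, which is exactly the span of the listed monomials, so Lemma \ref{lem:lqcEqvs}(2) holds and $J$ is $\mathfrak{c}$-compressed, i.e. $f$ is link-$q$-compressed.

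I do not expect a serious obstacle here; this is essentially a bookkeeping/restatement lemma. The only place requiring a little care is the identification "$k$-span of monomials in $S_i$ with some exponent $\geq q$" $=$ "$(\maxI^{[q]})_i$" — one should note that although $\maxI^{[q]}$ is generated by the $X_j^q$, a general element of $(\maxI^{[q]})_i$ is a $P$-linear combination of these generators, and expanding in the monomial basis every monomial appearing has some exponent $\geq q$, while conversely any such monomial is visibly a multiple of some $X_j^q$; so the two descriptions of the degree-$i$ piece coincide. With that identification in hand the rest is a direct translation.
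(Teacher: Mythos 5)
Your proposal is correct and follows essentially the same route as the paper's proof: invoke Lemma \ref{lem:invSystFrob} to identify $\ker\Phi_i = (\maxI^{[q]}:f)_i$, identify $(\maxI^{[q]})_i$ with the span of monomials having some exponent $\geq q$, then read Lemma \ref{lem:lqcEqvs}(2) as $(\maxI^{[q]}:f)_i = (\maxI^{[q]})_i$ for $i \leq s/2$, which is the stated degree condition. The extra remarks on homogeneous components and integer degrees versus $s/2$ are harmless bookkeeping that the paper leaves implicit.
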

\begin{proof}
Define $\varphi = \frac{f}{(x_1^{q-1} \cdots x_n^{q-1})}$ as in Lemma \ref{lem:invSystFrob}.
Also by Lemma \ref{lem:invSystFrob}, \[
\ker \Phi = I(\varphi) = J = ((x_1^q,\ldots,x_n^q) : f)
\/,\] which means \[
\ker \Phi_i = S_i \cap \ker \Phi = ((x_1^q,\ldots,x_n^q) : f)_i = (\maxI^{[q]}:f)_i
\] for any $i$.

Given any $1 \leq j \leq n$, $(x_j^q)$ is generated as a $k$-vector space by the set of $x_1^{a_1} \cdots x_n^{a_n}$ with $a_j \geq q$, so $\maxI^{[q]} = \sum_{j=1}^n (x_j^q)$ is the ideal generated by the set of monomials $x_1^{a_1} \cdots x_n^{a_n}$ with $a_j \geq q$ for some $1 \leq j \leq n$.
Thus for any $i$, $(\maxI^{[q]})_i$ is the set generated by the monomials $x_1^{a_1} \cdots x_n^{a_n} \in P_i$ with $a_j \geq q$ for some $1 \leq j \leq n$.

From Lemma \ref{lem:lqcEqvs} we know $\maxI^{[q]}:f$ is $\maxI^{[q]}$-compressed (i.e., $f$ is link-$q$-compressed) if and only if for all $i \leq s/2$ the kernel of the map $\Phi_i: S_i \to D_{s-i}$ is generated by the monomials $X_1^{a_1} \cdots X_n^{a_n}$ in $S_i$ (which correspond to $x_1^{a_1} \cdots x_n^{a_n}$ in $P_i$) with $a_j \geq q$ for some $1 \leq j \leq n$.
We showed that $\ker\Phi_i = (\maxI^{[q]}:f)_i$, and that $(\maxI^{[q]})_i$ is the set generated by the monomials $x_1^{a_1} \cdots x_n^{a_n} \in P_i$ with $a_j \geq q$ for some $1 \leq j \leq n$, meaning this condition is equivalent to requiring $(\maxI^{[q]}:f)_i = (\maxI^{[q]})_i$ for all $i \leq s/2$. Since $(\maxI^{[q]}:f) \supseteq \maxI^{[q]}$, we could rephrase this condition further as $\left((\maxI^{[q]}:f)/\maxI^{[q]}\right)_i = 0$ for all $i \leq s/2$, meaning every element of $(\maxI^{[q]}:f)/\maxI^{[q]}$ with degree $\leq s/2$ is zero, or equivalently every nonzero element has degree $> s/2$.
\end{proof}

\subsection{Prior link-$q$-compressed results}
\label{subsec:lqcFacts}

Here we discuss more facts about link-$q$-compressed polynomials. These facts all come from Miller, Rahmati, and R.G.'s results in \cite{RGpaper}.

\begin{notation}
Let $P = k[x,y,z]$ be a standard graded polynomial ring over a field $k$ of characteristic $p > 0$ with homogeneous maximal ideal $\maxI=(x,y,z)$. Let $R = P/(f)$ with homogeneous $f \in P$ with $d=\deg f\geq2$.

Let $q$ be a power of $p$. In the following theorems we work with graded ideals of the polynomial ring $P$: the graded complete intersection ideal \[
\mathfrak{c} = (c_1,c_2,c_3)
\/,\] and the linked ideals
$\mathfrak{c}+(f)$ and $\mathfrak{c}:f = \mathfrak{c}:(\mathfrak{c}+(f))$.
Since $\mathfrak{c}+(f)$ is an almost complete intersection ideal, $P/(\mathfrak{c}:f)$ is a Gorenstein ring (see \cite[Remark 2.7]{HU87} for this and more results on linkage).

We study these to specialize to the case \[
\mathfrak{c} = \maxI^{[q]} = (x^q,y^q,z^q)
\] and the linked ideals \[
\mathfrak{c}+(f) = \maxI^{[q]} + (f) = (x^q,y^q,z^q,f) \quad \text{and} \quad \mathfrak{c}:f = \maxI^{[q]}:f
\/.\]

Assume $c_1,c_2,c_3$ are part of a minimal generating set for $\mathfrak{c}:f$. Fix a set of minimal homogeneous generators for $\mathfrak{c}:f$ as follows
\[
\mathfrak{c}:f = (c_1,c_2,c_3,w_1,\ldots,w_m) \qquad \text{so that }m = \mu(\mathfrak{c}:f) - 3
\/.\]

Given $\mathfrak{c}$ and $f$ we also have matrices $\mtrx{\psi}$ and $\mtrx{\varphi}$ with entries in $P$, where $\mtrx{\psi}$ is $m\times3$ and $\mtrx{\varphi}$ is $m\times m$ and skew-symmetric.
These matrices appear in the resolution of $P/(\mathfrak{c}:f)$ from the Buchsbaum-Eisenbud structure theorem in \cite{BE77}, as well as the following resolutions (see \cite[Lemma 5.2]{RGpaper} for the form of the structure theorem these maps come from).
\end{notation}

The following propositions show the structure of the $P$-free and $R$-free resolutions of $P/(\mathfrak{c}+(f)) = R/\mathfrak{c}$. These results are refined in Theorem \ref{thm:RGlqcBetti} where we assume $\mathfrak{c} = \maxI^{[q]}$ and $f$ is link-$q$-compressed.
\begin{proposition}[{\cite[Proposition 5.6]{RGpaper}}]
\label{prop:RGPOverCandf}
Assume that $\deg c_i > d$ for $i = 1, 2, 3$. The minimal homogeneous resolution of $P/(\mathfrak{c}+(f))$ over $P$ is of the form
\[
0
 \gets 
P
 \xleftarrow{\begin{bmatrix}\vec{c}^\top&uf\end{bmatrix}} 
\bigoplus_P^{P^3}
 \xleftarrow{\begin{bmatrix}\mtrx{\psi}^\top\mtrx{\varphi}^\vee&uf\mtrx{I}\\-\vec{w}^\top&-\vec{c}^\top\end{bmatrix}} 
\bigoplus_{P^3}^{P^m}
 \xleftarrow{\begin{bmatrix}\mtrx{\varphi}\\-\mtrx{\psi}^\top\end{bmatrix}} 
P^m
 \gets 
0
\] with $\Pf(\mtrx{\varphi}) = uf$ for some unit $u \in k$, $\vec{c}^\top = \begin{bmatrix}c_1&c_2&c_3\end{bmatrix}$ and $\vec{w}^\top = \begin{bmatrix}w_1&\cdots&w_m\end{bmatrix}$.
\end{proposition}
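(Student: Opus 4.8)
The plan is to obtain the resolution as a mapping cone of the comparison map between the Buchsbaum--Eisenbud resolution of $P/(\mathfrak{c}:f)$ and the Koszul-type resolution of $P/\mathfrak{c}$, and then to prune it down to a minimal resolution using the hypothesis $\deg c_i > d$. The starting observation is the linkage identity $\mathfrak{c}+(f) = \mathfrak{c} : (\mathfrak{c}:f)$, which expresses $P/(\mathfrak{c}+(f))$ as the cokernel of the map dual to the inclusion $\mathfrak{c} \hookrightarrow \mathfrak{c}:f$ along the Gorenstein duality of the complete intersection $P/\mathfrak{c}$. Concretely, I would first record the structure theorem resolution of $P/(\mathfrak{c}:f)$: since $\mathfrak{c}:f = (c_1,c_2,c_3,w_1,\dots,w_m)$ is a height-three Gorenstein ideal with $\mu(\mathfrak{c}:f) = m+3$ odd, the Buchsbaum--Eisenbud theorem (in the form cited as \cite[Lemma 5.2]{RGpaper}) gives a resolution $0 \gets P \gets P^{m+3} \gets P^{m+3} \gets P \gets 0$ whose middle map is the skew-symmetric matrix built from $\mtrx{\varphi}$ and $\mtrx{\psi}$, with $\Pf$ of the relevant submatrix recovering the generators, and in particular $\Pf(\mtrx{\varphi}) = uf$ for a unit $u$. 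The generator $f$ of $\mathfrak{c}+(f)$ appears here precisely as this Pfaffian of the $\mtrx{\varphi}$-block, which is the combinatorial heart of why the matrices in the statement take the displayed block form.

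Next I would build the comparison diagram. The Koszul complex $K_\bullet$ on $c_1,c_2,c_3$ resolves $P/\mathfrak{c}$, and the surjection $P/\mathfrak{c} \twoheadrightarrow P/(\mathfrak{c}+(f))$ together with the presentation $P/(\mathfrak{c}+(f)) = \mathrm{coker}\begin{bmatrix}\vec{c}^\top & uf\end{bmatrix}$ lets me lift the identity on $P$ to a chain map from the Buchsbaum--Eisenbud resolution of $P/(\mathfrak{c}:f)$ (suitably dualized/shifted, using the linkage pairing with the Koszul complex on $\mathfrak{c}$) into $K_\bullet$. The mapping cone of this chain map is then a (generally non-minimal) free resolution of $P/(\mathfrak{c}+(f))$; assembling the blocks of the cone produces exactly the complex
\[
0 \gets P \xleftarrow{\begin{bmatrix}\vec{c}^\top & uf\end{bmatrix}} P^3\oplus P \xleftarrow{\begin{bmatrix}\mtrx{\psi}^\top\mtrx{\varphi}^\vee & uf\mtrx{I}\\ -\vec{w}^\top & -\vec{c}^\top\end{bmatrix}} P^m \oplus P^3 \xleftarrow{\begin{bmatrix}\mtrx{\varphi}\\ -\mtrx{\psi}^\top\end{bmatrix}} P^m \gets 0,
\]
where the off-diagonal blocks $uf\mtrx{I}$ and $-\vec{c}^\top$ are the connecting components of the cone and the diagonal blocks $\mtrx{\psi}^\top\mtrx{\varphi}^\vee$, $-\vec{w}^\top$, $\mtrx{\varphi}$, $-\mtrx{\psi}^\top$ come from the two constituent complexes; the identity $\mtrx{\varphi}\,\mtrx{\varphi}^\vee = \Pf(\mtrx{\varphi})\mtrx{I} = uf\,\mtrx{I}$ from Definition~\ref{def:PfaffAdj} is what makes consecutive composites vanish.

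Finally I would check minimality. A mapping cone is minimal exactly when no unit entries appear; the only danger is the connecting block $uf\mtrx{I}$ (entry of degree $d$) or the block $-\vec{c}^\top$ (entries of degree $\deg c_i$) pairing off against a generator of complementary degree. Here the hypothesis $\deg c_i > d \geq 2$ is exactly what I need: by degree reasons none of these entries can cancel against a redundant generator, since any such cancellation would force two minimal generators of $\mathfrak{c}:f$ to coincide in degree with $f$ or to be linear multiples of the $c_i$, contradicting that $c_1,c_2,c_3$ are part of a \emph{minimal} generating set and that $\deg f = d < \deg c_i$. Hence the cone is already minimal and has the asserted shape, with ranks $1, 4, m+3, m$ — wait, reading off the displayed complex the free modules are $P$, $P^3\oplus P$, $P^m\oplus P^3$, $P^m$, matching the statement. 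I expect the main obstacle to be the bookkeeping in the comparison map: getting the signs and the precise blocks $\mtrx{\psi}^\top\mtrx{\varphi}^\vee$ and $uf\mtrx{I}$ to line up requires carefully tracking the linkage duality (the Pfaffian adjoint $\mtrx{\varphi}^\vee$ and the $3\times 3$ Koszul duality on $\mathfrak{c}$ simultaneously), rather than any conceptual difficulty; the exactness and minimality arguments are then routine given the earlier Pfaffian identities.
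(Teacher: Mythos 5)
This proposition has no proof in the paper at hand: it is quoted verbatim from \cite[Proposition 5.6]{RGpaper} as a background fact in Subsection~\ref{subsec:lqcFacts}, so there is no in-paper argument to compare yours against. That said, the linkage/mapping-cone strategy you sketch is the standard route to results of this shape and is very likely what the cited source does, so the skeleton is reasonable; but two steps as written are genuine gaps. First, you treat $\Pf(\mtrx{\varphi})=uf$ as something the Buchsbaum--Eisenbud theorem hands you directly ("with $\Pf$ of the relevant submatrix recovering the generators, and in particular $\Pf(\mtrx{\varphi})=uf$"). It does not: the structure theorem produces the generators $c_1,c_2,c_3,w_1,\dots,w_m$ of $\mathfrak{c}:f$ as the \emph{maximal-order} Pfaffians $\Pf_\ell$ of the $(m{+}3)\times(m{+}3)$ alternating middle map, whereas $\Pf(\mtrx{\varphi})$ is the Pfaffian of a corank-three principal submatrix and is not one of those. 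That it recovers $uf$ is exactly the nontrivial content of the inverse link $\mathfrak{c}:(\mathfrak{c}:f)=\mathfrak{c}+(f)$ — it is singled out as part of the conclusion of the proposition, and is the substance of the variant of the structure theorem cited as \cite[Lemma 5.2]{RGpaper} — so a proof cannot begin by assuming it.

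Second, your minimality discussion addresses the wrong notion. Minimality of a graded free complex means every entry of every differential lies in $\maxI$; it is not a claim about generators ``coinciding in degree'' or being ``linear multiples.'' The hypothesis $\deg c_i>d$ ensures that $f\notin\mathfrak{c}$ (so the four entries of $\begin{bmatrix}\vec{c}^\top&uf\end{bmatrix}$ really are four minimal generators of $\mathfrak{c}+(f)$, pinning down the rank of the first syzygy module) and that the connecting blocks $uf\mtrx{I}$ (degree $d\geq 2$) and $-\vec{c}^\top$ (degrees $\deg c_i>d$) contain no constants; the remaining blocks inherit positive-degree entries from the minimal Buchsbaum--Eisenbud resolution of $P/(\mathfrak{c}:f)$. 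Your degree instinct is right, but as stated it is not a proof. Finally, the dualization-and-truncation step — passing from the five-term mapping cone, whose homology sits in degree one and is $(\mathfrak{c}:f)/\mathfrak{c}$, down to the displayed length-three resolution of $P/(\mathfrak{c}+(f))$ — is load-bearing rather than ``bookkeeping'': it is precisely where the identity block from $u_0=\mathrm{id}$ is split off and where the top-degree comparison map is identified (via the linkage pairing) with multiplication by $uf$, producing the $uf$ entries. That step needs to be carried out, not merely flagged.
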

\begin{proposition}[{\cite[Proposition 5.8]{RGpaper}}]
\label{prop:RGROverC}
Assume that $\deg c_i > d$ for $i = 1, 2, 3$. The $R$-free resolution of $R/\mathfrak{c} = P/(\mathfrak{c}+(f))$ is \[
\cdots
 \xrightarrow{\mtrx{\varphi}} 
R^m
 \xrightarrow{\mtrx{\varphi}^\vee} 
R^m
 \xrightarrow{\mtrx{\varphi}} 
R^m
 \xrightarrow{\mtrx{\psi}^\top \mtrx{\varphi}^\vee} 
R^3
 \xrightarrow{\vec{c}^\top} 
R
 \to 
0
\] where by $\mtrx{\varphi}$, $\mtrx{\varphi}^\vee$, $\mtrx{\psi}^\top$, and $\vec{c}^\top$ above we mean the images in $R$ of these $P$-matrices.

In addition, the homogeneous skew-symmetric matrix $\mtrx{\varphi}$ has Pfaffian equal to $uf$ for some unit $u \in k$, and the pair $(\mtrx{\varphi},\mtrx{\varphi}^\vee)$ of matrices over P is the matrix factorization of $uf$ over $P$ associated to the periodic portion.
\end{proposition}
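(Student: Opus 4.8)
The plan is to verify the displayed complex directly, using Proposition~\ref{prop:RGPOverCandf} for the structure over $P$ and Eisenbud's matrix-factorization mechanism over the hypersurface $R = P/(f)$ for its periodic tail. First I would record the matrix factorization. By Definition~\ref{def:PfaffAdj}, $\mtrx{\varphi}\mtrx{\varphi}^\vee = \mtrx{\varphi}^\vee\mtrx{\varphi} = (\Pf\mtrx{\varphi})\mtrx{I}$, and $\Pf(\mtrx{\varphi}) = uf$ by Proposition~\ref{prop:RGPOverCandf}, so $(\mtrx{\varphi},\mtrx{\varphi}^\vee)$ is a matrix factorization of $uf$ over $P$; as $\deg f = d \geq 2$ and $\mtrx{\varphi}$ sits in the \emph{minimal} Buchsbaum--Eisenbud resolution of $P/(\mathfrak{c}:f)$, its entries lie in $\maxI$, hence so do those of $\mtrx{\varphi}^\vee$ (signed Pfaffians of proper submatrices of $\mtrx{\varphi}$). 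This proves the last two assertions. Since $f$ is a nonzerodivisor on $P$, the usual argument --- if $\mtrx{\varphi}\bar v = 0$ in $R$ then $\mtrx{\varphi}v = fw$ over $P$, so $fv = \mtrx{\varphi}^\vee\mtrx{\varphi}v = f\mtrx{\varphi}^\vee w$, hence $v = \mtrx{\varphi}^\vee w$, and symmetrically --- shows $\cdots\xrightarrow{\mtrx{\varphi}}R^m\xrightarrow{\mtrx{\varphi}^\vee}R^m\xrightarrow{\mtrx{\varphi}}R^m$ is exact.

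Next I would check that the displayed sequence is a minimal complex with the correct cokernel. Each composite-zero relation collapses to an identity valid over $P$ whose value is $uf$ times something, hence $0$ in $R$: from $\mtrx{d}_1\mtrx{d}_2 = 0$ in Proposition~\ref{prop:RGPOverCandf} one gets $\vec{c}^\top(\mtrx{\psi}^\top\mtrx{\varphi}^\vee) = uf\,\vec{w}^\top$; also $(\mtrx{\psi}^\top\mtrx{\varphi}^\vee)\mtrx{\varphi} = \mtrx{\psi}^\top(\Pf\mtrx{\varphi})\mtrx{I} = uf\,\mtrx{\psi}^\top$ and $\mtrx{\varphi}^\vee\mtrx{\varphi} = \mtrx{\varphi}\mtrx{\varphi}^\vee = uf\,\mtrx{I}$. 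All entries lie in $\maxI$ ($\vec{c}$ because $\deg c_i > d \geq 2$, the rest by the previous paragraph), so the complex is minimal, and $H_0 = R/\mathfrak{c}R = R/\mathfrak{c}$.

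It then remains to prove exactness at $R^3$ and at the first $R^m$ (the one mapping to $R^3$). For both I would lift a cycle to $P$ and use: exactness of the $P$-resolution of Proposition~\ref{prop:RGPOverCandf}; that $c_1,c_2,c_3$ is a $P$-regular sequence; and that $\mathrm{Tor}^P_i(R/\mathfrak{c},R) = 0$ for $i \geq 2$ (from $0\to P\xrightarrow{f}P\to R\to 0$), equivalently that $\overline{\mathbb{F}} := \mathbb{F}\otimes_P R$ has vanishing homology in homological degrees $\geq 2$. Concretely: given $\vec{c}^\top\vec{a}_0 = 0$ in $R$, lift to $\vec{c}^\top\tilde{\vec{a}}_0 = bf$ with $b\in\mathfrak{c}:f = (\vec{c},\vec{w})$, write $b = \vec{c}^\top\vec{\lambda}+\vec{w}^\top\vec{\mu}$, absorb the $\vec{w}$-part into $\img(\mtrx{\psi}^\top\mtrx{\varphi}^\vee)$ using $w_j f = u^{-1}\sum_i c_i(\mtrx{\psi}^\top\mtrx{\varphi}^\vee)_{ij}$, and absorb the residual syzygy of $\vec{c}$ using regularity --- that residual already lies in $\img(\mtrx{\psi}^\top\mtrx{\varphi}^\vee)$ modulo $f$ because it is a genuine Koszul syzygy occurring inside $\img(\mtrx{d}_2)$ of the exact $P$-resolution. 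Similarly, given $\mtrx{\psi}^\top\mtrx{\varphi}^\vee\vec{a} = 0$ in $R$, lifting and composing with $\mtrx{d}_1\mtrx{d}_2 = 0$ forces $\vec{w}^\top\vec{a}\in\mathfrak{c}R$; then for a suitable $\vec{a}'$ the column $\begin{bmatrix}\vec{a}\\-\vec{a}'\end{bmatrix}$ is killed by $\mtrx{d}_2$ over $R$, so by $\mathrm{Tor}^P_2(R/\mathfrak{c},R) = 0$ it lies in the image of $\begin{bmatrix}\mtrx{\varphi}\\-\mtrx{\psi}^\top\end{bmatrix}$, giving $\vec{a}\in\img(\mtrx{\varphi})$ over $R$. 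Splicing these junction identifications onto the periodic tail of the first paragraph completes the proof.

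The step I expect to be the main obstacle is exactness at the junction: the formal identities of the second paragraph do not suffice, and one genuinely needs that the \emph{exact} $P$-resolution of $P/(\mathfrak{c}+(f))$ already carries the Koszul syzygies of $c_1,c_2,c_3$ inside $\img(\mtrx{d}_2)$ --- which is exactly where the linkage between $\mathfrak{c}+(f)$ and $\mathfrak{c}:f$, and the Buchsbaum--Eisenbud shape of $\mtrx{\psi}$, enter. A minor point to watch is small $m$: when $m = 2$ the matrix $\mtrx{\varphi}^\vee$ has a unit entry and the displayed tail is not minimal as written, but in the cases relevant here $m$ is even and at least $4$, so this does not arise.
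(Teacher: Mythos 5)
The paper does not supply its own proof of this statement: it is quoted verbatim from \cite[Proposition 5.8]{RGpaper}, so there is no in-paper argument against which to compare. Taking your proposal on its own merits, the overall strategy is sound and is essentially the standard one for this situation: identify the periodic tail with the matrix factorization $(\mtrx{\varphi},\mtrx{\varphi}^\vee)$ of $uf$ coming from $\mtrx{\varphi}\mtrx{\varphi}^\vee=(\Pf\mtrx{\varphi})\mtrx{I}$, verify that the complex is a complex with the correct $H_0$, check exactness of the periodic part by the usual nonzerodivisor argument, and then verify exactness at the two junction spots by lifting to the exact $P$-resolution of Proposition~\ref{prop:RGPOverCandf}. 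All of that works.

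Two small remarks on the junction step, where you anticipate the main difficulty. Your route there is more elaborate than it needs to be: rather than decomposing $b=\vec{c}^\top\vec{\lambda}+\vec{w}^\top\vec{\mu}$ and then absorbing the Koszul remainder, one can simply observe that if $\vec{c}^\top\tilde{\vec{a}}_0=bf$ over $P$, then $\left[\begin{smallmatrix}\tilde{\vec{a}}_0\\ -u^{-1}b\end{smallmatrix}\right]\in\ker\begin{bmatrix}\vec{c}^\top&uf\end{bmatrix}$, which by exactness of the $P$-resolution is $\mtrx{d}_2$ of something, whose top block reduces mod $f$ to the desired preimage; and likewise if $\mtrx{\psi}^\top\mtrx{\varphi}^\vee\tilde{\vec{a}}=f\vec{v}$, the identity $\vec{c}^\top\mtrx{\psi}^\top\mtrx{\varphi}^\vee=uf\vec{w}^\top$ forces $\vec{w}^\top\tilde{\vec{a}}=u^{-1}\vec{c}^\top\vec{v}$, so that $\left[\begin{smallmatrix}\tilde{\vec{a}}\\-u^{-1}\vec{v}\end{smallmatrix}\right]\in\ker\mtrx{d}_2=\img\left[\begin{smallmatrix}\mtrx{\varphi}\\-\mtrx{\psi}^\top\end{smallmatrix}\right]$ over $P$, giving $\tilde{\vec{a}}\in\img\mtrx{\varphi}$ directly with no Tor-vanishing invocation needed. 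Second, your caveat about $m=2$ is well taken for \emph{minimality} of the displayed complex, but Proposition~\ref{prop:RGROverC} as stated only asserts that the displayed sequence is a free $R$-resolution, not that it is minimal, so the $m=2$ case is not actually an obstruction to the claim; minimality is addressed separately under the link-$q$-compressed hypotheses in Theorem~\ref{thm:RGlqcBetti}, where $m=2d\geq 4$.
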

The following shows that link-$q$-compressed polynomials have nice graded Betti numbers for $R/\maxI^{[q]}$.
\begin{theorem}[{\cite[Theorem 5.10 and Theorem A]{RGpaper}}]
\label{thm:RGlqcBetti}
Let $R = k[x,y,z]/(f)$ be a standard graded hypersurface ring over a field $k$ of characteristic $p > 0$ with homogeneous maximal ideal $\maxI$. Suppose that $p$ and $d = \deg f$ have opposite parity. Let $q \geq d + 3$ be a power of $p$.

Assume further that $f$ is link-$q$-compressed. Then the following hold.
\begin{itemize}
\item[(a)]
The matrix $\mtrx{\varphi}$ in the matrix factorization $(\mtrx{\varphi},\mtrx{\varphi}^\vee)$ is a $2d\times 2d$ linear matrix with Pfaffian equal to $uf$ for some unit $u \in k$, and its Pfaffian adjoint $\mtrx{\varphi}^\vee$ is a $2d\times 2d$ matrix with entries of degree $d - 1$. The matrix $\mtrx{\psi}$ is $2d\times 3$ with entries of degree $\frac{1}{2} (q - d + 1)$. In particular, the minimal graded resolution of $R/\maxI^{[q]}$ over $R$ has the following eventually 2-periodic form \[
\cdots
 \xrightarrow{\mtrx{\varphi}} 
R^{2d}(-b-d)
 \xrightarrow{\mtrx{\varphi}^\vee} 
R^{2d}(-b-1)
 \xrightarrow{\mtrx{\varphi}} 
R^{2d}(-b)
 \xrightarrow{\mtrx{\psi}^\top \mtrx{\varphi}^\vee} 
R^3(-q)
 \xrightarrow{\vec{c}^\top} 
R
 \to 
0
\] where $b = \frac{3}{2} q + \frac{1}{2} d - \frac{1}{2}$.

As a result, for any two such values of $q$, say $q_1 > q_0 \geq d + 3$, whenever $f$ is link-$q_i$-compressed for $i = 0, 1$, the graded Betti numbers in high homological degree of the $R$-modules $R/\maxI^{[q_0]}$ and $R/\maxI^{[q_1]}$ are the same, up to a constant shift of $\frac{3}{2} (q_1 - q_0)$.

\item[(b)]
The minimal graded resolution over $P = k[x,y,z]$ of $P/(\maxI^{[q]}+(f)) = R/\maxI^{[q]}$ has the form \[
0
\to
P^{2d}(-b-1)
\xrightarrow{
\begin{bmatrix}
\mtrx{\varphi} \\ -\mtrx{\psi}^\top
\end{bmatrix}
}
\bigoplus_{P^3(-q-d)}^{P^{2d}(-b)}
\xrightarrow{
\begin{bmatrix}
\mtrx{\psi}^\top \mtrx{\varphi}^\vee & u f \mtrx{I} \\ -\vec{w}^\top & -\vec{c}^\top
\end{bmatrix}
}
\bigoplus_{P(-d)}^{P^3(-q)}
\xrightarrow{
\begin{bmatrix}
\vec{c}^\top & u f
\end{bmatrix}
}
P
\to
0
\/.\]

In particular, the Castelnuovo-Mumford regularity is given by \[
\reg(R/\maxI^{[q]}) = \frac{3}{2} q + \frac{1}{2} d - \frac{5}{2}
\/.\]
\end{itemize}
\end{theorem}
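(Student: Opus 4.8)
The plan is to reduce Theorem~\ref{thm:RGlqcBetti} to a Hilbert-function computation and then read off every graded twist from the structural Propositions~\ref{prop:RGPOverCandf} and \ref{prop:RGROverC}. Since $q \geq d+3 > d$, the hypothesis $\deg c_i = q > d$ of those propositions holds with $c_i = x_i^q$, so---writing $m = \mu(\maxI^{[q]}:f) - 3$---the $R$-free resolution of $R/\maxI^{[q]}$ already has the eventually $2$-periodic form of Proposition~\ref{prop:RGROverC}, with $\mtrx{\varphi}$ skew-symmetric of size $m \times m$, $\Pf(\mtrx{\varphi}) = uf$ for some unit $u \in k$, and $\mtrx{\psi}$ of size $m \times 3$; likewise the $P$-free resolution of $R/\maxI^{[q]}$ has the shape of Proposition~\ref{prop:RGPOverCandf}. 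Thus (a) and (b) come down to pinning down $m$, the degrees of $\mtrx{\varphi}$, $\mtrx{\varphi}^\vee$, $\mtrx{\psi}$, and the graded shifts---all of which I would extract from the graded Betti numbers of $P/(\maxI^{[q]}:f)$.

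To get those I would first compute the Hilbert function. Let $h_i := H_i(P/\maxI^{[q]})$ be the coefficient of $t^i$ in $(1+t+\cdots+t^{q-1})^3$; this sequence is unimodal, symmetric about $\frac{3}{2}(q-1)$, has socle degree $3(q-1)$, and near its centre agrees with a single quadratic polynomial in $i$ (only the first two of the four binomial terms in $h_i = \sum_{k=0}^{3}(-1)^k\binom{3}{k}\binom{i-kq+2}{2}$ survive there). By Lemma~\ref{lem:lqcDef}, the link-$q$-compressed hypothesis makes $P/(\maxI^{[q]}:f)$ an $\maxI^{[q]}$-compressed Gorenstein algebra of socle degree $s = 3(q-1) - d$, so $H_i(P/(\maxI^{[q]}:f)) = \min\{h_i, h_{s-i}\}$; using the symmetry of $h$ (which gives $h_{s-i} = h_{d+i}$) this minimum equals $h_i$ for $i \leq s/2$ and $h_{d+i}$ for $i \geq s/2$, and the opposite-parity hypothesis on $p$ and $d$ forces $s$ to be even, so the transition occurs at the single integer degree $s/2$. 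Multiplying the resulting Hilbert series by $(1-t)^3$ and using the self-duality of the Buchsbaum--Eisenbud resolution of the Gorenstein codimension-$3$ ideal $\maxI^{[q]}:f$ (\cite{BE77}) then gives its graded Betti numbers. Since $q \leq s/2$ (which is exactly $q \geq d+3$), the only generators in degrees $\leq s/2$ are $x_1^q, x_2^q, x_3^q$; a short computation with the third difference of $H_i(P/(\maxI^{[q]}:f))$, carried out using the quadratic description of $h$ near its centre, shows the remaining generators all lie in the single degree $\delta := \frac{3q-d-1}{2} = \frac{s}{2}+1$ and number exactly $2d$, with no generators in higher degree. Hence $m = 2d$, and by self-duality the relations dual to the degree-$\delta$ generators sit in degree $\delta+1$ (those dual to the three $x_i^q$ sitting in degree $2q-d$).

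With $m = 2d$ and these degrees in hand, the rest of (a) is bookkeeping: inside the Gorenstein resolution of $P/(\maxI^{[q]}:f)$, the block $\mtrx{\varphi}$ carries relations of degree $\delta+1$ onto generators of degree $\delta$, so $\mtrx{\varphi}$ is a $2d\times 2d$ \emph{linear} matrix; its Pfaffian then, being that of a $2d\times 2d$ matrix of linear forms, has degree $d = \deg f$, consistent with $\Pf(\mtrx{\varphi}) = uf$, and $\mtrx{\varphi}^\vee$ has entries that are Pfaffians of $(2d-2)\times(2d-2)$ submatrices of a linear matrix, hence of degree $d-1$. Put $b := \delta + d = \frac{3}{2}q + \frac{1}{2}d - \frac{1}{2}$; in Proposition~\ref{prop:RGPOverCandf} the last free module is $P^{2d}(-b-1)$, and $-\mtrx{\psi}^\top$ sends it onto the Koszul syzygies $x_i^q f - f x_i^q$ of degree $q+d$, so $\mtrx{\psi}$ has entries of degree $(b+1)-(q+d) = \frac{1}{2}(q-d+1)$. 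Substituting all these degrees into Propositions~\ref{prop:RGPOverCandf} and \ref{prop:RGROverC} produces exactly the graded resolutions displayed in (a) and (b). Finally, from the $P$-resolution in (b) one reads $\reg(R/\maxI^{[q]}) = \max\{0,\, q-1,\, q+d-2,\, b-2,\, (b+1)-3\}$, and since $q \geq d+3$ makes $b-2$ the largest of these, $\reg(R/\maxI^{[q]}) = b-2 = \frac{3}{2}q + \frac{1}{2}d - \frac{5}{2}$.

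The step I expect to be hardest is showing that the non-monomial generators of $\maxI^{[q]}:f$ all fall into the \emph{single} degree $\delta = s/2+1$---rather than spreading over two adjacent degrees, which would break the linearity of $\mtrx{\varphi}$ and change $m$. This is where both arithmetic hypotheses are used: $q \geq d+3$ gives the $\maxI^{[q]}$-compressed Hilbert function room to descend through its centre as a clean quadratic, and the opposite parity of $p$ and $d$ makes $s/2$ an integer, so the "valley" of that quadratic is a single degree rather than a two-term plateau.
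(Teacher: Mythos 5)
The paper does not prove Theorem~\ref{thm:RGlqcBetti} --- it is quoted verbatim as \cite[Theorem~5.10 and Theorem~A]{RGpaper} in the background section \ref{subsec:lqcFacts}, with no argument given. So there is no in-paper proof to compare against; what I can do is assess your sketch on its own merits.

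Your reconstruction is sound in outline and uses the expected ingredients: Lemma~\ref{lem:lqcDef} to turn the compressed condition into $H_i(P/J)=\min\{h_i,h_{s-i}\}$ with $J=\maxI^{[q]}:f$, the symmetry $h_{s-i}=h_{d+i}$, the parity hypothesis to make $s/2$ an integer so the Hilbert function has a clean peak, and then the Buchsbaum--Eisenbud structure theorem to translate the Hilbert data into the size and linearity of $\mtrx{\varphi}$. The degree bookkeeping --- $\delta=s/2+1$, $b=\delta+d=\frac{3}{2}q+\frac{1}{2}d-\frac{1}{2}$, $\deg\mtrx{\psi}=\frac{1}{2}(q-d+1)$, and $\reg=b-2$ --- all checks out arithmetically. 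Two caveats, though. First, the step you flag as hardest --- that the non-monomial generators of $J$ fall in the single degree $\delta$ and number exactly $2d$ --- is not a ``short computation'' in this paper's logic: it is precisely Corollary~\ref{cor:lqcDegOfGens} and Proposition~\ref{prop:lqcNumOfGens}, which the paper \emph{also} cites from \cite{RGpaper} rather than re-deriving, so the third-difference argument you gesture at would need to be carried out in full to make your proof self-contained. Second, the phrase ``onto the Koszul syzygies $x_i^q f - f x_i^q$'' is not quite coherent (those commutators vanish); what you mean --- and what is correct --- is just that $-\mtrx{\psi}^\top$ is the block of the map landing in the summand $P^3(-q-d)$, from which the degree of $\mtrx{\psi}$ is read off. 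With those two points repaired, your route is essentially the standard one for this kind of codimension-three Gorenstein linkage argument.
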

\begin{notation}
For a finitely generated graded $k$-algebra $R$ with homogeneous maximal ideal $\maxI$, the Hilbert-Kunz function of $R$ is defined as \[
HK_R(q) = \dim_k R/\maxI^{[q]}
\/.\]
\end{notation}
\begin{theorem}[{\cite[Theorem 5.11]{RGpaper}}]
\label{thm:lqcHilbKunzFunc}
Suppose that $p$ and $d$ have opposite parity. If $f\in k[x,y,z]$ is link-$q$-compressed for some $q$ then the Hilbert-Kunz function of $R$ at $q$ is \[
HK_R(q) = \frac{3}{4} dq^2 - \frac{1}{12} (d^3 - d)
\/.\]
\end{theorem}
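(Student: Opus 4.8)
The plan is to turn the computation of $HK_R(q) = \dim_k R/\maxI^{[q]}$ into the evaluation of a rational function at $t=1$, using the explicit $P$-free resolution that the link-$q$-compressed hypothesis makes available. First, identify $R/\maxI^{[q]}$ with $P/(\maxI^{[q]}+(f))$, so $HK_R(q) = \dim_k P/(\maxI^{[q]}+(f))$. Since $f$ is link-$q$-compressed and $p$, $d$ have opposite parity, Theorem~\ref{thm:RGlqcBetti}(b) (applied with $c_1,c_2,c_3 = x^q,y^q,z^q$, so $\deg c_i = q \geq d+3 > d$) supplies the minimal graded $P$-free resolution
\[
0 \to P^{2d}(-b-1) \to P^{3}(-q-d)\oplus P^{2d}(-b) \to P(-d)\oplus P^{3}(-q) \to P \to 0,
\qquad b = \tfrac{3q+d-1}{2}.
\]
Note the opposite-parity hypothesis is precisely what forces $b\in\ZZ$, so that the resolution has this shape; I would flag this explicitly at the outset.

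Second, read off the Hilbert series. Because $P(-a)$ has Hilbert series $t^a/(1-t)^3$, the alternating sum over the resolution gives
\[
H_{R/\maxI^{[q]}}(t) = \frac{N(t)}{(1-t)^3},
\qquad
N(t) = 1 - t^{d} - 3t^{q} + 3t^{q+d} + 2d\,t^{b} - 2d\,t^{b+1}.
\]
Since $R/\maxI^{[q]}$ is Artinian (Krull dimension $0$) while $P$ has dimension $3$, its Hilbert series is a polynomial, so $(1-t)^3$ divides $N(t)$ and $HK_R(q)$ is the value at $t=1$ of the polynomial $N(t)/(1-t)^3$.

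Third, evaluate this value. Writing $N(t) = \sum_k c_k t^{e_k}$, three applications of L'Hôpital (equivalently, a Taylor expansion at $t=1$) give the clean identity $HK_R(q) = -\tfrac16 N'''(1) = -\sum_k c_k\binom{e_k}{3}$; the lower-order cancellations $\sum_k c_k = \sum_k c_k e_k = \sum_k c_k e_k(e_k-1) = 0$ hold automatically since $(1-t)^3\mid N(t)$. Substituting the six monomials of $N$ and using Pascal's rule $\binom{b+1}{3}-\binom{b}{3} = \binom{b}{2}$ to merge the two $b$-terms collapses the expression to
\[
HK_R(q) = \binom{d}{3} + 3\binom{q}{3} - 3\binom{q+d}{3} + 2d\binom{b}{2},
\]
and plugging in $b=\tfrac{3q+d-1}{2}$ and expanding is then a routine polynomial identity yielding $\tfrac34 dq^2 - \tfrac1{12}(d^3-d)$.

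The only genuine input here is the resolution itself, which is exactly what link-$q$-compressedness buys through Theorem~\ref{thm:RGlqcBetti}; everything after that is bookkeeping, so I do not expect a serious obstacle beyond the arithmetic and the parity check. The one point requiring care is the standing assumption $q\geq d+3$ inherited from Theorem~\ref{thm:RGlqcBetti}: if one wants the statement without it, the residual small cases compatible with the opposite-parity condition can instead be handled by the direct count $HK_R(q) = q^3 - \dim_k P/(\maxI^{[q]}:f)$ (which follows from linkage, since $P/\maxI^{[q]}$ is Gorenstein Artinian and $\maxI^{[q]}:f$ is its link of $\maxI^{[q]}+(f)$), combined with the compressed formula $H_i\!\left(P/(\maxI^{[q]}:f)\right) = \min\{H_i(P/\maxI^{[q]}),\,H_{i+d}(P/\maxI^{[q]})\}$ (using Lemma~\ref{lem:lqcEqvs} and the symmetry of the complete-intersection Hilbert function) and the explicit lattice-point formula for $H_i(P/\maxI^{[q]})$; that route is more computation but entirely elementary.
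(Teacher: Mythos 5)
The paper you are working from states this as a cited result ({\cite[Theorem 5.11]{RGpaper}}) and does not give a proof, so there is no in-paper argument to compare against. Your reconstruction is correct: the alternating-sum Hilbert series from the length-3 graded $P$-free resolution of Theorem~\ref{thm:RGlqcBetti}(b), the cancellation of the three lowest Taylor coefficients forced by $(1-t)^3 \mid N(t)$, and the binomial evaluation $-\sum_k c_k\binom{e_k}{3}$ with the Pascal merge $\binom{b+1}{3}-\binom{b}{3}=\binom{b}{2}$ all check out, and expanding with $b=\tfrac{3q+d-1}{2}$ does yield $\tfrac34 dq^2 - \tfrac1{12}(d^3-d)$. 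Your observation that the opposite-parity hypothesis is exactly what makes $b$ an integer is also the right thing to flag, and your alternative linkage route (via $HK_R(q) = q^3 - \dim_k P/(\maxI^{[q]}:f)$ and the compressed Hilbert function) is a sound fallback if one wishes to drop the $q \geq d+3$ standing assumption inherited from Theorem~\ref{thm:RGlqcBetti}. No gaps.
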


In order to show that link-$q$-compressed is a common condition for polynomials, we note the following.
\begin{remark}
\label{rmk:lqcInGeneral}
For fixed values of $e$ and $d = \deg f$, a polynomial $f$ being link-$p^e$-compressed is a Zariski open condition on the coefficients of $f$, as was shown in \cite[Remark 2.10]{RGpaper}.
If the set of $f$ with this condition is shown to also be nonempty, we say it holds for general choices of $f$.
It is a stronger condition for a given $f$ to be link-$p^e$-compressed for all $e>0$; this would mean the coefficients of $f$ lie in a countable intersection of Zariski open sets. Such a choice of $f$ is called \textit{very general}.
\end{remark}

The following tells us about the generators of $\maxI^{[q]}:f$ when $f$ is link-$q$-compressed.
\begin{corollary}[cf. {\cite[Corollary 4.4]{RGpaper}}]
\label{cor:lqcDegOfGens}
Let $P = k[x,y,z]$, assume that $f$ is link-$q$-compressed for some $q$, with $q \geq d+3$ (so that $q \leq \frac{s}{2}$).

Then the minimal homogeneous generators for $\maxI^{[q]}:f$ in degrees $>\frac{s}{2}$ lie
in degree $\frac{s}{2}+1$ when $s$ is even.
\end{corollary}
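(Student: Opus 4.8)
The plan is to determine the degrees of the minimal generators of $J := \maxI^{[q]}:f$ by combining Theorem~\ref{thm:RGlqcBetti}(a) with the graded self-duality of the Buchsbaum--Eisenbud resolution of $P/J$. First I would dispose of the low degrees. Since $q \geq d+3$ we have $q \leq s/2$ with $s = 3(q-1)-d$, and since $s$ is even, $p$ and $d$ have opposite parity, so Theorem~\ref{thm:RGlqcBetti}(a) applies. By Lemma~\ref{lem:lqcDef}, $(J/\maxI^{[q]})_i = 0$ for all $i \leq s/2$; since $\maxI^{[q]}$ is generated in degree $q \leq s/2$, it follows that $J_i = (\maxI^{[q]})_i = \maxI_1(\maxI^{[q]})_{i-1} = \maxI_1 J_{i-1}$ for $q < i \leq s/2$, while $J_{<q} = 0$ and $J_q = (\maxI^{[q]})_q$ is spanned by $x^q,y^q,z^q$. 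Hence $J$ has exactly three minimal generators of degree $\leq s/2$, namely $c_1,c_2,c_3$ in degree $q$, and every $w_j$ has degree $\geq s/2+1$; it remains to show $\deg w_j = s/2+1$ for each $j$.

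Next I would read off $\deg w_j$ from the resolution. Because $\maxI^{[q]}+(f)$ is an almost complete intersection, $P/J$ is Artinian Gorenstein of codimension three with socle degree $s$, so its minimal $P$-free resolution has the Buchsbaum--Eisenbud form
\[
0 \longrightarrow P(-s-3) \longrightarrow \bigoplus_k P(-b_k) \xrightarrow{\mtrx{\partial}_2} \bigoplus_k P(-a_k) \longrightarrow P \longrightarrow 0,
\]
which is self-dual, so after suitable indexing $b_k = s+3-a_k$; here the $a_k$ are the degrees of $c_1,c_2,c_3,w_1,\ldots,w_{2d}$ and $\mtrx{\partial}_2 = \begin{bmatrix}\mtrx{\varphi}&\mtrx{\psi}\\-\mtrx{\psi}^\top&\mtrx{\Phi}\end{bmatrix}$ is the skew-symmetric matrix of Lemma~\ref{lem:partial2lastPfs}, with $\mtrx{\varphi}$ the $2d\times 2d$ block and $\mtrx{\Phi}$ the $3\times 3$ block (using $m=\mu(J)-3=2d$ from Theorem~\ref{thm:RGlqcBetti}(a)). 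Since the resolution is graded, the entry of $\mtrx{\partial}_2$ in the row of a generator of degree $a_k$ and the column of a generator of degree $a_l$ has degree $b_l - a_k = s+3-a_l-a_k$. Applying this to the block $\mtrx{\psi}$, whose rows index the $w_j$ and whose columns index the degree-$q$ generators $c_i$, its entries have degree $s+3-q-\deg w_j$; Theorem~\ref{thm:RGlqcBetti}(a) says these entries have degree $\frac12(q-d+1)$, so $\deg w_j = s+3-q-\frac12(q-d+1) = \frac{3q-d-1}{2} = \frac{s}{2}+1$ for every $j$.

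Putting the two steps together, the minimal generators of $J$ of degree $>s/2$ are exactly $w_1,\ldots,w_{2d}$, all in degree $\frac{s}{2}+1$, which is the assertion. I expect the only real obstacle to be bookkeeping: one must check that the block $\mtrx{\psi}$ indeed couples the $c_i$'s to the $w_j$'s (so that the self-duality relation is applied to the correct pair of shifts) and that $q \leq s/2$ keeps the $c_i$'s out of the range of interest. If one wishes to avoid invoking Theorem~\ref{thm:RGlqcBetti}, the same conclusion follows from the self-duality alone in the form $\beta_{1,a}(P/J) = \beta_{2,s+3-a}(P/J)$: it suffices to show $J$ has no minimal first syzygy in degrees $\leq s/2+1$, and in such a degree any syzygy can only involve $c_1,c_2,c_3$ together with the $w_j$ of degree exactly $s/2+1$ (with scalar coefficients, since $\deg w_j \geq s/2+1$); the scalar coefficients vanish because those $w_j$ are linearly independent modulo $\maxI^{[q]}$, and one is reduced to a syzygy of the complete intersection $\maxI^{[q]}$, whose first syzygies live only in degree $2q > s/2+1$.
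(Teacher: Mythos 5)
The paper does not actually prove this statement; it is cited as {\cite[Corollary 4.4]{RGpaper}} with no in-text argument, so there is nothing in the source to compare against. Your primary route, however, carries a circularity risk: it reads off $\deg\mtrx{\psi}=\tfrac12(q-d+1)$ from Theorem~\ref{thm:RGlqcBetti}(a), which is Theorem~5.10 of \cite{RGpaper}, and in that source Theorem~5.10 sits logically downstream of the Corollary~4.4 you are proving. Within this thesis's citation-only framework nothing is formally broken, but the dependency is inverted and you should not present that route as a proof.

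Your fallback argument is the genuinely self-contained one, and it is correct. From Lemma~\ref{lem:lqcDef} and $q\leq s/2$, the only minimal generators of $J=\maxI^{[q]}:f$ in degree $\leq s/2$ are $x^q,y^q,z^q$ in degree $q$, and every additional generator $w_j$ satisfies $\deg w_j\geq s/2+1$. Any graded first syzygy of $J$ in a degree $\delta\leq s/2+1$ can involve the $w_j$ only with scalar coefficients (and only those $w_j$ of degree exactly $\delta=s/2+1$); these scalars vanish because such $w_j$ are linearly independent modulo $\maxI^{[q]}$ --- here you implicitly need $(\maxI^{[q]})_{s/2+1}\subseteq(\maxI J)_{s/2+1}$, which holds precisely because $q\leq s/2$, so independence in $J/\maxI J$ descends to independence mod $\maxI^{[q]}$ in that degree. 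What remains is a syzygy of the regular sequence $x^q,y^q,z^q$, which lives in degree $\geq 2q>s/2+1$, so $\beta_{2,\delta}(P/J)=0$ for all $\delta\leq s/2+1$. Self-duality of the codimension-three Gorenstein resolution, $\beta_{1,a}(P/J)=\beta_{2,s+3-a}(P/J)$, then gives $\beta_{1,a}(P/J)=0$ for $a\geq s/2+2$, so every $w_j$ has degree exactly $s/2+1$. Lead with this self-duality argument and drop the appeal to Theorem~\ref{thm:RGlqcBetti}.
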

\begin{proposition}[cf. {\cite[Proposition 4.5]{RGpaper}}]
\label{prop:lqcNumOfGens}
Let $P = k[x,y,z]$, assume that $f$ is link-$q$-compressed for some $q$, with $q \geq d+3$ (so that $q \leq \frac{s}{2}$). Then $x^q,y^q,z^q$ are part of a minimal set of generators for $\maxI^{[q]}:f$, and if $s$ is even then $\maxI^{[q]}:f$ has exactly $2d$ additional minimal generators in degree $\frac{s}{2}+1$ and no others of higher degree.
\end{proposition}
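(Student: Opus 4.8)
The plan is to combine Lemma~\ref{lem:lqcDef} and Corollary~\ref{cor:lqcDegOfGens} with one explicit Hilbert-function computation. Throughout write $J = \maxI^{[q]}:f$ and $s = 3(q-1)-d$, and note that $q \ge d+3$ is exactly the inequality $q \le s/2$, while ``$s$ even'' is the standing hypothesis. Since $f$ is link-$q$-compressed, Lemma~\ref{lem:lqcDef} gives $J_i = (\maxI^{[q]})_i$ for all $i \le s/2$; because $P$ is generated in degree $1$ this forces $(\maxI J)_i = P_1 \cdot J_{i-1} = P_1 \cdot (\maxI^{[q]})_{i-1} = (\maxI \maxI^{[q]})_i$ for $i \le s/2$ as well, so $(J/\maxI J)_i \cong (\maxI^{[q]}/\maxI\maxI^{[q]})_i$ in that range. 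The right-hand side is $3$-dimensional in degree $q$ (spanned by the classes of $x^q,y^q,z^q$) and zero in every other degree $\le s/2$. Hence $x^q,y^q,z^q$ is precisely the degree-$\le s/2$ part of a minimal homogeneous generating set of $J$: this proves the first assertion and shows that every remaining minimal generator has degree $> s/2$. By Corollary~\ref{cor:lqcDegOfGens} each such generator lies in degree exactly $s/2+1$, which is the ``no others of higher degree'' statement; it remains only to count them.

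Write $h_i = H_i(P/\maxI^{[q]})$ and let $m = \dim_k (J/\maxI J)_{s/2+1}$ be the number of minimal generators of $J$ in degree $s/2+1$. Repeating the degree argument above (now using $J_{s/2} = (\maxI^{[q]})_{s/2}$, together with $s/2+1 > q$ so that $\maxI^{[q]}$ and $\maxI\maxI^{[q]}$ agree in degree $s/2+1$) gives $(\maxI J)_{s/2+1} = (\maxI^{[q]})_{s/2+1}$, hence
\[
m \;=\; \dim_k (J/\maxI^{[q]})_{s/2+1} \;=\; \bigl(\dim_k P_{s/2+1} - H_{s/2+1}(P/J)\bigr) - \bigl(\dim_k P_{s/2+1} - h_{s/2+1}\bigr) \;=\; h_{s/2+1} - H_{s/2+1}(P/J).
\]
Since $J$ is $\maxI^{[q]}$-compressed, Definition~\ref{def:compressed} gives $H_{s/2+1}(P/J) = \min\{h_{s/2+1},\, h_{s/2-1}\}$, so $m = \max\{0,\, h_{s/2+1}-h_{s/2-1}\}$.

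The final step is the identity $h_{s/2+1} - h_{s/2-1} = 2d$. I would read $h_i$ off the Hilbert series $H_{P/\maxI^{[q]}}(t) = \bigl(\tfrac{1-t^q}{1-t}\bigr)^3$, giving $h_i = \binom{i+2}{2} - 3\binom{i-q+2}{2} + 3\binom{i-2q+2}{2} - \binom{i-3q+2}{2}$, with binomials read as $0$ once the top entry is negative. For $i \in \{s/2-1,\, s/2+1\}$ one has $i \approx \tfrac{3q}{2}$, so $i-2q<0$ and $i-3q<0$ and the last two terms vanish, while $q \ge d+3$ keeps $i-q+2 > 0$ so the middle term equals its polynomial value. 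A short expansion (using $s/2 = (3q-3-d)/2$) then yields $h_{s/2+1}-h_{s/2-1} = 2d$; since this is positive, $m = 2d$, completing the proof.

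The computation in the last paragraph is routine, so I expect the main care to go into the two ``$\maxI J$ agrees with $\maxI\maxI^{[q]}$'' identities --- i.e.\ checking that $f$ being link-$q$-compressed really pins down $J$ in all degrees $\le s/2$ and hence $\maxI J$ in all degrees $\le s/2+1$ --- and into confirming that the hypothesis $q \ge d+3$ keeps the relevant binomial coefficients in the ``polynomial'' range so that the Hilbert-series formula can be applied term by term.
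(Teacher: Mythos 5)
The paper does not supply its own proof of this statement; it is imported verbatim from \cite[Proposition 4.5]{RGpaper}, so there is no source argument to compare against line by line. Your proof stands on its own and is correct. The reduction is clean: Lemma~\ref{lem:lqcDef} gives $J_i = (\maxI^{[q]})_i$ for $i \le s/2$, hence $(\maxI J)_i = P_1\cdot J_{i-1} = (\maxI\maxI^{[q]})_i$ through degree $s/2+1$ (the extra step $(\maxI\maxI^{[q]})_{s/2+1} = (\maxI^{[q]})_{s/2+1}$ uses exactly $s/2+1>q$, i.e.\ $q\le s/2$); Corollary~\ref{cor:lqcDegOfGens} supplies the ``no higher-degree generators'' half; and the compressed condition converts the count to the Hilbert-function gap $m = \max\{0,\,h_{s/2+1}-h_{s/2-1}\}$. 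I checked the deferred computation: with $a=(3q-1-d)/2$, $b=(q-1-d)/2$ and the identity $\binom{n+2}{2}-\binom{n}{2}=2n+1$, one gets $h_{s/2+1}-h_{s/2-1}=(2a+1)-3(2b+1)=2a-6b-2=2d$, and the truncation assumptions are satisfied since $q\ge d+3$ forces $i-q+2\ge 0$ while $i-2q+2<0$ for both $i=s/2\pm1$, so the middle binomial is in its polynomial range and the last two terms of the Hilbert-series expansion genuinely vanish. So $m=2d$ and the argument is complete.
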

We have a similar result for the socle of $R/\maxI^{[q]}$.
\begin{theorem}[cf. {\cite[Theorem 4.1]{RGpaper}}]
\label{thm:lqcSoc}
Let $R = k[x,y,z]/(f)$ be a standard graded hypersurface ring over a field $k$ with homogeneous maximal ideal $\maxI$. Set \[
q = p^e \quad d = \deg f \quad \text{and} \quad s = 3(q - 1) - d
\/.\] Assume that $q \geq d+3$. Assume further that $f$ is link-$q$-compressed. Then the following hold for the socle module $\soc(R/\maxI^{[q]})$.
\begin{enumerate}
    \item Its generators lie in degree $s_2 = \frac{1}{2} (3(q - 1) + d - 2)$ if $s$ is even.
    \item If $s$ is even, its dimension satisfies $\dim_k \soc \left( R/\maxI^{[q]} \right)_{s_2} = 2d$.
\end{enumerate}
\end{theorem}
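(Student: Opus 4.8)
The plan is to compute $\soc\!\left(R/\maxI^{[q]}\right)$ by combining linkage duality with the description of the minimal generators of $\maxI^{[q]}:f$ supplied by Corollary~\ref{cor:lqcDegOfGens} and Proposition~\ref{prop:lqcNumOfGens}. Put $\mathfrak c = \maxI^{[q]} = (x^q,y^q,z^q)$, an Artinian complete intersection in $P = k[x,y,z]$ whose socle sits in degree $\sigma = 3(q-1)$. Since $R/\maxI^{[q]} = P/(\mathfrak c + (f))$ and $\mathfrak c:(\mathfrak c+(f)) = \mathfrak c:f = \maxI^{[q]}:f$, the ideals $\mathfrak c+(f)$ and $\maxI^{[q]}:f$ are linked through $\mathfrak c$. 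Because $P/\mathfrak c$ is Gorenstein Artinian with $\omega_{P/\mathfrak c} \cong (P/\mathfrak c)(\sigma)$, for any ideal $I \supseteq \mathfrak c$ one has $\omega_{P/I} \cong \operatorname{Hom}_{P/\mathfrak c}\!\bigl(P/I,\,P/\mathfrak c\bigr)(\sigma) \cong \bigl((\mathfrak c:I)/\mathfrak c\bigr)(\sigma)$; applied to $I = \mathfrak c+(f)$ this gives
\[
\omega_{R/\maxI^{[q]}} \;\cong\; \bigl((\maxI^{[q]}:f)/\maxI^{[q]}\bigr)(\sigma), \qquad \sigma = 3(q-1).
\]

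First I would convert information about $\omega$ into information about the socle. For an Artinian graded $P$-module $M$, graded local duality gives $\omega_M \cong M^\vee$, hence $\omega_M/\maxI\,\omega_M \cong \bigl(\soc M\bigr)^\vee$, so that $\dim_k (\soc M)_t = \dim_k \bigl(\omega_M/\maxI\,\omega_M\bigr)_{-t}$ for all $t$. Thus $\soc\!\left(R/\maxI^{[q]}\right)$ is supported, with matching multiplicities, in the degrees obtained by negating the degrees of a minimal homogeneous generating set of $\omega_{R/\maxI^{[q]}}$, and it remains only to pin down the minimal generators of $(\maxI^{[q]}:f)/\maxI^{[q]}$ and then apply the twist by $\sigma$.

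Next I would analyze those generators. The hypothesis $q \ge d+3$ is equivalent to $q \le \frac{s}{2}$, so modulo $\maxI^{[q]}$ the three degree-$q$ generators $x^q,y^q,z^q$ of $\maxI^{[q]}:f$ (which lie in a minimal generating set by Proposition~\ref{prop:lqcNumOfGens}) become redundant: $(\maxI^{[q]})_q = \langle x^q,y^q,z^q\rangle$ already spans the degree-$q$ piece of $(\maxI^{[q]}:f)/\maxI(\maxI^{[q]}:f)$, while $(\maxI^{[q]})_{\frac{s}{2}+1} \subseteq \maxI\cdot\maxI^{[q]}$ contributes nothing in degree $\frac{s}{2}+1$. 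On the other hand, any minimal generator of $\maxI^{[q]}:f$ other than $x^q,y^q,z^q$ cannot lie in $\maxI^{[q]}$, so its image in $(\maxI^{[q]}:f)/\maxI^{[q]}$ is nonzero and hence of degree $> \frac{s}{2}$ by Lemma~\ref{lem:lqcDef}; Corollary~\ref{cor:lqcDegOfGens} then forces its degree to equal $\frac{s}{2}+1$, and by Proposition~\ref{prop:lqcNumOfGens} there are exactly $2d$ of them. Hence $(\maxI^{[q]}:f)/\maxI^{[q]}$ is minimally generated by exactly $2d$ elements, all of degree $\frac{s}{2}+1$, so $\omega_{R/\maxI^{[q]}}$ is minimally generated by $2d$ elements in degree $\frac{s}{2}+1-\sigma$. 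Negating, $\soc\!\left(R/\maxI^{[q]}\right)$ is concentrated in the single degree $\sigma-\frac{s}{2}-1 = \frac{1}{2}\bigl(3(q-1)+d-2\bigr) = s_2$, where it has dimension $2d$; this establishes (1) and (2).

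The main obstacle is careful bookkeeping rather than any genuine difficulty: one must fix conventions (for $\omega_P$ and for graded Matlis duality) so that the twist in the linkage isomorphism is exactly $\sigma = 3(q-1)$, and one must verify honestly that the $2d$ degree-$\left(\frac{s}{2}+1\right)$ generators of $\maxI^{[q]}:f$ persist as a minimal generating set after quotienting by $\maxI^{[q]}$, with no collapse among generators of equal degree. As a sanity check, the same conclusion can be read off the $P$-free resolution of $R/\maxI^{[q]}$ in Theorem~\ref{thm:RGlqcBetti}(b): dualizing it into $P(-3)$ exhibits $\omega_{R/\maxI^{[q]}}$ as a cokernel of a map with entries in $\maxI$ into $P^{2d}(b-2)$, where $b = \frac{3}{2}q + \frac{1}{2}d - \frac{1}{2}$, so $\omega_{R/\maxI^{[q]}}$ has $2d$ minimal generators in degree $-(b-2)$ and $\soc\!\left(R/\maxI^{[q]}\right)$ is $2d$-dimensional, concentrated in degree $b-2 = s_2$.
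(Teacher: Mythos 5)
The thesis itself does not prove Theorem~\ref{thm:lqcSoc}: it is quoted verbatim from \cite{RGpaper}~(Theorem~4.1) and used as an input, so there is no in-paper argument to compare against. Your derivation is nonetheless correct. The linkage step is right: $P/\maxI^{[q]}$ is Gorenstein Artinian with socle degree $\sigma = 3(q-1)$, hence $\omega_{P/\maxI^{[q]}} \cong (P/\maxI^{[q]})(\sigma)$, and for $I = \maxI^{[q]}+(f)$ one has $\operatorname{Hom}_{P/\maxI^{[q]}}(P/I,P/\maxI^{[q]}) = (\maxI^{[q]}:I)/\maxI^{[q]} = (\maxI^{[q]}:f)/\maxI^{[q]}$, giving $\omega_{R/\maxI^{[q]}} \cong \bigl((\maxI^{[q]}:f)/\maxI^{[q]}\bigr)(\sigma)$. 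The Matlis-duality conversion $\dim_k(\soc M)_t = \dim_k\bigl(\omega_M/\maxI\,\omega_M\bigr)_{-t}$ is standard, and the bookkeeping $\sigma - (\tfrac{s}{2}+1) = \tfrac{1}{2}(3(q-1)+d-2) = s_2$ is exact, as is the cross-check against the shift $-b-1$ in Theorem~\ref{thm:RGlqcBetti}(b), which yields $b-2 = s_2$. The step you flag does hold: if a homogeneous relation $\sum a_i \bar w_i = 0$ in $(\maxI^{[q]}:f)/\maxI^{[q]}$ had some coefficient $a_j$ a unit (which, by degree reasons, is the only way minimality could fail), then $w_j$ would lie in $(x^q,y^q,z^q,w_1,\dots,\widehat{w_j},\dots,w_{2d})$, contradicting minimality of the generating set supplied by Proposition~\ref{prop:lqcNumOfGens}. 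So the $\bar w_i$ are indeed a minimal generating set of $(\maxI^{[q]}:f)/\maxI^{[q]}$.

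One caveat worth keeping in mind: you deduce Theorem~\ref{thm:lqcSoc} (\cite{RGpaper}~4.1) from Corollary~\ref{cor:lqcDegOfGens} (\cite{RGpaper}~4.4) and Proposition~\ref{prop:lqcNumOfGens} (\cite{RGpaper}~4.5), and the sanity check invokes Theorem~\ref{thm:RGlqcBetti} (\cite{RGpaper}~5.10). The numbering in \cite{RGpaper} suggests their 4.1 is proved before 4.4, 4.5 and 5.10, so as a fresh reproof of \cite{RGpaper}'s results from scratch your argument may run through the dependency chain backwards. Within this thesis, which treats all of these as established facts, there is no circularity, and what you have written is a clean and correct \emph{consistency derivation} of \ref{thm:lqcSoc} from the other cited statements; it just should not be read as an independent alternative to \cite{RGpaper}'s proof of 4.1.
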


\section{Relevant number theory}
\label{sec:NumThry}

The following sequence of coefficients is vital for our results. This section is dedicated to proving theorems about these coefficients.
\begin{notation}
\label{not:lambda}
For any integer $t \geq 0$, we define the dyadic rational $\lambda_t = 2^{-2t} \binom{2t}{t} \in \ZZ[2^{-1}]$.
\end{notation}
\begin{remark}
For any $t\geq0$, $\lambda_t$ is well defined in any $\ZZ[2^{-1}]$-algebra, including fields which have characteristic not equal to $2$.
\end{remark}
\begin{remark}
The Maclaurin expansion of the function $(1-x)^{-1/2}$ uses the $\lambda_t=2^{-2t}\binom{2t}{t}$ coefficients:
\[
\frac{1}{\sqrt{1-x}} = \sum_{t=0}^\infty \lambda_t x^t
\]
for all $-1\leq t<1$.
\end{remark}

The following are important facts we need for several results:
\begin{lemma}
\label{lem:lambdaOdds}
For any $t \geq 0$, the following hold in any ring for which they are well-defined:
\begin{enumerate}
    \item $(2t)! = 2^t t! \prod_{h=1}^t (2h-1)$, 
    \item $2^t \prod_{h=1}^t (2h-1) = t! \binom{2t}{t}$, 
    \item $\prod_{h=1}^t (2h-1) = 2^t t! \lambda_t$, and 
    \item $\left(\prod_{h=1}^t (2h-1)\right)^2 = (2t)! \lambda_t$.
\end{enumerate}
\end{lemma}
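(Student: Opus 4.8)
The plan is to prove the four identities in sequence, each following from the previous one by elementary manipulation, with item (1) being the only place where a genuine combinatorial/inductive argument is needed.

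First I would establish (1), that $(2t)! = 2^t\, t! \prod_{h=1}^t (2h-1)$. The cleanest route is to split the product $(2t)! = \prod_{k=1}^{2t} k$ into its even factors and its odd factors: the even factors are $\prod_{h=1}^t (2h) = 2^t \prod_{h=1}^t h = 2^t\, t!$, and the odd factors are exactly $\prod_{h=1}^t (2h-1)$. Multiplying these gives the claim. (Equivalently one can do a quick induction on $t$: the base case $t=0$ is $1=1$, and the inductive step multiplies both sides by $(2t+1)(2t+2) = 2(t+1)(2t+1)$.) Since this is an identity of integers, it holds in any ring by applying the unique ring homomorphism from $\ZZ$.

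Next, (2) follows from (1) by dividing: from $(2t)! = 2^t\, t!\prod_{h=1}^t(2h-1)$ and the definition $\binom{2t}{t} = \frac{(2t)!}{(t!)^2}$ we get $\binom{2t}{t} = \frac{2^t\prod_{h=1}^t(2h-1)}{t!}$, hence $t!\binom{2t}{t} = 2^t\prod_{h=1}^t(2h-1)$. I should note this is again an integer identity (no division survives in the final form), so the "any ring for which it is well-defined" caveat is automatically satisfied. Then (3) is just (2) rewritten using Notation \ref{not:lambda}: $\lambda_t = 2^{-2t}\binom{2t}{t}$ means $\binom{2t}{t} = 2^{2t}\lambda_t$, so $t!\binom{2t}{t} = t!\, 2^{2t}\lambda_t = 2^t\bigl(2^t t!\,\lambda_t\bigr)$; comparing with (2) and cancelling $2^t$ (valid in any ring where $2$ is invertible, which is exactly where $\lambda_t$ lives) gives $\prod_{h=1}^t(2h-1) = 2^t t!\,\lambda_t$. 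Finally (4) is obtained by squaring (3) and then substituting (1): $\left(\prod_{h=1}^t(2h-1)\right)^2 = \left(2^t t!\,\lambda_t\right)\!\left(2^t t!\right)\lambda_t = \left(2^t t!\prod_{h=1}^t(2h-1)\right)\lambda_t / \text{(careful bookkeeping)}$ — more precisely, multiply the two copies of (3) as $\prod(2h-1)\cdot\prod(2h-1) = \bigl(2^t t!\,\lambda_t\bigr)\cdot\prod(2h-1)$ and then apply (1) in the form $2^t t!\prod(2h-1) = (2t)!$ to the remaining factor, yielding $(2t)!\,\lambda_t$.

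I do not expect any serious obstacle here; the main thing to be careful about is the bookkeeping of which identities are integer identities (so hold in every ring) versus which genuinely require $2$ to be invertible (those involving $\lambda_t$), and making sure the chain of substitutions in (4) is arranged so that the $\lambda_t$ appears exactly once and the leftover integer factor is precisely $(2t)!$. A short remark reconciling the "well-defined" phrasing for each item would make the statement airtight.
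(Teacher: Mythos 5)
Your proposal is correct and follows essentially the same path as the paper: (1) by splitting $(2t)!$ into even and odd factors, (2) by dividing through by $t!$ in (1), (3) by unwinding $\lambda_t = 2^{-2t}\binom{2t}{t}$ against (2), and (4) by replacing one factor of $\prod_{h=1}^t(2h-1)$ via (3) and then applying (1) to the leftover $2^t t!\prod_{h=1}^t(2h-1)$. Your caveat distinguishing the integer identities (1)--(2) from the $\ZZ[2^{-1}]$-identities (3)--(4) matches the remark the paper makes immediately after the lemma.
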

\begin{proof}
Let $t \geq 0$.

\begin{enumerate}
    \item \label{lem:lambdabullet1}
By separating the even and odd factors of $(2t)!$, we have \[
(2t)!
 = 
\prod_{h=1}^{2t} h
 = 
\prod_{h=1}^t (2h) \prod_{h=1}^t (2h-1)
 = 
\prod_{h=1}^t 2 \prod_{h=1}^t h \prod_{h=1}^t (2h-1)
 = 
2^t t! \prod_{h=1}^t (2h-1)
\/.\]

    \item \label{lem:lambdabullet2}
We also have $t! \binom{2t}{t} = \frac{(2t)!}{t!} = 2^t \prod_{h=1}^t (2h-1)$ using \ref{lem:lambdabullet1}.

    \item \label{lem:lambdabullet3}
From \ref{lem:lambdabullet2} we can see that \[
2^t t! \lambda_t = 2^t t! \left( 2^{-2t} \binom{2t}{t} \right) = 2^{-t} t! \binom{2t}{t} = \prod_{h=1}^t (2h-1)
\/.\]

    \item \label{lem:lambdabullet4}
Finally, \[
(2t)! \lambda_t = \left(2^t t! \prod_{h=1}^t (2h-1)\right) \lambda_t = 
\left(\prod_{h=1}^t (2h-1)\right)^2
\/.\qedhere\]
\end{enumerate}
\end{proof}
\begin{remark}
Note that \ref{lem:lambdabullet1} and \ref{lem:lambdabullet2} of this previous Lemma are true in any ring since only integers are involved. On the other hand, \ref{lem:lambdabullet3} and \ref{lem:lambdabullet4} are only true in $\ZZ[2^{-1}]$-algebras because they involve $\lambda_t \in \ZZ[2^{-1}]$, as well as integers.
\end{remark}

In order to prove a vital lemma about $\lambda_t$, we recall two well-known theorems about integers modulo a prime:
\begin{theorem*}[Fermat's little theorem]
Let $p$ be a prime number. Then for any integer $a$, $a^p \equiv a    \mod p$. By applying this repeatedly, we also see that $a^q \equiv a    \mod p$ if $q$ is any power of $p$.
\end{theorem*}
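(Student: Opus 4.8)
The plan is to prove the first assertion, $a^p \equiv a \pmod{p}$ for every integer $a$, by an induction on $a$ after a routine reduction, and then to bootstrap to an arbitrary prime power $q$ by a second, very short induction.

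First I would reduce to the case $a \geq 0$: every integer is congruent modulo $p$ to one of $0, 1, \ldots, p-1$, and $b \equiv c \pmod{p}$ forces $b^p \equiv c^p \pmod{p}$, so it suffices to verify the congruence for nonnegative $a$. For such $a$ I induct. The base case $a = 0$ is immediate from $0^p = 0$. For the inductive step I expand by the binomial theorem, $(a+1)^p = \sum_{k=0}^{p} \binom{p}{k} a^k$, and observe that for $1 \leq k \leq p-1$ the integer $\binom{p}{k} = \frac{p!}{k!\,(p-k)!}$ is divisible by $p$: the numerator $p!$ is divisible by $p$, while neither $k!$ nor $(p-k)!$ is, since $p$ is prime and both $k$ and $p-k$ are strictly less than $p$. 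Hence $(a+1)^p \equiv a^p + 1 \pmod{p}$, and the inductive hypothesis $a^p \equiv a \pmod{p}$ gives $(a+1)^p \equiv a+1 \pmod{p}$, completing the induction. (Alternatively one could invoke that $(\ZZ/p\ZZ)^\times$ is a group of order $p-1$, but the binomial induction is self-contained and fits the combinatorial flavor of Section~\ref{sec:NumThry}.)

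For the second assertion I would induct on the exponent $e$ in $q = p^e$. The case $e = 1$ is exactly the first assertion. Assuming $a^{p^e} \equiv a \pmod{p}$, I apply the first assertion to the integer $b = a^{p^e}$ to get $a^{p^{e+1}} = b^{p} \equiv b = a^{p^e} \equiv a \pmod{p}$, which closes the induction and yields $a^q \equiv a \pmod{p}$ for every power $q$ of $p$. There is no genuine obstacle here, as the statement is classical; the only point that uses a hypothesis in an essential way is the divisibility $p \mid \binom{p}{k}$ for $0 < k < p$, which is precisely where primality of $p$ enters.
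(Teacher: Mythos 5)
The paper does not prove Fermat's little theorem; it merely recalls it as a classical, well-known result preceding the proof of Lemma \ref{lem:lambdaBinom}, so there is no in-paper argument to compare against. Your proposal supplies a complete and correct proof along entirely standard lines: reduce to $a \geq 0$ using the compatibility of congruence with taking $p$th powers, induct on $a$ via the binomial theorem together with the divisibility $p \mid \binom{p}{k}$ for $0 < k < p$ (the one place where primality of $p$ is essential), and then bootstrap to $q = p^e$ by a short induction on $e$ applying the base case to $a^{p^e}$. All steps check out, and the second induction is exactly the ``applying this repeatedly'' that the paper's statement alludes to.
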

\begin{theorem*}[Lucas's theorem]
Let $p$ be a prime number. Let $a,b \geq 0$ be integers and let \[
a = a_n p^n + a_{n-1} p^{n-1} + \cdots + a_0 \qquad \text{and} \qquad b = b_n p^n + b_{n-1} p^{n-1} + \cdots + b_0
\] be the base $p$ expansions of $a$ and $b$. Then \[
\binom{a}{b} \equiv \prod_{i=0}^n \binom{a_i}{b_i}    \mod p
\/,\] where we use the convention that $\binom{m}{n} = 0$ if $m<n$.
\end{theorem*}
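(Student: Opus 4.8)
The plan is to prove Lucas's theorem by passing to the polynomial ring $\mathbb{F}_p[X]$ and comparing coefficients in a congruence of the form $(1+X)^a \equiv \prod_{i=0}^n (1+X^{p^i})^{a_i} \pmod p$. The foundation is the ``freshman's dream'' congruence $(1+X)^p \equiv 1 + X^p \pmod p$, which holds because $p \mid \binom{p}{j}$ for $0 < j < p$. First I would iterate this: since reducing coefficients mod $p$ is a ring homomorphism $\ZZ[X] \to \mathbb{F}_p[X]$ and hence commutes with taking $p$-th powers, raising the previous congruence to the $p$-th power repeatedly gives $(1+X)^{p^i} \equiv 1 + X^{p^i} \pmod p$ for all $i \geq 0$ by a one-line induction on $i$. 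Multiplying these identities according to the base-$p$ digits of $a$ then yields
\[
(1+X)^a \;=\; \prod_{i=0}^n \bigl((1+X)^{p^i}\bigr)^{a_i} \;\equiv\; \prod_{i=0}^n (1+X^{p^i})^{a_i} \pmod p .
\]

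Next I would extract the coefficient of $X^b$ from each side. On the left it is $\binom{a}{b}$ by the binomial theorem. On the right, expanding each factor produces a sum over tuples $(c_0,\dots,c_n)$ with $0 \leq c_i \leq a_i$ of the terms $\bigl(\prod_{i} \binom{a_i}{c_i}\bigr) X^{\sum_i c_i p^i}$, so the coefficient of $X^b$ is the sum of $\prod_i \binom{a_i}{c_i}$ over those tuples with $\sum_i c_i p^i = b$. The crucial point is that each $a_i \leq p-1$ forces $0 \leq c_i \leq p-1$, so by uniqueness of base-$p$ representations the only possible tuple is $c_i = b_i$ for every $i$ (and there is no such tuple at all when $b$ needs more than $n+1$ base-$p$ digits, i.e.\ when $b > a$). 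Therefore the coefficient of $X^b$ on the right is exactly $\prod_{i=0}^n \binom{a_i}{b_i}$, and equating the two coefficients in $\mathbb{F}_p$ gives the desired congruence.

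The proof is essentially formal, so the only genuine care needed --- the ``hard part'' only in a bookkeeping sense --- is reconciling everything with the convention $\binom{m}{n} = 0$ for $m < n$. One checks that if $b_i > a_i$ for some $i$, so that $\prod_i \binom{a_i}{b_i} \equiv 0$, then no admissible tuple $(c_i)$ yields the exponent $b$, hence $\binom{a}{b}$ also reduces to $0 \bmod p$; and symmetrically the degenerate cases $b > a$ and $a = 0$ are handled by the same uniqueness statement that drove the main computation. Nothing in the argument uses more than the binomial theorem and the divisibility $p \mid \binom{p}{j}$ for $0 < j < p$.
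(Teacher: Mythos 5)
The paper states Lucas's theorem without proof as a standard background fact (alongside Fermat's little theorem), so there is no argument of the paper's own to compare against. Your proposal is the classical generating-function proof: reduce to $\mathbb{F}_p[X]$, iterate the freshman's dream to obtain $(1+X)^{p^i} \equiv 1 + X^{p^i} \pmod{p}$, write $(1+X)^a \equiv \prod_{i=0}^n (1+X^{p^i})^{a_i}$ using the base-$p$ digits of $a$, and compare coefficients of $X^b$ on both sides, invoking uniqueness of base-$p$ representation to pin down the surviving term. This is correct, and your treatment of the convention $\binom{m}{n}=0$ for $m<n$ is also sound: since each $a_i \le p-1$ forces $0\le c_i\le p-1$, the only candidate tuple is $c_i=b_i$; if some $b_i>a_i$ no admissible tuple exists, so the coefficient of $X^b$ on the right vanishes while $\prod_i \binom{a_i}{b_i}=0$ as well, and the two sides agree. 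The proof stands on its own.
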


The following lemma is the reason the $\lambda_t$ coefficients are vital to our results:
\begin{lemma}
\label{lem:lambdaBinom}
Let $p$ be an odd prime and $q = p^e$ where $e > 0$. Set $\pi = (q-1)/2$. Then, in any ring of characteristic $p>2$, \[
(-1)^t \binom{\pi}{t} = 2^{-2t} \binom{2t}{t} = \lambda_t
\] for all $0 \leq t < q$.
\end{lemma}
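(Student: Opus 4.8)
The plan is to recast the claimed congruence as an identity of formal power series over $\mathbb{F}_p$ and then play the two sides off against each other using the Frobenius. First note the problem reduces to proving the identity in $\mathbb{F}_p$: a ring of characteristic $p>2$ is an $\mathbb{F}_p$-algebra, and both the integer $(-1)^t\binom{\pi}{t}$ and the element $\lambda_t = 2^{-2t}\binom{2t}{t}\in\ZZ[2^{-1}]$ determine elements of $\mathbb{F}_p$ that are then carried into the ring, so it is enough to show $(-1)^t\binom{\pi}{t}=\lambda_t$ in $\mathbb{F}_p$ for $0\le t<q$ (the second equality $2^{-2t}\binom{2t}{t}=\lambda_t$ being the definition of $\lambda_t$). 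Fix the odd prime $p$, put $\pi=(q-1)/2$, and work in $\mathbb{F}_p[[x]]$, a discrete valuation ring; write $v$ for the $x$-adic valuation.

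Introduce $g(x)=\sum_{t\ge0}\lambda_t x^t\in\mathbb{F}_p[[x]]$ (the reduction mod $p$ of the Maclaurin series of $(1-x)^{-1/2}$) and the polynomial $h(x)=(1-x)^\pi$, whose coefficient of $x^t$ is $(-1)^t\binom{\pi}{t}$. Since $\deg h=\pi<q$, the assertion of the lemma is precisely that $g\equiv h\pmod{x^q}$: matching coefficients of $x^t$ for $0\le t<q$ then yields $\lambda_t=(-1)^t\binom{\pi}{t}$ (in particular $\lambda_t=0$ in $\mathbb{F}_p$ for $\pi<t<q$, consistent with $\binom{\pi}{t}=0$ there).

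Next I would compute the two squares. From the Maclaurin expansion — or equivalently from the classical binomial identity $\sum_{k=0}^{n}\binom{2k}{k}\binom{2n-2k}{n-k}=4^n$, which holds already in $\ZZ[2^{-1}]$ and hence survives reduction mod $p$ — one gets $g^2=\sum_{t\ge0}x^t=(1-x)^{-1}$. For $h$ we have $h^2=(1-x)^{q-1}=(1-x)^q(1-x)^{-1}=(1-x^q)(1-x)^{-1}$, using the Frobenius identity $(1-x)^q=1-x^q$ valid in characteristic $p$ (here $q$ is odd, so the sign is as stated). Subtracting,
\[
g^2-h^2=(1-x)^{-1}\bigl(1-(1-x^q)\bigr)=x^q(1-x)^{-1},
\]
so $v(g^2-h^2)=q$.

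To finish, factor $g^2-h^2=(g-h)(g+h)$. The constant term of $g+h$ is $\lambda_0+1=2$, a unit in $\mathbb{F}_p$ because $p>2$, so $g+h$ is a unit in $\mathbb{F}_p[[x]]$ and $v(g+h)=0$; hence $v(g-h)=v(g^2-h^2)-v(g+h)\ge q$, i.e.\ $g\equiv h\pmod{x^q}$, as desired. The one spot that needs genuine care is the reduction/normalization bookkeeping — in particular verifying that $g^2=(1-x)^{-1}$ is a \emph{formal} identity with coefficients in $\ZZ[2^{-1}]$, so that it is legitimate to reduce it mod $p$ — while the concluding manipulation in the DVR is essentially two lines. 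One could instead argue digit-by-digit in base $p$ via Lucas's and Fermat's theorems, reducing to the single-digit case $0\le a\le\frac{p-1}{2}$, where $\binom{(p-1)/2}{a}$ is evaluated directly using $\frac{p-1}{2}\equiv -2^{-1}\pmod p$ together with Lemma \ref{lem:lambdaOdds}; there the fiddly step is showing $p\mid\binom{2t}{t}$ whenever some base-$p$ digit of $t$ exceeds $\frac{p-1}{2}$.
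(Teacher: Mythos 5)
Your proof is correct, and it takes a genuinely different route from the paper's. The paper argues coefficient-by-coefficient in $\ZZ/p\ZZ$: it first establishes $2^{2t_0}(-1)^{t_0}\binom{\pi_0}{t_0}\equiv\binom{2t_0}{t_0}\pmod p$ for a single base-$p$ digit $t_0$ (where $\pi_0=(p-1)/2$), then lifts to general $t<q$ by expanding $t$ in base $p$, invoking Lucas's theorem for the binomial coefficients and Fermat's little theorem for the powers of $-4$, with a separate case analysis when some digit of $t$ exceeds $\pi_0$. Your argument instead works in the DVR $\mathbb{F}_p[[x]]$: you recognize $\lambda_t$ as the coefficients of $(1-x)^{-1/2}$, set $g=\sum\lambda_t x^t$ and $h=(1-x)^\pi$, observe that $g^2=(1-x)^{-1}$ (an integral convolution identity, so it survives reduction mod $p$) while $h^2=(1-x^q)(1-x)^{-1}$ by Frobenius, conclude $v(g^2-h^2)=q$, and then divide by the unit $g+h$ (constant term $2$, invertible since $p$ is odd) to get $v(g-h)\geq q$, i.e.\ $g\equiv h\pmod{x^q}$. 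This is a clean square-root comparison in a complete local ring and avoids Lucas's theorem and base-$p$ digit bookkeeping entirely; the one ingredient the paper does not need but you do is the convolution identity $\sum_{k=0}^n\binom{2k}{k}\binom{2n-2k}{n-k}=4^n$ over $\ZZ$, which you correctly note is what legitimizes the reduction of $g^2=(1-x)^{-1}$ mod $p$. Both proofs are valid; yours is shorter and more conceptual, the paper's is more elementary in the sense of not invoking power series or valuations.
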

\begin{proof}
We prove a statement in the integers to prove the lemma. The following work shows that $2^{2t} \left( (-1)^t \binom{\pi}{t} \right) \equiv \binom{2t}{t}    \mod p$ for all $0 \leq t < q$.

Define $\pi_0 = (p-1)/2$.
Let $0 \leq t_0 \leq p-1$. Then we have
\begin{align*}
t_0! \left( 2^{2t_0} \left( (-1)^{t_0} \binom{\pi_0}{t_0} \right) \right)
=&
\left( (-1)^{t_0} 2^{2t_0} \right) \left( t_0! \binom{\pi_0}{t_0} \right)
\\=&
2^{t_0} (-2)^{t_0} \prod_{h=1}^{t_0} ((\pi_0+1)-h)
\\=&
2^{t_0} \left( \prod_{h=1}^{t_0} (-2) \prod_{h=1}^{t_0} ((\pi_0+1)-h) \right)
\\=&
2^{t_0} \prod_{h=1}^{t_0} -2((\pi_0+1)-h)
\\=&
2^{t_0} \prod_{h=1}^{t_0} ((2h-1)-(2\pi_0+1))
\\=&
2^{t_0} \prod_{h=1}^{t_0} ((2h-1)-p)
\\\equiv&
2^{t_0} \prod_{h=1}^{t_0} (2h-1)
    \mod p
\\=&
t_0! \binom{2t_0}{t_0}
\/,
\end{align*}
where the last line holds by Lemma \ref{lem:lambdaOdds}. Thus
\begin{equation}
\label{eq:LambdaBinomBase}
\tag{*}
2^{2t_0} \left( (-1)^{t_0} \binom{\pi_0}{t_0} \right) \equiv \binom{2t_0}{t_0}    \mod p
\end{equation}
since $t_0! \nequiv 0    \mod p$ and $p$ is prime.

Let $0 \leq t < q$. Write the base $p$ expansion of $t$ as $t = \sum_{i=0}^{e-1} t_i p^i$, so $0 \leq t_i < p$ for all $0 \leq i \leq e-1$.
Also note that \[
\sum_{i=0}^{e-1} \pi_0 p^i = \frac{p-1}{2} \sum_{i=0}^{e-1} p^i = \frac{p^e-1}{2} = \pi
\/.\] Then we have
\begin{align*}
2^{2t} \left( (-1)^t \binom{\pi}{t} \right)
=&
(-4)^{\sum_{i=0}^{e-1} t_i p^i}
\begin{pmatrix} {\sum_{i=0}^{e-1} \pi_0 p^i} \\ {\sum_{i=0}^{e-1} t_i p^i} \end{pmatrix}
\\=&
\left(
\prod_{i=0}^{e-1} 
(-4)^{t_i p^i}
\right)
\begin{pmatrix} {\sum_{i=0}^{e-1} \pi_0 p^i} \\ {\sum_{i=0}^{e-1} t_i p^i} \end{pmatrix}
\\=&
\left(
\prod_{i=0}^{e-1} 
\left(
(-4)^{p^i}
\right)^{t_i}
\right)
\begin{pmatrix} {\sum_{i=0}^{e-1} \pi_0 p^i} \\ {\sum_{i=0}^{e-1} t_i p^i} \end{pmatrix}
\\\equiv&
\left(
\prod_{i=0}^{e-1} 
\left(
(-4)^{p^i}
\right)^{t_i}
\right)
\left(
\prod_{i=0}^{e-1} 
\binom{\pi_0}{t_i}
\right)
    \mod p
\tag{1}
\\=&
\prod_{i=0}^{e-1} 
\left(
\left(
(-4)^{p^i}
\right)^{t_i}
\binom{\pi_0}{t_i}
\right)
\\\equiv&
\prod_{i=0}^{e-1} 
\left(
(-4)^{t_i}
\binom{\pi_0}{t_i}
\right)
    \mod p
\tag{2}
\\=&
\prod_{i=0}^{e-1}
\left(
2^{2t_i}
\left(
(-1)^{t_i}
\binom{\pi_0}{t_i}
\right)
\right)
    \mod p
\\\equiv&
\prod_{i=0}^{e-1}
\binom{2t_i}{t_i}
    \mod p
\tag{3}
\/,
\end{align*}
where (1) holds by Lucas's theorem, (2) holds by Fermat's little theorem, and (3) holds by Equation (\ref{eq:LambdaBinomBase}).

If $t_i \leq \pi_0$ for all $i$, this means that $0 \leq 2 t_i < p$, and thus $2t = \sum_{i=0}^{e-1} (2t_i) p^i$ is the base $p$ expansion of $2t$, which means by Lucas's theorem we have \[
2^{2t} \left( (-1)^t \binom{\pi}{t} \right)
 \equiv 
\prod_{i=0}^{e-1}
\binom{2t_i}{t_i}
 \equiv 
\binom{\sum_{i=0}^{e-1} 2t_i p^i}{\sum_{i=0}^{e-1} t_i p^i}
 = 
\binom{2t}{t}
    \mod p
\/.\] On the other hand, if $t$ has at least one digit greater than $\pi_0$, we show that both $2^{2t} \left( (-1)^t \binom{\pi}{t} \right)$ and $\binom{2t}{t}$ are congruent to $0    \mod p$. Let $0 \leq I \leq e-1$ be the smallest index such that $t_I > \pi_0$. Then $0 \leq 2 t_i < p$ for $i < I$ and $0 \leq 2 t_I-p < p$, so $2t = \sum_{i=0}^{I-1} (2t_i) p^i + (2t_I-p) p^I + \cdots$ is the base $p$ expansion of $2t$, up to the $I$th degree.
Note that because $t_I>\pi_0$, $\binom{\pi_0}{t_I} = 0$, and thus $\binom{2t_I}{t_I} \equiv 2^{2t_I} \left( (-1)^{t_I} \binom{\pi_0}{t_I} \right) = 0    \mod p$ by Equation (\ref{eq:LambdaBinomBase}). Note that we also have $\binom{2t_I-p}{t_I} = 0$ because $t_I>2t_I-p$, which follows from $t_I<p$. Thus, we have \[
\binom{2t}{t}
 = 
\binom{\sum_{i=0}^{I-1} (2t_i) p^i + (2t_I-p) p^I + \cdots}{\sum_{i=0}^{I-1} t_i p^i + t_I p^I + \cdots}
 \equiv 
\prod_{i=0}^{I-1} 
\binom{2t_i}{t_i}
\binom{2t_I-p}{t_I}
(\cdots)
 = 
\prod_{i=0}^{I-1} 
\binom{2t_i}{t_i}
(0)
(\cdots)
 = 
0
    \mod p
\] because of Lucas's theorem and $\binom{2t_I-p}{t_I} = 0$, and \[
2^{2t} \left( (-1)^t \binom{\pi}{t} \right)
 \equiv 
\prod_{i=0}^{e-1} 
\binom{2t_i}{t_i}
 = 
\prod_{i=0}^{I-1} 
\binom{2t_i}{t_i}
\binom{2t_I}{t_I}
\prod_{i=I+1}^{e-1} 
\binom{2t_i}{t_i}
 \equiv 
\prod_{i=0}^{I-1} 
\binom{2t_i}{t_i}
(0)
\prod_{i=I+1}^{e-1} 
\binom{2t_i}{t_i}
 = 
0
    \mod p
\] because $\binom{2t_I}{t_I} \equiv 0    \mod p$. 
Therefore $2^{2t} \left( (-1)^t \binom{\pi}{t} \right) \equiv \binom{2t}{t}    \mod p$ for all $0 \leq t < q$.
\end{proof}
\begin{lemma}
\label{lem:zq}
Let $p$ be an odd prime, with $q>1$ a power of $p$. Define $\pi=\frac{q-1}{2}$. In $S[x,y,z]$, where $S$ is any characteristic $p$ ring, we have \[
z^q = z \sum_{t=0}^\pi \lambda_t (xy)^{\pi-t} (xy-z^2)^t
\/.\]
\end{lemma}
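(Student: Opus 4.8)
The plan is to reduce the claimed identity to the ordinary binomial theorem together with Lemma~\ref{lem:lambdaBinom}. First I would note that since $q - 1 = 2\pi$, we have $z^q = z\cdot(z^2)^\pi$ in $S[x,y,z]$, so it suffices to establish the polynomial identity $(z^2)^\pi = \sum_{t=0}^\pi \lambda_t (xy)^{\pi-t}(xy-z^2)^t$. Writing $z^2 = (xy) - (xy - z^2)$ and introducing the two commuting elements $u = xy$ and $v = xy - z^2$, the goal becomes $(u-v)^\pi = \sum_{t=0}^\pi \lambda_t\, u^{\pi-t} v^t$ inside the commutative ring $S[x,y,z]$.

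Next I would expand the left-hand side by the binomial theorem, which is valid over any commutative ring: $(u-v)^\pi = \sum_{t=0}^\pi (-1)^t \binom{\pi}{t}\, u^{\pi-t} v^t$. It then remains only to match coefficients termwise, i.e.\ to verify $(-1)^t\binom{\pi}{t} = \lambda_t$ in $S$ for each $0 \le t \le \pi$. This is precisely the content of Lemma~\ref{lem:lambdaBinom}: its hypothesis asks the ambient ring to have characteristic $p > 2$, which holds for $S$, and its conclusion is valid for all $0 \le t < q$; since $\pi = (q-1)/2 < q$, every index of summation $0 \le t \le \pi$ falls in that range. I would also remark at the outset that $\lambda_t \in \ZZ[2^{-1}]$ is well-defined in $S$ because a ring of odd characteristic $p$ is a $\ZZ[2^{-1}]$-algebra, $2$ being invertible modulo an odd prime.

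Assembling the steps gives $z^q = z(z^2)^\pi = z(u-v)^\pi = z\sum_{t=0}^\pi (-1)^t\binom{\pi}{t}\, u^{\pi-t}v^t = z\sum_{t=0}^\pi \lambda_t (xy)^{\pi-t}(xy-z^2)^t$, which is the desired formula. I do not expect a real obstacle here: the only points needing attention are checking that the range of validity $t < q$ in Lemma~\ref{lem:lambdaBinom} comfortably contains the range of summation $t \le \pi$, and being explicit that $z^2 = xy - (xy-z^2)$ is a genuine identity so that the binomial expansion applies verbatim.
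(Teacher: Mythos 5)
Your proposal is correct and follows essentially the same route as the paper: factor $z^q = z(z^2)^\pi$, rewrite $z^2 = xy - (xy - z^2)$, apply the binomial theorem, and replace $(-1)^t\binom{\pi}{t}$ by $\lambda_t$ via Lemma~\ref{lem:lambdaBinom}. The additional remarks about $\lambda_t$ being well-defined over a $\ZZ[2^{-1}]$-algebra and about $\pi < q$ are sound but implicit in the paper.
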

\begin{proof}
We see that
\begin{align*}
z^q
=&
z(z^2)^\pi
\\=&
z(xy-(xy-z^2))^\pi
\\=&
z \sum_{t=0}^\pi (-1)^t \binom{\pi}{t} (xy)^{\pi-t} (xy-z^2)^t
\\=&
z \sum_{t=0}^\pi \lambda_t (xy)^{\pi-t} (xy-z^2)^t
\/,
\end{align*}
where the third equality follows from binomial expansion and the fourth equality follows from Lemma \ref{lem:lambdaBinom}.
\end{proof}

\section{Determinant calculations}
\label{sec:DetCalcs}

The following matrices play such a vital role in our results that we dedicate this entire section to calculating certain determinants of it.

The matrices and determinants we describe in this section exist in $\ZZ[x,y,z]$, so they can exist in $S[x,y,z]$ where $S$ is any $\ZZ$-algebra. Occasionally we will have to assume these matrices and determinants exist in $\ZZ[2^{-1}][x,y,z]$, in which case they can exist in $S[x,y,z]$ where $S$ is any $\ZZ[2^{-1}]$-algebra.
We note the few times we must make this additional assumption when they happen.
In either case, $S$ can be a field of odd characteristic.
\begin{notation}
\label{not:MandLn}
Let $d$ be a positive integer. Define the $d \times d$ matrix \[
\mtrx{M}
 = 
\begin{bmatrix}
-(d-1)z & 1x \\ -(d-1)y & -(d-3)z & 2x \\& -(d-2)y & -(d-5)z & 3x \\&& -(d-3)y & \ddots & \ddots \\&&& \ddots & \ddots & (d-2)x \\&&&& -2y & (d-3)z & (d-1)x \\&&&&& -1y & (d-1)z
\end{bmatrix}
\/.\] The entries of $\mtrx{M}$ are in $\ZZ[x,y,z]$.

We also define a generalization of $\mtrx{M}$: for every $n \geq 0$, let $\mtrx{L}_n$ denote the $n \times n$ matrix \[
\begin{bmatrix}
-(d-1)z & 1x \\ -(d-1)y & -(d-3)z & 2x \\& -(d-2)y & \ddots & \ddots \\&& \ddots & \ddots & ((n-1)-1)x \\&&& -((d+1)-(n-1))y & (2(n-1)-(d+1))z & (n-1)x \\&&&& -((d+1)-n)y & (2n-(d+1))z
\end{bmatrix}
\] with entries in $\ZZ[x,y,z]$.

Explicitly, the entries of both $\mtrx{M}$ and $\mtrx{L}_n$ are:
\begin{itemize}
\item $(2i-(d+1))z=(2j-(d+1))z$ on the main diagonal ($i=j$),
\item $ix=(j-1)x$ on the upper diagonal ($i=j-1$),
\item $((d+1)-i)y=(d-j)y$ on the lower diagonal ($i=j+1$), and 
\item $0$ everywhere else.
\end{itemize}
From this it is clear that $\mtrx{M} = \mtrx{L}_d$, and in fact $\mtrx{L}_n$ consists of the first $n$ rows and columns of $\mtrx{M}$ for any $n \leq d$.
\end{notation}

Besides the determinant of $\mtrx{M}$ itself, we need to know $\det\left(\mtrx{M}_{(\hat{d}),(\hat{1})}\right)$, $\det\left(\mtrx{M}_{(\hat{1}),(\hat{d})}\right)$, and $\det\left(\mtrx{M}_{(\hat{1}),(\hat{1})}\right)$.
We find $\det\left(\mtrx{M}_{(\hat{d}),(\hat{1})}\right)$ and $\det\left(\mtrx{M}_{(\hat{1}),(\hat{d})}\right)$ here:
\begin{proposition}
\label{prop:detMd1and1d}
We have
$\det\left(\mtrx{M}_{(\hat{d}),(\hat{1})}\right) = (d-1)! x^{d-1}$
and
$\det\left(\mtrx{M}_{(\hat{1}),(\hat{d})}\right) = (-1)^{d-1} (d-1)! y^{d-1}$.
\end{proposition}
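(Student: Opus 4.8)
The plan is to recognize both submatrices as triangular matrices, so that their determinants are simply the products of their diagonal entries.

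Recall from Notation~\ref{not:MandLn} that $\mtrx{M}$ is tridiagonal: its only nonzero entries are $\mtrx{M}_{i,i}=(2i-(d+1))z$ on the main diagonal, $\mtrx{M}_{i,i+1}=ix$ on the superdiagonal, and $\mtrx{M}_{i+1,i}=-((d+1)-(i+1))y$ on the subdiagonal, and in particular $\mtrx{M}_{i,j}=0$ whenever $|i-j|>1$. First I would note that deleting column $1$ shifts column indices by one, so the $(i,j)$ entry of the $(d-1)\times(d-1)$ matrix $\mtrx{M}_{(\hat{d}),(\hat{1})}$ is $\mtrx{M}_{i,j+1}$, which vanishes unless $j\le i\le j+2$, and hence vanishes whenever $j>i$. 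Thus $\mtrx{M}_{(\hat{d}),(\hat{1})}$ is lower triangular with diagonal entries $\mtrx{M}_{i,i+1}=ix$ for $i=1,\dots,d-1$, so $\det\bigl(\mtrx{M}_{(\hat{d}),(\hat{1})}\bigr)=\prod_{i=1}^{d-1}ix=(d-1)!\,x^{d-1}$.

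Symmetrically, deleting row $1$ shifts row indices by one, so the $(i,j)$ entry of $\mtrx{M}_{(\hat{1}),(\hat{d})}$ is $\mtrx{M}_{i+1,j}$, which vanishes unless $i\le j\le i+2$, and hence vanishes whenever $j<i$. Thus $\mtrx{M}_{(\hat{1}),(\hat{d})}$ is upper triangular with diagonal entries $\mtrx{M}_{i+1,i}=-((d+1)-(i+1))y=-(d-i)y$ for $i=1,\dots,d-1$, so $\det\bigl(\mtrx{M}_{(\hat{1}),(\hat{d})}\bigr)=\prod_{i=1}^{d-1}\bigl(-(d-i)y\bigr)=(-1)^{d-1}(d-1)!\,y^{d-1}$.

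I do not expect any genuine obstacle; the argument is essentially index bookkeeping once one observes the triangularity. The only point to watch is the sign $(-1)^{d-1}$ in the second determinant: it arises because the subdiagonal entries of $\mtrx{M}$ carry a minus sign, so each of the $d-1$ diagonal entries of $\mtrx{M}_{(\hat{1}),(\hat{d})}$ contributes a factor of $-1$, whereas the superdiagonal entries of $\mtrx{M}$ (which become the diagonal of $\mtrx{M}_{(\hat{d}),(\hat{1})}$) are sign-free.
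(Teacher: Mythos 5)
Your proof is correct and follows essentially the same approach as the paper: both arguments observe that the two minors are triangular matrices and compute the determinants as the products of the diagonal entries. The paper simply displays the submatrices explicitly rather than doing the index shift bookkeeping, but the substance is identical.
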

\begin{proof}
When we remove the first column and last row from the matrix $\mtrx{M}$, we get the matrix \[
\mtrx{M}_{(\hat{d}),(\hat{1})}
 = 
\begin{bmatrix}
1x \\ -(d-3)z & 2x \\ -(d-2)y & -(d-5)z & 3x \\& -(d-3)y & \ddots & \ddots \\&& \ddots & \ddots & (d-2)x \\&&& -2y & (d-3)z & (d-1)x
\end{bmatrix}
\/.\] Notice that this is a lower triangular matrix, with $h x$ the $h$th diagonal entry. This means that \[
\det\left(\mtrx{M}_{(\hat{d}),(\hat{1})}\right)
 = \prod_{h=1}^{d-1} (hx) = 
(d-1)! x^{d-1}
\/.\]

We can see that $\mtrx{M}_{(\hat{1}),(\hat{d})}$ is \[
\begin{bmatrix}
-(d-1)y & -(d-3)z & 2x \\& -(d-2)y & -(d-5)z & 3x \\&& -(d-3)y & \ddots & \ddots \\&&& \ddots & \ddots & (d-2)x \\&&&& -2y & (d-3)z \\&&&&& -1y
\end{bmatrix}
\/,\] which is upper triangular with $-(d-h) y$ as the $h$th diagonal entry. Thus we have \[
\det\left(\mtrx{M}_{(\hat{1}),(\hat{d})}\right)
 = \prod_{h=1}^{d-1} (-(d-h)y) = 
(-1)^{d-1} (d-1)! y^{d-1}
\/.\qedhere\]
\end{proof}

We show that $A_n$ in the following Notation is a formula for $\det\mtrx{L}_n$:
\begin{notation}
\label{not:An}
For any $t$ and $N$ with $0\leq t\leq N$, and any $\nu\in\{0,1\}$, define \[
a_{t,N,\nu}
 = 
(-1)^{t+\nu}
\left(
\prod_{h=1}^{N+t+\nu} (d-(2h-1))
\right)
\left(
\prod_{h=t+1}^N (d-2h)
\right)
\binom{N}{t}
 \in\ZZ
\/.\] Note that these products are well-defined since $0\leq t\leq N$.

Set $F = xy-z^2$.

Given $n\geq0$, set $n=2N+\nu$ with $N \geq 0$ and $\nu \in \{0,1\}$. Then we define
\begin{align*}
A_n
 = 
A_{2N+\nu}
 =& 
z^{\nu}
\sum_{t=0}^N 
a_{t,N,\nu}
(xy)^{N-t}
F^t
\\=&
z^{\nu}
\sum_{t=0}^N 
(-1)^{t+\nu}
\left(
\prod_{h=1}^{N+t+\nu} 
(d-(2h-1))
\right)
\left(
\prod_{h=t+1}^{N} 
(d-2h)
\right)
\binom{N}{t}
(xy)^{N-t}
F^t
 \in\ZZ[x,y,z]
\/.\end{align*}
\end{notation}
We prove that $\det\mtrx{L}_n = A_n$ for all $n\geq0$ by showing that both sequences satisfy the following homogeneous linear recurrence relation and initial values:
\begin{itemize}
    \item $u_0 = 1$,
    \item $u_1 = -(d-1)z$, and
    \item $u_n = (n-1)(d-(n-1)) xy u_{n-2} - (d-(2n-1)) z u_{n-1}$ for all $n \geq 2$.
\end{itemize}
We begin with $\det\mtrx{L}_n$:
\begin{lemma}
\label{lem:detLnRecursive}
The following are true regarding the sequence $\det(\mtrx{L}_n)$:
\begin{itemize}
    \item $\det(\mtrx{L}_0) = 1$,
    \item $\det(\mtrx{L}_1) = -(d-1)z$, and
    \item $\det(\mtrx{L}_n) = (n-1)(d-(n-1)) xy \det(\mtrx{L}_{n-2}) - (d-(2n-1)) z \det(\mtrx{L}_{n-1})$ for all $n \geq 2$.
\end{itemize}
\end{lemma}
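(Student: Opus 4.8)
The plan is to exploit that $\mtrx{L}_n$ is tridiagonal, so that $\det(\mtrx{L}_n)$ obeys the classical three-term ``continuant'' recursion obtained from a single cofactor expansion along the last row. First I would record the two base cases directly: $\mtrx{L}_0$ is the empty matrix, so $\det(\mtrx{L}_0)=1$ by the usual convention, and $\mtrx{L}_1$ is the $1\times1$ matrix whose single entry is $-(d-1)z$, so $\det(\mtrx{L}_1)=-(d-1)z$; both agree with the claimed initial values.

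For the recursion, fix $n\geq 2$. Because the $(i,j)$ entry of $\mtrx{L}_n$ prescribed in Notation \ref{not:MandLn} depends only on $i,j$ and $d$, and not on the ambient size $n$, the matrix $\mtrx{L}_m$ is the leading $m\times m$ principal submatrix of $\mtrx{L}_n$ for every $m\leq n$; in particular, deleting the last row and column of $\mtrx{L}_n$ produces exactly $\mtrx{L}_{n-1}$, and deleting the last two rows and columns produces $\mtrx{L}_{n-2}$. The last row of $\mtrx{L}_n$ has only two nonzero entries, the subdiagonal entry $(\mtrx{L}_n)_{n,n-1}=-(d-(n-1))y$ and the diagonal entry $(\mtrx{L}_n)_{n,n}=-(d-(2n-1))z$, while $(\mtrx{L}_n)_{n-1,n}=(n-1)x$. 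Expanding $\det(\mtrx{L}_n)$ along the last row gives
\[
\det(\mtrx{L}_n)=(-1)^{n+(n-1)}\bigl(-(d-(n-1))y\bigr)\,\det\!\left((\mtrx{L}_n)_{(\hat n),(\widehat{n-1})}\right)+\bigl(-(d-(2n-1))z\bigr)\,\det(\mtrx{L}_{n-1}).
\]
The second minor is $\det(\mtrx{L}_{n-1})$. In the first minor, the last column of $(\mtrx{L}_n)_{(\hat n),(\widehat{n-1})}$ (namely column $n$ of $\mtrx{L}_n$ with row $n$ removed) has a single nonzero entry, $(n-1)x$, in its bottom position, so a second expansion along that column yields $\det\!\left((\mtrx{L}_n)_{(\hat n),(\widehat{n-1})}\right)=(n-1)x\,\det(\mtrx{L}_{n-2})$. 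Plugging this in and simplifying $(-1)^{n+(n-1)}=-1$, so that the two minus signs on the $y$-term cancel, produces exactly $\det(\mtrx{L}_n)=(n-1)(d-(n-1))\,xy\,\det(\mtrx{L}_{n-2})-(d-(2n-1))\,z\,\det(\mtrx{L}_{n-1})$.

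I do not anticipate any genuine difficulty: this is the standard determinant recursion for a tridiagonal matrix. The one thing to watch is the sign bookkeeping across the two nested expansions — in particular the cofactor sign $(-1)^{n+(n-1)}=-1$ on the subdiagonal term together with the intrinsic minus sign of the entry $-(d-(n-1))y$ itself — since it is precisely this pair of minus signs that turns the usual ``$-a_{n-1}c_{n-1}$'' continuant term into the $+(n-1)(d-(n-1))xy$ appearing in the statement. It is also worth stating explicitly, as above, that the minors arising really are the smaller matrices $\mtrx{L}_{n-1}$ and $\mtrx{L}_{n-2}$, which is immediate from $\mtrx{L}_m$ being a leading principal submatrix of $\mtrx{L}_n$.
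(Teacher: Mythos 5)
Your proof is correct and takes essentially the same approach as the paper: the standard three-term continuant recursion for a tridiagonal matrix, obtained by a nested cofactor expansion and using that $\mtrx{L}_m$ is a leading principal submatrix of $\mtrx{L}_n$. The only cosmetic difference is that the paper expands first along the last column and then along the last row of the resulting minor, while you expand first along the last row and then along the last column; the sign bookkeeping and the identification of the minors with $\mtrx{L}_{n-1}$ and $\mtrx{L}_{n-2}$ are the same in both.
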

\begin{proof}
For any $n \geq 2$, we apply the cofactor expansion formula repeatedly over the last columns to show the following:
\begin{align*}&
\det(\mtrx{L}_n)
\\=&
\det
\begin{bmatrix}
-(d-1)z & 1x \\ -(d-1)y & \ddots & \ddots \\& \ddots & \ddots & (n-3)x \\&& -((d+3)-n)y & (2n-(d+5))z & (n-2)x \\&&& -((d+2)-n)y & (2n-(d+3))z & (n-1)x \\&&&& -((d+1)-n)y & (2n-(d+1))z
\end{bmatrix}
\\=&
(-1)^{n+n}
(2n-(d+1))z
\det
\begin{bmatrix}
-(d-1)z & 1x \\ -(d-1)y & \ddots & \ddots \\& \ddots & \ddots & (n-3)x \\&& -((d+3)-n)y & (2n-(d+5))z & (n-2)x \\&&& -((d+2)-n)y & (2n-(d+3))z
\end{bmatrix}
\\&
+
(-1)^{n-1+n}
(n-1)x
\det
\begin{bmatrix}
-(d-1)z & 1x \\ -(d-1)y & \ddots & \ddots \\& \ddots & \ddots & (n-3)x \\&& -((d+3)-n)y & (2n-(d+5))z & (n-2)x \\&&& 0 & -((d+1)-n)y
\end{bmatrix}
\\=&
-(d-(2n-1)) z
\det(\mtrx{L}_{n-1})
\\&
-
(n-1)x
(-1)^{(n-1)+(n-1)}
(-((d+1)-n)y)
\det
\begin{bmatrix}
-(d-1)z & 1x \\ -(d-1)y & \ddots & \ddots \\& \ddots & \ddots & (n-3)x \\&& -((d+3)-n)y & (2n-(d+5))z
\end{bmatrix}
\\=&
-(d-(2n-1)) z
\det(\mtrx{L}_{n-1})
+
(n-1) (d-(n-1))
x y
\det(\mtrx{L}_{n-2})
\/.
\end{align*}
This gives us a recursive formula:
\[
\det(\mtrx{L}_n)
 = 
(n-1)(d-(n-1)) xy \det(\mtrx{L}_{n-2}) - (d-(2n-1)) z \det(\mtrx{L}_{n-1})
\] for all $n \geq 2$.
We also have $\det(\mtrx{L}_0) = 1$ because $\mtrx{L}_0$ is a $0\times0$ matrix, and $\det(\mtrx{L}_1) = \det\begin{bmatrix} -(d-1)z \end{bmatrix} = -(d-1)z$.
\end{proof}
For $A_n$, we start with the initial conditions:
\begin{lemma}
\label{lem:AnRecursiveInitial}
The following are true regarding the sequence $A_n$:
\begin{itemize}
    \item $A_0 = 1$ and
    \item $A_1 = -(d-1)z$.
\end{itemize}
\end{lemma}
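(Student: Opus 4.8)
The plan is to verify both identities by direct substitution into the definition of $A_n$ from Notation \ref{not:An}, with careful attention to the empty-product conventions. For $n = 0$, the decomposition $n = 2N + \nu$ with $N \geq 0$ and $\nu \in \{0,1\}$ forces $N = 0$ and $\nu = 0$, so the sum $\sum_{t=0}^{N}$ collapses to the single term $t = 0$ and $A_0 = z^0 \, a_{0,0,0} (xy)^0 F^0 = a_{0,0,0}$. In the expression for $a_{0,0,0}$, the sign is $(-1)^{0+0} = 1$, the factor $\prod_{h=1}^{N+t+\nu}(d-(2h-1)) = \prod_{h=1}^{0}(\cdots)$ is an empty product equal to $1$, the factor $\prod_{h=t+1}^{N}(d-2h) = \prod_{h=1}^{0}(\cdots)$ is likewise $1$, and $\binom{0}{0} = 1$; hence $A_0 = 1$.

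For $n = 1$, the decomposition $1 = 2N + \nu$ forces $N = 0$ and $\nu = 1$, so again only $t = 0$ survives and $A_1 = z^1 \, a_{0,0,1} (xy)^0 F^0 = z \, a_{0,0,1}$. Here the sign is $(-1)^{0+1} = -1$, the factor $\prod_{h=1}^{N+t+\nu}(d-(2h-1)) = \prod_{h=1}^{1}(d-(2h-1)) = d-1$, the factor $\prod_{h=t+1}^{N}(d-2h) = \prod_{h=1}^{0}(\cdots) = 1$, and $\binom{0}{0} = 1$; hence $a_{0,0,1} = -(d-1)$ and $A_1 = -(d-1)z$.

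This is a purely computational check, so there is no real obstacle; the only point requiring care is that three of the products appearing (two in the $n=0$ case, one in the $n=1$ case) have empty index ranges and must be read as $1$ under the standard empty-product convention already flagged in Notation \ref{not:An}. I would present the computation in two short displayed lines, one for each value of $n$, spelling out each of the four factors of $a_{t,N,\nu}$ so the empty products are unambiguous.
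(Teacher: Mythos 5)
Your proof is correct and follows essentially the same route as the paper: direct substitution of $(N,\nu)=(0,0)$ and $(0,1)$ into the formula for $A_{2N+\nu}$, with the empty products read as $1$. The only cosmetic difference is that the paper treats both values of $\nu$ uniformly in one displayed computation before specializing, whereas you handle $n=0$ and $n=1$ separately; the substance is identical.
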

\begin{proof}
For $\nu \in \{0,1\}$ we have
\begin{align*}
A_\nu
 = 
A_{2(0)+\nu}
=&
z^{\nu}
\sum_{t=0}^0 
(-1)^{t+\nu}
\left(
\prod_{h=1}^{0+t+\nu} 
(d-(2h-1))
\right)
\left(
\prod_{h=t+1}^0 
(d-2h)
\right)
\binom{0}{t}
(xy)^{0-t}
F^t
\\=&
z^{\nu}
(-1)^{0+\nu}
\left(
\prod_{h=1}^{0+0+\nu} 
(d-(2h-1))
\right)
\left(
\prod_{h=0+1}^0 
(d-2h)
\right)
\binom{0}{0}
(xy)^{0-0}
F^0
\\=&
(-1)^\nu
z^\nu
\prod_{h=1}^\nu 
(d-(2h-1))
\end{align*}
by Notation \ref{not:An}, and thus we have \[
A_0
 = 
(-1)^0
z^0
\prod_{h=1}^0 
(d-(2h-1))
 = 
1
\qquad\text{and}\qquad
A_1
 = 
(-1)^1
z^1
\prod_{h=1}^1 
(d-(2h-1))
 = 
-(d-1) z
\/.\qedhere\]
\end{proof}
Next we show that $A_n$ satisfies the recursive formula, in both the cases where $n$ is even and $n$ is odd.
\begin{notation}
\label{not:tildean}
For any $N\geq0$, any $t\in\ZZ$, and any $\nu\in\{0,1\}$, define \[
\tilde{a}_{t,N,\nu}
 = 
(-1)^{t+\nu}
\left(
\prod_{h=1}^{N+t+\nu} (\tilde{d}-(2h-1))
\right)
\left(
\prod_{h=t+1}^N (\tilde{d}-2h)
\right)
\binom{N}{t}
\/.\] Here $\tilde{d}$ is an indeterminate, so $\tilde{a}_{t,N,\nu}$ is in $\ZZ(\tilde{d})$.
We use the conventions that $\binom{\beta}{\alpha}=0$ unless $0\leq\alpha\leq\beta$, and that $\prod_{h=\alpha}^\beta f(h) = \left( \prod_{h=\beta+1}^{\alpha-1} f(h) \right)^{-1}$ for $\beta<\alpha-1$.
\end{notation}
\begin{remark}
\label{rmk:tildean}
Unless $0\leq t\leq N$, $\tilde{a}_{t,N,\nu}=0$ because $\binom{N}{t}=0$. Also unless $0\leq t\leq N$, one of the products $\prod_{h=1}^{N+t+\nu} (\tilde{d}-(2h-1))$ or $\prod_{h=t+1}^N (\tilde{d}-2h)$ might need to be inverted (using the convention mentioned above, that $\prod_{h=\alpha}^\beta f(h) = \left( \prod_{h=\beta+1}^{\alpha-1} f(h) \right)^{-1}$ for $\beta<\alpha-1$); this is the reason we introduce $\tilde{d}$ and define $\tilde{a}_{t,N,\nu}$ as an element of the field of rational fractions $\ZZ(\tilde{d})$. Even though $\tilde{a}_{t,N,\nu}$ is $0$ unless $0\leq t\leq N$, these changes are necessary to make all factors of $\tilde{a}_{t,N,\nu}$ well-defined for all $t\in\ZZ$.

On the other hand, if $0\leq t\leq N$, then $\tilde{a}_{t,N,\nu} \in \ZZ[\tilde{d}]$, and in fact $\tilde{a}_{t,N,\nu}$ with $\tilde{d}$ mapped to $d$ is $a_{t,N,\nu}$, the coefficient of $(xy)^{N-t}F^t$ in the formula for $A_{2N+\nu}$ from Notation \ref{not:An}.
\end{remark}
\begin{notation}
\label{not:tildeAn}
Given $n\geq0$, set $n=2N+\nu$ with $N\geq0$ and $\nu\in\{0,1\}$. Then we define \[
\tilde{A}_n
 = 
\tilde{A}_{2N+\nu}
 := 
z^\nu \sum_{t=0}^N \tilde{a}_{t,N,\nu} (xy)^{N-t} F^t
\] as an element of $\ZZ[\tilde{d}][x,y,z]$.
\end{notation}
\begin{remark}
\label{rmk:tildeAn}
If we map $\tilde{d}$ to $d$, then $\tilde{A}_{2N+\nu}$ becomes $A_{2N+\nu}$ from Notation \ref{not:An}.
Also, if we treat $\tilde{A}_n$ as an element of $\ZZ(\tilde{d})(x,y,z)$, we write \[
\tilde{A}_{2N+\nu}
 = 
(xy)^N z^\nu \sum_{t=-\infty}^\infty \tilde{a}_{t,N,\nu} \left(\frac{F}{xy}\right)^t
\/,\] since $\tilde{a}_{t,N,\nu} = 0$ unless $0\leq t\leq N$.
\end{remark}
First we prove the odd case:
\begin{lemma}
\label{lem:AnRecursiveFormulaOdd}
For all odd $n \geq 2$, we have \[
A_n = (n-1)(d-(n-1)) xy A_{n-2} - (d-(2n-1)) z A_{n-1}
\/.\]
\end{lemma}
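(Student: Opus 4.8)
The plan is to verify the recurrence for the odd case $n = 2N+1$ directly from the explicit formula in Notation~\ref{not:An}, by comparing the coefficient of each monomial $(xy)^{N-t}F^t$ (or $z(xy)^{N-t}F^t$, depending on parity) on both sides. Writing $n = 2N+1$, so $n-1 = 2N$ is even and $n-2 = 2N-1$ is odd, the left side $A_{2N+1} = z\sum_{t=0}^N a_{t,N,1}(xy)^{N-t}F^t$ and the two terms on the right, $(2N)(d-2N)\,xy\,A_{2N-1}$ and $-(d-(4N+1))z\,A_{2N}$, are all of the form $z$ times a polynomial in $xy$ and $F$. So after cancelling the common factor $z$, the identity to prove becomes an identity among polynomials in $xy$ and $F$; it suffices to match the coefficient of $(xy)^{N-t}F^t$ for each $0 \le t \le N$.

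The cleanest way to organize the bookkeeping is to pass to the rational-function version $\tilde A_n \in \ZZ[\tilde d][x,y,z]$ from Notation~\ref{not:tildeAn} and the normalized coefficients $\tilde a_{t,N,\nu} \in \ZZ(\tilde d)$ from Notation~\ref{not:tildean}, using the stated conventions $\binom{N}{t} = 0$ for $t \notin [0,N]$ and the reciprocal-product convention for empty/negative ranges. With those conventions the sums may be taken over all $t \in \ZZ$ without worrying about endpoints (Remark~\ref{rmk:tildean}, Remark~\ref{rmk:tildeAn}), so the recurrence reduces to the single algebraic identity
\[
\tilde a_{t,N,1} = (2N)(\tilde d - 2N)\,\tilde a_{t-1,N-1,1} - (\tilde d - (4N+1))\,\tilde a_{t,N,0}
\]
for every $t \in \ZZ$ (here the $xy$-factor shifts the index $t \mapsto t-1$ in the first term, since $xy \cdot (xy)^{(N-1)-(t-1)}F^{t-1} = (xy)^{N-t}F^t$, and the second term contributes $(xy)^{N-t}F^t$ with no shift). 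Then specialize $\tilde d \mapsto d$ at the end. The three quantities $\tilde a_{t,N,1}$, $\tilde a_{t-1,N-1,1}$, $\tilde a_{t,N,0}$ are explicit products of factors of the form $(\tilde d - \text{odd})$, $(\tilde d - \text{even})$, and binomial coefficients; I would factor out a common product and reduce the identity to the elementary binomial recurrence $\binom{N}{t} = \binom{N-1}{t-1} + \binom{N-1}{t}$ together with matching of the sign $(-1)^{t+\nu}$ and of the ranges of the two products $\prod_{h=1}^{N+t+\nu}(\tilde d - (2h-1))$ and $\prod_{h=t+1}^N(\tilde d - 2h)$.

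The main obstacle is the careful ledger of which linear factors $(\tilde d - (2h-1))$ and $(\tilde d - 2h)$ appear in each of the three terms and how they must be split so that a single common factor can be pulled out of all three. Concretely: $\tilde a_{t,N,1}$ has odd-factor product up to $h = N+t+1$ and even-factor product from $h = t+1$ to $N$; $\tilde a_{t-1,N-1,1}$ has odd-factor product up to $h = (N-1)+(t-1)+1 = N+t-1$ and even-factor product from $h = t$ to $N-1$; $\tilde a_{t,N,0}$ has odd-factor product up to $h = N+t$ and even-factor product from $h = t+1$ to $N$. The prefactors $(2N)(\tilde d-2N)$ and $(\tilde d - (4N+1)) = (\tilde d - (2(2N+1)-1))$ are exactly the missing factors needed to bring the two right-hand terms up to the factor pattern of $\tilde a_{t,N,1}$ (note $4N+1 = 2(N+t+1)-1$ only when $t = N$, so in general $(\tilde d - (4N+1))$ must instead be reconciled against the even/odd products via a telescoping of indices — this is the step that requires the most care). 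Once the common product is extracted, what remains is the Pascal identity with the correct signs, which is immediate. I expect the whole argument to be a moderately long but entirely mechanical computation; the even case (Lemma to follow, $n = 2N$) will be handled by the same template with the roles of the $z^0$ and $z^1$ pieces exchanged.
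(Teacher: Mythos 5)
Your overall template — specialize to $\tilde a_{t,N,\nu}$, sum over all $t\in\ZZ$ using Remark~\ref{rmk:tildeAn}, match coefficients, and reduce to a scalar identity via Pascal's rule and factor bookkeeping — is exactly what the paper does. But the scalar identity you propose is wrong: you write
\[
\tilde a_{t,N,1} = (2N)(\tilde d - 2N)\,\tilde a_{t-1,N-1,1} - (\tilde d - (4N+1))\,\tilde a_{t,N,0},
\]
with a shift $t\mapsto t-1$ in the first term, but in the odd case there is \emph{no} shift. Multiplying $\tilde A_{2(N-1)+1}=(xy)^{N-1}z\sum_t \tilde a_{t,N-1,1}(F/xy)^t$ by $xy$ only raises the $(xy)$-prefactor from $(xy)^{N-1}$ to $(xy)^N$ and does not touch the exponent of $F$ (equivalently, of $F/xy$). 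Your claimed equality $xy\cdot(xy)^{(N-1)-(t-1)}F^{t-1}=(xy)^{N-t}F^t$ is false: the left side is $(xy)^{N-t+1}F^{t-1}$. The correct identity, which the paper proves, is
\[
\tilde a_{t,N,1} = 2N(\tilde d-2N)\,\tilde a_{t,N-1,1} - (\tilde d-(4N+1))\,\tilde a_{t,N,0}.
\]
That the shifted version is actually false can be checked at $N=1$, $t=0$: one has $\tilde a_{0,1,1}=-(\tilde d-1)(\tilde d-2)(\tilde d-3)$, while your right-hand side is $2(\tilde d-2)\tilde a_{-1,0,1}-(\tilde d-5)\tilde a_{0,1,0}=0-(\tilde d-5)(\tilde d-1)(\tilde d-2)$, which differs by the factor $(\tilde d-5)$ versus $(\tilde d-3)$.

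You have likely conflated the odd and even cases. A shift $t\mapsto t-1$ \emph{does} occur in Lemma~\ref{lem:AnRecursiveFormulaEven}, but there it arises from the $z\,A_{2(N-1)+1}$ term, which carries a factor $z\cdot z=z^2=xy-F$; rewriting $z^2=xy(1-F/(xy))$ produces the difference $\tilde a_{t,N-1,1}-\tilde a_{t-1,N-1,1}$ inside the even-case scalar identity. Nothing of the sort happens in the odd case. Once you replace $\tilde a_{t-1,N-1,1}$ with $\tilde a_{t,N-1,1}$, your factoring plan goes through cleanly: pull out $(-1)^{t+1}\prod_{h=1}^{N+t}(\tilde d-(2h-1))\prod_{h=t+1}^{N}(\tilde d-2h)$ (using $(\tilde d-2N)\prod_{h=t+1}^{N-1}=\prod_{h=t+1}^{N}$) and what remains is $2N\binom{N-1}{t}+(\tilde d-(4N+1))\binom{N}{t} = (\tilde d-(2(N+t+1)-1))\binom{N}{t}$ by $N\binom{N-1}{t}=(N-t)\binom{N}{t}$, so Pascal's identity is not even needed in this case — that appears only in the even case.
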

\begin{proof}
Fix $N\geq1$. We first prove that
\begin{equation}
\label{eq:anRecursiveFormulaOdd}
\tag{*}
\tilde{a}_{t,N,1} = 2N(\tilde{d}-2N) \tilde{a}_{t,N-1,1} - (\tilde{d}-(4N+1)) \tilde{a}_{t,N,0}
\end{equation}
for all $t \in \ZZ$.
To prove this, we note that
\begin{itemize}
\item $\tilde{a}_{t,N,1} = (-1)^{t+1} \prod_{h=1}^{N+t+1} (\tilde{d}-(2h-1)) \prod_{h=t+1}^N (\tilde{d}-2h) \binom{N}{t}$,
\item $\tilde{a}_{t,N-1,1} = (-1)^{t+1} \prod_{h=1}^{N+t} (\tilde{d}-(2h-1)) \prod_{h=t+1}^{N-1} (\tilde{d}-2h) \binom{N-1}{t}$, and
\item $\tilde{a}_{t,N,0} = (-1)^t \prod_{h=1}^{N+t} (\tilde{d}-(2h-1)) \prod_{h=t+1}^N (\tilde{d}-2h) \binom{N}{t}$
\end{itemize}
for any $t\in\ZZ$ by Notation \ref{not:tildean}. Now we see that
\begin{align*}
2N(\tilde{d}-2N)& \tilde{a}_{t,N-1,1} - (\tilde{d}-(4N+1)) \tilde{a}_{t,N,0}
\\=&
2N(\tilde{d}-2N) \left( (-1)^{t+1} \prod_{h=1}^{N+t} (\tilde{d}-(2h-1)) \prod_{h=t+1}^{N-1} (\tilde{d}-2h) \binom{N-1}{t} \right)
\\&-
(\tilde{d}-(4N+1)) \left( (-1)^t \prod_{h=1}^{N+t} (\tilde{d}-(2h-1)) \prod_{h=t+1}^N (\tilde{d}-2h) \binom{N}{t} \right)
\\=&
(-1)^{t+1} \prod_{h=1}^{N+t} (\tilde{d}-(2h-1))
\left(
2N \prod_{h=t+1}^N (\tilde{d}-2h) \binom{N-1}{t}
+
(\tilde{d}-(4N+1)) \prod_{h=t+1}^N (\tilde{d}-2h) \binom{N}{t}
\right)
\\=&
(-1)^{t+1} \prod_{h=1}^{N+t} (\tilde{d}-(2h-1)) \prod_{h=t+1}^N (\tilde{d}-2h)
\left(
2N \binom{N-1}{t}
+
(\tilde{d}-(4N+1)) \binom{N}{t}
\right)
\\=&
(-1)^{t+1} \prod_{h=1}^{N+t} (\tilde{d}-(2h-1)) \prod_{h=t+1}^N (\tilde{d}-2h)
\left(
2(N-t) \binom{N}{t}
+
(\tilde{d}-(4N+1)) \binom{N}{t}
\right)
\\=&
(-1)^{t+1} \prod_{h=1}^{N+t} (\tilde{d}-(2h-1)) \prod_{h=t+1}^N (\tilde{d}-2h)
(
2(N-t)
+
(\tilde{d}-(4N+1))
)
\binom{N}{t}
\\=&
(-1)^{t+1} \prod_{h=1}^{N+t} (\tilde{d}-(2h-1)) \prod_{h=t+1}^N (\tilde{d}-2h) (\tilde{d}-(2(N+t+1)-1)) \binom{N}{t}
\\=&
(-1)^{t+1} \prod_{h=1}^{N+t+1} (\tilde{d}-(2h-1)) \prod_{h=t+1}^N (\tilde{d}-2h) \binom{N}{t}
\\=&
\tilde{a}_{t,N,1}
\/.\end{align*}
Here we made use of the fact that $N\binom{N-1}{t} = (N-t)\binom{N}{t}$ for all $t\in\ZZ$.

Using Remark \ref{rmk:tildeAn}, we write
\begin{itemize}
\item $\tilde{A}_{2N+1} = (xy)^N z \sum_{t=-\infty}^\infty \tilde{a}_{t,N,1} \left(\frac{F}{xy}\right)^t$,
\item $\tilde{A}_{2N} = (xy)^N \sum_{t=-\infty}^\infty \tilde{a}_{t,N,0} \left(\frac{F}{xy}\right)^t$, and
\item $\tilde{A}_{2(N-1)+1} = (xy)^{N-1} z \sum_{t=-\infty}^\infty \tilde{a}_{t,N-1,1} \left(\frac{F}{xy}\right)^t$.
\end{itemize}
Using Equation (\ref{eq:anRecursiveFormulaOdd}), which we proved earlier, we see that
\begin{align*}
2N(\tilde{d}-2N) &xy \tilde{A}_{2(N-1)+1} - (\tilde{d}-(4N+1)) z \tilde{A}_{2N}
\\=&
2N(\tilde{d}-2N) xy \left( (xy)^{N-1} z \sum_{t=-\infty}^\infty \tilde{a}_{t,N-1,1} \left(\frac{F}{xy}\right)^t \right) - (\tilde{d}-(4N+1)) z \left( (xy)^N \sum_{t=-\infty}^\infty \tilde{a}_{t,N,0} \left(\frac{F}{xy}\right)^t \right)
\\=&
(xy)^N z \sum_{t=-\infty}^\infty \left(
2N(\tilde{d}-2N) \tilde{a}_{t,N-1,1} - (\tilde{d}-(4N+1)) \tilde{a}_{t,N,0}
\right) \left(\frac{F}{xy}\right)^t
\\=&
(xy)^N z \sum_{t=-\infty}^\infty \tilde{a}_{t,N,1} \left(\frac{F}{xy}\right)^t
\\=&
\tilde{A}_{2N+1}
\end{align*}
which proves that
\[
\tilde{A}_{2N+1} = 2N(\tilde{d}-2N) xy \tilde{A}_{2(N-1)+1} - (\tilde{d}-(4N+1)) z \tilde{A}_{2N}
\] for $N\geq1$. When we set $\tilde{d}$ to $d$, this proves that \[
A_{2N+1} = 2N(d-2N) xy A_{2(N-1)+1} - (d-(4N+1)) z A_{2N}
\/.\]
If we write $n=2N+1$, then this proves the lemma.
\end{proof}
And now we prove the even case:
\begin{lemma}
\label{lem:AnRecursiveFormulaEven}
For all even $n \geq 2$, we have \[
A_n = (n-1)(d-(n-1)) xy A_{n-2} - (d-(2n-1)) z A_{n-1}
\/.\]
\end{lemma}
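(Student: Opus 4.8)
The plan is to imitate the proof of Lemma~\ref{lem:AnRecursiveFormulaOdd} almost verbatim. Fix $N\geq1$ and write $n=2N$, so that $n-1=2N-1$ and $n-2=2(N-1)$; thus $A_{n-2}=A_{2(N-1)}$ has $\nu=0$ and $A_{n-1}=A_{2(N-1)+1}$ has $\nu=1$. As in the odd case I would (i) prove a recurrence among the coefficients $\tilde a_{t,N,\nu}\in\ZZ(\tilde d)$ valid for every $t\in\ZZ$, (ii) transport it to the $\tilde A_n$ via the Laurent presentation of Remark~\ref{rmk:tildeAn}, and (iii) specialize $\tilde d\mapsto d$. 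The only new feature is that $z\,A_{n-1}$ carries a factor $z^2$ (since $A_{2N-1}$ has odd degree, hence a factor of $z$), so the defining relation $z^2=xy-F$ must be used; this is what forces the telescoped difference $\tilde a_{t,N-1,1}-\tilde a_{t-1,N-1,1}$ to appear below.

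\textbf{Coefficient identity.} I would show that for every $N\geq1$ and $t\in\ZZ$,
\[
\tilde a_{t,N,0}
 =
(2N-1)\bigl(\tilde d-(2N-1)\bigr)\,\tilde a_{t,N-1,0}
-\bigl(\tilde d-(4N-1)\bigr)\bigl(\tilde a_{t,N-1,1}-\tilde a_{t-1,N-1,1}\bigr)\/.
\]
Expanding the four coefficients by Notation~\ref{not:tildean}, each contains the common factor $\prod_{h=1}^{N+t-1}(\tilde d-(2h-1))\prod_{h=t+1}^{N-1}(\tilde d-2h)$ (legitimately for every $t\in\ZZ$, by the product conventions recorded there). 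Pulling this factor and a sign $(-1)^t$ out of both sides reduces the claim to
\[
\bigl(\tilde d-(2N+2t-1)\bigr)\bigl(\tilde d-2N\bigr)\binom Nt
 =
(2N-1)\bigl(\tilde d-(2N-1)\bigr)\binom{N-1}{t}
+\bigl(\tilde d-(4N-1)\bigr)\Bigl(\bigl(\tilde d-(2N+2t-1)\bigr)\binom{N-1}{t}+\bigl(\tilde d-2t\bigr)\binom{N-1}{t-1}\Bigr)\/.
\]
For $t<0$ or $t>N$ every binomial here vanishes, so this is trivial; for $0\leq t\leq N$, using Pascal's rule $\binom{N-1}{t-1}=\binom Nt-\binom{N-1}{t}$ to gather the coefficients of $\binom Nt$ and of $\binom{N-1}{t}$, and then $(N-t)\binom Nt=N\binom{N-1}{t}$ to cancel the (nonzero) integer $\binom Nt$, it collapses to the one-line polynomial identity
\[
\bigl(\tilde d-(2N+2t-1)\bigr)\bigl(\tilde d-2N\bigr)-\bigl(\tilde d-(4N-1)\bigr)\bigl(\tilde d-2t\bigr)=2(2N-1)(N-t)\/,
\]
which follows by expanding both sides.

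\textbf{Transport and specialization.} By Remark~\ref{rmk:tildeAn}, in the field $\ZZ(\tilde d)(x,y,z)$ we have $\tilde A_{2N}=(xy)^N\sum_t\tilde a_{t,N,0}(F/xy)^t$, $\tilde A_{2(N-1)}=(xy)^{N-1}\sum_t\tilde a_{t,N-1,0}(F/xy)^t$, and $\tilde A_{2(N-1)+1}=(xy)^{N-1}z\sum_t\tilde a_{t,N-1,1}(F/xy)^t$, all finite sums. Writing $z\,\tilde A_{2(N-1)+1}=(xy)^{N-1}z^2\sum_t\tilde a_{t,N-1,1}(F/xy)^t$, substituting $z^2=xy-F$, and reindexing the terms coming from the $-F$ summand gives
\[
z\,\tilde A_{2(N-1)+1}=(xy)^N\sum_t\bigl(\tilde a_{t,N-1,1}-\tilde a_{t-1,N-1,1}\bigr)(F/xy)^t\/.
\]
Hence $(2N-1)(\tilde d-(2N-1))\,xy\,\tilde A_{2(N-1)}-(\tilde d-(4N-1))\,z\,\tilde A_{2(N-1)+1}$ equals $(xy)^N\sum_t\bigl[(2N-1)(\tilde d-(2N-1))\tilde a_{t,N-1,0}-(\tilde d-(4N-1))(\tilde a_{t,N-1,1}-\tilde a_{t-1,N-1,1})\bigr](F/xy)^t$, which by the coefficient identity is $(xy)^N\sum_t\tilde a_{t,N,0}(F/xy)^t=\tilde A_{2N}$. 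This is an equality of elements of $\ZZ[\tilde d][x,y,z]$; applying $\tilde d\mapsto d$ (Remark~\ref{rmk:tildeAn}) turns it into $A_{2N}=(2N-1)(d-(2N-1))xy\,A_{2(N-1)}-(d-(4N-1))z\,A_{2(N-1)+1}$, i.e.\ the asserted recurrence with $n=2N$.

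\textbf{Main obstacle.} The single genuinely new ingredient relative to Lemma~\ref{lem:AnRecursiveFormulaOdd} is the $z^2=xy-F$ substitution and the resulting telescoped difference $\tilde a_{t,N-1,1}-\tilde a_{t-1,N-1,1}$; the delicate part is keeping signs and product ranges consistent while extracting the common factor, so that the coefficient identity reduces cleanly to the final one-line polynomial check. Everything downstream is bookkeeping parallel to the odd case.
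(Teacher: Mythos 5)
Your proposal is correct and follows essentially the same route as the paper: prove the coefficient identity among the $\tilde a_{t,N,\nu}$ for all $t\in\ZZ$, transport it to the $\tilde A$'s via the Laurent presentation using $z^2 = xy - F$ to produce the telescoped difference $\tilde a_{t,N-1,1}-\tilde a_{t-1,N-1,1}$, then specialize $\tilde d \mapsto d$. The only difference is organizational: the paper first simplifies $\tilde a_{t,N-1,1}-\tilde a_{t-1,N-1,1}$ on its own using Pascal's rule and then substitutes it into the recurrence, whereas you extract the common factor $\prod_{h=1}^{N+t-1}(\tilde d-(2h-1))\prod_{h=t+1}^{N-1}(\tilde d-2h)$ from all four coefficients up front and reduce to the binomial identity in one pass; both versions ultimately invoke Pascal's rule, the identity $N\binom{N-1}{t}=(N-t)\binom{N}{t}$, and the same final quadratic polynomial identity, so the underlying argument is the same.
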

\begin{proof}
Fix $N\geq1$. We first want to prove that
\begin{equation}
\label{eq:anRecursiveFormulaEven}
\tilde{a}_{t,N,0} = (2N-1)(\tilde{d}-(2N-1)) \tilde{a}_{t,N-1,0} - (\tilde{d}-(4N-1)) (\tilde{a}_{t,N-1,1}-\tilde{a}_{t-1,N-1,1})
\tag{*}\end{equation}
for all $t\in\ZZ$.
To prove this, we see that
\begin{itemize}
\item $\tilde{a}_{t,N,0} = (-1)^t \prod_{h=1}^{N+t} (\tilde{d}-(2h-1)) \prod_{h=t+1}^N (\tilde{d}-2h) \binom{N}{t}$,
\item $\tilde{a}_{t,N-1,0} = (-1)^t \prod_{h=1}^{N-1+t} (\tilde{d}-(2h-1)) \prod_{h=t+1}^{N-1} (\tilde{d}-2h) \binom{N-1}{t}$,
\item $\tilde{a}_{t,N-1,1} = (-1)^{t+1} \prod_{h=1}^{N+t} (\tilde{d}-(2h-1)) \prod_{h=t+1}^{N-1} (\tilde{d}-2h) \binom{N-1}{t}$, and
\item $\tilde{a}_{t-1,N-1,1} = (-1)^t \prod_{h=1}^{N+t-1} (\tilde{d}-(2h-1)) \prod_{h=t}^{N-1} (\tilde{d}-2h) \binom{N-1}{t-1}$
\end{itemize}
for any $t\in\ZZ$ by Notation \ref{not:tildean}. Now we see that
\begin{align*}&
\tilde{a}_{t,N-1,1}-\tilde{a}_{t-1,N-1,1}
\\=&
(-1)^{t+1} \prod_{h=1}^{N+t} (\tilde{d}-(2h-1)) \prod_{h=t+1}^{N-1} (\tilde{d}-2h) \binom{N-1}{t}
-
(-1)^t \prod_{h=1}^{N+t-1} (\tilde{d}-(2h-1)) \prod_{h=t}^{N-1} (\tilde{d}-2h) \binom{N-1}{t-1}
\\=&
-(-1)^t
\prod_{h=1}^{N+t-1} (\tilde{d}-(2h-1))
\left(
(\tilde{d}-(2(N+t)-1)) \prod_{h=t+1}^{N-1} (\tilde{d}-2h) \binom{N-1}{t}
+
\prod_{h=t}^{N-1} (\tilde{d}-2h) \binom{N-1}{t-1}
\right)
\\=&
-(-1)^t
\prod_{h=1}^{N+t-1} (\tilde{d}-(2h-1))
\prod_{h=t+1}^{N-1} (\tilde{d}-2h)
\left(
((\tilde{d}-2t)-(2N-1)) \binom{N-1}{t}
+
(\tilde{d}-2t) \binom{N-1}{t-1}
\right)
\\=&
-(-1)^t
\prod_{h=1}^{N+t-1} (\tilde{d}-(2h-1))
\prod_{h=t+1}^{N-1} (\tilde{d}-2h)
\left(
(\tilde{d}-2t)
\left( \binom{N-1}{t} + \binom{N-1}{t-1} \right)
-
(2N-1) \binom{N-1}{t}
\right)
\\=&
-(-1)^t
\prod_{h=1}^{N+t-1} (\tilde{d}-(2h-1))
\prod_{h=t+1}^{N-1} (\tilde{d}-2h)
\left(
(\tilde{d}-2t) \binom{N}{t}
-
(2N-1) \binom{N-1}{t}
\right)
\/.\end{align*}
where the last line holds because $\binom{N}{t} = \binom{N-1}{t} + \binom{N-1}{t-1}$ for all $t \in \ZZ$. We use this to show that
\begin{align}&\notag
(2N-1)(\tilde{d}-(2N-1)) \tilde{a}_{t,N-1,0} - (\tilde{d}-(4N-1)) (\tilde{a}_{t,N-1,1}-\tilde{a}_{t-1,N-1,1})
\\=&\notag
(2N-1)(\tilde{d}-(2N-1)) \left( (-1)^t \prod_{h=1}^{N-1+t} (\tilde{d}-(2h-1)) \prod_{h=t+1}^{N-1} (\tilde{d}-2h) \binom{N-1}{t} \right)
\\&-\notag
(\tilde{d}-(4N-1)) \left(
-(-1)^t
\prod_{h=1}^{N+t-1} (\tilde{d}-(2h-1))
\prod_{h=t+1}^{N-1} (\tilde{d}-2h)
\left(
(\tilde{d}-2t) \binom{N}{t}
-
(2N-1) \binom{N-1}{t}
\right)
\right)
\\=&\notag
(-1)^t \prod_{h=1}^{N+t-1} (\tilde{d}-(2h-1)) \prod_{h=t+1}^{N-1} (\tilde{d}-2h)
\\&\notag
\left(
(2N-1)(\tilde{d}-(2N-1)) \binom{N-1}{t}
+
(\tilde{d}-(4N-1))
\left(
(\tilde{d}-2t) \binom{N}{t}
-
(2N-1) \binom{N-1}{t}
\right)
\right)
\\=&\notag
(-1)^t \prod_{h=1}^{N+t-1} (\tilde{d}-(2h-1)) \prod_{h=t+1}^{N-1} (\tilde{d}-2h)
\\&\notag
\left(
(2N-1) \left(
(\tilde{d}-(2N-1))
-
(\tilde{d}-(4N-1))
\right)
\binom{N-1}{t}
+
(\tilde{d}-(4N-1)) (\tilde{d}-2t) \binom{N}{t}
\right)
\\=&\notag
(-1)^t \prod_{h=1}^{N+t-1} (\tilde{d}-(2h-1)) \prod_{h=t+1}^{N-1} (\tilde{d}-2h)
\left(
(2N-1) (2N) \binom{N-1}{t}
+
(\tilde{d}-(4N-1)) (\tilde{d}-2t) \binom{N}{t}
\right)
\\=&
(-1)^t \prod_{h=1}^{N+t-1} (\tilde{d}-(2h-1)) \prod_{h=t+1}^{N-1} (\tilde{d}-2h)
\left(
(2N-1) (2(N-t)) \binom{N}{t}
+
(\tilde{d}-(4N-1)) (\tilde{d}-2t) \binom{N}{t}
\right)
\\=&\notag
(-1)^t \prod_{h=1}^{N+t-1} (\tilde{d}-(2h-1)) \prod_{h=t+1}^{N-1} (\tilde{d}-2h)
\left(
2(2N-1) (N-t)
+
(\tilde{d}-(4N-1)) (\tilde{d}-2t)
\right) \binom{N}{t}
\\=&
(-1)^t \prod_{h=1}^{N+t-1} (\tilde{d}-(2h-1)) \prod_{h=t+1}^{N-1} (\tilde{d}-2h)
\left(
(\tilde{d}-(2(N+t)-1)) (\tilde{d}-2N)
\right) \binom{N}{t}
\\=&\notag
(-1)^t \prod_{h=1}^{N+t} (\tilde{d}-(2h-1)) \prod_{h=t+1}^N (\tilde{d}-2h) \binom{N}{t}
\\=&\notag
\tilde{a}_{t,N,0}
\end{align}
where line (1) holds because $N \binom{N-1}{t} = (N-t) \binom{N}{t}$ for any $N\geq1$ and any $t \in \ZZ$, and line (2) holds because
\begin{align*}
2(2N-1) (N-t) + (\tilde{d}-(4N-1))& (\tilde{d}-2t)
\\=&
2(2N-1) (N-t) + (\tilde{d}-(4N-1)) ((\tilde{d}-2N)+2(N-t))
\\=&
2((2N-1) + (\tilde{d}-(4N-1)))(N-t) + (\tilde{d}-(4N-1)) (\tilde{d}-2N)
\\=&
2(\tilde{d} - 2N)(N-t) + (\tilde{d}-(4N-1)) (\tilde{d}-2N)
\\=&
(2(N-t) + (\tilde{d}-(4N-1))) (\tilde{d}-2N)
\\=&
(\tilde{d}-(2(N+t)-1)) (\tilde{d}-2N)
\/.
\end{align*}

Using Remark \ref{rmk:tildeAn}, we write
\begin{itemize}
\item $\tilde{A}_{2N} = (xy)^N \sum_{t=-\infty}^\infty \tilde{a}_{t,N,0} \left(\frac{F}{xy}\right)^t$,
\item $\tilde{A}_{2(N-1)+1} = (xy)^{N-1} z \sum_{t=-\infty}^\infty \tilde{a}_{t,N-1,1} \left(\frac{F}{xy}\right)^t$, and
\item $\tilde{A}_{2(N-1)} = (xy)^{N-1} \sum_{t=-\infty}^\infty \tilde{a}_{t,N-1,0} \left(\frac{F}{xy}\right)^t$.
\end{itemize}
We can see that $z^2=xy-F=xy\left(1-\frac{F}{xy}\right)$, and with this we prove that
\begin{align*}
z \tilde{A}_{2(N-1)+1}
=&
z \left( (xy)^{N-1} z \sum_{t=-\infty}^\infty \tilde{a}_{t,N-1,1} \left(\frac{F}{xy}\right)^t \right)
\\=&
(xy)^{N-1} z^2 \sum_{t=-\infty}^\infty \tilde{a}_{t,N-1,1} \left(\frac{F}{xy}\right)^t
\\=&
(xy)^N \left(1-\frac{F}{xy}\right) \sum_{t=-\infty}^\infty \tilde{a}_{t,N-1,1} \left(\frac{F}{xy}\right)^t
\\=&
(xy)^N
\left(
\sum_{t=-\infty}^\infty \tilde{a}_{t,N-1,1} \left(\frac{F}{xy}\right)^t
-
\frac{F}{xy} \sum_{t=-\infty}^\infty \tilde{a}_{t,N-1,1} \left(\frac{F}{xy}\right)^t
\right)
\\=&
(xy)^N
\left(
\sum_{t=-\infty}^\infty \tilde{a}_{t,N-1,1} \left(\frac{F}{xy}\right)^t
-
\sum_{t=-\infty}^\infty \tilde{a}_{t,N-1,1} \left(\frac{F}{xy}\right)^{t+1}
\right)
\\=&
(xy)^N
\left(
\sum_{t=-\infty}^\infty \tilde{a}_{t,N-1,1} \left(\frac{F}{xy}\right)^t
-
\sum_{t=-\infty}^\infty \tilde{a}_{t-1,N-1,1} \left(\frac{F}{xy}\right)^t
\right)
\tag{**}
\\=&
(xy)^N \sum_{t=-\infty}^\infty (\tilde{a}_{t,N-1,1}-\tilde{a}_{t-1,N-1,1}) \left(\frac{F}{xy}\right)^t
\/,\end{align*}
where (**) holds because the sum on the right is reindexed ($t$ is replaced with $t-1$).
Using Equation (\ref{eq:anRecursiveFormulaEven}), we see that
\begin{align*}&
(2N-1)(\tilde{d}-(2N-1)) xy \tilde{A}_{2(N-1)} - (\tilde{d}-(4N-1)) z \tilde{A}_{2(N-1)+1}
\\=&
(2N-1)(\tilde{d}-(2N-1)) xy 
\left( (xy)^{N-1} \sum_{t=-\infty}^\infty \tilde{a}_{t,N-1,0} \left(\frac{F}{xy}\right)^t \right)
\\&-
(\tilde{d}-(4N-1)) \left( (xy)^N \sum_{t=-\infty}^\infty (\tilde{a}_{t,N-1,1}-\tilde{a}_{t-1,N-1,1}) \left(\frac{F}{xy}\right)^t \right)
\\=&
(xy)^N \sum_{t=-\infty}^\infty (2N-1)(\tilde{d}-(2N-1)) \tilde{a}_{t,N-1,0} \left(\frac{F}{xy}\right)^t
\\&-
(xy)^N \sum_{t=-\infty}^\infty (\tilde{d}-(4N-1)) (\tilde{a}_{t,N-1,1}-\tilde{a}_{t-1,N-1,1}) \left(\frac{F}{xy}\right)^t
\\=&
(xy)^N \sum_{t=-\infty}^\infty \left( (2N-1)(\tilde{d}-(2N-1)) \tilde{a}_{t,N-1,0} - (\tilde{d}-(4N-1)) (\tilde{a}_{t,N-1,1}-\tilde{a}_{t-1,N-1,1}) \right) \left(\frac{F}{xy}\right)^t
\\=&
(xy)^N \sum_{t=-\infty}^\infty \tilde{a}_{t,N,0} \left(\frac{F}{xy}\right)^t
\\=&
\tilde{A}_{2N}
\end{align*}
which proves that \[
\tilde{A}_{2N} = (2N-1)(\tilde{d}-(2N-1)) xy \tilde{A}_{2(N-1)} - (\tilde{d}-(4N-1)) z \tilde{A}_{2(N-1)+1}
\] for $N\geq1$. When we set $\tilde{d}$ to $d$, this proves that \[
A_{2N} = (2N-1)(d-(2N-1)) xy A_{2(N-1)} - (d-(4N-1)) z A_{2(N-1)+1}
\/.\]
If we write $n=2N$, then this proves the lemma.
\end{proof}
\begin{theorem}
\label{thm:detLnFormula}
Let $F = xy-z^2$.
Given $n\geq0$, set $n=2N+\nu$ with $N \geq 0$ and $\nu \in \{0,1\}$. Then we have the following: \[
\det(\mtrx{L}_n)
 = 
\det(\mtrx{L}_{2N+\nu})
 = 
A_{2N+\nu}
 = 
z^{\nu}
\sum_{t=0}^N 
(-1)^{t+\nu}
\left(
\prod_{h=1}^{N+t+\nu} 
(d-(2h-1))
\right)
\left(
\prod_{h=t+1}^{N} 
(d-2h)
\right)
\binom{N}{t}
(xy)^{N-t}
F^t
\/.\]
\end{theorem}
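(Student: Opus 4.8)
The statement follows immediately by strong induction on $n$, using the recursion results established in the preceding lemmas; essentially all of the work has already been done. The plan is as follows. First I would record the two base cases: by Lemma~\ref{lem:detLnRecursive} and Lemma~\ref{lem:AnRecursiveInitial} we have $\det(\mtrx{L}_0) = 1 = A_0$ and $\det(\mtrx{L}_1) = -(d-1)z = A_1$, so the sequences $\det(\mtrx{L}_n)$ and $A_n$ agree for $n = 0, 1$.

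Next I would observe that both sequences satisfy the same second-order linear recursion for $n \geq 2$, namely
\[
u_n = (n-1)(d-(n-1))\, xy\, u_{n-2} - (d-(2n-1))\, z\, u_{n-1}\/.
\]
For $\det(\mtrx{L}_n)$ this is exactly the content of Lemma~\ref{lem:detLnRecursive}. For $A_n$ this is the combination of Lemma~\ref{lem:AnRecursiveFormulaOdd} (the odd case $n \geq 3$) and Lemma~\ref{lem:AnRecursiveFormulaEven} (the even case $n \geq 2$), which together cover every $n \geq 2$.

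Finally, I would run the induction: fix $n \geq 2$ and assume $\det(\mtrx{L}_m) = A_m$ for all $0 \le m < n$; in particular $\det(\mtrx{L}_{n-1}) = A_{n-1}$ and $\det(\mtrx{L}_{n-2}) = A_{n-2}$. Substituting these equalities into the common recursion gives $\det(\mtrx{L}_n) = A_n$. Hence $\det(\mtrx{L}_n) = A_n$ for all $n \geq 0$, and unwinding the definition of $A_{2N+\nu}$ in Notation~\ref{not:An} yields the displayed closed form. There is no real obstacle remaining at this stage: the genuine difficulty was verifying that $A_n$ obeys the recursion, and that was dispatched in Lemmas~\ref{lem:AnRecursiveFormulaOdd} and~\ref{lem:AnRecursiveFormulaEven} via the auxiliary quantities $\tilde a_{t,N,\nu}$; the only thing to be careful about here is confirming that the odd and even cases of those lemmas jointly cover all indices $n \geq 2$ so that the induction has no gaps.
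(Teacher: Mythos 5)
Your proposal is correct and matches the paper's proof exactly: both verify the initial values via Lemmas~\ref{lem:detLnRecursive} and~\ref{lem:AnRecursiveInitial}, and both observe that the two sequences satisfy the common recursion (Lemma~\ref{lem:detLnRecursive} for $\det(\mtrx{L}_n)$, Lemmas~\ref{lem:AnRecursiveFormulaOdd} and~\ref{lem:AnRecursiveFormulaEven} for $A_n$), concluding by induction. The only difference is that you spell out the induction step a bit more explicitly than the paper does, which is fine.
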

\begin{proof}
Both $\det(\mtrx{L}_n)$ and $A_n$ satisfy the same following homogeneous linear recurrence relation and initial values:
\begin{itemize}
    \item $u_0 = 1$,
    \item $u_1 = -(d-1)z$,
    \item and $u_n = (n-1)(d-(n-1)) xy u_{n-2} - (d-(2n-1)) z u_{n-1}$ for all $n \geq 2$.
\end{itemize}
We have proved this with the following lemmas: 
\begin{itemize}
    \item $\det(\mtrx{L}_0) = 1$ by Lemma \ref{lem:detLnRecursive} and $A_0 = 1$ by Lemma \ref{lem:AnRecursiveInitial}),
    \item $\det(\mtrx{L}_1) = -(d-1)z$ by Lemma \ref{lem:detLnRecursive} and $A_1 = -(d-1)z$ by Lemma \ref{lem:AnRecursiveInitial},
    \item and finally, $\det(\mtrx{L}_n) = (n-1)(d-(n-1)) xy \det(\mtrx{L}_{n-2}) - (d-(2n-1)) z \det(\mtrx{L}_{n-1})$ for all $n \geq 2$ by Lemma \ref{lem:detLnRecursive} and $A_n = (n-1)(d-(n-1)) xy A_{n-2} - (d-(2n-1)) z A_{n-1}$ for all $n \geq 2$ by Lemmas \ref{lem:AnRecursiveFormulaOdd} and  \ref{lem:AnRecursiveFormulaEven}.
\end{itemize}
Therefore, $\det(\mtrx{L}_n) = A_n$ for all $n\geq0$.
\end{proof}
We only care about the case when $d$ is even. If we consider the case when $d$ is odd we see that the determinant is trivial.
\begin{corollary}
If $d$ is odd, then $\det\mtrx{M} = 0$.
\end{corollary}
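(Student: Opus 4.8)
The plan is to reduce everything to the closed formula for $\det(\mtrx{L}_n)$ and then observe a forced cancellation. First I would recall from Notation \ref{not:MandLn} that $\mtrx{M} = \mtrx{L}_d$, so by Theorem \ref{thm:detLnFormula} we have $\det\mtrx{M} = A_d$. Writing $d = 2N+1$ with $N \geq 0$ (possible since $d$ is odd and positive), this becomes
\[
\det\mtrx{M}
 =
A_{2N+1}
 =
z
\sum_{t=0}^N
(-1)^{t+1}
\left(
\prod_{h=1}^{N+t+1}
(d-(2h-1))
\right)
\left(
\prod_{h=t+1}^{N}
(d-2h)
\right)
\binom{N}{t}
(xy)^{N-t}
F^t
\/.
\]

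Next I would show every summand is zero. The key point is that the first product $\prod_{h=1}^{N+t+1}(d-(2h-1))$ always contains the index $h = N+1$: indeed $1 \leq N+1 \leq N+t+1$ since $t \geq 0$. For that index the factor is $d - (2(N+1)-1) = d - (2N+1) = 0$. Hence the product vanishes for every $t$ with $0 \leq t \leq N$, so each term of the sum is $0$, and therefore $\det\mtrx{M} = A_{2N+1} = 0$.

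There is essentially no obstacle here; the only thing requiring a moment's care is confirming that the vanishing factor $h = N+1$ genuinely lies inside the range of the product for all admissible $t$, which it does because $t \geq 0$ pushes the upper limit $N+t+1$ to at least $N+1$. This corollary is really just a sanity check showing that the even-$d$ hypothesis used throughout the rest of the paper is not merely a convenience but is forced: in the odd case the determinant $\det\mtrx{M}$ degenerates to zero and carries no information.
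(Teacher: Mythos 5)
Your proof is correct and takes essentially the same route as the paper: both identify $\mtrx{M} = \mtrx{L}_d$, invoke the closed formula from Theorem \ref{thm:detLnFormula}, and observe that the factor at $h = N+1$ (the paper writes $D+1$) in $\prod_{h=1}^{N+t+1}(d-(2h-1))$ vanishes for every $t$ in the summation range. The only difference is notational ($N$ versus $D$).
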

\begin{proof}
Set $d=2D+1$.
We defined $\mtrx{M}$ and $\mtrx{L}_n$ in Notation \ref{not:MandLn} in a way such that $\mtrx{M} = \mtrx{L}_d$. By setting $n=d$ in the formula from Theorem \ref{thm:detLnFormula}, we know that
\begin{align*}
\det(\mtrx{M})
=&
\det(\mtrx{L}_{d})
 = 
\det(\mtrx{L}_{2D+1})
\\=&
z^1
\sum_{t=0}^D 
(-1)^{t+1}
\prod_{h=1}^{D+t+1} 
(d-(2h-1))
\prod_{h=t+1}^D 
(d-2h)
\binom{D}{t}
(xy)^{D-t}
F^t
\/.\end{align*}
We see that $\prod_{h=1}^{D+t+1} (d-(2h-1)) = 0$ if $d-(2h-1) = 0$ for some $1\leq h\leq D+t+1$. This is the case because if $h=D+1$, then $d-(2h-1) = (2D+1)-(2(D+1)-1) = 0$ and $1\leq h\leq D+t+1$ for any $0\leq t\leq D$. Therefore,
\begin{align*}
\det(\mtrx{M})
=&
z^1
\sum_{t=0}^D 
(-1)^{t+1}
\prod_{h=1}^{D+t+1} 
(d-(2h-1))
\prod_{h=t+1}^D 
(d-2h)
\binom{D}{t}
(xy)^{D-t}
F^t
\\=&
z^1
\sum_{t=0}^D 
(-1)^{t+1}
(0)
\prod_{h=t+1}^D 
(d-2h)
\binom{D}{t}
(xy)^{D-t}
F^t
 = 
z^1
\sum_{t=0}^D 
0
 = 
0
\/.\qedhere\end{align*}
\end{proof}
We now assume that $d$ is even for the rest of this paper, and write $d=2D$.
\begin{corollary}
\label{cor:detMdd}
If $d=2D$ is even, then \[
\det\left(\mtrx{M}_{(\hat{d}),(\hat{d})}\right) = -(d-1)! z \sum_{t=0}^{D-1} \lambda_t (xy)^{(D-1)-t} F^t
\] as an element of $\ZZ[2^{-1}][x,y,z]$.
\end{corollary}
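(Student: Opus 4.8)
The plan is to identify $\mtrx{M}_{(\hat{d}),(\hat{d})}$ as one of the submatrices $\mtrx{L}_n$ and then specialize the closed formula of Theorem~\ref{thm:detLnFormula}. Deleting the last row and last column of $\mtrx{M} = \mtrx{L}_d$ leaves exactly the first $d-1$ rows and columns of $\mtrx{M}$, so $\mtrx{M}_{(\hat{d}),(\hat{d})} = \mtrx{L}_{d-1}$. Writing $d = 2D$, we have $d-1 = 2D-1 = 2(D-1)+1$, so Theorem~\ref{thm:detLnFormula} applies with $N = D-1$ and $\nu = 1$, yielding
\[
\det\!\left(\mtrx{M}_{(\hat{d}),(\hat{d})}\right) = z \sum_{t=0}^{D-1} (-1)^{t+1}\left(\prod_{h=1}^{D+t}(d-(2h-1))\right)\!\left(\prod_{h=t+1}^{D-1}(d-2h)\right)\binom{D-1}{t}(xy)^{(D-1)-t}F^t .
\]
It then remains to show the coefficient of $z\,(xy)^{(D-1)-t}F^t$ equals $-(d-1)!\,\lambda_t$ for each $t$.

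For this I would substitute $d = 2D$ and rewrite the two products. In the first product $d-(2h-1) = 2(D-h)+1$, so as $h$ runs from $1$ to $D+t$ the index $j = D-h$ runs from $D-1$ down to $-t$; splitting at $h = D$ (where the factor is $1$) gives $\prod_{h=1}^{D+t}(d-(2h-1)) = (-1)^t\bigl(\prod_{m=1}^{t}(2m-1)\bigr)\bigl(\prod_{h=1}^{D}(2h-1)\bigr)$. In the second product $d-2h = 2(D-h)$, which over $h = t+1,\dots,D-1$ equals $2^{D-1-t}(D-1-t)!$. Combining, and using $(-1)^{t+1}(-1)^t = -1$ together with $(D-1-t)!\binom{D-1}{t} = (D-1)!/t!$, the coefficient becomes
\[
-\left(\prod_{m=1}^{t}(2m-1)\right)\left(\prod_{h=1}^{D}(2h-1)\right)2^{D-1-t}\,\frac{(D-1)!}{t!} .
\]

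Finally I would invoke Lemma~\ref{lem:lambdaOdds}: part~(3) gives $\prod_{m=1}^{t}(2m-1) = 2^{t}t!\,\lambda_t$, which cancels the $t!$ and collapses the powers of $2$ to $2^{D-1}$, leaving $-\lambda_t\bigl(\prod_{h=1}^{D}(2h-1)\bigr)2^{D-1}(D-1)!$; then part~(1) gives $\prod_{h=1}^{D}(2h-1) = (2D)!/(2^{D}D!)$, so the remaining factor is $2^{D-1}(D-1)!\cdot(2D)!/(2^{D}D!) = (2D)!/(2D) = (2D-1)! = (d-1)!$. Hence the coefficient is $-(d-1)!\,\lambda_t$, and summing over $t$ produces the stated formula. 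Since $\lambda_t \in \ZZ[2^{-1}]$, the identity holds in $\ZZ[2^{-1}][x,y,z]$. No step is a genuine obstacle; the only care needed is the bookkeeping of the index range $\prod_{h=1}^{D+t}(2(D-h)+1)$, which straddles $h = D$ and contributes the sign $(-1)^t$ from the factors with $h > D$, after which everything is elementary factorial manipulation combined with the two identities from Lemma~\ref{lem:lambdaOdds}.
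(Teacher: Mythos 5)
Your proposal is correct and follows essentially the same route as the paper: identify $\mtrx{M}_{(\hat{d}),(\hat{d})} = \mtrx{L}_{d-1}$, apply Theorem~\ref{thm:detLnFormula} with $N=D-1$, $\nu=1$, split the product $\prod_{h=1}^{D+t}(d-(2h-1))$ at $h=D$ to extract the sign $(-1)^t$, and then use Lemma~\ref{lem:lambdaOdds} parts~(1) and~(3) to collapse the coefficient to $(d-1)!\,\lambda_t$. The only cosmetic difference is where you invoke Lemma~\ref{lem:lambdaOdds}(1): you apply it with $t=D$ to write $\prod_{h=1}^{D}(2h-1)=(2D)!/(2^D D!)$, whereas the paper peels off the last factor $(2D-1)$ and applies the identity with $t=D-1$ to recognize $(2(D-1))!$; both lead to $(2D-1)!=(d-1)!$.
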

\begin{proof}
\setcounter{equation}{0}
Recall that $\lambda_t = 2^{-2t}\binom{2t}{t}$ from Notation \ref{not:lambda} is a dyadic rational, which is why we require $2$ to be invertible.

We defined $\mtrx{L}_n$ in Notation \ref{not:MandLn} so that it consists of the first $n$ rows and columns of $\mtrx{M}$ if $n\leq d$, so $\mtrx{M}_{(\hat{d}),(\hat{d})} = \mtrx{L}_{d-1}$ since $\mtrx{M}$ has size $d\times d$. If we set $n=d-1$ in the formula from Theorem \ref{thm:detLnFormula}, then
\begin{align*}
\det\left(\mtrx{M}_{(\hat{d}),(\hat{d})}\right)
=&
\det(\mtrx{L}_{d-1})
 = 
\det(\mtrx{L}_{2(D-1)+1})
\\=&
z^1
\sum_{t=0}^{D-1} 
(-1)^{t+1}
\prod_{h=1}^{D-1+t+1} 
(d-(2h-1))
\prod_{h=t+1}^{D-1} 
(d-2h)
\binom{D-1}{t}
(xy)^{(D-1)-t}
F^t
\\=&
-z
\sum_{t=0}^{D-1} 
(-1)^t
\prod_{h=1}^{D+t} 
(d-(2h-1))
\prod_{h=t+1}^{D-1} 
(d-2h)
\binom{D-1}{t}
(xy)^{(D-1)-t}
F^t
\/.\end{align*}
Further,
\begin{align}&\notag
(-1)^t
\prod_{h=1}^{D+t} 
(d-(2h-1))
\prod_{h=t+1}^{D-1} 
(d-2h)
\binom{D-1}{t}
\\=&\notag
(-1)^t
\left(
\prod_{h=1}^D 
(2D-(2h-1))
\prod_{h=D+1}^{D+t} 
(2D-(2h-1))
\right)
\prod_{h=t+1}^{D-1} 
(2D-2h)
\binom{D-1}{t}
\\=&\notag
(-1)^t
\prod_{h=1}^D 
(2h-1)
\prod_{h=1}^t 
(-(2h-1))
\prod_{h=1}^{(D-1)-t} 
(2h)
\binom{D-1}{t}
\\=&\notag
\prod_{h=1}^D 
(2h-1)
\prod_{h=1}^t 
(2h-1)
\left(
2^{(D-1)-t}
((D-1)-t)!
\right)
\binom{D-1}{t}
\\=&
\prod_{h=1}^D 
(2h-1)
\left(
2^t t! \lambda_t
\right)
2^{(D-1)-t}
((D-1)-t)!
\binom{D-1}{t}
\\=&\notag
2^{D-1}
\left(
t!
((D-1)-t)!
\binom{D-1}{t}
\right)
\prod_{h=1}^D 
(2h-1)
\lambda_t
\\=&\notag
2^{D-1}
(D-1)!
\prod_{h=1}^D 
(2h-1)
\lambda_t
\\=&\notag
\left(
2^{D-1}
(D-1)!
\prod_{h=1}^{D-1} 
(2h-1)
\right)
(2D-1)
\lambda_t
\\=&
(2(D-1))!
(2D-1)
\lambda_t
\\=&\notag
(d-1)! \lambda_t
\/,\end{align}
where we use $\prod_{h=1}^t (2h-1) = 2^t t! \lambda_t$ from Lemma \ref{lem:lambdaOdds} on line (1) and $2^t t! \prod_{h=1}^t (2h-1) = (2t)!$ also from Lemma \ref{lem:lambdaOdds} on line (2).

Therefore, we have
\begin{align*}
\det\left(\mtrx{M}_{(\hat{d}),(\hat{d})}\right)
=&
-z
\sum_{t=0}^{D-1} 
(-1)^t
\prod_{h=1}^{D+t} 
(d-(2h-1))
\prod_{h=t+1}^{D-1} 
(d-2h)
\binom{D-1}{t}
(xy)^{(D-1)-t}
F^t
\\=&
-z
\sum_{t=0}^{D-1} 
((d-1)! \lambda_t)
(xy)^{(D-1)-t}
F^t
\\=&
-(d-1)!
z
\sum_{t=0}^{D-1} 
\lambda_t
(xy)^{(D-1)-t}
F^t
\/.\qedhere\end{align*}
\end{proof}
\begin{proposition}
\label{prop:detM11}
If $d=2D$ is even, then \[
\det\left(\mtrx{M}_{(\hat{1}),(\hat{1})}\right) = (d-1)! z \sum_{t=0}^{D-1} \lambda_t (xy)^{(D-1)-t} F^t
\] as an element of $\ZZ[2^{-1}][x,y,z]$.
\end{proposition}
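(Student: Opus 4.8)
The plan is to obtain $\det(\mtrx{M}_{(\hat{1}),(\hat{1})})$ from Corollary \ref{cor:detMdd} by exploiting a reflection symmetry of the tridiagonal matrix $\mtrx{M}$ that swaps its top-left and bottom-right corners, so that deleting the first row and column gets traded for deleting the last row and column. Let $\mtrx{J}$ denote the $d\times d$ exchange matrix (the anti-identity), so that $(\mtrx{J}\mtrx{M}\mtrx{J})_{i,j} = \mtrx{M}_{d+1-i,\,d+1-j}$, and let $\sigma$ be the ring automorphism of $\ZZ[x,y,z]$ (hence also of $\ZZ[2^{-1}][x,y,z]$) determined by $\sigma(x) = -y$, $\sigma(y) = -x$, $\sigma(z) = -z$. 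Observe that $\sigma$ is an involution and that $\sigma(xy) = xy$ and $\sigma(F) = \sigma(xy - z^2) = F$, while $\sigma$ fixes every dyadic rational $\lambda_t$ and the integer $(d-1)!$.

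The first step is to verify the identity $\mtrx{J}\mtrx{M}\mtrx{J} = \sigma(\mtrx{M})$ (with $\sigma$ applied entrywise). Using the entry formulas of Notation \ref{not:MandLn}, the substitution $i \mapsto d+1-i$ sends the main-diagonal entry $(2i-(d+1))z$ to its negative, sends the super-diagonal entry $ix$ to the value $-iy$ that now sits on the sub-diagonal, and sends the sub-diagonal entry $-(d-i)y$ to the value $(d-i)x$ that now sits on the super-diagonal; this is precisely the effect of $\sigma$ on those entries. Next, since conjugation by $\mtrx{J}$ carries the first row and column of $\mtrx{M}$ to the last row and column, one checks that $\mtrx{M}_{(\hat{1}),(\hat{1})} = \mtrx{J}_{d-1}\,(\mtrx{J}\mtrx{M}\mtrx{J})_{(\hat{d}),(\hat{d})}\,\mtrx{J}_{d-1}$, where $\mtrx{J}_{d-1}$ is the $(d-1)\times(d-1)$ exchange matrix; and $(\mtrx{J}\mtrx{M}\mtrx{J})_{(\hat{d}),(\hat{d})} = \sigma(\mtrx{M})_{(\hat{d}),(\hat{d})} = \sigma\!\left(\mtrx{M}_{(\hat{d}),(\hat{d})}\right)$ because an entrywise automorphism commutes with deleting rows and columns. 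Taking determinants and using $\det(\mtrx{J}_{d-1})^2 = 1$, together with the fact that $\det$ commutes with the ring automorphism $\sigma$, yields $\det\!\left(\mtrx{M}_{(\hat{1}),(\hat{1})}\right) = \sigma\!\left(\det\!\left(\mtrx{M}_{(\hat{d}),(\hat{d})}\right)\right)$.

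It then remains to substitute Corollary \ref{cor:detMdd} and apply $\sigma$. Since $\sigma$ fixes $xy$, $F$, each $\lambda_t$, and $(d-1)!$, but sends $z$ to $-z$, applying $\sigma$ to $-(d-1)!\,z\sum_{t=0}^{D-1}\lambda_t (xy)^{(D-1)-t}F^t$ merely flips the sign of the leading $z$, giving $(d-1)!\,z\sum_{t=0}^{D-1}\lambda_t (xy)^{(D-1)-t}F^t$, which is the claimed formula. The step I expect to be the main obstacle is the index bookkeeping behind $\mtrx{J}\mtrx{M}\mtrx{J} = \sigma(\mtrx{M})$ and the reduction $\mtrx{M}_{(\hat{1}),(\hat{1})} = \mtrx{J}_{d-1}\,(\mtrx{J}\mtrx{M}\mtrx{J})_{(\hat{d}),(\hat{d})}\,\mtrx{J}_{d-1}$: one has to keep straight how the reflection $i \mapsto d+1-i$ interchanges super- and sub-diagonal positions and shifts their indices, and confirm that the reversed corner submatrix really is $\mtrx{M}_{(\hat{1}),(\hat{1})}$ up to conjugation by $\mtrx{J}_{d-1}$. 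Should this symmetry be judged too terse for inclusion, an alternative is to imitate Section \ref{sec:DetCalcs} directly: let $\mtrx{L}'_n$ be the bottom-right $n\times n$ submatrix of $\mtrx{M}$, expand $\det(\mtrx{L}'_n)$ along its first column to obtain a three-term recurrence, and solve it in closed form exactly as was done there for $\det(\mtrx{L}_n)$; but the reflection argument is considerably shorter.
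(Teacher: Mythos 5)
Your proof is correct and takes essentially the same route as the paper: the paper also observes that reversing all rows and columns of $\mtrx{M}$ produces the matrix obtained by the substitution $x\mapsto -y$, $y\mapsto -x$, $z\mapsto -z$, concludes $\det(\mtrx{M}_{(\hat{1}),(\hat{1})})$ is the image of $\det(\mtrx{M}_{(\hat{d}),(\hat{d})})$ under that automorphism, and then invokes Corollary \ref{cor:detMdd} together with the invariance of $xy$ and $F$. Your version is the same argument made more explicit with the exchange matrices $\mtrx{J}$ and $\mtrx{J}_{d-1}$ and the verification $\det(\mtrx{J}_{d-1})^2=1$.
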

\begin{proof}
Recall that $\lambda_t = 2^{-2t}\binom{2t}{t}$ from Notation \ref{not:lambda} is a dyadic rational, which is why we require $2$ to be invertible.

In order to turn $\det\left(\mtrx{M}_{(\hat{d}),(\hat{d})}\right)$ into $\det\left(\mtrx{M}_{(\hat{1}),(\hat{1})}\right)$, we reverse the order of all the columns and rows of $\mtrx{M}$ (which does not affect the determinant since $\mtrx{M}$ has the same number of rows and columns), in order to obtain \[
\begin{bmatrix}
(d-1)z & -1y \\ (d-1)x & (d-3)z & -2y \\& (d-2)x & (d-5)z & -3y \\&& (d-3)x & \ddots & \ddots \\&&& \ddots & \ddots & -(d-3)y \\&&&& 3x & -(d-5)z & -(d-2)y \\&&&&& 2x & -(d-3)z & -(d-1)y \\&&&&&& 1x & -(d-1)z 
\end{bmatrix}
\/.\] Since \[
\mtrx{M}
 = 
\begin{bmatrix}
-(d-1)z & 1x \\ -(d-1)y & -(d-3)z & 2x \\& -(d-2)y & -(d-5)z & 3x \\&& -(d-3)y & \ddots & \ddots \\&&& \ddots & \ddots & (d-3)x \\&&&& -3y & (d-5)z & (d-2)x \\&&&&& -2y & (d-3)z & (d-1)x \\&&&&&& -1y & (d-1)z 
\end{bmatrix}
\/,\] we can also obtain this matrix by applying an invertible linear transformation to $\mtrx{M}$ sending $x \mapsto -y$, $y \mapsto -x$, and $z \mapsto -z$. Therefore, $\det\left(\mtrx{M}_{(\hat{1}),(\hat{1})}\right)$ is equal to $\det\left(\mtrx{M}_{(\hat{d}),(\hat{d})}\right)$ with this transformation applied. We know from Corollary \ref{cor:detMdd} that \[
\det\left(\mtrx{M}_{(\hat{d}),(\hat{d})}\right) = -(d-1)! z \sum_{t=0}^{D-1} \lambda_t (xy)^{(D-1)-t} F^t
\/;\] note that $xy$ and $F=xy-z^2$, and thus $\sum_{t=0}^{D-1} \lambda_t (xy)^{(D-1)-t} F^t$, aren't affected by the simultaneous transformations $x \mapsto -y$, $y \mapsto -x$, and $z \mapsto -z$.
Therefore \[
\det\left(\mtrx{M}_{\hat{1},\hat{1}}\right)
 = 
(d-1)! z \sum_{t=0}^{D-1} \lambda_t (xy)^{(D-1)-t} F^t
\/.\qedhere\]
\end{proof}
\begin{proposition}
\label{prop:detM}
If $d=2D$ is even, then $\det\mtrx{M} = \left( \prod_{h=1}^{D} (2h-1)^2 \right) F^D$.
\end{proposition}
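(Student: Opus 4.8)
The plan is to apply Theorem~\ref{thm:detLnFormula} with $n = d = 2D$ and then show that every term of the resulting sum except one vanishes. Since $\mtrx{M} = \mtrx{L}_d = \mtrx{L}_{2D}$, the theorem (with $N = D$ and $\nu = 0$) gives
\[
\det\mtrx{M}
 =
\sum_{t=0}^D
(-1)^t
\left(\prod_{h=1}^{D+t}(d-(2h-1))\right)
\left(\prod_{h=t+1}^D (d-2h)\right)
\binom{D}{t}
(xy)^{D-t}
F^t
\/.
\]

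First I would note that for each $t$ with $0 \le t \le D-1$ the product $\prod_{h=t+1}^D (d-2h)$ contains the factor at $h = D$, namely $d - 2D = 0$ since $d = 2D$; hence that whole term is zero. By contrast the companion product $\prod_{h=1}^{D+t}(d-(2h-1))$ never vanishes, because each $d - (2h-1)$ is odd while $d$ is even, so it is never $0$ --- this is precisely the place where the hypothesis ``$d$ even'' is used. Thus only the $t = D$ term survives; there $\prod_{h=D+1}^D(d-2h)$ is the empty product $1$ and $\binom{D}{D}(xy)^0 = 1$, so
\[
\det\mtrx{M} = (-1)^D \left(\prod_{h=1}^{2D}(d-(2h-1))\right) F^D
\/.
\]

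Next I would evaluate $\prod_{h=1}^{2D}\bigl(2D - (2h-1)\bigr)$ by splitting the index range at $h = D$: for $1 \le h \le D$ the factors are $2D-1, 2D-3, \ldots, 1$, with product $\prod_{h=1}^D(2h-1)$; for $D+1 \le h \le 2D$ the factors are $-1, -3, \ldots, -(2D-1)$, with product $(-1)^D\prod_{h=1}^D(2h-1)$ since there are exactly $D$ of them. Multiplying these two and substituting back, the two factors of $(-1)^D$ cancel and one is left with $\det\mtrx{M} = \left(\prod_{h=1}^D(2h-1)^2\right)F^D$, as claimed.

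The argument is routine once Theorem~\ref{thm:detLnFormula} is in hand; the only step demanding care is the last one, where the $D$ negative factors in the upper half of the product must be counted correctly so that the resulting sign $(-1)^D$ exactly cancels the $(-1)^D$ carried by the surviving $t = D$ term. No further input (not even the $\lambda_t$ identities) is needed, since the vanishing of the $t < D$ terms collapses the sum immediately.
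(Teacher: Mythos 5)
Your proof is correct and follows essentially the same route as the paper: apply Theorem~\ref{thm:detLnFormula} with $n=2D$, observe that $\prod_{h=t+1}^D(d-2h)$ vanishes for $t<D$ because of the $h=D$ factor $d-2D=0$, and then split the surviving product $\prod_{h=1}^{2D}(d-(2h-1))$ at $h=D$ to extract $(-1)^D\prod_{h=1}^D(2h-1)^2$, cancelling the leading $(-1)^D$. Your aside about the odd product $\prod(d-(2h-1))$ never vanishing is accurate but not needed for the computation; the parity hypothesis is already doing the essential work by forcing $d-2D=0$.
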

\begin{proof}
We defined $\mtrx{M}$ and $\mtrx{L}_n$ in Notation \ref{not:MandLn} in a away such that $\mtrx{M} = \mtrx{L}_d$. By setting $n=d$ in the formula from Theorem \ref{thm:detLnFormula}, we get
\begin{align*}
\det(\mtrx{M})
=&
\det(\mtrx{L}_{d})
 = 
\det(\mtrx{L}_{2D+0})
\\=&
z^0
\sum_{t=0}^D 
(-1)^{t+0}
\prod_{h=1}^{D+t+0} 
(d-(2h-1))
\prod_{h=t+1}^D 
(d-2h)
\binom{D}{t}
(xy)^{D-t}
F^t
\\=&
\sum_{t=0}^D 
(-1)^t
\prod_{h=1}^{D+t} 
(d-(2h-1))
\prod_{h=t+1}^D 
(d-2h)
\binom{D}{t}
(xy)^{D-t}
F^t
\/.\end{align*}
If we focus on the $\prod_{h=t+1}^D (d-2h)$ factor of the coefficients, we see that $\prod_{h=t+1}^D (d-2h) = 1$ if $t = D$, but $\prod_{h=t+1}^D (d-2h) = 0$ if $t < D$ because $d-2h = 0$ when $h = D$. Therefore, \[
\det(\mtrx{M})
 = 
(-1)^D
\prod_{h=1}^{D+D} 
(d-(2h-1))
\binom{D}{D}
(xy)^{D-D}
F^D
 = 
(-1)^D
\prod_{h=1}^{2D} 
(d-(2h-1))
F^D
\/.
\] If we look at $\prod_{h=1}^{2D} (d-(2h-1))$, we can see that its factors split nicely between $h \leq D$ and $h \geq D+1$:
\begin{align*}&
\prod_{h=1}^{2D} 
(d-(2h-1))
\\=&
\prod_{h=1}^D 
(2D-(2h-1))
\prod_{h=D+1}^{2D} 
(2D-(2h-1))
\\=&
\prod_{h=1}^D 
(2h-1)
\prod_{h=1}^D 
(-(2h-1))
\\=&
(-1)^D
\prod_{h=1}^D 
(2h-1)^2
\/.
\end{align*}
Thus $\det(\mtrx{M}) = \prod_{h=1}^D 
(2h-1)^2 F^D$.
\end{proof}

\section{Main results}
\label{sec:MainResults}

We use the results discussed thus far to prove in this section that the polynomial $(xy-z^2)^D$ is link-$q$-compressed in $k[x,y,z]$ for all powers $q$ of $p$, the odd characteristic of the field $k$, as long as $p>2D-1$. We use this fact to show most choices of degree $2D$ homogeneous polynomials in $k[x,y,z]$ are link-$q$-compressed, and thus the conclusions listed in Subsection \ref{subsec:lqcFacts} hold for these choices of polynomials. This section lists and proves these statements.

To prove our results, we make use of the following lemma:

\begin{lemma}[cf. {\cite[Lemma 2.3]{KRV12}}]
\label{lem:phipsi}
Let $P$ be a commutative Noetherian ring, $\vec{x}^\top = \begin{bmatrix} x_1 & x_2 & x_3 \end{bmatrix}$ such that $x_1,x_2,x_3$ generate a perfect grade 3 ideal in $P$, $\mtrx{\varphi}$ a skew-symmetric matrix in $P$ of size $m \times m$ (where $m$ is even), $\mtrx{\psi}$ a $m \times 3$ matrix in $P$, and $u$ a unit in $P$. Define $f = u^{-1} \Pf\mtrx{\varphi}$ and $\mtrx{X} = \begin{bmatrix} 0 & x_3 & -x_2 \\ -x_3 & 0 & x_1 \\ x_2 & -x_1 & 0 \end{bmatrix}$.
Assume that the entries of $u\mtrx{X} - \mtrx{\psi}^\top \mtrx{\varphi}^\vee \mtrx{\psi}$ are in the ideal $(f)P$. Define $\mtrx{\Phi} = (u\mtrx{X} - \mtrx{\psi}^\top \mtrx{\varphi}^\vee \mtrx{\psi})/(uf)$ and define $\mtrx{\partial}_2 = \begin{bmatrix} \mtrx{\varphi} & \mtrx{\psi} \\ -\mtrx{\psi}^\top & \mtrx{\Phi} \end{bmatrix}$.

Then the ideal $I_1(\vec{x}^\top):f$ is generated by the maximal order Pfaffians of $\mtrx{\partial}_2$.

Also, \[
0
\xrightarrow{}
P^m
\xrightarrow{
\begin{bmatrix}
\mtrx{\varphi} \\ -u \mtrx{\psi}^\top
\end{bmatrix}
}
\bigoplus_{P^3}^{P^m}
\xrightarrow{
\begin{bmatrix}
u \mtrx{\psi}^\top \mtrx{\varphi}^\vee & u f \mtrx{I} \\ -\vec{b}^\top & -\vec{x}^\top
\end{bmatrix}
}
\bigoplus_{P}^{P^3}
\xrightarrow{
\begin{bmatrix}
\vec{x}^\top & u f
\end{bmatrix}
}
P
\]
is a free $P$-resolution of $P/(f,I_1(\vec{x}^\top))$, where $\vec{b}^\top = \begin{bmatrix} \Pf_1\mtrx{\partial}_2 & \cdots & \Pf_m\mtrx{\partial}_2 \end{bmatrix}$.

Lastly, if $R = P/(f)$, then
\[
\cdots
 \to 
R^m
 \xrightarrow{\mtrx{\varphi}} 
R^m
 \xrightarrow{\mtrx{\varphi}^\vee} 
R^m
 \xrightarrow{\mtrx{\varphi}} 
R^m
 \xrightarrow{\mtrx{\psi}^\top \mtrx{\varphi}^\vee} 
R^3
 \xrightarrow{\vec{x}^\top} 
R
\]
is a free $R$-resolution of $R/I_1(\vec{x}^\top)$.
\end{lemma}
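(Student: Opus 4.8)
The strategy is to recognize $\mtrx{\partial}_2$ as the Buchsbaum--Eisenbud matrix of the grade-$3$ Gorenstein ideal $I_1(\vec{x}^\top):f$, and then to read off all three conclusions from the structure theorem \cite{BE77}, from linkage, and from the theory of matrix factorizations. Once the identification of that ideal is in hand, the three displayed complexes are exactly those of Propositions \ref{prop:RGPOverCandf} and \ref{prop:RGROverC}, so the real work is producing the identification.

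First I would compute the maximal-order Pfaffians of $\mtrx{\partial}_2$. Since $m$ is even, $\mtrx{\partial}_2$ is skew-symmetric of odd size $m+3$, and $\mtrx{\Phi}$ is skew-symmetric (because $\mtrx{\varphi}^\vee$ is, for $\mtrx{\varphi}$ skew of even size). Lemma \ref{lem:partial2lastPfs} gives $\Pf_{m+\ell}(\mtrx{\partial}_2)=\Pf_\ell(\mtrx{\psi}^\top\mtrx{\varphi}^\vee\mtrx{\psi}+\Pf(\mtrx{\varphi})\mtrx{\Phi})$ for $\ell=1,2,3$; since $\Pf(\mtrx{\varphi})=uf$ and $uf\,\mtrx{\Phi}=u\mtrx{X}-\mtrx{\psi}^\top\mtrx{\varphi}^\vee\mtrx{\psi}$ by construction, the argument of $\Pf_\ell$ collapses to $u\mtrx{X}$, and evaluating the three $2\times2$ Pfaffians of $\mtrx{X}$ gives $\Pf_\ell(u\mtrx{X})=ux_\ell$. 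Writing $\vec{v}^\top=\begin{bmatrix}\Pf_1\mtrx{\partial}_2&\cdots&\Pf_m\mtrx{\partial}_2&ux_1&ux_2&ux_3\end{bmatrix}=\begin{bmatrix}\vec{b}^\top&u\vec{x}^\top\end{bmatrix}$, the ideal $I$ of maximal Pfaffians of $\mtrx{\partial}_2$ therefore contains $I_1(\vec{x}^\top)$, which is perfect of grade $3$; hence $\operatorname{grade} I=3$, and the structure theorem (in the form of \cite[Lem.~5.2]{RGpaper}) shows $P/I$ is Gorenstein with minimal free resolution the Pfaffian complex $0\to P\xrightarrow{\vec{v}}P^{m+3}\xrightarrow{\mtrx{\partial}_2}P^{m+3}\xrightarrow{\vec{v}^\top}P\to0$, in which $\mtrx{\partial}_2\vec{v}=0$ and $\vec{v}^\top\mtrx{\partial}_2=0$.

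Next I would identify $I=I_1(\vec{x}^\top):f$ and obtain the $P$-resolution. The top and left blocks of $\mtrx{\partial}_2\vec{v}=0$ and $\vec{v}^\top\mtrx{\partial}_2=0$ read $\mtrx{\varphi}\vec{b}=-u\mtrx{\psi}\vec{x}$ and $\vec{b}^\top\mtrx{\varphi}=u\vec{x}^\top\mtrx{\psi}^\top$; multiplying by $\mtrx{\varphi}^\vee$ and using $\mtrx{\varphi}\mtrx{\varphi}^\vee=\mtrx{\varphi}^\vee\mtrx{\varphi}=uf\mtrx{I}$ yields $f\vec{b}=-\mtrx{\varphi}^\vee\mtrx{\psi}\vec{x}$ and $f\vec{b}^\top=\vec{x}^\top\mtrx{\psi}^\top\mtrx{\varphi}^\vee$, so $fI\subseteq I_1(\vec{x}^\top)$ and hence $I\subseteq I_1(\vec{x}^\top):f$. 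The same three identities show that the matrices $\begin{bmatrix}\vec{x}^\top&uf\end{bmatrix}$, $\begin{bmatrix}u\mtrx{\psi}^\top\mtrx{\varphi}^\vee&uf\mtrx{I}\\-\vec{b}^\top&-\vec{x}^\top\end{bmatrix}$, $\begin{bmatrix}\mtrx{\varphi}\\-u\mtrx{\psi}^\top\end{bmatrix}$ compose to zero, and acyclicity of the resulting $P$-complex then follows from the Buchsbaum--Eisenbud exactness criterion: the ranks of the three maps are $1,3,m$, and the required grade bounds on the ideals of minors come from $\operatorname{grade}(f,I_1(\vec{x}^\top))=3$, from $\operatorname{grade} I=3$, and from the maximal minors of $\begin{bmatrix}\mtrx{\varphi}\\-u\mtrx{\psi}^\top\end{bmatrix}$ generating a grade-$3$ ideal. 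Thus $P/(f,I_1(\vec{x}^\top))$ has the stated resolution, and dualizing it in the standard way of linkage (with respect to $I_1(\vec{x}^\top)$) shows that the linked ideal $I_1(\vec{x}^\top):(f,I_1(\vec{x}^\top))=I_1(\vec{x}^\top):f$ is resolved by the transpose complex, which is the Pfaffian complex of $\mtrx{\partial}_2$; hence $I_1(\vec{x}^\top):f=I$, with the inclusion already obtained serving as a consistency check on the unit $u$ (cf.\ \cite[Rem.~2.7]{HU87}).

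Finally, the $R$-resolution of $R/I_1(\vec{x}^\top)=P/(f,I_1(\vec{x}^\top))$ comes from reducing the $P$-resolution modulo $f$: the entry $uf$ and the block $uf\mtrx{I}$ vanish, and since $\mtrx{\varphi}\mtrx{\varphi}^\vee=\mtrx{\varphi}^\vee\mtrx{\varphi}=uf\mtrx{I}\equiv0$ in $R$, the pair $(\mtrx{\varphi},\mtrx{\varphi}^\vee)$ is a matrix factorization of $uf$ over $P$; Eisenbud's matrix-factorization theory then supplies the eventually $2$-periodic tail, producing $\cdots\xrightarrow{\mtrx{\varphi}}R^m\xrightarrow{\mtrx{\varphi}^\vee}R^m\xrightarrow{\mtrx{\varphi}}R^m\xrightarrow{\mtrx{\psi}^\top\mtrx{\varphi}^\vee}R^3\xrightarrow{\vec{x}^\top}R$ as a free $R$-resolution, exactly as in Proposition \ref{prop:RGROverC}. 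The main obstacle I expect is verifying the grade hypotheses in the Buchsbaum--Eisenbud acyclicity criterion --- in particular that the maximal minors of $\begin{bmatrix}\mtrx{\varphi}\\-u\mtrx{\psi}^\top\end{bmatrix}$ cut out a grade-$3$ ideal --- together with making the linkage step an honest equality of ideals rather than merely an equality up to radical; by contrast the Pfaffian computation and the propagation of signs and of the unit $u$ through Lemma \ref{lem:partial2lastPfs} and the two reductions are routine bookkeeping.
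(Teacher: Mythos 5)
The paper does not actually prove this lemma: it is cited verbatim (modulo cosmetics) from \cite[Lemma 2.3]{KRV12}, as the remark immediately following it makes explicit. So there is no ``paper's proof'' here for your proposal to agree or disagree with; you are supplying an argument the paper deliberately outsources.

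That said, the strategy you sketch is the right one and is in the same spirit as the proof in \cite{KRV12}: compute $\Pf_{m+\ell}(\mtrx{\partial}_2)=u x_\ell$ using Lemma~\ref{lem:partial2lastPfs}, deduce $\operatorname{grade}$ of the Pfaffian ideal $I$ is exactly $3$ from $I_1(\vec{x}^\top)\subseteq I$, apply the Buchsbaum--Eisenbud structure theorem to get the Pfaffian complex for $P/I$, identify $I$ with $I_1(\vec{x}^\top):f$, and read off the $P$- and $R$-resolutions. The Pfaffian computation, the unit bookkeeping, and the identities $f\vec{b}=-\mtrx{\varphi}^\vee\mtrx{\psi}\vec{x}$ obtained by multiplying $\mtrx{\varphi}\vec b = -u\mtrx{\psi}\vec x$ by $\mtrx{\varphi}^\vee$ are all sound.

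The two gaps you flag at the end are genuine, but you can route around the first and should be more precise about the second. For the grade condition on the maximal minors of $\begin{bmatrix}\mtrx{\varphi}\\-u\mtrx{\psi}^\top\end{bmatrix}$: this does not follow from the hypotheses of the lemma, and you should not try to verify it directly. Instead, once the Pfaffian complex resolving $P/I$ is in hand, the $P$-resolution of $P/(I_1(\vec x^\top):I)$ is produced automatically by the standard mapping-cone-dual construction of linkage over the Koszul complex of $\vec x$; you never need to invoke Buchsbaum--Eisenbud acyclicity for the second complex. For the linkage step: the residual ideal $I_1(\vec x^\top):I$ is an honest ideal equality in the linkage theorem, not an equality up to radical, provided the ideals are unmixed of the right grade. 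What you actually need to pin down is that $I_1(\vec x^\top):I=(f)+I_1(\vec x^\top)$. Your inclusion $I\subseteq I_1(\vec x^\top):f$ gives $(f)+I_1(\vec x^\top)\subseteq I_1(\vec x^\top):I$; the reverse inclusion follows either from matching the $P$-resolutions (read off the augmentation of the mapping-cone dual) or from the double-link identity $I_1(\vec x^\top):(I_1(\vec x^\top):I)=I$ combined with $I\subseteq I_1(\vec x^\top):f\subseteq I_1(\vec x^\top):\bigl((f)+I_1(\vec x^\top)\bigr)$, both unmixed of grade $3$. Either route closes the circle without the minor-grade hypothesis you were worried about. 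For a worked-out version, consult \cite[Lemma 2.3]{KRV12} directly.
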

\begin{remark}
We make only two significant changes from the original lemma. The first is that $\mtrx{\psi}$ is replaced with its transpose (the original paper had $\mtrx{\psi}$ as a $3 \times m$ matrix instead of $m \times 3$). The second is that every $f$ is replaced with $u f$ (the original paper had $\Pf\mtrx{\varphi} = f$). The statement as given here is equivalent to its original formulation.
\end{remark}

We establish the following in order to apply Lemma \ref{lem:phipsi}:

\begin{notation}
\label{not:phipsi}
Let $k$ be a field with $\Char k = p$, an odd prime, and let $P = k[x,y,z]$ with homogeneous maximal ideal $\maxI = (x,y,z)$.

Define the polynomial $f = F^D$ in $P$, where $F = xy-z^2$ and $D \geq 1$. Let $d = \deg f = (\deg F)D = 2D$. Also let $R=P/(f)$.

Let $q>1$ be a power of $p$. Since $p$ is odd, $q$ is odd as well; define $\pi=(q-1)/2$.

Now set $x_1=x^q$, $x_2=y^q$, $x_3=z^q$. This sets $\vec{x}^\top = \begin{bmatrix} x^q & y^q & z^q \end{bmatrix}$, $\mtrx{X} = \begin{bmatrix} 0 & z^q & -y^q \\ -z^q & 0 & x^q \\ y^q & -x^q & 0 \end{bmatrix}$, and $\maxI^{[q]} = (x^q,y^q,z^q) = I_1(\vec{x}^\top)$.

We have \[
z^q = z \sum_{t=0}^\pi \lambda_t (xy)^{\pi-t} F^t = z \left( (xy)^{\pi-(D-1)} g + f G \right)
\/,\] where the first equality is from Lemma \ref{lem:zq}, and the second equality is clear if we define $g := \sum_{t=0}^{D-1} \lambda_t (xy)^{(D-1)-t} F^t$ and $G := \sum_{t=D}^\pi \lambda_t (xy)^{\pi-t} F^{t-D}$. Here $\lambda_t$ is from Notation \ref{not:lambda}.

We define the $2d \times 3$ block matrix \[
\mtrx{\psi}
 = 
\begin{bmatrix} d \lambda_D y^{\pi-(D-1)} \vec{e}_1 & \vec{0} & d \lambda_D x^{\pi-(D-1)} \vec{e}_d \\ \vec{0} & -x^{\pi-(D-1)} \vec{e}_1 & y^{\pi-(D-1)} \vec{e}_d \end{bmatrix}
\/,\] where we use the length $d$ elementary column vectors $\vec{e}_1 = \begin{bmatrix} 1 \\ 0 \\ \vdots \\ 0 \end{bmatrix}$ and $\vec{e}_d = \begin{bmatrix} 0 \\ \vdots \\ 0 \\ 1 \end{bmatrix}$, and also define the $2d \times 2d$ block matrix of pure graded degree $1$  \[
\mtrx{\varphi}
 = 
\begin{bmatrix} \mtrx{0} & \mtrx{M} \\ -\mtrx{M}^\top & \mtrx{0} \end{bmatrix}
\/,\] where we use the $d \times d$ tridiagonal matrix of pure graded degree $1$ \[
\mtrx{M}
 = 
\begin{bmatrix}
-(d-1)z & 1x \\ -(d-1)y & -(d-3)z & 2x \\& -(d-2)y & -(d-5)z & 3x \\&& -(d-3)y & \ddots & \ddots \\&&& \ddots & \ddots & (d-3)x \\&&&& -3y & (d-5)z & (d-2)x \\&&&&& -2y & (d-3)z & (d-1)x \\&&&&&& -1y & (d-1)z 
\end{bmatrix}
\/.\] Recall that we defined this matrix in Notation \ref{not:MandLn}.
Define $\mtrx{\Phi} = z G \begin{bmatrix}0&1&0\\-1&0&0\\0&0&0\end{bmatrix}$; we prove in Lemma \ref{lem:PhiEquation} that $u \mtrx{X} - \mtrx{\psi}^\top \mtrx{\varphi}^\vee \mtrx{\psi} = u f \mtrx{\Phi}$, where $u = (-1)^D d! \lambda_D$. Also define \[
\mtrx{\partial}_2
 = 
\begin{bmatrix}
\mtrx{\varphi} & \mtrx{\psi} \\ -\mtrx{\psi}^\top & \mtrx{\Phi}
\end{bmatrix}
 = 
\begin{bmatrix}
\mtrx{0} & \mtrx{M} & d \lambda_D y^{\pi-(D-1)} \vec{e}_1 & \vec{0} & d \lambda_D x^{\pi-(D-1)} \vec{e}_d \\ -\mtrx{M}^\top & \mtrx{0} & \vec{0} & -x^{\pi-(D-1)} \vec{e}_1 & y^{\pi-(D-1)} \vec{e}_d \\ -d \lambda_D y^{\pi-(D-1)} \vec{e}_1^\top & \vec{0}^\top & 0 & z G & 0 \\ \vec{0}^\top & x^{\pi-(D-1)} \vec{e}_1^\top & -z G & 0 & 0 \\ -d \lambda_D x^{\pi-(D-1)} \vec{e}_d^\top & -y^{\pi-(D-1)} \vec{e}_d^\top & 0 & 0 & 0
\end{bmatrix}
\/.\]
\end{notation}

Our goal is to apply Lemma \ref{lem:phipsi} to the objects we've established. This means we must prove that:
\begin{itemize}
\item $u = (-1)^D d! \lambda_D$ is a unit,
\item $\Pf\mtrx{\varphi} = u f$, and
\item the entries of $u \mtrx{X} - \mtrx{\psi}^\top \mtrx{\varphi}^\vee \mtrx{\psi}$ are in the ideal $(f)P$, which we'll prove by showing that $u \mtrx{X} - \mtrx{\psi}^\top \mtrx{\varphi}^\vee \mtrx{\psi} = u f \mtrx{\Phi}$.
\end{itemize}
Once we apply Lemma \ref{lem:phipsi}, we will know the maximal order Pfaffians of $\mtrx{\partial}_2$ are generators of the ideal $I_1(\mtrx{x}):f = \maxI^{[q]}:f$. When we show these Pfaffians have degree greater than $s/2$ except for those in $\maxI^{[q]}$, this proves that $f$ is link-$q$-compressed. Recall from Lemma \ref{lem:invSystFrob} that $s = 3(q-1)-\deg f = 2(3\pi-D)$ is the socle dimension of $R/\maxI^{[q]}$.

\begin{lemma}
\label{lem:uIsUnit}
Assume $d=2D$ is even.
We have $u := (-1)^D d! \lambda_D = (-1)^D \left( \prod_{h=1}^D (2h-1) \right)^2$ of a field $k$ with odd characteristic $p>d-1$ is a unit.
\end{lemma}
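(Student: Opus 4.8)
The plan is to first rewrite $u$ in the claimed closed form using the number-theoretic identities of Lemma \ref{lem:lambdaOdds}, and then to observe that the resulting expression is (up to sign) the square of a nonzero element of the field $k$.

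First I would apply part (4) of Lemma \ref{lem:lambdaOdds} with $t = D$. Since $d = 2D$, this gives $d!\,\lambda_D = (2D)!\,\lambda_D = \left(\prod_{h=1}^D (2h-1)\right)^2$, and multiplying through by $(-1)^D$ yields the asserted identity $u = (-1)^D d!\,\lambda_D = (-1)^D\left(\prod_{h=1}^D (2h-1)\right)^2$. Both sides make sense in $k$ because $\Char k = p$ is odd, so $2$ is invertible and hence $\lambda_D = 2^{-2D}\binom{2D}{D}$ is a well-defined element of $k$; concretely, the identity is the specialization at $\tilde d \mapsto$ (image of $2$ inverted) of an identity valid over $\ZZ[2^{-1}]$.

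Next I would show $u$ is a unit. Since $k$ is a field, it suffices to show $u \neq 0$, and as $(-1)^D$ is a unit it suffices to show $\prod_{h=1}^D (2h-1) \neq 0$ in $k$. The factors are the odd integers $1, 3, \ldots, 2D-1$, and each satisfies $1 \leq 2h-1 \leq 2D-1 = d-1 < p$ by the hypothesis $p > d-1$; hence no factor is divisible by $p$, so each has nonzero image in $k$. Because $k$ is an integral domain, the product of these nonzero elements is nonzero, so $u \neq 0$ and therefore $u$ is a unit in $k$.

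There is essentially no obstacle here: everything beyond Lemma \ref{lem:lambdaOdds} comes down to the fact that the hypothesis $p > d-1$ forces every odd factor $2h-1$ with $h \leq D$ to reduce to a nonzero residue modulo $p$. This is precisely why the bound $D < \tfrac{p+1}{2}$ (equivalently $p > d-1$) is imposed throughout the paper, and no sharper input is needed.
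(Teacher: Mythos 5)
Your proof is correct and matches the paper's own argument essentially step for step: both invoke part (4) of Lemma \ref{lem:lambdaOdds} to rewrite $u=(-1)^D(2D)!\,\lambda_D$ as $(-1)^D\bigl(\prod_{h=1}^D(2h-1)\bigr)^2$, and then note that each odd factor $2h-1\le d-1<p$ is nonzero (hence invertible) in $k$. The extra remark that the identity is a specialization of one over $\ZZ[2^{-1}]$ is a nice clarification but does not change the route.
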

\begin{proof}
We established in Lemma  \ref{lem:lambdaOdds} that $(2t)! \lambda_t = \left( \prod_{h=1}^t (2h-1) \right)^2$ for any $t \geq 0$, and so we have \[
u = (-1)^D (2D)! \lambda_D = (-1)^D \left( \prod_{h=1}^D (2h-1) \right)^2
\/.\] This means $u$ is an integer that is a product of odd numbers at most $2D-1=d-1$. Since $\Char k > d-1$, this means each factor of $u$ is invertible, and thus so is $u$.
\end{proof}
\begin{lemma}
\label{lem:Pfphi}
Assume $d=2D$ is even.
Then $\Pf\mtrx{\varphi} = u f$.
\end{lemma}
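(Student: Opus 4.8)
The plan is to assemble this from two results already in hand: Remark~\ref{rmk:PfaffBlockSquare}, which evaluates the Pfaffian of a block matrix of the form $\begin{bmatrix}\mtrx{0}&\mtrx{M}\\-\mtrx{M}^\top&\mtrx{0}\end{bmatrix}$ in terms of $\det\mtrx{M}$, and Proposition~\ref{prop:detM}, which computes $\det\mtrx{M}$ explicitly when $d=2D$ is even. Since $\mtrx{M}$ is the $d\times d$ matrix from Notation~\ref{not:MandLn} and $\mtrx{\varphi}=\begin{bmatrix}\mtrx{0}&\mtrx{M}\\-\mtrx{M}^\top&\mtrx{0}\end{bmatrix}$, Remark~\ref{rmk:PfaffBlockSquare} gives $\Pf\mtrx{\varphi}=(-1)^{d(d-1)/2}\det\mtrx{M}$, and Proposition~\ref{prop:detM} gives $\det\mtrx{M}=\left(\prod_{h=1}^D(2h-1)^2\right)F^D$.

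First I would substitute these two facts to get $\Pf\mtrx{\varphi}=(-1)^{d(d-1)/2}\left(\prod_{h=1}^D(2h-1)^2\right)F^D$. Then the only remaining task is to reconcile the sign $(-1)^{d(d-1)/2}$ with the sign $(-1)^D$ appearing in the definition of $u$. Writing $d=2D$, one has $d(d-1)/2=D(2D-1)$, and since $2D-1$ is odd, $(-1)^{D(2D-1)}=\left((-1)^{2D-1}\right)^D=(-1)^D$. Hence $\Pf\mtrx{\varphi}=(-1)^D\left(\prod_{h=1}^D(2h-1)^2\right)F^D$.

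Finally I would invoke Lemma~\ref{lem:uIsUnit} (or just the identity $(2t)!\lambda_t=\left(\prod_{h=1}^t(2h-1)\right)^2$ from Lemma~\ref{lem:lambdaOdds}) to recognize $(-1)^D\left(\prod_{h=1}^D(2h-1)^2\right)=(-1)^D d!\lambda_D=u$, and recall $f=F^D$ from Notation~\ref{not:phipsi}, so that $\Pf\mtrx{\varphi}=u F^D=uf$, as claimed.

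There is essentially no obstacle here: the content is entirely in Proposition~\ref{prop:detM} (which was proved via the recursion in Theorem~\ref{thm:detLnFormula}) and the block-Pfaffian identity, both already established. The only thing requiring a moment's care is the parity bookkeeping on the sign, namely verifying $(-1)^{d(d-1)/2}=(-1)^D$ when $d=2D$; this is the single step I would write out explicitly so the reader is not left to check it.
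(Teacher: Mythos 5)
Your proposal is correct and follows exactly the same approach as the paper: apply Remark~\ref{rmk:PfaffBlockSquare} to get $\Pf\mtrx{\varphi}=(-1)^{d(d-1)/2}\det\mtrx{M}$, simplify the sign to $(-1)^D$ via $d=2D$, substitute $\det\mtrx{M}=\left(\prod_{h=1}^D(2h-1)\right)^2 F^D$ from Proposition~\ref{prop:detM}, and recognize the resulting scalar as $u$ by Lemma~\ref{lem:uIsUnit}. No gaps; the parity step you highlight is indeed the only point of care.
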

\begin{proof}
We define $u$, $f$, and $\mtrx{\varphi}$ as in Notation \ref{not:phipsi}.

To find $\Pf\mtrx{\varphi}$, we use the formula from Remark \ref{rmk:PfaffBlockSquare} to see \[
\Pf\mtrx{\varphi} = \Pf \begin{bmatrix} \mtrx{0} & \mtrx{M} \\ -\mtrx{M}^\top & \mtrx{0} \end{bmatrix} = (-1)^{d(d-1)/2} \det\mtrx{M} = (-1)^{D(2D-1)} \det\mtrx{M} = (-1)^D \det\mtrx{M}
\/,\] and then since $\det\mtrx{M} = \left( \prod_{h=1}^{D} (2h-1) \right)^2 F^D = \left( \prod_{h=1}^{D} (2h-1) \right)^2 f$ by Proposition \ref{prop:detM} and the previous paragraph, we have $\Pf\mtrx{\varphi} = (-1)^D \left( \prod_{h=1}^{D} (2h-1) \right)^2 f = u f$ using Lemma \ref{lem:uIsUnit}.
\end{proof}
\begin{lemma}
\label{lem:PhiEquation}
We have $\mtrx{\psi}^\top \mtrx{\varphi}^\vee \mtrx{\psi} + u f \mtrx{\Phi} = u \mtrx{X}$, which means that the entries of $u \mtrx{X} - \mtrx{\psi}^\top \mtrx{\varphi}^\vee \mtrx{\psi}$ are in the ideal $(f)P$.
\end{lemma}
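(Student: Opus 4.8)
The plan is to compute $\mtrx{\psi}^\top \mtrx{\varphi}^\vee \mtrx{\psi}$ in closed form by exploiting the block structure of $\mtrx{\varphi}$, $\mtrx{\varphi}^\vee$, and $\mtrx{\psi}$. First I would apply Lemma \ref{lem:PfaffBlockAdj} with $n = d = 2D$ to get
\[
\mtrx{\varphi}^\vee = (-1)^{d(d-1)/2}\begin{bmatrix}\mtrx{0} & -\overline{\mtrx{M}}^\top \\ \overline{\mtrx{M}} & \mtrx{0}\end{bmatrix} = (-1)^{D}\begin{bmatrix}\mtrx{0} & -\overline{\mtrx{M}}^\top \\ \overline{\mtrx{M}} & \mtrx{0}\end{bmatrix},
\]
since $d(d-1)/2 = D(2D-1) \equiv D \pmod{2}$. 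Writing $\mtrx{\psi} = \begin{bmatrix}\mtrx{\psi}_1 \\ \mtrx{\psi}_2\end{bmatrix}$ for the two $d \times 3$ blocks of $\mtrx{\psi}$ from Notation \ref{not:phipsi}, block multiplication then gives $\mtrx{\psi}^\top \mtrx{\varphi}^\vee \mtrx{\psi} = (-1)^{D}\bigl(\mtrx{N}^\top - \mtrx{N}\bigr)$, where $\mtrx{N} = \mtrx{\psi}_1^\top \overline{\mtrx{M}}^\top \mtrx{\psi}_2$; in particular the result comes out automatically skew-symmetric, matching the skew-symmetry of $u\mtrx{X}$ and $uf\mtrx{\Phi}$.

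Next, because every column of $\mtrx{\psi}_1$ and of $\mtrx{\psi}_2$ is zero or a monomial multiple of $\vec{e}_1$ or $\vec{e}_d$, the product $\mtrx{N}$ only sees the four corner entries $\overline{\mtrx{M}}_{1,1}$, $\overline{\mtrx{M}}_{1,d}$, $\overline{\mtrx{M}}_{d,1}$, $\overline{\mtrx{M}}_{d,d}$ of the classical adjoint (and $\overline{\mtrx{M}}_{d,d}$ will cancel in $\mtrx{N}^\top - \mtrx{N}$). By Definition \ref{def:detAdj} these are, up to explicit signs, the four determinants computed earlier: Proposition \ref{prop:detMd1and1d} gives $\overline{\mtrx{M}}_{1,d} = -(d-1)!\,x^{d-1}$ and $\overline{\mtrx{M}}_{d,1} = (d-1)!\,y^{d-1}$, while Proposition \ref{prop:detM11} and Corollary \ref{cor:detMdd} give $\overline{\mtrx{M}}_{1,1} = (d-1)!\,z\,g$ with $g = \sum_{t=0}^{D-1}\lambda_t (xy)^{(D-1)-t}F^t$. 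Substituting these in and absorbing $d\cdot(d-1)! = d!$ together with the sign $(-1)^{D}$ into $u = (-1)^{D} d!\,\lambda_D$, I would obtain, writing $m := \pi-(D-1)$ for brevity,
\[
\mtrx{\psi}^\top \mtrx{\varphi}^\vee \mtrx{\psi} = u\begin{bmatrix} 0 & x^{m}y^{m}z\,g & -y^{2m+d-1} \\ -x^{m}y^{m}z\,g & 0 & x^{2m+d-1} \\ y^{2m+d-1} & -x^{2m+d-1} & 0 \end{bmatrix}.
\]

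Finally, I would use the arithmetic identity $2m + (d-1) = 2\pi + 1 = q$ to rewrite the corner exponents. Comparing with $\mtrx{X} = \begin{bmatrix}0 & z^q & -y^q \\ -z^q & 0 & x^q \\ y^q & -x^q & 0\end{bmatrix}$, all entries of $u\mtrx{X}$ and $\mtrx{\psi}^\top\mtrx{\varphi}^\vee\mtrx{\psi}$ agree outside positions $(1,2)$ and $(2,1)$, so $u\mtrx{X} - \mtrx{\psi}^\top\mtrx{\varphi}^\vee\mtrx{\psi}$ has single nonzero entry $u\bigl(z^q - x^{m}y^{m}z\,g\bigr)$ in position $(1,2)$ (and its negative in $(2,1)$). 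Since $z^q = z\bigl((xy)^{m}g + fG\bigr)$ by Notation \ref{not:phipsi} (which itself rests on Lemma \ref{lem:zq}), that entry equals $uzfG = uf(zG)$, so $u\mtrx{X} - \mtrx{\psi}^\top\mtrx{\varphi}^\vee\mtrx{\psi} = uf\,zG\begin{bmatrix}0&1&0\\-1&0&0\\0&0&0\end{bmatrix} = uf\mtrx{\Phi}$; rearranging gives the claimed identity, and in particular every entry of $u\mtrx{X} - \mtrx{\psi}^\top\mtrx{\varphi}^\vee\mtrx{\psi}$ lies in $(f)P$. I expect the only real obstacle to be sign bookkeeping — tracking $(-1)^{d(d-1)/2}$, the adjoint signs $(-1)^{i+j}$, and the $(-1)^{d-1}$ hidden in $\det\bigl(\mtrx{M}_{(\hat{1}),(\hat{d})}\bigr)$ — so that, with $d=2D$ even, everything collapses onto the single unit $u$; the exponent check $2m+d-1 = q$ is routine.
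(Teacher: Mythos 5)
Your proposal is correct and follows essentially the same route as the paper: apply Lemma \ref{lem:PfaffBlockAdj} to get the block form of $\mtrx{\varphi}^\vee$, reduce the product $\mtrx{\psi}^\top\mtrx{\varphi}^\vee\mtrx{\psi}$ to the corner entries of $\overline{\mtrx{M}}$, substitute the determinants from Propositions \ref{prop:detMd1and1d} and \ref{prop:detM11}, and close with the identity $z^q = z\bigl((xy)^{\pi-(D-1)}g + fG\bigr)$. The only cosmetic difference is your $(-1)^D(\mtrx{N}^\top-\mtrx{N})$ packaging of the block multiplication, which the paper carries out entry-by-entry instead.
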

\begin{proof}
We define $u$, $f$, $\mtrx{\varphi}$, $\mtrx{\psi}$, $\mtrx{\Phi}$, and $\mtrx{X}$ in Notation \ref{not:phipsi}.

We use Lemma \ref{lem:PfaffBlockAdj} to see that \[
\mtrx{\varphi}^\vee = \begin{bmatrix} \mtrx{0} & \mtrx{M} \\ -\mtrx{M}^\top & \mtrx{0} \end{bmatrix}^\vee = (-1)^{d(d-1)/2} \begin{bmatrix} \mtrx{0} & -\overline{\mtrx{M}}^\top \\ \overline{\mtrx{M}} & \mtrx{0} \end{bmatrix} = (-1)^D \begin{bmatrix} \mtrx{0} & -\overline{\mtrx{M}}^\top \\ \overline{\mtrx{M}} & \mtrx{0} \end{bmatrix}
\/,\] where the blocks are $d \times d$ matrices. From this we can calculate
\begin{align*}&
\mtrx{\psi}^\top \mtrx{\varphi}^\vee \mtrx{\psi}
\\=&
\begin{bmatrix} d \lambda_D y^{\pi-(D-1)} \vec{e}_1^\top & \vec{0}^\top \\ \vec{0}^\top & -x^{\pi-(D-1)} \vec{e}_1^\top \\ d \lambda_D x^{\pi-(D-1)} \vec{e}_d^\top & y^{\pi-(D-1)} \vec{e}_d^\top \end{bmatrix}
(-1)^D \begin{bmatrix} \mtrx{0} & -\overline{\mtrx{M}}^\top \\ \overline{\mtrx{M}} & \mtrx{0} \end{bmatrix}
\mtrx{\psi}
\\\\=&
(-1)^D
\begin{bmatrix} \vec{0}^\top & -d \lambda_D y^{\pi-(D-1)} \vec{e}_1^\top \overline{\mtrx{M}}^\top \\ -x^{\pi-(D-1)} \vec{e}_1^\top \overline{\mtrx{M}} & \vec{0}^\top \\ y^{\pi-(D-1)} \vec{e}_d^\top \overline{\mtrx{M}} & -d \lambda_D x^{\pi-(D-1)} \vec{e}_d^\top \overline{\mtrx{M}}^\top \end{bmatrix}
\mtrx{\psi}
\\\\=&
(-1)^D
\begin{bmatrix} \vec{0}^\top & -d \lambda_D y^{\pi-(D-1)} \vec{e}_1^\top \overline{\mtrx{M}}^\top \\ -x^{\pi-(D-1)} \vec{e}_1^\top \overline{\mtrx{M}} & \vec{0}^\top \\ y^{\pi-(D-1)} \vec{e}_d^\top \overline{\mtrx{M}} & -d \lambda_D x^{\pi-(D-1)} \vec{e}_d^\top \overline{\mtrx{M}}^\top \end{bmatrix}
\begin{bmatrix} d \lambda_D y^{\pi-(D-1)} \vec{e}_1 & \vec{0} & d \lambda_D x^{\pi-(D-1)} \vec{e}_d \\ \vec{0} & -x^{\pi-(D-1)} \vec{e}_1 & y^{\pi-(D-1)} \vec{e}_d \end{bmatrix}
\\\\=&
(-1)^D
\begin{bmatrix} 0 & d \lambda_D (xy)^{\pi-(D-1)} \vec{e}_1^\top \overline{\mtrx{M}}^\top \vec{e}_1 & -d \lambda_D (y^2)^{\pi-(D-1)} \vec{e}_1^\top \overline{\mtrx{M}}^\top \vec{e}_d \\ -d \lambda_D (xy)^{\pi-(D-1)} \vec{e}_1^\top \overline{\mtrx{M}} \vec{e}_1 & 0 & -d \lambda_D (x^2)^{\pi-(D-1)} \vec{e}_1^\top \overline{\mtrx{M}} \vec{e}_d \\ d \lambda_D (y^2)^{\pi-(D-1)} \vec{e}_d^\top \overline{\mtrx{M}} \vec{e}_1 & d \lambda_D (x^2)^{\pi-(D-1)} \vec{e}_d^\top \overline{\mtrx{M}}^\top \vec{e}_1 & 0 \end{bmatrix}
\\\\=&
(-1)^D d \lambda_D
\begin{bmatrix} 0 & (xy)^{\pi-(D-1)} \overline{\mtrx{M}}_{1,1} & -y^{q-(d-1)} \overline{\mtrx{M}}_{d,1} \\ -(xy)^{\pi-(D-1)} \overline{\mtrx{M}}_{1,1} & 0 & -x^{q-(d-1)} \overline{\mtrx{M}}_{1,d} \\ y^{q-(d-1)} \overline{\mtrx{M}}_{d,1} & x^{q-(d-1)} \overline{\mtrx{M}}_{1,d} & 0 \end{bmatrix}
\/.\end{align*}
Note that we used the fact that $2(\pi-(D-1)) = q-(d-1)$.

We write the following using Definition \ref{def:detAdj}:
\[
\overline{\mtrx{M}}_{1,1}
 = 
(-1)^{1+1} \det\left(\mtrx{M}_{(\hat{1}),(\hat{1})}\right)
 = 
\det\left(\mtrx{M}_{(\hat{1}),(\hat{1})}\right)
\/.\]
\[
\overline{\mtrx{M}}_{1,d}
 = 
(-1)^{d+1} \det\left(\mtrx{M}_{(\hat{d}),(\hat{1})}\right)
 = 
-\det\left(\mtrx{M}_{(\hat{d}),(\hat{1})}\right)
\/.\]
and
\[
\overline{\mtrx{M}}_{d,1}
 = 
(-1)^{1+d} \det\left(\mtrx{M}_{(\hat{1}),(\hat{d})}\right)
 = 
-\det\left(\mtrx{M}_{(\hat{1}),(\hat{d})}\right)
\/.\]
This lets us write
\begin{align*}&
\mtrx{\psi}^\top \mtrx{\varphi}^\vee \mtrx{\psi}
\\=&
(-1)^D d \lambda_D
\begin{bmatrix} 0 & (xy)^{\pi-(D-1)} \det\left(\mtrx{M}_{(\hat{1}),(\hat{1})}\right) & y^{q-(d-1)} \det\left(\mtrx{M}_{(\hat{1}),(\hat{d})}\right) \\ -(xy)^{\pi-(D-1)} \det\left(\mtrx{M}_{(\hat{1}),(\hat{1})}\right) & 0 & x^{q-(d-1)} \det\left(\mtrx{M}_{(\hat{d}),(\hat{1})}\right) \\ -y^{q-(d-1)} \det\left(\mtrx{M}_{(\hat{1}),(\hat{d})}\right) & -x^{q-(d-1)} \det\left(\mtrx{M}_{(\hat{d}),(\hat{1})}\right) & 0 \end{bmatrix}
\/.\end{align*}

Using Proposition \ref{prop:detMd1and1d}, we have \[
\det\left(\mtrx{M}_{(\hat{d}),(\hat{1})}\right)
 = 
(d-1)! x^{d-1}
\] and \[
\det\left(\mtrx{M}_{(\hat{1}),(\hat{d})}\right)
 = 
(-1)^{d-1} (d-1)! y^{d-1}
 = 
-(d-1)! y^{d-1}
\/,\] as well as \[
\det\left(\mtrx{M}_{(\hat{1}),(\hat{1})}\right) = (d-1)! z \left( \sum_{t=0}^{D-1} \lambda_t (xy)^{(D-1)-t} F^t \right) = (d-1)! z g
\] by Proposition \ref{prop:detM11}. Recall that $g = \sum_{t=0}^{D-1} \lambda_t (xy)^{(D-1)-t} F^t$ from Notation \ref{not:phipsi}.
We put these together to see that
\begin{align*}&
\mtrx{\psi}^\top \mtrx{\varphi}^\vee \mtrx{\psi}
\\=&
(-1)^D d \lambda_D
\begin{bmatrix} 0 & (xy)^{\pi-(D-1)} \left((d-1)! z g\right) & y^{q-(d-1)} \left(-(d-1)! y^{d-1}\right) \\ -(xy)^{\pi-(D-1)} \left((d-1)! z g\right) & 0 & x^{q-(d-1)} \left((d-1)! x^{d-1}\right) \\ -y^{q-(d-1)} \left(-(d-1)! y^{d-1}\right) & -x^{q-(d-1)} \left((d-1)! x^{d-1}\right) & 0 \end{bmatrix}
\\=&
(-1)^D d! \lambda_D
\begin{bmatrix} 0 & (xy)^{\pi-(D-1)} z g & -y^q \\ -(xy)^{\pi-(D-1)} z g & 0 & x^q \\ y^q & -x^q & 0 \end{bmatrix}
\\=&
u
\begin{bmatrix} 0 & (xy)^{\pi-(D-1)} z g & -y^q \\ -(xy)^{\pi-(D-1)} z g & 0 & x^q \\ y^q & -x^q & 0 \end{bmatrix}
\/,\end{align*}
and so we have
\[
\mtrx{\psi}^\top \mtrx{\varphi}^\vee \mtrx{\psi}
 = 
u
\begin{bmatrix} 0 & (xy)^{\pi-(D-1)} z g & -y^q \\ -(xy)^{\pi-(D-1)} z g & 0 & x^q \\ y^q & -x^q & 0 \end{bmatrix}
\/.\]

Recall that that $z^q = z \left( (xy)^{\pi-(D-1)} g + f G \right)$ from Notation \ref{not:phipsi}.
Thus, we have
\begin{align*}
u \mtrx{X}
=&
u
\begin{bmatrix} 0 & z^q & -y^q \\ -z^q & 0 & x^q \\ y^q & -x^q & 0 \end{bmatrix}
\\=&
u
\begin{bmatrix} 0 & (xy)^{\pi-(D-1)} z g & -y^q \\ -(xy)^{\pi-(D-1)} z g & 0 & x^q \\ y^q & -x^q & 0 \end{bmatrix}
+
u
\begin{bmatrix} 0 & f z G & 0 \\ -f z G & 0 & 0 \\ 0 & 0 & 0 \end{bmatrix}
\\=&
\mtrx{\psi}^\top \mtrx{\varphi}^\vee \mtrx{\psi}
+
u
f z G
\begin{bmatrix}0&
1
&0\\
-1
&0&0\\0&0&0\end{bmatrix}
\\=&
\mtrx{\psi}^\top \mtrx{\varphi}^\vee \mtrx{\psi}
+
u f \mtrx{\Phi}
\/.
\end{align*}

Therefore, $u \mtrx{X} -\mtrx{\psi}^\top \mtrx{\varphi}^\vee \mtrx{\psi} = u f \mtrx{\Phi}$, which has entries in $(f)P$.
\end{proof}
\begin{theorem}
\label{thm:GensAndRes}
Let $D\geq1$ and let $k$ be a field with odd characteristic $p > 2D-1$. Let $q>1$ be a power of $p$.
Set $f = \left( xy-z^2 \right)^D$ as an element of $P=k[x,y,z]$.

Then the ideal $I_1(\vec{x}^\top):f = \maxI^{[q]}:f$ is generated by the maximal order Pfaffians of $\mtrx{\partial}_2$.

Also, \[
0
\xrightarrow{}
P^{2d}
\xrightarrow{
\begin{bmatrix}
\mtrx{\varphi} \\ -u \mtrx{\psi}^\top
\end{bmatrix}
}
\bigoplus_{P^3}^{P^{2d}}
\xrightarrow{
\begin{bmatrix}
u \mtrx{\psi}^\top \mtrx{\varphi}^\vee & u f \mtrx{I} \\ -\vec{b}^\top & -\vec{x}^\top
\end{bmatrix}
}
\bigoplus_{P}^{P^3}
\xrightarrow{
\begin{bmatrix}
\vec{x}^\top & u f
\end{bmatrix}
}
P
\]
is a free $P$-resolution of $P/(f,\maxI^{[q]})$, where $\vec{b}^\top = \begin{bmatrix} \Pf_1\mtrx{\partial}_2 & \cdots & \Pf_{2d}\mtrx{\partial}_2 \end{bmatrix}$.

Lastly, if $R = P/(f) = k[x,y,z]/\left( \left( xy-z^2 \right)^D \right)$, then
\[
\cdots
 \to 
R^{2d}
 \xrightarrow{\mtrx{\varphi}} 
R^{2d}
 \xrightarrow{\mtrx{\varphi}^\vee} 
R^{2d}
 \xrightarrow{\mtrx{\varphi}} 
R^{2d}
 \xrightarrow{\mtrx{\psi}^\top \mtrx{\varphi}^\vee} 
R^3
 \xrightarrow{\vec{x}^\top} 
R
\]
is a free $R$-resolution of $R/\maxI^{[q]}$.
Further, these maps are of pure graded degrees \[
\deg(\vec{x}^\top) = q
, \; 
\deg(\mtrx{\psi}) = \frac{1}{2} (q - (d - 1))
, \; 
\deg(\mtrx{\varphi}) = 1
, \; 
\deg(\mtrx{\varphi}^\vee) = d - 1
\/.\]
\end{theorem}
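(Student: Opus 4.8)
The plan is to obtain all four assertions as a direct application of Lemma~\ref{lem:phipsi} to the data assembled in Notation~\ref{not:phipsi}: $P = k[x,y,z]$, $\vec{x}^\top = \begin{bmatrix} x^q & y^q & z^q \end{bmatrix}$, $m = 2d$, $u = (-1)^D d!\,\lambda_D$, and the matrices $\mtrx{\varphi} = \begin{bmatrix}\mtrx{0}&\mtrx{M}\\-\mtrx{M}^\top&\mtrx{0}\end{bmatrix}$, $\mtrx{\psi}$, $\mtrx{\Phi}$, $\mtrx{\partial}_2$. First I would verify the hypotheses of that lemma. Since $x^q,y^q,z^q$ is a regular sequence in the polynomial ring $P$, the ideal $I_1(\vec{x}^\top) = (x^q,y^q,z^q) = \maxI^{[q]}$ is a complete intersection, hence a perfect ideal of grade $3$; the integer $m = 2d$ is even; and the block form of $\mtrx{\varphi}$ makes it skew-symmetric. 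Lemma~\ref{lem:uIsUnit} (using the hypothesis $p > 2D-1 = d-1$) shows $u$ is a unit of $k$, hence of $P$; Lemma~\ref{lem:Pfphi} shows $\Pf\mtrx{\varphi} = uf$, so $f = u^{-1}\Pf\mtrx{\varphi}$; and Lemma~\ref{lem:PhiEquation} shows $u\mtrx{X} - \mtrx{\psi}^\top\mtrx{\varphi}^\vee\mtrx{\psi} = uf\mtrx{\Phi}$. This last identity simultaneously shows that the entries of $u\mtrx{X} - \mtrx{\psi}^\top\mtrx{\varphi}^\vee\mtrx{\psi}$ lie in $(f)P$ and, since $uf$ is a nonzerodivisor in $P$, that the matrix $\mtrx{\Phi}$ of Notation~\ref{not:phipsi} equals the matrix $(u\mtrx{X}-\mtrx{\psi}^\top\mtrx{\varphi}^\vee\mtrx{\psi})/(uf)$ required by the lemma; consequently our $\mtrx{\partial}_2 = \begin{bmatrix}\mtrx{\varphi}&\mtrx{\psi}\\-\mtrx{\psi}^\top&\mtrx{\Phi}\end{bmatrix}$ is exactly the $\mtrx{\partial}_2$ of the lemma.

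With the hypotheses in place, the three conclusions of Lemma~\ref{lem:phipsi} transcribe directly, substituting $m = 2d$ and $I_1(\vec{x}^\top) = \maxI^{[q]}$ and noting $P/(f, I_1(\vec{x}^\top)) = P/(f,\maxI^{[q]}) = R/\maxI^{[q]}$ and $R/(I_1(\vec{x}^\top)R) = R/\maxI^{[q]}$: the ideal $\maxI^{[q]}:f$ is generated by the maximal order Pfaffians of $\mtrx{\partial}_2$; the stated length-three complex over $P$ (with $\vec{b}^\top = \begin{bmatrix}\Pf_1\mtrx{\partial}_2 & \cdots & \Pf_{2d}\mtrx{\partial}_2\end{bmatrix}$) is a free $P$-resolution of $P/(f,\maxI^{[q]})$; and the stated eventually $2$-periodic complex over $R = k[x,y,z]/((xy-z^2)^D)$ is a free $R$-resolution of $R/\maxI^{[q]}$.

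The degree statement is then a matter of reading off the matrices in Notation~\ref{not:phipsi}. The entries of $\vec{x}^\top$ are $x^q,y^q,z^q$, so $\deg(\vec{x}^\top) = q$. Every nonzero entry of $\mtrx{\psi}$ is a scalar multiple of $x^{\pi-(D-1)}$ or $y^{\pi-(D-1)}$, and $\pi-(D-1) = \tfrac{q-1}{2} - (D-1) = \tfrac12\bigl(q-(d-1)\bigr)$, so $\deg(\mtrx{\psi}) = \tfrac12(q-(d-1))$. The matrix $\mtrx{M}$, and hence $\mtrx{\varphi}$, is linear, so $\deg(\mtrx{\varphi}) = 1$. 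Finally, by Lemma~\ref{lem:PfaffBlockAdj} we have $\mtrx{\varphi}^\vee = (-1)^{d(d-1)/2}\begin{bmatrix}\mtrx{0}&-\overline{\mtrx{M}}^\top\\\overline{\mtrx{M}}&\mtrx{0}\end{bmatrix}$, and the entries of $\overline{\mtrx{M}}$ are $\pm$ determinants of $(d-1)\times(d-1)$ submatrices of the linear matrix $\mtrx{M}$, hence homogeneous of degree $d-1$ by Remark~\ref{rmk:DetDeg}; thus $\deg(\mtrx{\varphi}^\vee) = d-1$.

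There is essentially no obstacle remaining: all the substantive content---that $u$ is a unit, that $\Pf\mtrx{\varphi} = uf$, and the matrix identity $u\mtrx{X} = \mtrx{\psi}^\top\mtrx{\varphi}^\vee\mtrx{\psi} + uf\mtrx{\Phi}$---is already established in Lemmas~\ref{lem:uIsUnit}, \ref{lem:Pfphi}, and \ref{lem:PhiEquation}, which themselves rest on the determinant computations of Section~\ref{sec:DetCalcs} and the congruences of Section~\ref{sec:NumThry}. The only point requiring attention is that the hypothesis $p > 2D-1$ is precisely what guarantees the invertibility of $u$, and so it cannot be weakened; everything else is bookkeeping in specializing Lemma~\ref{lem:phipsi}.
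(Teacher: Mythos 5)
Your proposal is correct and follows essentially the same route as the paper, which simply cites Lemmas~\ref{lem:uIsUnit}, \ref{lem:Pfphi}, and \ref{lem:PhiEquation} as verifying the hypotheses of Lemma~\ref{lem:phipsi}; you merely spell out the remaining (routine) hypothesis checks and the degree bookkeeping that the paper leaves implicit.
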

\begin{proof}
Lemmas \ref{lem:uIsUnit}, \ref{lem:Pfphi} and \ref{lem:PhiEquation} prove that the variables from Notation \ref{not:phipsi} satisfy Lemma \ref{lem:phipsi}.
\end{proof}
With this, we have shown that $R/\maxI^{[q]}$ with $R = k[x,y,z]/\left( \left( xy-z^2 \right)^D \right)$ is an example of the phenomenon detailed in \cite{KU09} where the tail of the $R/\maxI^{[q]}$ is independent of $q$:
\begin{corollary}
\label{cor:FDResRailIndep}
The tail end of the $R$-free resolutions of $R/\maxI^{[q]}$ are independent of $q$, since $\mtrx{\varphi}$ and $\mtrx{\varphi}^\vee$ do not depend on $q$.
\end{corollary}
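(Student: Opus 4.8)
The plan is to read the conclusion directly off Theorem~\ref{thm:GensAndRes}. That theorem shows that for every power $q>1$ of $p$ the $R$-free resolution of $R/\maxI^{[q]}$ has the shape
\[
\cdots \to R^{2d} \xrightarrow{\mtrx{\varphi}} R^{2d} \xrightarrow{\mtrx{\varphi}^\vee} R^{2d} \xrightarrow{\mtrx{\varphi}} R^{2d} \xrightarrow{\mtrx{\psi}^\top \mtrx{\varphi}^\vee} R^3 \xrightarrow{\vec{x}^\top} R \/,
\]
so that from homological degree $2$ onward it is the $2$-periodic complex built from the matrix factorization pair $(\mtrx{\varphi},\mtrx{\varphi}^\vee)$ of $uf$ over $R$. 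First I would note that the ambient ring $R = k[x,y,z]/\left(\left(xy-z^2\right)^D\right)$ does not involve $q$ at all, so the statement makes sense: all of these resolutions live over one fixed ring, and comparing their tails is comparing complexes of modules over the same $R$.

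Next I would trace the dependence of $\mtrx{\varphi}$ and $\mtrx{\varphi}^\vee$ on $q$. By Notation~\ref{not:phipsi}, $\mtrx{\varphi} = \begin{bmatrix}\mtrx{0} & \mtrx{M} \\ -\mtrx{M}^\top & \mtrx{0}\end{bmatrix}$, and the $d\times d$ tridiagonal matrix $\mtrx{M}$ of Notation~\ref{not:MandLn} has entries that are fixed integer multiples of $x$, $y$, $z$ determined entirely by $d = 2D$; in particular nothing in $\mtrx{M}$, hence nothing in $\mtrx{\varphi}$, changes with $q$. By Lemma~\ref{lem:PfaffBlockAdj}, $\mtrx{\varphi}^\vee = (-1)^{d(d-1)/2}\begin{bmatrix}\mtrx{0} & -\overline{\mtrx{M}}^\top \\ \overline{\mtrx{M}} & \mtrx{0}\end{bmatrix}$, which is determined by $\mtrx{M}$ alone and is therefore likewise independent of $q$. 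Consequently the entire periodic tail $\cdots \xrightarrow{\mtrx{\varphi}} R^{2d} \xrightarrow{\mtrx{\varphi}^\vee} R^{2d} \xrightarrow{\mtrx{\varphi}} R^{2d}$, together with its differentials, is literally the same complex of $R$-modules for every $q$.

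Finally I would be careful about the one place where $q$ genuinely intervenes: the low-degree part of the resolution, namely the map $\vec{x}^\top = \begin{bmatrix} x^q & y^q & z^q\end{bmatrix}$ and the map $\mtrx{\psi}^\top\mtrx{\varphi}^\vee$ whose left factor $\mtrx{\psi}$ has entries of degree $\tfrac{1}{2}(q-(d-1))$, together with the induced graded twists on all the free modules. So the clean way to state the conclusion is: the periodic tail of the resolution, regarded as a complex of $R$-modules equipped with its differentials, is independent of $q$; as a complex of \emph{graded} modules it is independent of $q$ up to an overall graded shift. There is no real obstacle here, since the content is entirely the bookkeeping already carried out in Theorem~\ref{thm:GensAndRes} plus the observation that $\mtrx{M}$ was defined in Notation~\ref{not:MandLn} with no reference to $q$; thus this is a short remark rather than an argument, and I would present it as such.
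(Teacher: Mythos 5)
Your argument is correct and is essentially the paper's own reasoning: the corollary is stated immediately after Theorem~\ref{thm:GensAndRes} with the observation that $\mtrx{\varphi}$ and $\mtrx{\varphi}^\vee$ do not depend on $q$ serving as the entire justification, and your tracing of this fact back through Notation~\ref{not:phipsi} to the $q$-independent matrix $\mtrx{M}$ of Notation~\ref{not:MandLn} matches the intended argument. Your additional caveat distinguishing the tail as a complex of $R$-modules (literally identical for all $q$) from its graded incarnation (identical up to a uniform shift) is a sound and slightly more careful formulation than the paper's terse statement, but it is the same proof.
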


In order to show that $\left( xy-z^2 \right)^D$ is link-$q$-compressed using Lemma \ref{lem:lqcDef}, we prove that the maximal order Pfaffians of $\mtrx{\partial}_2$, which generate $\maxI^{[q]}:f$ as we established above, are either in the ideal $\maxI^{[q]}$ or have degree greater than $\frac{s}{2} = \frac{3(q-1)-d}{2} = \frac{3(2\pi)-(2D)}{2} = 3\pi-D$ (in fact, they are all of degree $\frac{s}{2}+1 = 3\pi-(D-1)$).
\begin{proposition}
\label{prop:Partial2PfaffsExceptLast3}
The Pfaffians $\Pf_{\ell}(\mtrx{\partial}_2)$ for $1\leq\ell\leq 2d$ are all homogeneous of degree $\frac{s}{2}+1 = 3\pi-(D-1)$.
\end{proposition}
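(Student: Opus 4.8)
The plan is to determine the common graded degree of the Pfaffians $\Pf_\ell(\mtrx{\partial}_2)$ for $1\le\ell\le 2d$ directly from the graded structure of the resolution in Theorem~\ref{thm:GensAndRes}, by an argument parallel to Remark~\ref{rmk:DetDeg} but carried out for Pfaffians rather than determinants.

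First I would check that $\mtrx{\partial}_2$ is the matrix of a homogeneous map of graded free $P$-modules. By Notation~\ref{not:phipsi} and Theorem~\ref{thm:GensAndRes}, the block $\mtrx{\varphi}$ has pure graded degree $1$ and the block $\mtrx{\psi}$ has pure graded degree $\tfrac12(q-d+1)=\pi-(D-1)$; moreover $\mtrx{\Phi}$ equals $zG$ times a constant skew-symmetric matrix, where $G=\sum_{t=D}^{\pi}\lambda_t(xy)^{\pi-t}F^{t-D}$ is homogeneous of degree $2(\pi-D)$, so $\mtrx{\Phi}$ has pure graded degree $q-d$. These three degrees are mutually compatible, so one can attach internal degrees to the $2d+3$ rows and columns of $\mtrx{\partial}_2$ with $g_j-h_i$ equal to the degree of the $(i,j)$ entry; since $\mtrx{\varphi}$ is connected (the tridiagonal matrix $\mtrx{M}$ has nonzero diagonal and nonzero off-diagonals), the first $2d$ columns share a single value $g_1$ and the first $2d$ rows a single value $h_1$, and the last three columns and rows share values $g_2$ and $h_2$. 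Skew-symmetry forces $g_1+h_1=g_2+h_2=:c$; normalizing $g_1=1$ and $h_1=0$ (so $c=1$), the relation $g_2-h_1=\pi-(D-1)$ gives $g_2=\pi-(D-1)$, and $g_2-h_2=q-d$ checks for consistency.

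Next I would fix $\ell$ with $1\le\ell\le 2d$ and examine $(\mtrx{\partial}_2)_{\hat{\hat{\ell}}}$, the skew-symmetric matrix of even size $2d+2$ obtained by deleting row and column $\ell$; it inherits the internal degrees on the surviving $2d+2$ indices. Iterating the cofactor-type expansion of Definition~\ref{def:Pfaffs}, every term of $\Pf((\mtrx{\partial}_2)_{\hat{\hat{\ell}}})$ is, up to sign, a product of $d+1$ entries $(\mtrx{\partial}_2)_{i,j}$ indexed by a perfect matching of $\{1,\dots,2d+3\}\setminus\{\ell\}$; writing $\deg(\mtrx{\partial}_2)_{i,j}=g_j-h_i=g_i+g_j-c$ via $h_i=c-g_i$, each such term has degree $\sum_{v\ne\ell}g_v-(d+1)c$, which does not depend on the matching. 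Hence $\Pf((\mtrx{\partial}_2)_{\hat{\hat{\ell}}})$, and therefore $\Pf_\ell(\mtrx{\partial}_2)=(-1)^{\ell+1}\Pf((\mtrx{\partial}_2)_{\hat{\hat{\ell}}})$, is homogeneous of that degree. Plugging in $g_v=1$ for $v\le 2d$, $g_v=\pi-(D-1)$ for $v>2d$, $c=1$, and $d=2D$ gives $(2d-1)+3(\pi-D+1)-(d+1)=3\pi-(D-1)=\tfrac{s}{2}+1$, with $s=3(q-1)-d=6\pi-2D$.

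The only delicate step is the second one: pinning down the normalization and orientation conventions for the internal degrees and, above all, confirming that the degree of a Pfaffian term is independent of the chosen perfect matching — which is exactly where the skew-symmetry identity $g_i+h_i=c$ is used. One can avoid introducing $g$ and $h$ altogether by instead noting that the entries $\Pf_\ell(\mtrx{\partial}_2)$ for $1\le\ell\le 2d$ form the $-\vec{b}^\top$ block of the middle differential of the homogeneous $P$-resolution in Theorem~\ref{thm:GensAndRes}; since the neighbouring blocks $u\mtrx{\psi}^\top\mtrx{\varphi}^\vee$, $uf\mtrx{I}$, and $\vec{x}^\top$ have degrees $\pi+D$, $d$, and $q$, the graded shifts of that resolution are forced and one reads off $\deg\Pf_\ell(\mtrx{\partial}_2)=(q+\pi+D)-d=3\pi-(D-1)$. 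The three remaining maximal order Pfaffians $\Pf_{2d+1},\Pf_{2d+2},\Pf_{2d+3}$ instead have degree $q$ and so lie in $\maxI^{[q]}$; that case is handled separately with Lemma~\ref{lem:partial2lastPfs} and is outside the scope of this proposition.
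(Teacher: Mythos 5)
Your proof is correct, but takes a genuinely different and more conceptual route than the paper's. The paper establishes the degree by brute force: it carries out an explicit iterated cofactor expansion of $\Pf_\ell(\mtrx{\partial}_2)$ for each of the five cases $\ell=1$, $\ell=d$, $\ell=d+1$, $\ell=2d$, and $1<\ell<d$ or $d<\ell<2d$, reducing each to signed products of $\det\mtrx{M}_{(\hat\imath),(\hat\jmath)}$, $g$, and $G$, and then reads off the degree term by term. Your argument sidesteps all of that by observing that $\mtrx{\partial}_2$ is a skew-symmetric homogeneous matrix with respect to a consistent assignment of internal degrees $g_v$, $h_v$ satisfying $g_v + h_v = c$, so that every monomial term in a Pfaffian expansion over a perfect matching has the same degree $\sum_{v\neq\ell} g_v - (d+1)c$, which one then evaluates once. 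That is a clean general principle (the Pfaffian analogue of Remark~\ref{rmk:DetDeg}) and it delivers the uniform statement for all $1\le\ell\le 2d$ in a single stroke.

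Two small points. First, your argument, like the paper's in the $\ell=1$ and $\ell=d+1$ cases, only shows that each $\Pf_\ell(\mtrx{\partial}_2)$ is either zero or homogeneous of degree $\tfrac{s}{2}+1$; this is exactly what is needed for the application in Theorem~\ref{thm:FDlqc}, so it is not a gap, but you should phrase the conclusion that way for precision. Second, your alternative argument reading the degree off the graded $P$-free resolution from Theorem~\ref{thm:GensAndRes} is also sound, but it quietly uses that this resolution admits graded shifts on its free modules, which the theorem does not state; your first, self-contained argument is preferable since it does not rely on that unstated input. What the paper's longer computation buys, by contrast, is explicit closed formulas for each $\Pf_\ell(\mtrx{\partial}_2)$ (e.g.\ $\Pf_1\mtrx{\partial}_2 = (-1)^D(d-1)!\,y^{\pi+D}zG$), which could conceivably be useful elsewhere but are not needed for this proposition.
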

\begin{proof}
We discuss the following Pfaffians of $\mtrx{\partial}_2$ in pairs as follows:
\begin{itemize}
\item $\Pf_{1} \mtrx{\partial}_2$ and $\Pf_{d+1} \mtrx{\partial}_2$, 
\item $\Pf_{d} \mtrx{\partial}_2$ and $\Pf_{2d} \mtrx{\partial}_2$, and 
\item $\Pf_{\ell} \mtrx{\partial}_2$ and $\Pf_{d+\ell} \mtrx{\partial}_2$ for $1<\ell<d$.
\end{itemize}

In order to prove that $f$ is link-$q$-compressed using Lemma \ref{lem:lqcDef}, we need only show that the remaining Pfaffians (which generate $\maxI^{[q]}:f$ by Lemma \ref{lem:phipsi}) all have degree $s/2+1 = 3\pi-(D-1)$. This is the degree they must have anyway if $f$ is link-$q$-compressed by Corollary \ref{cor:lqcDegOfGens}.
We recall that $\mtrx{\partial}_2$ is a $(2d+3)\times(2d+3)$ block matrix with the structure \[
\mtrx{\partial}_2
 = 
\left[
\begin{tabular}{cc|ccc}
$\mtrx{0}$                            & $\mtrx{M}$                & $d \lambda_D y^{\pi-(D-1)} \vec{e}_1$ & $\vec{0}$                  & $d \lambda_D x^{\pi-(D-1)} \vec{e}_d$ \\
$-\mtrx{M}^\top$                      & $\mtrx{0}$                & $\vec{0}$                             & $-x^{\pi-(D-1)} \vec{e}_1$ & $y^{\pi-(D-1)} \vec{e}_d$             \\ \hline
$-d \lambda_D y^{\pi-(D-1)} \vec{e}_1^\top$ & $\vec{0}^\top$                  & $0$                                   & $z G$                      & $0$                                   \\
$\vec{0}^\top$                              & $x^{\pi-(D-1)} \vec{e}_1^\top$  & $-z G$                                & $0$                        & $0$                                   \\
$-d \lambda_D x^{\pi-(D-1)} \vec{e}_d^\top$ & $-y^{\pi-(D-1)} \vec{e}_d^\top$ & $0$                                   & $0$                        & $0$                                  
\end{tabular}
\right]
\] from Notation \ref{not:phipsi}.

Also note the size of each component of $\mtrx{\partial}_2$:
\begin{multicols}{2}
\begin{itemize}
\item $\mtrx{M}$ and $\mtrx{0}$ are $d\times d$ matrices, and
\item $\vec{e}_1$, $\vec{e}_d$, and $\vec{0}$ are length $d$ column vectors.
\end{itemize}
\end{multicols}
The remaining components of $\mtrx{\partial}_2$ are scalars. Thus, we have
\begin{itemize}
\item $\begin{bmatrix} \mtrx{0} \\ -\mtrx{M}^\top \\ -d \lambda_D y^{\pi-(D-1)} \vec{e}_1^\top \\ \vec{0}^\top \\ -d \lambda_D x^{\pi-(D-1)} \vec{e}_d^\top \end{bmatrix}$ is columns $1$ through $d$ of $\mtrx{\partial}_2$, 
\item $\begin{bmatrix} \mtrx{M} \\ \mtrx{0} \\ \vec{0}^\top \\ x^{\pi-(D-1)} \vec{e}_1^\top \\ -y^{\pi-(D-1)} \vec{e}_d^\top \end{bmatrix}$ is columns $d+1$ through $2d$ of $\mtrx{\partial}_2$, 
\item $\begin{bmatrix} d \lambda_D y^{\pi-(D-1)} \vec{e}_1 \\ \vec{0} \\ 0 \\ -z G \\ 0 \end{bmatrix}$ is column $2d+1$ of $\mtrx{\partial}_2$, 
\item $\begin{bmatrix} \vec{0} \\ -x^{\pi-(D-1)} \vec{e}_1 \\ z G \\ 0 \\ 0 \end{bmatrix}$ is column $2d+2$ of $\mtrx{\partial}_2$, and 
\item $\begin{bmatrix} d \lambda_D x^{\pi-(D-1)} \vec{e}_d \\ y^{\pi-(D-1)} \vec{e}_d \\ 0 \\ 0 \\ 0 \end{bmatrix}$ is column $2d+3$ of $\mtrx{\partial}_2$.
\end{itemize}

\begin{landscape}
We start with the calculation of the degree of $\Pf_{1} \mtrx{\partial}_2$. Using Definition \ref{def:PfaffEll}, we write
\begin{align*}&
\Pf_{1} \mtrx{\partial}_2
\\=&
\Pf_{1}
\begin{bmatrix}
\mtrx{0} & \mtrx{M} & d \lambda_D y^{\pi-(D-1)} \vec{e}_1 & \vec{0} & d \lambda_D x^{\pi-(D-1)} \vec{e}_d \\ -\mtrx{M}^\top & \mtrx{0} & \vec{0} & -x^{\pi-(D-1)} \vec{e}_1 & y^{\pi-(D-1)} \vec{e}_d \\ -d \lambda_D y^{\pi-(D-1)} \vec{e}_1^\top & \vec{0}^\top & 0 & z G & 0 \\ \vec{0}^\top & x^{\pi-(D-1)} \vec{e}_1^\top & -z G & 0 & 0 \\ -d \lambda_D x^{\pi-(D-1)} \vec{e}_d^\top & -y^{\pi-(D-1)} \vec{e}_d^\top & 0 & 0 & 0
\end{bmatrix}
\\\\=&
(-1)^{(1)+1}
\\&
\Pf
\begin{bmatrix}
\mtrx{0}_{(\hat{1}),(\hat{1})} & \mtrx{M}_{(\hat{1}),(-)} & \left(d \lambda_D y^{\pi-(D-1)} \vec{e}_1\right)_{(\hat{1}),(-)} & \vec{0}_{(\hat{1}),(-)} & \left(d \lambda_D x^{\pi-(D-1)} \vec{e}_d\right)_{(\hat{1}),(-)} \\ \left(-\mtrx{M}^\top\right)_{(-),(\hat{1})} & \mtrx{0} & \vec{0} & -x^{\pi-(D-1)} \vec{e}_1 & y^{\pi-(D-1)} \vec{e}_d \\ \left(-d \lambda_D y^{\pi-(D-1)} \vec{e}_1^\top\right)_{(-),(\hat{1})} & \vec{0}^\top & 0 & z G & 0 \\ \left(\vec{0}^\top\right)_{(-),(\hat{1})} & x^{\pi-(D-1)} \vec{e}_1^\top & -z G & 0 & 0 \\ \left(-d \lambda_D x^{\pi-(D-1)} \vec{e}_d^\top\right)_{(-),(\hat{1})} & -y^{\pi-(D-1)} \vec{e}_d^\top & 0 & 0 & 0
\end{bmatrix}
\\\\=&
\Pf
\begin{bmatrix}
\mtrx{0}_{(\hat{1}),(\hat{1})} & \mtrx{M}_{(\hat{1}),(-)} & \vec{0}_{\hat{1}} & \vec{0}_{\hat{1}} & d \lambda_D x^{\pi-(D-1)} \left(\vec{e}_d\right)_{\hat{1}} \\ -\left(\mtrx{M}_{(\hat{1}),(-)}\right)^\top & \mtrx{0} & \vec{0} & -x^{\pi-(D-1)} \vec{e}_1 & y^{\pi-(D-1)} \vec{e}_d \\ \vec{0}_{\hat{1}}^\top & \vec{0}^\top & 0 & z G & 0 \\ \vec{0}_{\hat{1}}^\top & x^{\pi-(D-1)} \vec{e}_1^\top & -z G & 0 & 0 \\ -d \lambda_D x^{\pi-(D-1)} \left(\vec{e}_d\right)_{\hat{1}}^\top & -y^{\pi-(D-1)} \vec{e}_d^\top & 0 & 0 & 0
\end{bmatrix}
\/.\end{align*}
Note that we replaced every instance of $\left(d \lambda_D y^{\pi-(D-1)} \vec{e}_1\right)_{(\hat{1}),(-)}$ with $\vec{0}_{\hat{1}}$ in the last line because the first entry in $d \lambda_D y^{\pi-(D-1)} \vec{e}_1$ is the only nonzero entry.
\end{landscape}

To make the calculations easier to read, all cofactor expansions are performed over fixed columns (fixed $j$ values).

Using the cofactor expansion formula in Definition \ref{def:Pfaffs}, we further write
\begin{align*}
\Pf_{1} \mtrx{\partial}_2
=&
\Pf
\begin{bmatrix}
\mtrx{0}_{(\hat{1}),(\hat{1})} & \mtrx{M}_{(\hat{1}),(-)} & \vec{0}_{\hat{1}} & \vec{0}_{\hat{1}} & d \lambda_D x^{\pi-(D-1)} (\vec{e}_d)_{\hat{1}} \\ -\left(\mtrx{M}_{(\hat{1}),(-)}\right)^\top & \mtrx{0} & \vec{0} & -x^{\pi-(D-1)} \vec{e}_1 & y^{\pi-(D-1)} \vec{e}_d \\ \vec{0}_{\hat{1}}^\top & \vec{0}^\top & 0 & z G & 0 \\ \vec{0}_{\hat{1}}^\top & x^{\pi-(D-1)} \vec{e}_1^\top & -z G & 0 & 0 \\ -d \lambda_D x^{\pi-(D-1)} (\vec{e}_d)_{\hat{1}}^\top & -y^{\pi-(D-1)} \vec{e}_d^\top & 0 & 0 & 0
\end{bmatrix}
\\\\=&
(-1)^{(2d+1)+(2d)+H((2d)-(2d+1))}
(-z G)
\\&
\Pf
\begin{bmatrix}
\mtrx{0}_{(\hat{1}),(\hat{1})} & \mtrx{M}_{(\hat{1}),(-)} & d \lambda_D x^{\pi-(D-1)} (\vec{e}_d)_{\hat{1}} \\ -\left(\mtrx{M}_{(\hat{1}),(-)}\right)^\top & \mtrx{0} & y^{\pi-(D-1)} \vec{e}_d \\ -d \lambda_D x^{\pi-(D-1)} (\vec{e}_d)_{\hat{1}}^\top & -y^{\pi-(D-1)} \vec{e}_d^\top & 0
\end{bmatrix}
\\\\=&
(z G)
(-1)^{(d-1)+(2d)+H((2d)-(d-1))}
(d \lambda_D x^{\pi-(D-1)})
\Pf
\begin{bmatrix}
\mtrx{0}_{(\hat{1}\hat{d}),(\hat{1}\hat{d})} & \mtrx{M}_{(\hat{1}\hat{d}),(-)} \\ -\left(\mtrx{M}_{(\hat{1}\hat{d}),(-)}\right)^\top & \mtrx{0}
\end{bmatrix}
\\&
+
(z G)
(-1)^{(2d-1)+(2d)+H((2d)-(2d-1))}
(y^{\pi-(D-1)})
\Pf
\begin{bmatrix}
\mtrx{0}_{(\hat{1}),(\hat{1})} & \mtrx{M}_{(\hat{1}),(\hat{d})} \\ -\left(\mtrx{M}_{(\hat{1}),(\hat{d})}\right)^\top & \mtrx{0}_{(\hat{d}),(\hat{d})}
\end{bmatrix}
\\\\=&\tag{i}
(z G)
(-1)^{(d-1)+(2d)+H((2d)-(d-1))}
(d \lambda_D x^{\pi-(D-1)})
(0)
\\&
+
(z G)
(-1)^{(2d-1)+(2d)+H((2d)-(2d-1))}
(y^{\pi-(D-1)})
\left(
(-1)^{(d-1)(d-2)/2}
\det\left(\mtrx{M}_{(\hat{1}),(\hat{d})}\right)
\right)
\\\\=&\tag{ii}
(z G)
(y^{\pi-(D-1)})
(-1)^{(2D-1)(2D-2)/2}
\left(
(-1)^{d-1} (d-1)! y^{d-1}
\right)
\\\\=&
(z G)
(y^{\pi-(D-1)})
(-1)^{D-1}
(-1)^{2D-1}
(d-1)! y^{2D-1}
\\\\=&
(-1)^D (d-1)! y^{\pi+D} z G
\/,\end{align*}
where line (i) is true by Lemma \ref{lem:PfaffBlockNonSquare} and Remark \ref{rmk:PfaffBlockSquare}, and line (ii) is true by Corollary \ref{prop:detMd1and1d}.

Recall that $G = \sum_{t=D}^\pi \lambda_t (xy)^{\pi-t} F^{t-D}$. Note that $(xy)^{\pi-t} F^{t-D}$ has degree $2(\pi-t)+2(t-D) = 2(\pi-D)$ for all $D \leq t \leq \pi$, so $G \in P_{2(\pi-D)}$.
Thus we have \[
\Pf_{1} \mtrx{\partial}_2
 = 
(-1)^D (d-1)! y^{\pi+D} z G
 \in 
P_{(\pi+D)+1+2(\pi-D)} = P_{3\pi-(D-1)} = P_{s/2+1}
\/,\] and so we see that $\Pf_{1} \mtrx{\partial}_2$ is either $0$ or degree $s/2+1$ as we desired.

We now show that this is true for the remaining Pfaffians.

We calculate the degree of $\Pf_{d+1} \mtrx{\partial}_2$ by a similar method to our previous calculation. Using Definition \ref{def:PfaffEll} and the cofactor expansion formula in Definition \ref{def:Pfaffs}, we write
\begin{align*}&
\Pf_{d+1} \mtrx{\partial}_2
\\=&
\Pf_{d+1}
\begin{bmatrix}
\mtrx{0}                            & \mtrx{M}                & d \lambda_D y^{\pi-(D-1)} \vec{e}_1 & \vec{0}                  & d \lambda_D x^{\pi-(D-1)} \vec{e}_d \\
-\mtrx{M}^\top                      & \mtrx{0}                & \vec{0}                             & -x^{\pi-(D-1)} \vec{e}_1 & y^{\pi-(D-1)} \vec{e}_d             \\
-d \lambda_D y^{\pi-(D-1)} \vec{e}_1^\top & \vec{0}^\top                  & 0                                   & z G                      & 0                                   \\
\vec{0}^\top                              & x^{\pi-(D-1)} \vec{e}_1^\top  & -z G                                & 0                        & 0                                   \\
-d \lambda_D x^{\pi-(D-1)} \vec{e}_d^\top & -y^{\pi-(D-1)} \vec{e}_d^\top & 0                                   & 0                        & 0                                  
\end{bmatrix}
\\\\=&
(-1)^{(d+1)+1}
\Pf
\begin{bmatrix}
\mtrx{0} & \mtrx{M}_{(-),(\hat{1})} & d \lambda_D y^{\pi-(D-1)} \vec{e}_1 & \vec{0} & d \lambda_D x^{\pi-(D-1)} \vec{e}_d \\ -\left(\mtrx{M}_{(-),(\hat{1})}\right)^\top & \mtrx{0}_{(\hat{1}),(\hat{1})} & \vec{0}_{\hat{1}} & \vec{0}_{\hat{1}} & y^{\pi-(D-1)} (\vec{e}_d)_{\hat{1}} \\ -d \lambda_D y^{\pi-(D-1)} \vec{e}_1^\top & \vec{0}_{\hat{1}}^\top & 0 & z G & 0 \\ \vec{0}^\top & \vec{0}_{\hat{1}}^\top & -z G & 0 & 0 \\ -d \lambda_D x^{\pi-(D-1)} \vec{e}_d^\top & -y^{\pi-(D-1)} (\vec{e}_d)_{\hat{1}}^\top & 0 & 0 & 0
\end{bmatrix}
\\\\=&
(-1)^{(2d)+(2d+1)+H((2d+1)-(2d))}
(z G)
\\&
\Pf
\begin{bmatrix}
\mtrx{0} & \mtrx{M}_{(-),(\hat{1})} & d \lambda_D x^{\pi-(D-1)} \vec{e}_d \\ -\left(\mtrx{M}_{(-),(\hat{1})}\right)^\top & \mtrx{0}_{(\hat{1}),(\hat{1})} & y^{\pi-(D-1)} (\vec{e}_d)_{\hat{1}} \\ -d \lambda_D x^{\pi-(D-1)} \vec{e}_d^\top & -y^{\pi-(D-1)} (\vec{e}_d)_{\hat{1}}^\top & 0
\end{bmatrix}
\\\\=&
(z G)
(-1)^{(d)+(2d)+H((2d)-(d))}
(d \lambda_D x^{\pi-(D-1)})
\Pf
\begin{bmatrix}
\mtrx{0}_{(\hat{d}),(\hat{d})} & \mtrx{M}_{(\hat{d}),(\hat{1})} \\ -\left(\mtrx{M}_{(\hat{d}),(\hat{1})}\right)^\top & \mtrx{0}_{(\hat{1}),(\hat{1})}
\end{bmatrix}
\\&
+
(z G)
(-1)^{(2d-1)+(2d)+H((2d)-(2d-1))}
(y^{\pi-(D-1)})
\Pf
\begin{bmatrix}
\mtrx{0} & \mtrx{M}_{(-),(\hat{1}\hat{d})} \\ -\left(\mtrx{M}_{(-),(\hat{1}\hat{d})}\right)^\top & \mtrx{0}_{(\hat{1}\hat{d}),(\hat{1}\hat{d})}
\end{bmatrix}
\\\\=&\tag{iii}
(z G)
(-1)^{(d)+(2d)+H((2d)-(d))}
(d \lambda_D x^{\pi-(D-1)})
\left(
(-1)^{D-1}
\det\left(\mtrx{M}_{(\hat{d}),(\hat{1})}\right)
\right)
\\&
+
(z G)
(-1)^{(2d-1)+(2d)+H((2d)-(2d-1))}
(y^{\pi-(D-1)})
(0)
\\\\=&\tag{iv}
-
(z G)
(d \lambda_D x^{\pi-(D-1)})
\left(
(-1)^{D-1}
\left(
(d-1)! x^{2D-1}
\right)
\right)
\\\\=&
(-1)^D d! \lambda_D x^{\pi+D} z G
\/,\end{align*}
where line (iii) is true by Lemma \ref{lem:PfaffBlockNonSquare} and Remark \ref{rmk:PfaffBlockSquare}, and line (iv) is true by Corollary \ref{prop:detMd1and1d}.
Thus we have \[
\Pf_{d+1} \mtrx{\partial}_2
 = 
(-1)^D d! \lambda_D x^{\pi+D} z G
 \in 
P_{(\pi+D)+1+2(\pi-D)} = P_{s/2+1}
\/,\] and so we see that $\Pf_{d+1} \mtrx{\partial}_2$ is either $0$ or degree $s/2+1$ as we desired.

We now calculate the degrees of $\Pf_{d} \mtrx{\partial}_2$ and $\Pf_{2d} \mtrx{\partial}_2$ together because they have a common factor. Using Definition \ref{def:PfaffEll} and the cofactor expansion formula in Definition \ref{def:Pfaffs}, we write
\begin{align*}&
\Pf_{d} \mtrx{\partial}_2
\\=&
\Pf_{d}
\begin{bmatrix}
\mtrx{0} & \mtrx{M} & d \lambda_D y^{\pi-(D-1)} \vec{e}_1 & \vec{0} & d \lambda_D x^{\pi-(D-1)} \vec{e}_d \\ -\mtrx{M}^\top & \mtrx{0} & \vec{0} & -x^{\pi-(D-1)} \vec{e}_1 & y^{\pi-(D-1)} \vec{e}_d \\ -d \lambda_D y^{\pi-(D-1)} \vec{e}_1^\top & \vec{0}^\top & 0 & z G & 0 \\ \vec{0}^\top & x^{\pi-(D-1)} \vec{e}_1^\top & -z G & 0 & 0 \\ -d \lambda_D x^{\pi-(D-1)} \vec{e}_d^\top & -y^{\pi-(D-1)} \vec{e}_d^\top & 0 & 0 & 0
\end{bmatrix}
\\\\=&
(-1)^{(d)+1}
\\&
\Pf
\begin{bmatrix}
\mtrx{0}_{(\hat{d}),(\hat{d})} & \mtrx{M}_{(\hat{d}),(-)} & d \lambda_D y^{\pi-(D-1)} (\vec{e}_1)_{\hat{d}} & \vec{0}_{\hat{d}} & \vec{0}_{\hat{d}} \\ -\left(\mtrx{M}_{(\hat{d}),(-)}\right)^\top & \mtrx{0} & \vec{0} & -x^{\pi-(D-1)} \vec{e}_1 & y^{\pi-(D-1)} \vec{e}_d \\ -d \lambda_D y^{\pi-(D-1)} (\vec{e}_1)_{\hat{d}}^\top & \vec{0}^\top & 0 & z G & 0 \\ \vec{0}_{\hat{d}}^\top & x^{\pi-(D-1)} \vec{e}_1^\top & -z G & 0 & 0 \\ \vec{0}_{\hat{d}}^\top & -y^{\pi-(D-1)} \vec{e}_d^\top & 0 & 0 & 0
\end{bmatrix}
\\\\=&
-
(-1)^{(2d-1)+(2d+2)+H((2d+2)-(2d-1))}
y^{\pi-(D-1)}
\\&
\Pf
\begin{bmatrix}
\mtrx{0}_{(\hat{d}),(\hat{d})} & \mtrx{M}_{(\hat{d}),(\hat{d})} & d \lambda_D y^{\pi-(D-1)} (\vec{e}_1)_{\hat{d}} & \vec{0}_{\hat{d}} \\ -\left(\mtrx{M}_{(\hat{d}),(\hat{d})}\right)^\top & \mtrx{0}_{(\hat{d}),(\hat{d})} & \vec{0}_{\hat{d}} & -x^{\pi-(D-1)} (\vec{e}_1)_{\hat{d}} \\ -d \lambda_D y^{\pi-(D-1)} (\vec{e}_1)_{\hat{d}}^\top & \vec{0}_{\hat{d}}^\top & 0 & z G \\ \vec{0}_{\hat{d}}^\top & x^{\pi-(D-1)} (\vec{e}_1)_{\hat{d}}^\top & -z G & 0
\end{bmatrix}
\/.\end{align*}
We also have
\begin{align*}&
\Pf_{2d} \mtrx{\partial}_2
\\=&
\Pf_{2d}
\begin{bmatrix}
\mtrx{0} & \mtrx{M} & d \lambda_D y^{\pi-(D-1)} \vec{e}_1 & \vec{0} & d \lambda_D x^{\pi-(D-1)} \vec{e}_d \\ -\mtrx{M}^\top & \mtrx{0} & \vec{0} & -x^{\pi-(D-1)} \vec{e}_1 & y^{\pi-(D-1)} \vec{e}_d \\ -d \lambda_D y^{\pi-(D-1)} \vec{e}_1^\top & \vec{0}^\top & 0 & z G & 0 \\ \vec{0}^\top & x^{\pi-(D-1)} \vec{e}_1^\top & -z G & 0 & 0 \\ -d \lambda_D x^{\pi-(D-1)} \vec{e}_d^\top & -y^{\pi-(D-1)} \vec{e}_d^\top & 0 & 0 & 0
\end{bmatrix}
\\\\=&
(-1)^{(2d)+1}
\\&
\Pf
\begin{bmatrix}
\mtrx{0} & \mtrx{M}_{(-),(\hat{d})} & d \lambda_D y^{\pi-(D-1)} \vec{e}_1 & \vec{0} & d \lambda_D x^{\pi-(D-1)} \vec{e}_d \\ -\left(\mtrx{M}_{(-),(\hat{d})}\right)^\top & \mtrx{0}_{(\hat{d}),(\hat{d})} & \vec{0}_{\hat{d}} & -x^{\pi-(D-1)} (\vec{e}_1)_{\hat{d}} & \vec{0}_{\hat{d}} \\ -d \lambda_D y^{\pi-(D-1)} \vec{e}_1^\top & \vec{0}_{\hat{d}}^\top & 0 & z G & 0 \\ \vec{0}^\top & x^{\pi-(D-1)} (\vec{e}_1)_{\hat{d}}^\top & -z G & 0 & 0 \\ -d \lambda_D x^{\pi-(D-1)} \vec{e}_d^\top & \vec{0}_{\hat{d}}^\top & 0 & 0 & 0
\end{bmatrix}
\\\\=&
-
(-1)^{(d)+(2d+2)+H((2d+2)-(d))}
(d \lambda_D x^{\pi-(D-1)})
\\&
\Pf
\begin{bmatrix}
\mtrx{0}_{(\hat{d}),(\hat{d})} & \mtrx{M}_{(\hat{d}),(\hat{d})} & d \lambda_D y^{\pi-(D-1)} (\vec{e}_1)_{\hat{d}} & \vec{0}_{\hat{d}} \\ -\left(\mtrx{M}_{(\hat{d}),(\hat{d})}\right)^\top & \mtrx{0}_{(\hat{d}),(\hat{d})} & \vec{0}_{\hat{d}} & -x^{\pi-(D-1)} (\vec{e}_1)_{\hat{d}} \\ -d \lambda_D y^{\pi-(D-1)} (\vec{e}_1)_{\hat{d}}^\top & \vec{0}_{\hat{d}}^\top & 0 & z G \\ \vec{0}_{\hat{d}}^\top & x^{\pi-(D-1)} (\vec{e}_1)_{\hat{d}}^\top & -z G & 0
\end{bmatrix}
\/.
\end{align*}
To finish off both of these cases, we now calculate the degree of \[
\Pf
\begin{bmatrix}
\mtrx{0}_{(\hat{d}),(\hat{d})} & \mtrx{M}_{(\hat{d}),(\hat{d})} & d \lambda_D y^{\pi-(D-1)} (\vec{e}_1)_{\hat{d}} & \vec{0}_{\hat{d}} \\ -\left(\mtrx{M}_{(\hat{d}),(\hat{d})}\right)^\top & \mtrx{0}_{(\hat{d}),(\hat{d})} & \vec{0}_{\hat{d}} & -x^{\pi-(D-1)} (\vec{e}_1)_{\hat{d}} \\ -d \lambda_D y^{\pi-(D-1)} (\vec{e}_1)_{\hat{d}}^\top & \vec{0}_{\hat{d}}^\top & 0 & z G \\ \vec{0}_{\hat{d}}^\top & x^{\pi-(D-1)} (\vec{e}_1)_{\hat{d}}^\top & -z G & 0
\end{bmatrix}
\] by first applying more cofactor expansion, allowing us to write
\begin{align*}&
\Pf
\begin{bmatrix}
\mtrx{0}_{(\hat{d}),(\hat{d})} & \mtrx{M}_{(\hat{d}),(\hat{d})} & d \lambda_D y^{\pi-(D-1)} (\vec{e}_1)_{\hat{d}} & \vec{0}_{\hat{d}} \\ -\left(\mtrx{M}_{(\hat{d}),(\hat{d})}\right)^\top & \mtrx{0}_{(\hat{d}),(\hat{d})} & \vec{0}_{\hat{d}} & -x^{\pi-(D-1)} (\vec{e}_1)_{\hat{d}} \\ -d \lambda_D y^{\pi-(D-1)} (\vec{e}_1)_{\hat{d}}^\top & \vec{0}_{\hat{d}}^\top & 0 & z G \\ \vec{0}_{\hat{d}}^\top & x^{\pi-(D-1)} (\vec{e}_1)_{\hat{d}}^\top & -z G & 0
\end{bmatrix}
\\\\=&
(-1)^{(d)+(2d)+H((2d)-(d))}
(-x^{\pi-(D-1)})
\\&
\Pf
\begin{bmatrix}
\mtrx{0}_{(\hat{d}),(\hat{d})} & \mtrx{M}_{(\hat{d}),(\hat{1}\hat{d})} & d \lambda_D y^{\pi-(D-1)} (\vec{e}_1)_{\hat{d}} \\ -\left(\mtrx{M}_{(\hat{d}),(\hat{1}\hat{d})}\right)^\top & \mtrx{0}_{(\hat{1}\hat{d}),(\hat{1}\hat{d})} & \vec{0}_{\hat{1}\hat{d}} \\ -d \lambda_D y^{\pi-(D-1)} (\vec{e}_1)_{\hat{d}}^\top & \vec{0}_{\hat{1}\hat{d}}^\top & 0
\end{bmatrix}
\\&
+
(-1)^{(2d-1)+(2d)+H((2d)-(2d-1))}
(z G)
\\&
\Pf
\begin{bmatrix}
\mtrx{0}_{(\hat{d}),(\hat{d})} & \mtrx{M}_{(\hat{d}),(\hat{d})} \\ -\left(\mtrx{M}_{(\hat{d}),(\hat{d})}\right)^\top & \mtrx{0}_{(\hat{d}),(\hat{d})}
\end{bmatrix}
\\\\=&
x^{\pi-(D-1)}
(-1)^{(1)+(2d-2)+H((2d-2)-(1))}
(d \lambda_D y^{\pi-(D-1)})
\Pf
\begin{bmatrix}
\mtrx{0}_{(\hat{1}\hat{d}),(\hat{1}\hat{d})} & \mtrx{M}_{(\hat{1}\hat{d}),(\hat{1}\hat{d})} \\ -\left(\mtrx{M}_{(\hat{1}\hat{d}),(\hat{1}\hat{d})}\right)^\top & \mtrx{0}_{(\hat{1}\hat{d}),(\hat{1}\hat{d})}
\end{bmatrix}
\\&
+
(-1)^{(2d-1)+(2d)+H((2d)-(2d-1))}
(z G)
\Pf
\begin{bmatrix}
\mtrx{0}_{(\hat{d}),(\hat{d})} & \mtrx{M}_{(\hat{d}),(\hat{d})} \\ -\left(\mtrx{M}_{(\hat{d}),(\hat{d})}\right)^\top & \mtrx{0}_{(\hat{d}),(\hat{d})}
\end{bmatrix}
\\\\=&\tag{v}
x^{\pi-(D-1)}
(-1)^{(1)+(2d-2)+H((2d-2)-(1))}
(d \lambda_D y^{\pi-(D-1)})
\left(
(-1)^{(d-2)(d-3)/2}
\det\left(\mtrx{M}_{(\hat{1}\hat{d}),(\hat{1}\hat{d})}\right)
\right)
\\&
+
(-1)^{(2d-1)+(2d)+H((2d)-(2d-1))}
(z G)
\left(
(-1)^{D-1}
\det\left(\mtrx{M}_{(\hat{d}),(\hat{d})}\right)
\right)
\\\\=&\tag{vi}
x^{\pi-(D-1)}
(d \lambda_D y^{\pi-(D-1)})
(-1)^{(2D-2)(2D-3)/2}
\det\left(\mtrx{M}_{(\hat{1}\hat{d}),(\hat{1}\hat{d})}\right)
+
(z G)
(-1)^{D-1}
(-(d-1)! z g)
\\\\=&
(-1)^{D-1} d \lambda_D (xy)^{\pi-(D-1)}
\det\left(\mtrx{M}_{(\hat{1}\hat{d}),(\hat{1}\hat{d})}\right)
+
(-1)^D (d-1)! z^2 g G
\/.\end{align*}
Here line (v) is a result of Remark \ref{rmk:PfaffBlockSquare}, and line (vi) is true by Corollary \ref{cor:detMdd}.

Recall that $g = \sum_{t=0}^{D-1} \lambda_t (xy)^{(D-1)-t} F^t$. Note that $(xy)^{(D-1)-t} F^t$ has degree $2((D-1)-t)+2t = 2(D-1)$ for all $0 \leq t \leq D-1$, so $g \in P_{2(D-1)}$. We also have $\det\left(\mtrx{M}_{(\hat{1}\hat{d}),(\hat{1}\hat{d})}\right) \in P_{(d-2)1} = P_{2(D-1)}$ by Remark \ref{rmk:DetDeg}, since $\mtrx{M}$ is size $d \times d$ and pure graded degree $1$.
Thus we have both \[
(xy)^{\pi-(D-1)}
\det\left(\mtrx{M}_{(\hat{1}\hat{d}),(\hat{1}\hat{d})}\right)
 \in 
P_{2(\pi-(D-1))+2(D-1)} = P_{2\pi}
\] and \[
z^2 g G
 \in 
P_{2+2(D-1)+2(\pi-D)} = P_{2\pi}
\/,\] so we see that
\begin{align*}&
\Pf
\begin{bmatrix}
\mtrx{0}_{(\hat{d}),(\hat{d})} & \mtrx{M}_{(\hat{d}),(\hat{d})} & d \lambda_D y^{\pi-(D-1)} (\vec{e}_1)_{\hat{d}} & \vec{0}_{\hat{d}} \\ -\left(\mtrx{M}_{(\hat{d}),(\hat{d})}\right)^\top & \mtrx{0}_{(\hat{d}),(\hat{d})} & \vec{0}_{\hat{d}} & -x^{\pi-(D-1)} (\vec{e}_1)_{\hat{d}} \\ -d \lambda_D y^{\pi-(D-1)} (\vec{e}_1)_{\hat{d}}^\top & \vec{0}_{\hat{d}}^\top & 0 & z G \\ \vec{0}_{\hat{d}}^\top & x^{\pi-(D-1)} (\vec{e}_1)_{\hat{d}}^\top & -z G & 0
\end{bmatrix}
\\=&
(-1)^{D-1} d \lambda_D (xy)^{\pi-(D-1)}
\det\left(\mtrx{M}_{(\hat{1}\hat{d}),(\hat{1}\hat{d})}\right)
+
(-1)^D (d-1)! z^2 g G
 \in 
P_{2\pi}
\/.\end{align*}
From this we have both
\begin{align*}&
\Pf_{d} \mtrx{\partial}_2
\\=&
-
(-1)^{(2d-1)+(2d+2)+H((2d+2)-(2d-1))}
y^{\pi-(D-1)}
\\&
\Pf
\begin{bmatrix}
\mtrx{0}_{(\hat{d}),(\hat{d})} & \mtrx{M}_{(\hat{d}),(\hat{d})} & d \lambda_D y^{\pi-(D-1)} (\vec{e}_1)_{\hat{d}} & \vec{0}_{\hat{d}} \\ -\left(\mtrx{M}_{(\hat{d}),(\hat{d})}\right)^\top & \mtrx{0}_{(\hat{d}),(\hat{d})} & \vec{0}_{\hat{d}} & -x^{\pi-(D-1)} (\vec{e}_1)_{\hat{d}} \\ -d \lambda_D y^{\pi-(D-1)} (\vec{e}_1)_{\hat{d}}^\top & \vec{0}_{\hat{d}}^\top & 0 & z G \\ \vec{0}_{\hat{d}}^\top & x^{\pi-(D-1)} (\vec{e}_1)_{\hat{d}}^\top & -z G & 0
\end{bmatrix}
\\\\&\in
P_{(\pi-(D-1))+2\pi} = P_{s/2+1}
\end{align*}
and
\begin{align*}&
\Pf_{2d} \mtrx{\partial}_2
\\=&
-
(-1)^{(d)+(2d+2)+H((2d+2)-(d))}
(d \lambda_D x^{\pi-(D-1)})
\\&
\Pf
\begin{bmatrix}
\mtrx{0}_{(\hat{d}),(\hat{d})} & \mtrx{M}_{(\hat{d}),(\hat{d})} & d \lambda_D y^{\pi-(D-1)} (\vec{e}_1)_{\hat{d}} & \vec{0}_{\hat{d}} \\ -\left(\mtrx{M}_{(\hat{d}),(\hat{d})}\right)^\top & \mtrx{0}_{(\hat{d}),(\hat{d})} & \vec{0}_{\hat{d}} & -x^{\pi-(D-1)} (\vec{e}_1)_{\hat{d}} \\ -d \lambda_D y^{\pi-(D-1)} (\vec{e}_1)_{\hat{d}}^\top & \vec{0}_{\hat{d}}^\top & 0 & z G \\ \vec{0}_{\hat{d}}^\top & x^{\pi-(D-1)} (\vec{e}_1)_{\hat{d}}^\top & -z G & 0
\end{bmatrix}
\\\\&\in
P_{(\pi-(D-1))+2\pi} = P_{s/2+1}
\/,\end{align*}
which means $\Pf_{d} \mtrx{\partial}_2$ and $\Pf_{2d} \mtrx{\partial}_2$ have degree $s/2+1$.

Now that we have calculated the degrees of $\Pf_{1} \mtrx{\partial}_2$, $\Pf_{d} \mtrx{\partial}_2$, $\Pf_{d+1} \mtrx{\partial}_2$, and $\Pf_{2d} \mtrx{\partial}_2$, we only need to calculate the degree of $\Pf_{\ell} \mtrx{\partial}_2$ and $\Pf_{d+\ell} \mtrx{\partial}_2$ for $1<\ell<d$.

Let $1<\ell<d$. Using Definition \ref{def:PfaffEll} and the cofactor expansion formula in Definition \ref{def:Pfaffs}, we write
\begin{align*}&
\Pf_{\ell} \mtrx{\partial}_2
\\=&
\Pf_{\ell}
\begin{bmatrix}
\mtrx{0} & \mtrx{M} & d \lambda_D y^{\pi-(D-1)} \vec{e}_1 & \vec{0} & d \lambda_D x^{\pi-(D-1)} \vec{e}_d \\ -\mtrx{M}^\top & \mtrx{0} & \vec{0} & -x^{\pi-(D-1)} \vec{e}_1 & y^{\pi-(D-1)} \vec{e}_d \\ -d \lambda_D y^{\pi-(D-1)} \vec{e}_1^\top & \vec{0}^\top & 0 & z G & 0 \\ \vec{0}^\top & x^{\pi-(D-1)} \vec{e}_1^\top & -z G & 0 & 0 \\ -d \lambda_D x^{\pi-(D-1)} \vec{e}_d^\top & -y^{\pi-(D-1)} \vec{e}_d^\top & 0 & 0 & 0
\end{bmatrix}
\\\\=&
(-1)^{(\ell)+1}
\\&
\Pf
\begin{bmatrix}
\mtrx{0}_{(\hat{\ell}),(\hat{\ell})} & \mtrx{M}_{(\hat{\ell}),(-)} & d \lambda_D y^{\pi-(D-1)} (\vec{e}_1)_{\hat{\ell}} & \vec{0}_{\hat{\ell}} & d \lambda_D x^{\pi-(D-1)} (\vec{e}_d)_{\hat{\ell}} \\ -\left(\mtrx{M}_{(\hat{\ell}),(-)}\right)^\top & \mtrx{0} & \vec{0} & -x^{\pi-(D-1)} \vec{e}_1 & y^{\pi-(D-1)} \vec{e}_d \\ -d \lambda_D y^{\pi-(D-1)} (\vec{e}_1)_{\hat{\ell}}^\top & \vec{0}^\top & 0 & z G & 0 \\ \vec{0}_{\hat{\ell}}^\top & x^{\pi-(D-1)} \vec{e}_1^\top & -z G & 0 & 0 \\ -d \lambda_D x^{\pi-(D-1)} (\vec{e}_d)_{\hat{\ell}}^\top & -y^{\pi-(D-1)} \vec{e}_d^\top & 0 & 0 & 0
\end{bmatrix}
\\\\=&
-(-1)^\ell
(-1)^{(1)+(2d)+H((2d)-(1))}
(d \lambda_D y^{\pi-(D-1)})
\\&
\Pf
\begin{bmatrix}
\mtrx{0}_{(\hat{1}\hat{\ell}),(\hat{1}\hat{\ell})} & \mtrx{M}_{(\hat{1}\hat{\ell}),(-)} & \vec{0}_{\hat{1}\hat{\ell}} & d \lambda_D x^{\pi-(D-1)} (\vec{e}_d)_{\hat{1}\hat{\ell}} \\ -\left(\mtrx{M}_{(\hat{1}\hat{\ell}),(-)}\right)^\top & \mtrx{0} & -x^{\pi-(D-1)} \vec{e}_1 & y^{\pi-(D-1)} \vec{e}_d \\ \vec{0}_{\hat{1}\hat{\ell}}^\top & x^{\pi-(D-1)} \vec{e}_1^\top & 0 & 0 \\ -d \lambda_D x^{\pi-(D-1)} (\vec{e}_d)_{\hat{1}\hat{\ell}}^\top & -y^{\pi-(D-1)} \vec{e}_d^\top & 0 & 0
\end{bmatrix}
\\&
-
(-1)^{\ell}
(-1)^{(2d+1)+(2d)+H((2d)-(2d+1))}
(-z G)
\\&
\Pf
\begin{bmatrix}
\mtrx{0}_{(\hat{\ell}),(\hat{\ell})} & \mtrx{M}_{(\hat{\ell}),(-)} & d \lambda_D x^{\pi-(D-1)} (\vec{e}_d)_{\hat{\ell}} \\ -\left(\mtrx{M}_{(\hat{\ell}),(-)}\right)^\top & \mtrx{0} & y^{\pi-(D-1)} \vec{e}_d \\ -d \lambda_D x^{\pi-(D-1)} (\vec{e}_d)_{\hat{\ell}}^\top & -y^{\pi-(D-1)} \vec{e}_d^\top & 0
\end{bmatrix}
\\\\=&
-(-1)^\ell d \lambda_D
y^{\pi-(D-1)}
(-1)^{(d-1)+(2d-1)+H((2d-1)-(d-1))}
(-x^{\pi-(D-1)})
\\&
\Pf
\begin{bmatrix}
\mtrx{0}_{(\hat{1}\hat{\ell}),(\hat{1}\hat{\ell})} & \mtrx{M}_{(\hat{1}\hat{\ell}),(\hat{1})} & d \lambda_D x^{\pi-(D-1)} (\vec{e}_d)_{\hat{1}\hat{\ell}} \\ -\left(\mtrx{M}_{(\hat{1}\hat{\ell}),(\hat{1})}\right)^\top & \mtrx{0}_{(\hat{1}),(\hat{1})} & y^{\pi-(D-1)} (\vec{e}_d)_{\hat{1}} \\ -d \lambda_D x^{\pi-(D-1)} (\vec{e}_d)_{\hat{1}\hat{\ell}}^\top & -y^{\pi-(D-1)} (\vec{e}_d)_{\hat{1}}^\top & 0
\end{bmatrix}
\\&
-
(-1)^{\ell}
(-1)^{(2d+1)+(2d)+H((2d)-(2d+1))}
(-z G)
\\&
\Pf
\begin{bmatrix}
\mtrx{0}_{(\hat{\ell}),(\hat{\ell})} & \mtrx{M}_{(\hat{\ell}),(-)} & d \lambda_D x^{\pi-(D-1)} (\vec{e}_d)_{\hat{\ell}} \\ -\left(\mtrx{M}_{(\hat{\ell}),(-)}\right)^\top & \mtrx{0} & y^{\pi-(D-1)} \vec{e}_d \\ -d \lambda_D x^{\pi-(D-1)} (\vec{e}_d)_{\hat{\ell}}^\top & -y^{\pi-(D-1)} \vec{e}_d^\top & 0
\end{bmatrix}
\end{align*}
\begin{align*}
=&
-(-1)^\ell d \lambda_D (xy)^{\pi-(D-1)}
(-1)^{(d-2)+(2d-2)+H((2d-2)-(d-2))}
(d \lambda_D x^{\pi-(D-1)})
\\&
\Pf
\begin{bmatrix}
\mtrx{0}_{(\hat{1}\hat{\ell}\hat{d}),(\hat{1}\hat{\ell}\hat{d})} & \mtrx{M}_{(\hat{1}\hat{\ell}\hat{d}),(\hat{1})} \\ -\left(\mtrx{M}_{(\hat{1}\hat{\ell}\hat{d}),(\hat{1})}\right)^\top & \mtrx{0}_{(\hat{1}),(\hat{1})}
\end{bmatrix}
\\&
-
(-1)^\ell d \lambda_D (xy)^{\pi-(D-1)}
(-1)^{(2d-3)+(2d-2)+H((2d-2)-(2d-3))}
(y^{\pi-(D-1)})
\\&
\Pf
\begin{bmatrix}
\mtrx{0}_{(\hat{1}\hat{\ell}),(\hat{1}\hat{\ell})} & \mtrx{M}_{(\hat{1}\hat{\ell}),(\hat{1}\hat{d})} \\ -\left(\mtrx{M}_{(\hat{1}\hat{\ell}),(\hat{1}\hat{d})}\right)^\top & \mtrx{0}_{(\hat{1}\hat{d}),(\hat{1}\hat{d})}
\end{bmatrix}
\\&
-
(-1)^\ell z G
(-1)^{(d-1)+(2d)+H((2d)-(d-1))}
(d \lambda_D x^{\pi-(D-1)})
\Pf
\begin{bmatrix}
\mtrx{0}_{(\hat{\ell}\hat{d}),(\hat{\ell}\hat{d})} & \mtrx{M}_{(\hat{\ell}\hat{d}),(-)} \\ -\left(\mtrx{M}_{(\hat{\ell}\hat{d}),(-)}\right)^\top & \mtrx{0}
\end{bmatrix}
\\&
-
(-1)^\ell z G
(-1)^{(2d-1)+(2d)+H((2d)-(2d-1))}
(y^{\pi-(D-1)})
\Pf
\begin{bmatrix}
\mtrx{0}_{(\hat{\ell}),(\hat{\ell})} & \mtrx{M}_{(\hat{\ell}),(\hat{d})} \\ -\left(\mtrx{M}_{(\hat{\ell}),(\hat{d})}\right)^\top & \mtrx{0}_{(\hat{d}),(\hat{d})}
\end{bmatrix}
\\\\=&\tag{vii}
-(-1)^\ell d \lambda_D (xy^2)^{\pi-(D-1)}
\left(
(-1)^{D-1}
\det\left(\mtrx{M}_{(\hat{1}\hat{\ell}),(\hat{1}\hat{d})}\right)
\right)
\\&
-
(-1)^\ell y^{\pi-(D-1)} z G
\left(
(-1)^{D-1}
\det\left(\mtrx{M}_{(\hat{\ell}),(\hat{d})}\right)
\right)
\\\\=&
(-1)^{\ell+D}
y^{\pi-(D-1)}
\left(
d \lambda_D
(xy)^{\pi-(D-1)}
\det\left(\mtrx{M}_{(\hat{1}\hat{\ell}),(\hat{1}\hat{d})}\right)
+
z G
\det\left(\mtrx{M}_{(\hat{\ell}),(\hat{d})}\right)
\right)
\/,\end{align*}
where line (vii) is true by Lemma \ref{lem:PfaffBlockNonSquare} and Remark \ref{rmk:PfaffBlockSquare}.
By Remark \ref{rmk:DetDeg}, $\det\left(\mtrx{M}_{(\hat{1}\hat{\ell}),(\hat{1}\hat{d})}\right) \in P_{2D-2}$ and $\det\left(\mtrx{M}_{(\hat{\ell}),(\hat{d})}\right) \in P_{2D-1}$, which means \[
(xy)^{\pi-(D-1)}
\det\left(\mtrx{M}_{(\hat{1}\hat{\ell}),(\hat{1}\hat{d})}\right)
 \in 
P_{2(\pi-(D-1))+(2D-2)} = P_{2\pi}
\] and \[
z G
\det\left(\mtrx{M}_{(\hat{\ell}),(\hat{d})}\right)
 \in 
P_{1+2(\pi-D)+(2D-1)} = P_{2\pi}
\/.\] Therefore
\begin{align*}&
\Pf_{\ell} \mtrx{\partial}_2
\\=&
(-1)^{\ell+D}
y^{\pi-(D-1)}
\left(
d \lambda_D
(xy)^{\pi-(D-1)}
\det\left(\mtrx{M}_{(\hat{1}\hat{\ell}),(\hat{1}\hat{d})}\right)
+
z G
\det\left(\mtrx{M}_{(\hat{\ell}),(\hat{d})}\right)
\right)
\\&\in
P_{(\pi-(D-1))+2\pi} = P_{s/2+1}
\/,\end{align*}
so $\Pf_{\ell} \mtrx{\partial}_2$ has degree $s/2+1$.

Let $0<\ell<d$. Using Definition \ref{def:PfaffEll} and the cofactor expansion formula in Definition \ref{def:Pfaffs}, we write
\begin{align*}&
\Pf_{d+\ell} \mtrx{\partial}_2
\\=&
\Pf_{d+\ell}
\begin{bmatrix}
\mtrx{0} & \mtrx{M} & d \lambda_D y^{\pi-(D-1)} \vec{e}_1 & \vec{0} & d \lambda_D x^{\pi-(D-1)} \vec{e}_d \\ -\mtrx{M}^\top & \mtrx{0} & \vec{0} & -x^{\pi-(D-1)} \vec{e}_1 & y^{\pi-(D-1)} \vec{e}_d \\ -d \lambda_D y^{\pi-(D-1)} \vec{e}_1^\top & \vec{0}^\top & 0 & z G & 0 \\ \vec{0}^\top & x^{\pi-(D-1)} \vec{e}_1^\top & -z G & 0 & 0 \\ -d \lambda_D x^{\pi-(D-1)} \vec{e}_d^\top & -y^{\pi-(D-1)} \vec{e}_d^\top & 0 & 0 & 0
\end{bmatrix}
\\\\=&
(-1)^{(d+\ell)+1}
\\&
\Pf
\begin{bmatrix}
\mtrx{0} & \mtrx{M}_{(-),(\hat{\ell})} & d \lambda_D y^{\pi-(D-1)} \vec{e}_1 & \vec{0} & d \lambda_D x^{\pi-(D-1)} \vec{e}_d \\ -\left(\mtrx{M}_{(-),(\hat{\ell})}\right)^\top & \mtrx{0}_{(\hat{\ell}),(\hat{\ell})} & \vec{0}_{\hat{\ell}} & -x^{\pi-(D-1)} (\vec{e}_1)_{\hat{\ell}} & y^{\pi-(D-1)} (\vec{e}_d)_{\hat{\ell}} \\ -d \lambda_D y^{\pi-(D-1)} \vec{e}_1^\top & \vec{0}_{\hat{\ell}}^\top & 0 & z G & 0 \\ \vec{0}^\top & x^{\pi-(D-1)} (\vec{e}_1)_{\hat{\ell}}^\top & -z G & 0 & 0 \\ -d \lambda_D x^{\pi-(D-1)} \vec{e}_d^\top & -y^{\pi-(D-1)} (\vec{e}_d)_{\hat{\ell}}^\top & 0 & 0 & 0
\end{bmatrix}
\\\\=&
-(-1)^{\ell}
(-1)^{(1)+(2d)+H((2d)-(1))}
(d \lambda_D y^{\pi-(D-1)})
\\&
\Pf
\begin{bmatrix}
\mtrx{0}_{(\hat{1}),(\hat{1})} & \mtrx{M}_{(\hat{1}),(\hat{\ell})} & \vec{0}_{\hat{1}} & d \lambda_D x^{\pi-(D-1)} (\vec{e}_d)_{\hat{1}} \\ -\left(\mtrx{M}_{(\hat{1}),(\hat{\ell})}\right)^\top & \mtrx{0}_{(\hat{\ell}),(\hat{\ell})} & -x^{\pi-(D-1)} (\vec{e}_1)_{\hat{\ell}} & y^{\pi-(D-1)} (\vec{e}_d)_{\hat{\ell}} \\ \vec{0}_{\hat{1}}^\top & x^{\pi-(D-1)} (\vec{e}_1)_{\hat{\ell}}^\top & 0 & 0 \\ -d \lambda_D x^{\pi-(D-1)} (\vec{e}_d)_{\hat{1}}^\top & -y^{\pi-(D-1)} (\vec{e}_d)_{\hat{\ell}}^\top & 0 & 0
\end{bmatrix}
\\&
-
(-1)^{\ell}
(-1)^{(2d+1)+(2d)+H((2d)-(2d+1))}
(-z G)
\\&
\Pf
\begin{bmatrix}
\mtrx{0} & \mtrx{M}_{(-),(\hat{\ell})} & d \lambda_D x^{\pi-(D-1)} \vec{e}_d \\ -\left(\mtrx{M}_{(-),(\hat{\ell})}\right)^\top & \mtrx{0}_{(\hat{\ell}),(\hat{\ell})} & y^{\pi-(D-1)} (\vec{e}_d)_{\hat{\ell}} \\ -d \lambda_D x^{\pi-(D-1)} \vec{e}_d^\top & -y^{\pi-(D-1)} (\vec{e}_d)_{\hat{\ell}}^\top & 0
\end{bmatrix}
\\\\=&
-(-1)^{\ell} d \lambda_D
y^{\pi-(D-1)}
(-1)^{(d)+(2d-1)+H((2d-1)-(d))}
(-x^{\pi-(D-1)})
\\&
\Pf
\begin{bmatrix}
\mtrx{0}_{(\hat{1}),(\hat{1})} & \mtrx{M}_{(\hat{1}),(\hat{1}\hat{\ell})} & d \lambda_D x^{\pi-(D-1)} (\vec{e}_d)_{\hat{1}} \\ -\left(\mtrx{M}_{(\hat{1}),(\hat{1}\hat{\ell})}\right)^\top & \mtrx{0}_{(\hat{1}\hat{\ell}),(\hat{1}\hat{\ell})} & y^{\pi-(D-1)} (\vec{e}_d)_{\hat{1}\hat{\ell}} \\ -d \lambda_D x^{\pi-(D-1)} (\vec{e}_d)_{\hat{1}}^\top & -y^{\pi-(D-1)} (\vec{e}_d)_{\hat{1}\hat{\ell}}^\top & 0
\end{bmatrix}
\\&
-
(-1)^{\ell}
(-1)^{(2d+1)+(2d)+H((2d)-(2d+1))}
(-z G)
\\&
\Pf
\begin{bmatrix}
\mtrx{0} & \mtrx{M}_{(-),(\hat{\ell})} & d \lambda_D x^{\pi-(D-1)} \vec{e}_d \\ -\left(\mtrx{M}_{(-),(\hat{\ell})}\right)^\top & \mtrx{0}_{(\hat{\ell}),(\hat{\ell})} & y^{\pi-(D-1)} (\vec{e}_d)_{\hat{\ell}} \\ -d \lambda_D x^{\pi-(D-1)} \vec{e}_d^\top & -y^{\pi-(D-1)} (\vec{e}_d)_{\hat{\ell}}^\top & 0
\end{bmatrix}
\/,\end{align*}
\begin{align*}
=&
(-1)^{\ell} d \lambda_D
(xy)^{\pi-(D-1)}
(-1)^{(d-1)+(2d-2)+H((2d-2)-(d-1))}
(d \lambda_D x^{\pi-(D-1)})
\\&
\Pf
\begin{bmatrix}
\mtrx{0}_{(\hat{1}\hat{d}),(\hat{1}\hat{d})} & \mtrx{M}_{(\hat{1}\hat{d}),(\hat{1}\hat{\ell})} \\ -\left(\mtrx{M}_{(\hat{1}\hat{d}),(\hat{1}\hat{\ell})}\right)^\top & \mtrx{0}_{(\hat{1}\hat{\ell}),(\hat{1}\hat{\ell})}
\end{bmatrix}
\\&
+
(-1)^{\ell} d \lambda_D
(xy)^{\pi-(D-1)}
(-1)^{(2d-3)+(2d-2)+H((2d-2)-(2d-3))}
(y^{\pi-(D-1)})
\\&
\Pf
\begin{bmatrix}
\mtrx{0}_{(\hat{1}),(\hat{1})} & \mtrx{M}_{(\hat{1}),(\hat{1}\hat{\ell}\hat{d})} \\ -\left(\mtrx{M}_{(\hat{1}),(\hat{1}\hat{\ell}\hat{d})}\right)^\top & \mtrx{0}_{(\hat{1}\hat{\ell}\hat{d}),(\hat{1}\hat{\ell}\hat{d})}
\end{bmatrix}
\\&
-
(-1)^{\ell}
z G
(-1)^{(d)+(2d)+H((2d)-(d))}
(d \lambda_D x^{\pi-(D-1)})
\Pf
\begin{bmatrix}
\mtrx{0}_{(\hat{d}),(\hat{d})} & \mtrx{M}_{(\hat{d}),(\hat{\ell})} \\ -\left(\mtrx{M}_{(\hat{d}),(\hat{\ell})}\right)^\top & \mtrx{0}_{(\hat{\ell}),(\hat{\ell})}
\end{bmatrix}
\\&
-
(-1)^{\ell}
z G
(-1)^{(2d-1)+(2d)+H((2d)-(2d-1))}
(y^{\pi-(D-1)})
\Pf
\begin{bmatrix}
\mtrx{0} & \mtrx{M}_{(-),(\hat{\ell}\hat{d})} \\ -\left(\mtrx{M}_{(-),(\hat{\ell}\hat{d})}\right)^\top & \mtrx{0}_{(\hat{\ell}\hat{d}),(\hat{\ell}\hat{d})}
\end{bmatrix}
\\\\=&\tag{viii}
(-1)^{\ell} (d \lambda_D)^2
(x^2y)^{\pi-(D-1)}
\left(
(-1)^{D-1}
\det\left(\mtrx{M}_{(\hat{1}\hat{d}),(\hat{1}\hat{\ell})}\right)
\right)
\\&
+
(-1)^{\ell} d \lambda_D
x^{\pi-(D-1)} z G
\left(
(-1)^{D-1}
\det\left(\mtrx{M}_{(\hat{d}),(\hat{\ell})}\right)
\right)
\\\\=&
(-1)^{\ell+D-1} d \lambda_D x^{\pi-(D-1)}
\left(
d \lambda_D
(xy)^{\pi-(D-1)}
\det\left(\mtrx{M}_{(\hat{1}\hat{d}),(\hat{1}\hat{\ell})}\right)
+
z G
\det\left(\mtrx{M}_{(\hat{d}),(\hat{\ell})}\right)
\right)
\/,\end{align*}
where line (viii) is true by Lemma \ref{lem:PfaffBlockNonSquare} and Remark \ref{rmk:PfaffBlockSquare}.
By Remark \ref{rmk:DetDeg}, $\det\left(\mtrx{M}_{(\hat{1}\hat{d}),(\hat{1}\hat{\ell})}\right) \in P_{2D-2}$ and $\det\left(\mtrx{M}_{(\hat{d}),(\hat{\ell})}\right) \in P_{2D-1}$, which means \[
(xy)^{\pi-(D-1)}
\det\left(\mtrx{M}_{(\hat{1}\hat{d}),(\hat{1}\hat{\ell})}\right)
 \in 
P_{2(\pi-(D-1))+(2D-2)} = P_{2\pi}
\] and \[
z G
\det\left(\mtrx{M}_{(\hat{d}),(\hat{\ell})}\right)
 \in 
P_{1+2(\pi-D)+(2D-1)} = P_{2\pi}
\/.\] Therefore
\begin{align*}&
\Pf_{d+\ell} \mtrx{\partial}_2
\\=&
(-1)^{\ell+D-1} d \lambda_D x^{\pi-(D-1)}
\left(
d \lambda_D
(xy)^{\pi-(D-1)}
\det\left(\mtrx{M}_{(\hat{1}\hat{d}),(\hat{1}\hat{\ell})}\right)
+
z G
\det\left(\mtrx{M}_{(\hat{d}),(\hat{\ell})}\right)
\right)
\\&\in
P_{(\pi-(D-1))+2\pi} = P_{s/2+1}
\/,\end{align*}
so $\Pf_{d+\ell} \mtrx{\partial}_2$ has degree $s/2+1$.
\end{proof}
\begin{proposition}
\label{prop:Partial2PfaffsLast3}
The last three maximal order Pfaffians of $\mtrx{\partial}_2$ are as follows:
\begin{itemize}
\item $\Pf_{2d+1}(\mtrx{\partial}_2) = u x^q$,
\item $\Pf_{2d+2}(\mtrx{\partial}_2) = u y^q$, and
\item $\Pf_{2d+3}(\mtrx{\partial}_2) = u z^q$.
\end{itemize}
\end{proposition}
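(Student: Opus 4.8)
The plan is to reduce everything to Lemma \ref{lem:partial2lastPfs}. The matrix $\mtrx{\partial}_2$ from Notation \ref{not:phipsi} is a $(2d+3)\times(2d+3)$ skew-symmetric matrix already presented in the partitioned form $\mtrx{\partial}_2 = \begin{bmatrix} \mtrx{\varphi} & \mtrx{\psi} \\ -\mtrx{\psi}^\top & \mtrx{\Phi} \end{bmatrix}$ required by that lemma, with $\mtrx{\varphi}$ the $2d\times 2d$ skew-symmetric block, $\mtrx{\Phi}$ the $3\times 3$ skew-symmetric block, and $\mtrx{\psi}$ the $2d\times 3$ block. Since $2d$ is even, Lemma \ref{lem:partial2lastPfs} applies with $m = 2d$ and gives, for $\ell = 1, 2, 3$, the identity $\Pf_{2d+\ell}(\mtrx{\partial}_2) = \Pf_\ell\!\left(\mtrx{\psi}^\top \mtrx{\varphi}^\vee \mtrx{\psi} + \Pf(\mtrx{\varphi})\,\mtrx{\Phi}\right)$. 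Note $2d+1, 2d+2, 2d+3$ are precisely the indices of the last three maximal order Pfaffians.

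Next I would substitute the two identities already proved. Lemma \ref{lem:Pfphi} gives $\Pf(\mtrx{\varphi}) = u f$, and Lemma \ref{lem:PhiEquation} gives $\mtrx{\psi}^\top \mtrx{\varphi}^\vee \mtrx{\psi} + u f \mtrx{\Phi} = u \mtrx{X}$. Combining these, $\mtrx{\psi}^\top \mtrx{\varphi}^\vee \mtrx{\psi} + \Pf(\mtrx{\varphi})\,\mtrx{\Phi} = u \mtrx{X}$, so that $\Pf_{2d+\ell}(\mtrx{\partial}_2) = \Pf_\ell(u \mtrx{X})$ for $\ell = 1, 2, 3$, where $\mtrx{X} = \begin{bmatrix} 0 & z^q & -y^q \\ -z^q & 0 & x^q \\ y^q & -x^q & 0 \end{bmatrix}$ is the $3\times 3$ skew-symmetric matrix from Notation \ref{not:phipsi}.

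Finally I would compute the three quantities $\Pf_\ell(u\mtrx{X})$ directly from Definition \ref{def:PfaffEll}, namely $\Pf_\ell(u\mtrx{X}) = (-1)^{\ell+1}\Pf\!\left((u\mtrx{X})_{\hat{\hat{\ell}}}\right)$, using that the Pfaffian of a $2\times 2$ skew-symmetric matrix $\begin{bmatrix} 0 & a \\ -a & 0 \end{bmatrix}$ is $a$. Deleting row and column $1$ leaves $\begin{bmatrix} 0 & u x^q \\ -u x^q & 0 \end{bmatrix}$, so $\Pf_1(u\mtrx{X}) = u x^q$; deleting row and column $2$ leaves $\begin{bmatrix} 0 & -u y^q \\ u y^q & 0 \end{bmatrix}$, so $\Pf_2(u\mtrx{X}) = -(-u y^q) = u y^q$; deleting row and column $3$ leaves $\begin{bmatrix} 0 & u z^q \\ -u z^q & 0 \end{bmatrix}$, so $\Pf_3(u\mtrx{X}) = u z^q$. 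This yields $\Pf_{2d+1}(\mtrx{\partial}_2) = u x^q$, $\Pf_{2d+2}(\mtrx{\partial}_2) = u y^q$, and $\Pf_{2d+3}(\mtrx{\partial}_2) = u z^q$, as claimed.

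There is no substantive obstacle here: the only points requiring attention are checking that the block partition of $\mtrx{\partial}_2$ literally matches the hypotheses of Lemma \ref{lem:partial2lastPfs} (so that the lemma may be invoked verbatim) and tracking the $(-1)^{\ell+1}$ signs together with the signs of the off-diagonal entries of $\mtrx{X}$ in the $3\times 3$ Pfaffian expansions — both entirely routine. All the real content has been absorbed into Lemmas \ref{lem:Pfphi} and \ref{lem:PhiEquation} and into Lemma \ref{lem:partial2lastPfs}.
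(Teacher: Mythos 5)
Your proposal is correct and follows essentially the same route the paper takes: apply Lemma \ref{lem:partial2lastPfs} with $m = 2d$, substitute $\Pf(\mtrx{\varphi}) = uf$ (Lemma \ref{lem:Pfphi}) and $\mtrx{\psi}^\top \mtrx{\varphi}^\vee \mtrx{\psi} + uf\mtrx{\Phi} = u\mtrx{X}$ (Lemma \ref{lem:PhiEquation}) to reduce to $\Pf_\ell(u\mtrx{X})$, then evaluate the three $2\times 2$ Pfaffians directly. The sign bookkeeping in your $\Pf_\ell(u\mtrx{X})$ computations is correct and matches the paper's.
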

\begin{proof}
\setcounter{equation}{0}
We have
\begin{align}
\Pf_{2d+\ell}(\mtrx{\partial}_2)
=&
\Pf_\ell\left(
\mtrx{\psi}^\top \mtrx{\varphi}^\vee \mtrx{\psi}
+
\Pf(\mtrx{\varphi})
\mtrx{\Phi}
\right)
\\=&
\Pf_\ell\left(
\mtrx{\psi}^\top \mtrx{\varphi}^\vee \mtrx{\psi}
+
(u f)
\mtrx{\Phi}
\right)
\\=&
\Pf_\ell(u \mtrx{X})
\end{align}
for $\ell=1,2,3$, where line (1) follows from Lemma \ref{lem:partial2lastPfs}, $\Pf \mtrx{\varphi} = uf$ in line (2) by Lemma \ref{lem:Pfphi}, and $\mtrx{\psi}^\top \mtrx{\varphi}^\vee \mtrx{\psi} + u f \mtrx{\Phi} = u \mtrx{X}$ for line (3) by Lemma \ref{lem:PhiEquation}. Therefore,
\begin{itemize}
\item
$
\Pf_{2d+1}(\mtrx{\partial}_2)
 = 
\Pf_1\left(u \begin{bmatrix} 0 & z^q & -y^q \\ -z^q & 0 & x^q \\ y^q & -x^q & 0 \end{bmatrix}\right)
 = 
(-1)^{(1)+1} \Pf\left( \begin{bmatrix} 0 & u x^q \\ -u x^q & 0 \end{bmatrix} \right) = u x^q
$,
\item
$
\Pf_{2d+2}(\mtrx{\partial}_2)
 = 
\Pf_2\left(u \begin{bmatrix} 0 & z^q & -y^q \\ -z^q & 0 & x^q \\ y^q & -x^q & 0 \end{bmatrix}\right)
 = 
(-1)^{(2)+1} \Pf\left( u \begin{bmatrix} 0 & -y^q \\ y^q & 0 \end{bmatrix} \right)
 = 
u y^q
$, and
\item
$
\Pf_{2d+3}(\mtrx{\partial}_2)
 = 
\Pf_3\left(u \begin{bmatrix} 0 & z^q & -y^q \\ -z^q & 0 & x^q \\ y^q & -x^q & 0 \end{bmatrix}\right)
 = 
(-1)^{(3)+1} \Pf\left( u \begin{bmatrix} 0 & z^q \\ -z^q & 0 \end{bmatrix} \right)
 = 
u z^q
$.\qedhere
\end{itemize}
\end{proof}

\begin{theorem}
\label{thm:FDlqc}
If $\Char k = p$, where $p > 2D-1$ is an odd prime, then $f = \left( xy-z^2 \right)^D$ is link-$q$-compressed for all powers $q>1$ of $p$.
\end{theorem}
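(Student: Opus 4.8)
The plan is to extract the statement directly from the structural results already assembled, so the argument itself is a short assembly; the real content sits in the auxiliary propositions one assumes.

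First I would translate the conclusion via Lemma \ref{lem:lqcDef}. With $n=3$ and $d=\deg f = 2D$, the socle degree of $P/(\maxI^{[q]}:f)$ is $s = 3(q-1)-2D$, and $f$ is link-$q$-compressed exactly when every nonzero homogeneous element of $(\maxI^{[q]}:f)/\maxI^{[q]}$ has degree $> s/2 = 3\pi - D$, where $\pi = (q-1)/2$. I would first note that the hypothesis $p > 2D-1$ with $p$ an odd prime forces $p \ge 2D+1$, hence $q \ge p \ge 2D+1$ and $\pi \ge D$; this is precisely what makes the exponents $\pi-(D-1)\ge 1$ in the matrices $\mtrx{\psi}$, $\mtrx{\Phi}$ of Notation \ref{not:phipsi} legitimate, so that Theorem \ref{thm:GensAndRes} and Propositions \ref{prop:Partial2PfaffsExceptLast3} and \ref{prop:Partial2PfaffsLast3} all apply to $f = (xy-z^2)^D$.

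Next I would invoke Theorem \ref{thm:GensAndRes}: $\maxI^{[q]}:f$ is generated by the $2d+3$ maximal order Pfaffians $\Pf_\ell(\mtrx{\partial}_2)$, $1\le \ell \le 2d+3$. By Proposition \ref{prop:Partial2PfaffsExceptLast3} the first $2d$ of these are homogeneous of degree exactly $s/2 + 1 = 3\pi-(D-1)$, and by Proposition \ref{prop:Partial2PfaffsLast3} the remaining three are $u x^q, u y^q, u z^q$ with $u = (-1)^D d!\lambda_D$ a unit by Lemma \ref{lem:uIsUnit}, hence lie in $\maxI^{[q]}$. Thus every generator of $\maxI^{[q]}:f$ either already lies in $\maxI^{[q]}$ or has degree $s/2+1$. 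To finish, I would check that this forces $(\maxI^{[q]}:f)_i = (\maxI^{[q]})_i$ for all $i \le s/2$: a homogeneous element of $(\maxI^{[q]}:f)_i$ is a $P$-linear combination of the generators, the $u x^q, u y^q, u z^q$ terms contributing to $(\maxI^{[q]})_i$, while any nonzero $P$-multiple of a $\Pf_\ell(\mtrx{\partial}_2)$ (degree $s/2+1$) landing in degree $i \le s/2$ would need a coefficient of negative degree, which is impossible. Hence $\bigl((\maxI^{[q]}:f)/\maxI^{[q]}\bigr)_i = 0$ for all $i \le s/2$, and Lemma \ref{lem:lqcDef} yields that $f$ is link-$q$-compressed.

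The genuine obstacle is not in this assembly but in the inputs it consumes: the identities $\Pf\mtrx{\varphi} = uf$ and $u\mtrx{X} - \mtrx{\psi}^\top \mtrx{\varphi}^\vee \mtrx{\psi} = uf\,\mtrx{\Phi}$ (Lemmas \ref{lem:Pfphi}, \ref{lem:PhiEquation}), which rest on the Frobenius expansion $z^q = z\sum_{t=0}^\pi \lambda_t (xy)^{\pi-t} F^t$ and hence on the congruence $(-1)^t\binom{\pi}{t}\equiv \lambda_t \pmod p$ from Lemma \ref{lem:lambdaBinom} (proved via Lucas and Fermat), together with the degree bookkeeping for all $2d+3$ Pfaffians of $\mtrx{\partial}_2$, which in turn depends on the closed form for $\det(\mtrx{L}_n)$ in Theorem \ref{thm:detLnFormula}. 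Once those are granted, as they are here, the theorem is immediate.
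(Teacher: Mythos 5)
Your proposal is correct and follows essentially the same route as the paper's proof: invoke Theorem \ref{thm:GensAndRes} to identify the generators of $\maxI^{[q]}:f$ as the maximal order Pfaffians of $\mtrx{\partial}_2$, then apply Propositions \ref{prop:Partial2PfaffsExceptLast3} and \ref{prop:Partial2PfaffsLast3} together with Lemma \ref{lem:lqcDef}. The extra details you spell out (the bound $\pi \ge D$ ensuring the exponents in $\mtrx{\psi},\mtrx{\Phi}$ are nonnegative, and the degree bookkeeping for linear combinations) are accurate elaborations that the paper leaves implicit.
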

\begin{proof}
Fix a power $q>1$ of $p$.
Theorem \ref{thm:GensAndRes} showed that $\maxI^{[q]}:f$ is generated by the maximal order Pfaffians of $\mtrx{\partial}_2$, and so by Propositions \ref{prop:Partial2PfaffsExceptLast3} and \ref{prop:Partial2PfaffsLast3}, $(\maxI^{[q]}:f)/\maxI^{[q]}$ is generated by $x^q,y^q,z^q$ (which all become zero) and polynomials of degree $\frac{s}{2}+1$. By Lemma \ref{lem:lqcDef}, this means that $f$ is link-$q$-compressed.
\end{proof}

Since the set of link-q-compressed polynomials is Zariski open, most polynomials are link-$q$-compressed. We use the terminology from Remark \ref{rmk:lqcInGeneral} to write the following:
\begin{theorem}
\label{thm:generallqc}
Let $P = k[x,y,z]$ be a standard graded polynomial ring over $k$, a field of odd prime characteristic $p$. Let $d<p+1$ be an even number.
\begin{itemize}
\item A general choice of degree $d$ homogeneous polynomial $f \in P$ is link-$q$-compressed for a fixed power (or finitely many powers) $q>1$ of $p$.
\item A very general choice of degree $d$ homogeneous polynomial $f \in P$ is link-$q$-compressed for all fixed powers $q>1$ of $p$.
\end{itemize}
\end{theorem}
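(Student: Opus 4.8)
The plan is to deduce Theorem~\ref{thm:generallqc} formally from Theorem~\ref{thm:FDlqc} together with the openness statement recalled in Remark~\ref{rmk:lqcInGeneral} (which is \cite[Remark 2.10]{RGpaper}). First I would reconcile the numerical hypotheses: writing $d=2D$ with $D\geq 1$, the assumption $d<p+1$ forces $d\leq p$ since $d$ and $p$ are integers, hence $p>d-1=2D-1$, which is exactly the hypothesis of Theorem~\ref{thm:FDlqc}. Consequently, for \emph{every} power $q>1$ of $p$, the particular polynomial $f_0=\left(xy-z^2\right)^D$ is a degree-$d$ homogeneous element of $P$ that is link-$q$-compressed.

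For a single fixed power $q=p^e$, Remark~\ref{rmk:lqcInGeneral} tells us that the locus $U_q$ of degree-$d$ homogeneous $f$ which are link-$q$-compressed is Zariski open in the affine space of coefficients of a degree-$d$ form; this space is irreducible, so a nonempty open subset is dense. We have just produced $f_0\in U_q$, so $U_q$ is nonempty, and hence link-$q$-compressed holds for a general choice of $f$. For finitely many powers $q_1,\dots,q_r$, the set $U_{q_1}\cap\cdots\cap U_{q_r}$ is again Zariski open and still contains $f_0$, so it is nonempty and dense; this handles the ``finitely many powers'' case of the first bullet.

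For the second bullet, I would take the countable intersection $\bigcap_{e\geq 1}U_{p^e}$ over all powers $q=p^e>1$. Each $U_{p^e}$ is a nonempty (indeed dense) Zariski open subset of the coefficient space, so this countable intersection is precisely the kind of set that Remark~\ref{rmk:lqcInGeneral} describes as carving out the \emph{very general} choices of $f$; moreover it visibly contains $f_0$. Therefore a very general degree-$d$ homogeneous $f\in P$ is link-$q$-compressed simultaneously for all powers $q>1$ of $p$.

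I do not expect a genuine obstacle here: the entire substance of the argument is Theorem~\ref{thm:FDlqc}, whose proof occupies the bulk of the section, and once that single nonempty witness $f_0$ is available the rest is a routine consequence of the open condition. The only point requiring care is the elementary bookkeeping between $d$ and $D$ and the verification that the characteristic bound $p>2D-1$ of Theorem~\ref{thm:FDlqc} really does follow from the hypothesis $d<p+1$ with $d$ even — which it does, as noted above.
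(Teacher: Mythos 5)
Your proposal is correct and follows essentially the same route as the paper: identify $f_0=(xy-z^2)^{d/2}$ as a witness lying in the Zariski-open locus $U_q$ of link-$q$-compressed degree-$d$ forms (nonempty by Theorem~\ref{thm:FDlqc}, open by Remark~\ref{rmk:lqcInGeneral}), then pass to finite and then countable intersections exactly as the paper does. The one thing you add beyond the paper's own proof is the explicit bookkeeping that the hypothesis $d<p+1$ with $d$ even and $p$ odd implies $p>2D-1$; the paper leaves this implicit, and it is a worthwhile check to make the appeal to Theorem~\ref{thm:FDlqc} airtight.
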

\begin{proof}
For a power $q>1$ of $p$, consider the set $S_q$ of all homogeneous degree $d$ link-$q$-compressed polynomials in $P$. In Remark \ref{rmk:lqcInGeneral}, we established that $S_q$ is Zariski open for any $q$, and in Theorem \ref{thm:FDlqc} we showed that $(xy-z^2)^{d/2} \in S_q$ no matter what $q$ is, so $S_q\neq\emptyset$. This means that for a fixed $q$, a general choice of $f\in P$ that is homogeneous and of degree $d$ is link-$q$-compressed.
In fact for any fixed finite set $Q$ of values of $q$, $(xy-z^2)^{d/2} \in \bigcap_{q\in Q} S_q$, and so $\bigcap_{q>1} S_q\neq\emptyset$. Thus a general choice of $f\in P$ that is homogeneous and of degree $d$ is link-$q$-compressed for all $q \in Q$.
Lastly, we have $(xy-z^2)^{d/2} \in \bigcap_{q>1} S_q$, and so $\bigcap_{q>1} S_q\neq\emptyset$. Therefore a very general choice of $f\in P$ that is homogeneous and of degree $d$ is link-$q$-compressed for all $q>1$.
\end{proof}

We use Theorem \ref{thm:generallqc} and the results for link-$q$-compressed polynomials in Section \ref{subsec:lqcFacts} from \cite{RGpaper} to conclude the following:
\begin{theorem}
\label{thm:generalBettiAndMore}
Let $P=k[x,y,z]$ be a standard graded polynomial ring over a field $k$ of odd prime characteristic $p$, with $\maxI=(x,y,z)$ the homogeneous maximal ideal of $P$. Let $d<p+1$ be an even number.

Fix a power $q\geq d+3$ of $p$. For a general choice of homogeneous $f \in P$ with $\deg f = d$, the following hold:
\begin{itemize}
    \item The minimal graded $R=P/(f)$-free resolution of $R/\maxI^{[q]}$ has the following eventually 2-periodic form \[\cdots \xrightarrow{\mtrx{\varphi}} R^{2d}(-b-d) \xrightarrow{\mtrx{\varphi}^\vee} R^{2d}(-b-1) \xrightarrow{\mtrx{\varphi}} R^{2d}(-b) \xrightarrow{\mtrx{\psi}^\top \mtrx{\varphi}^\vee} R^3(-q) \xrightarrow{\vec{c}^\top} R \to 0\] where $\mtrx{\varphi}$ is a $2d\times 2d$ linear skew-symmetric matrix with Pfaffian equal to $uf$ for some unit $u \in k$, $\mtrx{\psi}$ is a $2d\times 3$ matrix with entries of degree $\frac{1}{2}(q-d+1)$, $\vec{c}^\top = \begin{bmatrix}x^q&y^q&z^q\end{bmatrix}$, and $b = \frac{1}{2}(3q+d-1)$.
    \item The minimal graded resolution over $P = k[x,y,z]$ of $P/(\maxI^{[q]}+(f)) = R/\maxI^{[q]}$ has the form \[0 \to P^{2d}(-b-1) \xrightarrow{\begin{bmatrix}\mtrx{\varphi} \\ -\mtrx{\psi}^\top\end{bmatrix}} \bigoplus_{P^3(-q-d)}^{P^{2d}(-b)} \xrightarrow{\begin{bmatrix}\mtrx{\psi}^\top \mtrx{\varphi}^\vee & u f \mtrx{I} \\ -\vec{w}^\top & -\vec{c}^\top\end{bmatrix}} \bigoplus_{P(-d)}^{P^3(-q)} \xrightarrow{\begin{bmatrix}\vec{c}^\top & u f\end{bmatrix}} P \to 0\] where $\vec{w}^\top = \begin{bmatrix}w_1&w_2&\cdots&w_{2d}\end{bmatrix}$ consists of $2d$ many degree $\frac{s}{2}+1$ elements of $P$, which together with $x^q,y^q,z^q$ generate $\maxI^{[q]}:f$.
    \item The Castelnuovo-Mumford regularity is given by $\reg(R/\maxI^{[q]}) = \frac{1}{2}(3q+d-5)$.
    \item The Hilbert-Kunz function of $R$ at $q$ is $HK_R(q) = \frac{3}{4} dq^2 - \frac{1}{12} (d^3 - d)$.
    \item The socle module $\soc\left(R/\maxI^{[q]}\right)$ has generators that lie only in degree $s_2=\frac{1}{2}(3q+d-5)$ and has dimension $\dim_k\soc\left(R/\maxI^{[q]}\right)_{s_2}=2d$.
\end{itemize}
\end{theorem}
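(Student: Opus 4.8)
The plan is to obtain every bullet point as a direct consequence of Theorem~\ref{thm:generallqc} together with the structural results of Subsection~\ref{subsec:lqcFacts}, once the relevant parity and size hypotheses have been checked. Since $p$ is odd and $d$ is even, $p$ and $d$ have opposite parity; and $d<p+1$ forces $d\le p-1$, hence $p>d-1=2D-1$ where $D=d/2$, so Theorem~\ref{thm:FDlqc} shows $(xy-z^2)^{D}$ is link-$q$-compressed. By Theorem~\ref{thm:generallqc} the Zariski-open set of degree $d$ link-$q$-compressed forms is therefore nonempty, so a general $f\in P$ of degree $d$ is link-$q$-compressed. Finally $q\ge d+3$ gives $q\le s/2$ with $s=3(q-1)-d$, and since $q$ is odd and $d$ even, $s=3q-3-d$ is even, so in each cited statement it is the ``$s$ even'' branch that applies.

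With these hypotheses in force, the first two bullets are Theorem~\ref{thm:RGlqcBetti}(a) and (b) verbatim: the eventually $2$-periodic $R$-free resolution is governed by the $2d\times 2d$ linear skew-symmetric matrix $\mtrx{\varphi}$ with $\Pf\mtrx{\varphi}=uf$ and the $2d\times3$ matrix $\mtrx{\psi}$ whose entries have degree $\tfrac12(q-d+1)$, and the twist is $b=\tfrac12(3q+d-1)$; the $P$-free resolution of $P/(\maxI^{[q]}+(f))$ has the stated four-term shape. To name the vector $\vec{w}^\top$ in that $P$-resolution I would invoke Proposition~\ref{prop:lqcNumOfGens} and Corollary~\ref{cor:lqcDegOfGens}: $x^q,y^q,z^q$ are part of a minimal generating set of $\maxI^{[q]}:f$, there are exactly $2d$ further minimal generators, and they all lie in degree $\tfrac{s}{2}+1$; the one-line identity $\tfrac{s}{2}+1=\tfrac12(3q-d-1)=b-d$ confirms these are precisely the entries of $\vec{w}^\top=\begin{bmatrix}w_1&w_2&\cdots&w_{2d}\end{bmatrix}$ appearing in the middle map. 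The regularity bullet is Theorem~\ref{thm:RGlqcBetti}(b) rewritten via $\tfrac32 q+\tfrac12 d-\tfrac52=\tfrac12(3q+d-5)$, and the Hilbert--Kunz bullet is Theorem~\ref{thm:lqcHilbKunzFunc} applied verbatim, its hypothesis of opposite parity being exactly what we checked.

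The socle bullet is Theorem~\ref{thm:lqcSoc}: since $s$ is even, $\soc(R/\maxI^{[q]})$ is generated in the single degree $s_2=\tfrac12(3(q-1)+d-2)=\tfrac12(3q+d-5)$ with $\dim_k\soc(R/\maxI^{[q]})_{s_2}=2d$; note that $s_2$ coincides with the regularity computed above, which is consistent with $R/\maxI^{[q]}$ being level. There is essentially no obstacle beyond bookkeeping: the two points requiring care are (i) verifying that a general $f$ really satisfies all the hypotheses of the cited theorems --- which amounts to the parity of $s$, the inequality $p>2D-1$, and $q\ge d+3$ checked above --- and (ii) reconciling the several normalizations of the key degree, namely $b$, $\tfrac{s}{2}+1$, and $s_2$, so that the matrices and graded twists in the $R$- and $P$-resolutions line up. All remaining content is a direct appeal to \cite{RGpaper} via Theorem~\ref{thm:generallqc}.
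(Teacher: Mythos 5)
Your proposal is correct and takes essentially the same approach as the paper's proof: identify a general $f$ as link-$q$-compressed via Theorem~\ref{thm:generallqc} (which rests on Theorem~\ref{thm:FDlqc} and the Zariski-openness in Remark~\ref{rmk:lqcInGeneral}), and then read off the bullets from the corresponding results of \cite{RGpaper}, using that $s=3(q-1)-d$ is even because $q$ is odd and $d$ is even. The paper's proof is just a terse list of the cited results; you usefully spell out the parity/inequality checks and the reconciliation of $b$, $\tfrac{s}{2}+1$, and $s_2$, but the underlying argument and the set of invoked theorems are the same.
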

\begin{proof}
We know that a general choice of $f$ is link-$q$-compressed by Theorem \ref{thm:generallqc}. For link-$q$-compressed polynomials, the results (from \cite{RGpaper}) of Propositions \ref{prop:RGROverC} and \ref{prop:RGPOverCandf}, Theorems \ref{thm:RGlqcBetti} and \ref{thm:lqcHilbKunzFunc}, Corollary \ref{cor:lqcDegOfGens}, Proposition \ref{prop:lqcNumOfGens}, and Theorem \ref{thm:lqcSoc}, all hold. Some of these require that $3(q-1)-d$ be even, which holds because $d$ is even and $q$ (is a power of an odd number and thus) is odd.
\end{proof}
A general choice of $f$ is in fact link-$q$-compressed for multiple $q$ values, which allows us to note the following:
\begin{corollary}
\label{cor:generalCompareBetti}
Let $P=k[x,y,z]$ be a standard graded polynomial ring over a field $k$ of odd prime characteristic $p$, with $\maxI=(x,y,z)$ the homogeneous maximal ideal of $P$. Also let $d<p+1$ be an even number.

Fix two powers $q_1>q_0\geq d+3$ of $p$. For a general choice of homogeneous $f \in P$ with $\deg f = d$, the graded Betti numbers in high homological degree $2$ and higher of the $R=P/(f)$-modules $R/\maxI^{[q_0]}$ and $R/\maxI^{[q_1]}$ are the same up to a constant shift of $\frac{3}{2}(q_1-q_0)$.
\end{corollary}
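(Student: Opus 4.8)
The plan is to deduce the corollary by combining the genericity statement of Theorem~\ref{thm:generallqc} with the explicit shape of the tail of the resolution recorded in Theorem~\ref{thm:RGlqcBetti}(a). The only real content is that a general $f$ is \emph{simultaneously} link-$q_0$- and link-$q_1$-compressed; once that is in hand, the twist $\tfrac32 q_i$ appearing in the graded shifts of Theorem~\ref{thm:RGlqcBetti}(a) produces the claimed constant shift automatically.

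First I would fix the two powers $q_1 > q_0 \ge d+3$ and, as in the proof of Theorem~\ref{thm:generallqc}, let $S_{q_i}$ denote the set of homogeneous degree-$d$ polynomials in $P$ that are link-$q_i$-compressed, for $i = 0,1$. By Remark~\ref{rmk:lqcInGeneral} each $S_{q_i}$ is Zariski open, and by Theorem~\ref{thm:FDlqc} we have $(xy-z^2)^{d/2} \in S_{q_0} \cap S_{q_1}$ (using that $d$ is even and $p > 2(d/2)-1 = d-1$, which holds since $d < p+1$). Hence $S_{q_0} \cap S_{q_1}$ is a nonempty Zariski-open set, so a general homogeneous $f$ of degree $d$ lies in it; that is, such an $f$ is link-$q_i$-compressed for both $i = 0,1$. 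This is exactly the finitely-many-powers assertion of Theorem~\ref{thm:generallqc}.

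Next I would apply Theorem~\ref{thm:RGlqcBetti}(a) to each of $q_0$ and $q_1$ separately. Its hypotheses hold: $p$ is odd and $d$ is even, so $p$ and $d$ have opposite parity, and $q_i \ge d+3$ by assumption. The theorem then gives, for each $i$, that the minimal graded $R$-free resolution of $R/\maxI^{[q_i]}$ is eventually $2$-periodic of the form
\[
\cdots \xrightarrow{\mtrx{\varphi}} R^{2d}(-b_i-d) \xrightarrow{\mtrx{\varphi}^\vee} R^{2d}(-b_i-1) \xrightarrow{\mtrx{\varphi}} R^{2d}(-b_i) \xrightarrow{\mtrx{\psi}^\top \mtrx{\varphi}^\vee} R^3(-q_i) \xrightarrow{\vec{c}^\top} R \to 0,
\]
where $b_i = \tfrac32 q_i + \tfrac12 d - \tfrac12$, so that for every homological degree $n \ge 2$ the $n$-th free module is a copy of $R^{2d}$ carrying a single graded shift dictated by $b_i$ through the fixed periodic pattern $b_i,\, b_i+1,\, b_i+d,\, b_i+d+1,\dots$ (single, because $\mtrx{\varphi}$ is linear and $\mtrx{\varphi}^\vee$ has pure degree $d-1$). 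Comparing $q_0$ with $q_1$, for each $n \ge 2$ the $n$-th free module for $R/\maxI^{[q_1]}$ is the $n$-th free module for $R/\maxI^{[q_0]}$ twisted by $b_0 - b_1 = -\tfrac32(q_1-q_0)$, whence $\beta^R_{n,j}(R/\maxI^{[q_1]}) = \beta^R_{n,\,j-\frac32(q_1-q_0)}(R/\maxI^{[q_0]})$ for all $n \ge 2$ and all $j$, which is the asserted agreement up to a constant shift of $\tfrac32(q_1-q_0)$.

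There is no genuinely hard step; the argument is a bookkeeping combination of Theorems~\ref{thm:generallqc} and~\ref{thm:RGlqcBetti}. The one subtlety worth spelling out is why the conclusion must be restricted to homological degrees $\ge 2$: the first syzygy module is $R^3(-q_i)$, and $R^3(-q_1)$ differs from $R^3(-q_0)$ by the twist $q_1 - q_0$, not $\tfrac32(q_1 - q_0)$, so homological degree $1$ is genuinely excluded. From homological degree $2$ onward all twists are governed by $b_i$, whose dependence on $q_i$ is precisely $\tfrac32 q_i$, which is what pins down the uniform shift.
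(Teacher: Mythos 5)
Your argument is correct and matches the paper's (implicitly supplied) proof: establish via Theorem~\ref{thm:generallqc} that a general $f$ lies in the nonempty Zariski-open set $S_{q_0}\cap S_{q_1}$, then invoke the ``as a result'' clause of Theorem~\ref{thm:RGlqcBetti}(a). You re-derive the constant shift $b_1-b_0=\tfrac{3}{2}(q_1-q_0)$ rather than simply citing that clause, and your remark about why homological degree $1$ is excluded is a welcome clarification, but the underlying approach is the same.
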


We list here some examples of polynomials, one that is never link-$q$-compressed (meaning link-$q$-compressed for no $q$ values) and one that is sometimes link-$q$-compressed (meaning link-$q$-compressed for some $q$ values and not others). While Theorem \ref{thm:generallqc} shows that being always link-$q$-compressed (meaning link-$q$-compressed for all $q$ values) holds for very general choices of polynomials, these examples show that isn't true of all polynomials.
\begin{example}
\label{rmk:neverlqc}
Let $d\geq2$.
The polynomial $x_1^{q-d}$ generates $(\maxI^{[q]}:x_1^d)/\maxI^{[q]}$ and has degree $q-d$. Thus by Lemma \ref{lem:lqcDef}, $x_1^d \in k[x_1,\ldots,x_n]$ is link-$q$-compressed if and only if $q-d > \frac{n(q-1)-d}{2}$, which is true if and only if $d<n-(n-2)q$. So if $n\geq2$, $x_1^d$ is never link-$q$-compressed because $n-(n-2)q \leq n-(n-2) = 2$, but we assume that $d=\deg f\geq2$.
\end{example}
\begin{example}
\label{rmk:sometimeslqc}
In $\ZZ/(3)[x,y,z]$, $f=x^4+x^3y+x^3z+y^2z^2$ is link-$q$-compressed for $q=9$, but not $q=27$. The ideal $(\maxI^{[27]}:f)/\maxI^{[27]}$ has four generators, two of degree $38$ and two of degree $37=\frac{3(27-1)-4}{2} = \frac{s}{2}$, and thus $f$ is not link-$27$-compressed. This can be shown using Macaulay2. This example indicates that link-$p^e$-compressed polynomials aren't necessarily link-$p^{e+1}$-compressed.
\end{example}

After determining whether a polynomial $f$ is link-$q$-compressed or not, we know that all polynomials of the form $u\overline{T}(f)$ share that property, where $u$ is a unit and $\overline{T}$ is a linear isomorphism:
\begin{theorem}
\label{thm:lqcLinearIso}
Let $P = k[x_1,\ldots,x_n]$ with $k$ a field of characteristic $p>0$.
Fix a power $q$ of $p>0$.

The link-$q$-compressed property is unaffected by invertible scaling and linear isomorphism. In other words, a homogeneous polynomial $f\in P$ is link-$q$-compressed if and only if $u \overline{T}(f)$ is link-$q$-compressed, where $u\in k$ is nonzero and $\overline{T}$ is a linear isomorphism on $k[x_1,\ldots,x_n]$, meaning it is generated by mappings $
x_i
 \mapsto 
\sum_{j=1}^n \mtrx{T}_{i,j} x_j
$ for $1\leq i\leq n$ where $\mtrx{T}$ is an invertible $n \times n$ matrix with entries in $k$.
\end{theorem}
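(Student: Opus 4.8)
The plan is to split the claim into two independent parts — invariance of the link-$q$-compressed property under scaling by a nonzero element of $k$, and invariance under the induced automorphism $\overline{T}$ — and then compose them, verifying the criterion of Lemma \ref{lem:lqcDef} in each case.

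First I would dispose of the scaling. If $u\in k$ is nonzero then $(uf) = (f)$ as ideals of $P$, so $\maxI^{[q]}:(uf) = \maxI^{[q]}:f$, and since $\deg(uf) = \deg f = d$ the socle degree $s = n(q-1)-d$ of the associated quotient is unchanged. Thus both the graded module $(\maxI^{[q]}:uf)/\maxI^{[q]}$ and the threshold $s/2$ appearing in Lemma \ref{lem:lqcDef} are literally identical for $f$ and for $uf$, so $f$ is link-$q$-compressed if and only if $uf$ is.

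Next I would treat the linear isomorphism. The map $\overline{T}$ is a graded $k$-algebra automorphism of $P$: its inverse is $\overline{\mtrx{T}^{-1}}$, which one checks on generators using the composition rule $\overline{\mtrx{S}}\circ\overline{\mtrx{T}} = \overline{\mtrx{T}\mtrx{S}}$. The crucial step, and the only place the hypotheses are genuinely used, is to show $\overline{T}(\maxI^{[q]}) = \maxI^{[q]}$: because $\Char k = p$ and $q$ is a power of $p$, the Frobenius is additive, so $\overline{T}(x_i^q) = \bigl(\sum_j \mtrx{T}_{i,j}x_j\bigr)^q = \sum_j \mtrx{T}_{i,j}^q\,x_j^q$; the matrix $\bigl(\mtrx{T}_{i,j}^q\bigr)$ is invertible since its determinant is $(\det\mtrx{T})^q\neq0$ (the entrywise $q$-th power is a ring endomorphism of $k$, which commutes with the determinant), so $\overline{T}$ carries the generating set $x_1^q,\dots,x_n^q$ of $\maxI^{[q]}$ to another generating set of the same ideal.

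Granting that, I would observe that $\overline{T}$, being a ring automorphism fixing $\maxI^{[q]}$ setwise, satisfies $\overline{T}(\maxI^{[q]}:f) = \overline{T}(\maxI^{[q]}):\overline{T}(f) = \maxI^{[q]}:\overline{T}(f)$, hence induces a degree-preserving $k$-linear isomorphism $(\maxI^{[q]}:f)/\maxI^{[q]} \xrightarrow{\ \sim\ } (\maxI^{[q]}:\overline{T}(f))/\maxI^{[q]}$; since $\deg\overline{T}(f) = d$ the socle degree, and hence $s/2$, agrees on both sides. So the nonzero elements of the two quotients lie in degrees $>s/2$ simultaneously, and Lemma \ref{lem:lqcDef} gives that $f$ is link-$q$-compressed if and only if $\overline{T}(f)$ is. Composing this with the scaling step (applied to the homogeneous degree-$d$ polynomial $\overline{T}(f)$) proves the theorem. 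The main obstacle is exactly the identity $\overline{T}(\maxI^{[q]}) = \maxI^{[q]}$ — everything else is formal, but this is where one must use that $q$ is a power of the characteristic, and the statement genuinely fails for other $q$.
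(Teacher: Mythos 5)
Your proposal is correct and follows essentially the same strategy as the paper: split into scaling and linear isomorphism, verify the criterion of Lemma \ref{lem:lqcDef}, and for the isomorphism part establish $\overline{T}(\maxI^{[q]}) = \maxI^{[q]}$ via Frobenius-linearity and the invertibility of the entrywise $q$-th power matrix. Your phrasing of the colon-ideal step as the general identity $\overline{T}(I:J)=\overline{T}(I):\overline{T}(J)$ for a ring automorphism is a tidier packaging of the two-inclusion argument the paper spells out, and your remark that Frobenius commutes with $\det$ is a slightly cleaner route to $\det(\mtrx{T}^{[q]})=(\det\mtrx{T})^q$ than the paper's Leibniz-formula computation, but the underlying content is identical.
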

\begin{proof}
We can investigate invertible scaling
and linear isomorphisms separately.
Let $f$ be a homogeneous polynomial.

Let $u\in k$ be nonzero.
Since $(f)=(uf)$, $\maxI^{[q]}:(f)=\maxI^{[q]}:(uf)$, and thus they have the same generators. By Lemma \ref{lem:lqcDef}, $f$ will be link-$q$-compressed if and only if $uf$ is.

Let $\overline{T}$ be a linear isomorphism on $P$, corresponding to mappings $
x_i
 \mapsto 
\sum_{j=1}^n \mtrx{T}_{i,j} x_j
$ for $1\leq i\leq n$, where $\mtrx{T}$ is an invertible $n \times n$ matrix with entries in $k$. These mappings correspond to the matrix equation $
\begin{bmatrix}
\overline{T}(x_1) \\ \vdots \\ \overline{T}(x_n)
\end{bmatrix}
 = 
\mtrx{T}
\begin{bmatrix}
x_1 \\ \vdots \\ x_n
\end{bmatrix}
$.
Note that applying $\overline{T}$ to any element of $P$ preserves degree.

For any $1\leq i\leq n$, we have \[
\overline{T}(x_i^q)
 = 
\left(\overline{T}(x_i)\right)^q
 = 
\left(
\sum_{j=1}^n \mtrx{T}_{i,j} x_j
\right)^q
 = 
\sum_{j=1}^n 
\left(
\mtrx{T}_{i,j} x_j
\right)^q
 = 
\sum_{j=1}^n 
\mtrx{T}_{i,j}^q x_j^q
\/.\] If we define the $n\times n$ matrix $\mtrx{T}^{[q]}$ entry-wise as  $\left(\mtrx{T}^{[q]}\right)_{i,j}=\mtrx{T}_{i,j}^q$ for all $i,j$, then this means $
\begin{bmatrix}
\overline{T}(x_1^q) \\ \vdots \\ \overline{T}(x_n^q)
\end{bmatrix}
 = 
\mtrx{T}^{[q]}
\begin{bmatrix}
x_1^q \\ \vdots \\ x_n^q
\end{bmatrix}
$. Note that $\det\left(\mtrx{T}^{[q]}\right) = \left(\det\mtrx{T}\right)^q$:
\begin{align*}
\left(\det\mtrx{T}\right)^q
=&
\left(
\sum_\sigma 
\left(
\sgn\sigma
\prod_{i=1}^n 
\mtrx{T}_{i,\sigma(i)}
\right)
\right)^q
\\=&
\sum_\sigma 
\left(
\sgn\sigma
\prod_{i=1}^n 
\mtrx{T}_{i,\sigma(i)}
\right)^q
\\=&
\sum_\sigma 
\left(
(\sgn\sigma)^q
\left(
\prod_{i=1}^n 
\mtrx{T}_{i,\sigma(i)}
\right)^q
\right)
\\=&\tag{*}
\sum_\sigma 
\left(
\sgn\sigma
\prod_{i=1}^n 
\mtrx{T}_{i,\sigma(i)}^q
\right)
\\=&
\sum_\sigma 
\left(
\sgn\sigma
\prod_{i=1}^n 
\left(\mtrx{T}^{[q]}\right)_{i,\sigma(i)}
\right)
\\=&
\det\left(\mtrx{T}^{[q]}\right)
\/.\end{align*}
Here the sums are over all permutations $\sigma$ on the set $\{1,\ldots,n\}$. For (*) we have $(\sgn\sigma)^q=\sgn\sigma$ by Fermat's little theorem.
Since $\det\mtrx{T} \neq 0$ because $\mtrx{T}$ is invertible, we have
$\det\left(\mtrx{T}^{[q]}\right) = \left(\det\mtrx{T}\right)^q \neq 0$, thus $\mtrx{T}^{[q]}$ is invertible.
Because $\mtrx{T}^{[q]}$ is invertible, $\overline{T}(x_1^q),\ldots,\overline{T}(x_n^q)$ and $x_1^q,\ldots,x_n^q$ are linear combinations of each other, and therefore we have $\overline{T}(\maxI^{[q]}) = (\overline{T}(x_1^q),\ldots,\overline{T}(x_n^q)) = (x_1^q,\ldots,x_n^q) = \maxI^{[q]}$.

Set $g=\overline{T}(f)$.
Let $a\in \maxI^{[q]}:(f)$. Then $a f \in \maxI^{[q]}$, and thus \[
\overline{T}(a) g = \overline{T}(a) \overline{T}(f) = \overline{T}(a f) \in \overline{T}(\maxI^{[q]}) = \maxI^{[q]}
\/.\] This means $\overline{T}(a) \in \maxI^{[q]}:(g)$. Thus $\overline{T}(\maxI^{[q]}:(f)) \subseteq \maxI^{[q]}:(g)$.
Let $b\in \maxI^{[q]}:(g)$. Then \[
\overline{T}\left(f\ \overline{T}^{-1}(b)\right) = \overline{T}(f)\ \overline{T}\left(\overline{T}^{-1}(b)\right) = g\ b \in \maxI^{[q]} = \overline{T}(\maxI^{[q]})
\/,\] and so $f\ \overline{T}^{-1}(b) \in \maxI^{[q]}$ because $\overline{T}$ is invertible. Thus $\overline{T}^{-1}(b) \in \maxI^{[q]}:(f)$, which means that $b \in \overline{T}(\maxI^{[q]}:(f))$. Therefore $\maxI^{[q]}:(g) \subseteq \overline{T}(\maxI^{[q]}:(f))$ as well, so $\overline{T}(\maxI^{[q]}:(f)) = \maxI^{[q]}:(g)$.

Since $\overline{T}$ preserves degree, the degrees of elements of $(\maxI^{[q]}:(f))/\maxI^{[q]}$ are the same as those of $\overline{T}((\maxI^{[q]}:(f))/\maxI^{[q]}) = (\maxI^{[q]}:(g))/\maxI^{[q]}$. Therefore, by Lemma \ref{lem:lqcDef}, $f$ is link-$q$-compressed if and only if $g$ is.
\end{proof}

\begin{corollary}
If $k$ is a field of odd characteristic $p$, the polynomial $x^2-y^2-z^2$ in $P=k[x,y,z]$ is link-$q$-compressed for all powers $q>1$ of $p$.
If $k$ includes an element $i$ such that $i^2=-1$, the polynomial $x^2+y^2+z^2$ in $P=k[x,y,z]$ is link-$q$-compressed for all powers $q>1$ of $p$.
\end{corollary}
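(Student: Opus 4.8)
The plan is to recognize this as the special case $D = 1$ of Theorem~\ref{thm:FDlqc}, combined with the change-of-variables invariance established in Theorem~\ref{thm:lqcLinearIso}. First I would note that for $D = 1$ the hypothesis $p > 2D - 1 = 1$ is automatic whenever $p$ is an odd prime, so Theorem~\ref{thm:FDlqc} applies directly to $f = \left(xy - z^2\right)^1 = xy - z^2$ and gives that $xy - z^2$ is link-$q$-compressed for every power $q > 1$ of $p$.

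Next I would exhibit an explicit linear isomorphism carrying $xy - z^2$ to $x^2 - y^2 - z^2$. Take $\overline{T}$ to be generated by $x \mapsto x + y$, $y \mapsto x - y$, $z \mapsto z$; its matrix $\mtrx{T} = \begin{bmatrix} 1 & 1 & 0 \\ 1 & -1 & 0 \\ 0 & 0 & 1 \end{bmatrix}$ has determinant $-2$, which is nonzero in $k$ since $\Char k \neq 2$, so $\overline{T}$ is a linear isomorphism in the sense of Theorem~\ref{thm:lqcLinearIso}. One computes $\overline{T}(xy - z^2) = (x+y)(x-y) - z^2 = x^2 - y^2 - z^2$. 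Applying Theorem~\ref{thm:lqcLinearIso} (with the scalar $u = 1$) then yields that $x^2 - y^2 - z^2$ is link-$q$-compressed for all powers $q > 1$ of $p$.

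For the second statement, assuming $k$ contains $i$ with $i^2 = -1$, I would further compose with the diagonal linear isomorphism $x \mapsto x$, $y \mapsto i y$, $z \mapsto i z$, whose matrix has nonzero determinant $i^2 = -1$ and which sends $x^2 - y^2 - z^2$ to $x^2 - (iy)^2 - (iz)^2 = x^2 + y^2 + z^2$; equivalently, apply the composite isomorphism directly to $xy - z^2$. One more invocation of Theorem~\ref{thm:lqcLinearIso} gives that $x^2 + y^2 + z^2$ is link-$q$-compressed for all powers $q > 1$ of $p$.

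I do not expect a genuine obstacle here: the argument is pure bookkeeping on top of Theorems~\ref{thm:FDlqc} and~\ref{thm:lqcLinearIso}. The only points that need (minimal) care are checking that the change-of-variables matrices are invertible over $k$ — which uses $\Char k \neq 2$ for the first map and the hypothesis $i \in k$ for the second — and observing that the pair of hypotheses ``$p$ odd'' and ``$D = 1$'' together give $p > 2D - 1$, so that Theorem~\ref{thm:FDlqc} is indeed applicable.
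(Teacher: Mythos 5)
Your proposal is correct and follows essentially the same route as the paper: both invoke the $D=1$ case of Theorem~\ref{thm:FDlqc} to get that $xy-z^2$ is link-$q$-compressed, then apply Theorem~\ref{thm:lqcLinearIso} with the identical two linear changes of variables ($x\mapsto x+y$, $y\mapsto x-y$, $z\mapsto z$; and then $x\mapsto x$, $y\mapsto iy$, $z\mapsto iz$). The only difference is that you spell out the invertibility of the transformation matrices and the applicability of the $p>2D-1$ hypothesis, which the paper leaves implicit.
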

\begin{proof}
Let $q>1$ be a power of $p$.
If $\overline{T}_1: P \to P$ is the linear isomorphism induced by $x\mapsto x+y$, $y\mapsto x-y$, and $z\mapsto z$, then $\overline{T}_1(xy-z^2)=x^2-y^2-z^2$. By Theorem \ref{thm:lqcLinearIso}, $x^2-y^2-z^2$ is link-$q$-compressed because $xy-z^2$ is link-$q$-compressed.
If $\overline{T}_2: P \to P$ is the linear isomorphism induced by $x\mapsto x$, $y\mapsto iy$, and $z\mapsto iz$, then $\overline{T}_2(x^2-y^2-z^2)=x^2+y^2+z^2$. By Theorem \ref{thm:lqcLinearIso}, $x^2+y^2+z^2$ is link-$q$-compressed because $x^2-y^2-z^2$ is link-$q$-compressed.
\end{proof}
This Corollary aligns with results from \cite{KRV12}, which stated that if $R=k[x,y,z]/(x^2+y^2+z^2)$, then either $\projdim_R(R/\maxI^{[q]}) = \infty$ for all $q$ or $\projdim_R(R/\maxI^{[q]}) < \infty$ for all $q$, depending on $k$.

\bibliographystyle{amsalpha}
\bibliography{currentbib}

\end{document}